%% file: main.tex
\documentclass[preprint,3p,times,11pt]{elsarticle} 

\usepackage{amssymb}
\usepackage{lipsum}
\usepackage{amsmath}
\usepackage{cool}
\usepackage{mathtools}
\usepackage{dirtytalk}
\usepackage{cuted}
\usepackage{relsize}
\usepackage[font=small]{caption}
\usepackage{subcaption}
\usepackage{ragged2e}
\usepackage{ifthen}
\usepackage[normalem]{ulem}
\usepackage{xcolor}
\usepackage{url}
\usepackage[inkscapearea=page]{svg}
\usepackage{tabularx}
\usepackage{hyperref}
\usepackage{mathrsfs}
\usepackage{enumitem}
\usepackage{multirow}
\usepackage{stmaryrd}
\usepackage{algorithm, algorithmic}
\usepackage{diagbox}
\usepackage{microtype}

\newboolean{AnnotateChanges}
\setboolean{AnnotateChanges}{true} 

\ifthenelse{\boolean{AnnotateChanges}}{%
\definecolor{annotateColor}{RGB}{0,0,255}
}{%
\definecolor{annotateColor}{RGB}{0,0,0} 
}

\newcommand{\added}[2][]{%
    \ifthenelse{\boolean{AnnotateChanges}}{%
        \textcolor{annotateColor}{#2}\textbf{\textcolor{red}{#1}}}{#2}}
        
\newcommand{\deleted}[2][]{%
    \ifthenelse{\boolean{AnnotateChanges}}{%
        \textcolor{annotateColor}{\sout{#2}}\textbf{\textcolor{red}{#1}}}{}}
        
\newcommand{\replaced}[3][]{%
    \ifthenelse{\boolean{AnnotateChanges}}{%
        \textcolor{annotateColor}{#2\sout{#3}}\textbf{\textcolor{red}{#1}}}{#2}}

\usepackage{stfloats}

\usepackage{amsthm}
\newtheorem{theorem}{Theorem}[section]
\newtheorem{proposition}[theorem]{Proposition}
\newtheorem{lemma}[theorem]{Lemma}
\theoremstyle{definition}
\newtheorem{assumption}[theorem]{Assumption}
\newtheorem{remark}[theorem]{Remark}

\usepackage{tikz}
\usepackage{tikzscale}
\usetikzlibrary{calc,intersections,arrows,shadings,patterns.meta,arrows.meta}

\usepackage{lineno}

\journal{}

\newcommand{\bTheta}{\boldsymbol{\Theta}}
\newcommand{\bnabla}{\boldsymbol{\nabla}}
\newcommand{\bn}{\boldsymbol{n}}
\newcommand{\bx}{\boldsymbol{x}}

\newcommand{\bq}{\boldsymbol{q}}

\newcommand{\Ab}{\boldsymbol{\mathrm{A}}}
\newcommand{\meant}[1]{\langle#1\rangle}
\newcommand{\mean}[1]{\{#1\}}
\newcommand{\jump}[1]{\llbracket#1\rrbracket}
\newcommand{\bN}{\boldsymbol{\mathrm{N}}}

\definecolor{mypurple}{RGB}{168, 121, 206}
\definecolor{myblue}{RGB}{110,170,190} 
\definecolor{mygreen}{RGB}{197,206,142}
\definecolor{anotherblue}{RGB}{82,85,255}
\definecolor{myred}{RGB}{255,158,158}

\usepackage{float}
\usepackage[newfloat,frozencache,cachedir={_minted}]{minted} 
\usepackage[utf8x]{inputenc}
\usepackage{textgreek}
\DeclareUnicodeCharacter{2218}{$\circ$}
\DeclareUnicodeCharacter{2299}{$\odot$}
\DeclareUnicodeCharacter{222B}{$\int$}
\DeclareUnicodeCharacter{22C5}{$\cdot$}
\DeclareUnicodeCharacter{2207}{$\nabla$}
\DeclareUnicodeCharacter{1D62}{$_i$}
\DeclareUnicodeCharacter{2081}{$_1$}
\DeclareUnicodeCharacter{2082}{$_2$}
\DeclareUnicodeCharacter{2083}{$_3$}
\DeclareUnicodeCharacter{2229}{$\cap$}

\makeatletter
\newcommand\fs@tbruled{%
  \def\@fs@cfont{\normalfont}%
  \let\@fs@capt\floatc@plain
  \def\@fs@pre{\hrule\kern2pt}%
  \def\@fs@post{\kern2pt\hrule\relax}%
  \def\@fs@mid{\kern2pt\hrule\kern2pt}
  \let\@fs@iftopcapt\iftrue 
}
\makeatother

\floatstyle{tbruled}
\restylefloat{listing}

\begin{document}

\begin{frontmatter}

\title{Shape calculus and automatic differentiation for multi-phase level-set topology optimisation with unfitted finite elements}

\author[qut]{Zachary J. Wegert\corref{cor}}
\ead{wegert@qut.edu.au}
\author[umea]{Martin Berggren}
\ead{martin.berggren@cs.umu.se}
\author[qut]{Vivien J. Challis}
\ead{vivien.challis@qut.edu.au}

\cortext[cor]{Corresponding author}

\affiliation[qut]{organization={School of Mathematical Sciences, Queensland University of Technology},
            addressline={2 George St}, 
            city={Brisbane},
            postcode={4000}, 
            state={Queensland},
            country={Australia}}

\affiliation[umea]{organization={Department of Computing Science, Umeå University},
            city={SE-901 87 Umeå},
            country={Sweden}}

\begin{abstract}
We present shape calculus techniques and a scalable automatic shape differentiation framework for multi-phase topology optimisation on unfitted discretisations defined by several level-set functions. First we establish general, exact shape calculus expressions in the discrete case for multi-phase systems by leveraging concepts from convex geometry. To complement this theoretical foundation, we introduce an open-source multi-phase automatic shape differentiation framework based on polytopal cutting. This computational framework is validated against both finite differences and our established exact expressions, matching the latter to near machine precision. Furthermore, the proposed automatic shape differentiation is scalable across distributed computing environments, demonstrating near ideal weak scaling up to 1.65 billion finite elements across 13,824 computer cores. We demonstrate our implementation by solving unfitted multi-phase topology optimisation problems for anisotropic diffusion, linear elasticity, and fluid--structure interaction. Together, these theoretical and computational contributions provide a robust and accessible foundation for advancing multi-phase topology optimisation using unfitted finite element methods. In particular, the methods enable the solution of topology optimisation problems involving multi-phase and multi-physics systems with non-trivial boundary conditions. The open-source software is available at \url{https://github.com/zjwegert/GridapTopOpt.jl}.
\end{abstract}


\begin{keyword}
Level-set method \sep Multi-phase topology optimisation \sep Shape calculus \sep Automatic shape differentiation \sep Unfitted finite element methods \sep CutFEM
\end{keyword}

\end{frontmatter}


\section{Introduction}\label{sec: Introduction}\noindent
New additive manufacturing technologies are enabling the fabrication of complex multi-material designs that were previously unattainable \citep{Yu_Griffis_Manogharan_Panesar_2025}. Across industries such as aviation and aerospace, these advances create significant opportunities for lead-time and cost reduction, for instance through weight reduction, without sacrificing design performance \citep{BlakeyMilner_Gradl_Snedden_Brooks_Pitot_Lopez_Leary_Berto_Plessis_2021}. Owing in part to the considerable growth in additive manufacturing, topology optimisation has become a key design tool in several engineering fields such as structural design, aerospace, and robotics \citep{Meng_Zhang_Quan_Shi_Tang_Hou_Breitkopf_Zhu_Gao_2020,Peng_Xiuli_Shaojing_Yufan_2025}. Topology optimisation enables the optimisation of material layouts to maximise performance, while satisfying physics constraints formulated as partial differential equations (PDEs). Broadly speaking, topology optimisation methods fall into two categories: density-based (also called material distribution) approaches \cite{Bendsoe89, Rozvanyetal1992} and level-set approaches \cite{10.1016/S0045-7825(02)00559-5_2003,10.1016/j.jcp.2003.09.032_2004}. In density methods, the design variables are typically material densities assigned to the elements or nodes of a mesh, whereas in level-set methods, the domain boundary is represented implicitly by a level-set function that is updated, for instance, according to an evolution equation.

Multi-phase topology optimisation facilitates the design of components with more than two distinct regions that have different material properties. In this context, both density and level-set approaches have been developed and studied. The predominant approach in the construction of these methods is to smoothly interpolate material properties between regions \citep{Peng_Xiuli_Shaojing_Yufan_2025}. In some cases, such as for functionally graded materials \citep{10.1007/s00158-017-1688-2_2017}, this interpolation of material properties is physically realistic. However, when modelling the situation of material properties that change sharply at the interface between the two phases, interpolating the material properties only approximates the interface conditions. Furthermore, the imposition of more complicated interface conditions in the case of multi-physics problems may be intractable. For this reason, several authors have recently developed level set-based body-fitted mesh approaches for multi-phase topology optimisation. \citet{10.1051/cocv/2013076_2014} considered the sensitivity analysis for two-phase problems involving a boundary-conforming mesh. \citet{Liu_Shi_Kang_2020} developed a body-fitted level-set method for two-dimensional bi-material elastic structures. \citet{NguyenTran_Phan_NguyenThoi_Nguyen_2025} combined a two-dimensional body-fitted `multi-trimmed' mesh technique with a reaction-diffusion-based level-set formulation for elastic structures. \citet{Kosta_Amir_2025} and \citet{Kosta_Shakur_Amir_2026} developed a multi-material isogeometric analysis method for two-dimensional elastic structures. These works are restricted to two-dimensional systems, and significant challenges persist in the tractable generation of more than one connected boundary-conforming mesh for multi-phase three-dimensional problems. 
Furthermore, the shape calculus is problem specific, poses significant difficulty, and often requires approximation. A versatile and systematic approach for the topology optimisation of general sharp-interface multi-material problems is currently absent from the literature.

Unfitted finite element methods are powerful techniques for solving PDEs over complicated physical domains that overcome the need to generate a boundary-conforming mesh \citep{10.1002/nme.4823_2015,Badia_Verdugo_Martin_2018}. For domains defined using a level-set function, this typically works by computing a non-conforming triangulation of the boundary by intersecting the zero level curve with the background triangulation. Integration of the variational form of a PDE over the interface or cut cells can then be carried out using the resulting cut-cell triangulation. Ill-conditioned linear systems can arise due to arbitrarily small cuts \citep{dePrenter2023}, and therefore several schemes have been developed to address this issue. The approaches include ghost penalty stabilisation (CutFEM,  \citep{10.1002/nme.4823_2015}) and discrete extension operators (AgFEM, \citep{Badia_Verdugo_Martin_2018}). Unfitted finite element methods have been applied to solve a wide range of PDE problems, including modelling fluid--structure interaction \citep{Schott_Ager_Wall_2019}, acoustics \citep{10.1002/nme.5621_2018}, and electrophysiology \citep{Berre_Rognes_Massing_2024}. 
The extension of unfitted finite element methods to multi-phase PDE problems typically relies on recursively cutting the background mesh and adding appropriate jump conditions on the distinct phase interfaces \citep{10.1002/nme.4823_2015,Claus_Kerfriden_2018,10.1016/j.cma.2019.01.009_2019,Gammanpila_Aulisa_Chierici_2025}. 

Unfitted finite element methods have been used for solving single-phase level-set topology optimisation problems, including those involving linear elasticity \citep{10.1016/j.cma.2017.09.005_2018,WEGERT2025118203}, fluid flow \citep{10.1016/j.cma.2017.03.007_2017,WEGERT2025118203}, and acoustics \cite{10.1002/nme.5621_2018}. However, for multi-phase systems, a general framework for sensitivity analysis to enable gradient-based optimisation is lacking. 
\citet{10.1051/cocv/2013076_2014} considered the sensitivity analysis of two-phase problems using Hadamard's domain transformation approach. This provides a clear framework for the case of two-phase problems. 
However, generalising the approach to an arbitrary number of phases defined by several level-set functions remains a difficult issue. As a result, the use of multi-phase unfitted finite element methods for topology optimisation has not been considered in the literature.

In recent work by \citet{Berggren_2023} and the extension by \citet{WEGERT2025118203}, a `discretise-then-differentiate' shape calculus approach was proposed for single-phase problems based on the concept of Delfour dilations \citep{Delfour_2018}. This can be regarded as a `discretise-then-differentiate' approach because it directly relies on locally parametrising the dilated region under a perturbation of a level-set function defined on a fixed mesh. Here we generalise the results of \citet{Berggren_2023} and \citet{WEGERT2025118203} 
to arbitrary convex polytopes that arise in the case of multiple intersecting domains defined by level-set functions. 
The resulting shape calculus results make it possible to use gradient-based optimisation with multiple level-set functions.

It is also possible to compute shape derivatives using automatic differentiation. Automatic differentiation enables the exact recovery of derivatives to machine precision by applying the chain rule, in which derivatives are propagated from inputs to outputs (forward-mode) or from outputs to inputs (reverse-mode) \citep{Griewank_Walther_2008}. In the context of the single-phase shape calculus of \citet{Berggren_2023}, automatic shape differentiation was proposed by \citet{WEGERT2025118203}. In that work, it was shown that automatic shape differentiation techniques were able to recover the shape calculus results of \citet{Berggren_2023} to machine precision. This significantly reduces the burden associated with computing shape derivatives, particularly for complicated multi-physics problems. Here we consider the extension of the automatic shape differentiation techniques of \citet{WEGERT2025118203} to the case of several intersecting domains defined by level-set functions. 

We implement multi-phase automatic shape differentiation for unfitted discretisations in the Julia package GridapTopOpt \cite{GridapTopOpt} and the wider Gridap-package ecosystem \cite{Badia2020,Verdugo2022}. Our implementation utilises GridapDistributed \cite{Badia2022} so that it can be leveraged in both serial and distributed computing frameworks on central processing units (CPUs). Like Gridap, our implementation aims for a clear and intuitive application programming interface (API). Furthermore, our automatic differentiation implementation is fully compatible with GridapTopOpt's automatic differentiation of arbitrary PDE-constrained maps. Providing this open-source software will facilitate the solution of complex multi-physics and multi-phase topology optimisation problems by the research community.

The remainder of the article is as follows. In Section \ref{sec: multi-phase unfitted fes}, we introduce the reader to the concept of multi-phase unfitted finite elements to provide context for our results. In Section \ref{sec: ls sd} we extend the shape calculus of \citet{Berggren_2023} and \citet{WEGERT2025118203} to the case of intersecting domains defined by multiple level-set functions. The key mathematical results are given in Theorems~\ref{theorem 1}, \ref{theorem 2}, and \ref{theorem 3}. In Section \ref{sec: asd}, we discuss automatic differentiation for multi-phase unfitted discretisations and provide scalability benchmarks for our automatic differentiation implementation. In Section \ref{sec: Level-set topology optimisation}, we briefly discuss our implementation of multi-phase unfitted level-set topology optimisation. In Section \ref{sec: Examples}, we consider exemplar topology optimisation problems that showcase the capabilities provided by our approach. Finally, in Section \ref{sec: Conclusions} we present our concluding remarks.

\section{Multi-phase unfitted finite elements}\label{sec: multi-phase unfitted fes}\noindent
To illustrate the concept of multi-phase unfitted finite element methods, we will consider the construction of a variational formulation of a multi-phase diffusion problem on which the discretisation will be based, and for which we will parametrise the subdomains using level-set functions.

\subsection{A multi-phase diffusion boundary-value problem}\noindent
Consider a hold-all \textit{background} domain $D\subset\mathbb{R}^d$, in which we consider an open and bounded domain $\Omega\subset D$ that can be subdivided into two non-overlapping subdomains $\Omega_1$ and $\Omega_2$, that is, $\overline\Omega=\overline\Omega_1\cup\overline\Omega_2$, and  denote the nonempty interface between these by $\Gamma_{12}=\partial\Omega_1\cap\partial\Omega_2$. 
On $\Omega$, consider the scalar field $u$ and the vector flux function $\bq(u)$, defined locally on $\Omega_i$, $i=1,2$, and denote $\bq_i(u_i)\coloneqq\bq(u)\rvert_{\Omega_i}$, where $u_i\coloneqq u\rvert_{\Omega_i}$.
These fields satisfy the boundary-value problem
\begin{subequations}\label{eqn: diff}
\begin{align}
        -\bnabla\cdot\bq(u) &= 0 \quad \text{ in } \Omega, \label{eqn: diff omega}\\
        \bq(u)\cdot\bn &= g \quad \text{ on } \Gamma_N, \label{eqn: diff gammaN}\\
        \bq(u)\cdot\bn &= 0 \quad \text{ on } \partial\Omega\setminus(\Gamma_N\cup\Gamma_D),\label{eqn: diff noflux}\\
        u &= 0 \quad \text{ on } \Gamma_D\label{eqn: diff dirich},
\end{align}
\end{subequations}
where a homogeneous Dirichlet condition for $u$ is given on one part of the boundary and  homogeneous and inhomogeneous fluxes are defined on the rest of the boundary. In this formulation, we assume that $\Gamma_N\subset\partial D$ and $\Gamma_D\subset\partial D$.
On the phase interface,
we impose the transmission conditions
\begin{subequations}\label{eqn: jumps}
\begin{align}
    \jump{u} &= u_1\bn_1 + u_2\bn_2= \boldsymbol 0\quad\text{ on }\Gamma_{12},\label{eqn: diff jump}\\
    \jump{\bq(u)} &= \bq_1(u_1)\cdot\bn_1 + \bq_2(u_2)\cdot\bn_2 = 0 \quad\text{ on }\Gamma_{12},\label{eqn: diff flux jump}
\end{align}
\end{subequations}
where $\bn_i$, $i=1,2$, is the outward-directed normal field on $\partial\Omega_i\cap\Gamma_{12}$ satisfying $\bn_1=-\bn_2$. 
Here, we have adopted the jump convention of Arnold et al.~\cite[\S\,3.1]{ArBrCoMa02}.
Note that under this convention, the jump of a scalar function is a vector and the jump of a vector function is a scalar.
The advantage with this notation is that no unique orientation needs to be assigned to $\Gamma_{12}$ to define the jumps.

In this general discussion, the relation between the scalar unknown $u$ and the flux function $\bq(u)$ is unspecified. 
Later we will consider the case of anisotropic multi-phase diffusion with $\bq_i=\Ab_i\bnabla u_i$ where $\Ab_i$ are symmetric diffusion tensors.

To derive the variational formulation, we begin by defining the spaces $\mathcal{V}_i=\{v\in H^1(\Omega_i):v\rvert_{\Gamma_D}=0\}$. Next, separate \eqref{eqn: diff omega} over each partition $\Omega_i$, multiply by a test function $v_i\in \mathcal{V}_i$, sum the contributions, and integrate by parts. This gives
\begin{equation}
    0=\sum_{i}(-\bnabla\cdot\bq_i(u_i), v_i)_{\Omega_i}=\sum_{i}(\bq_i(u_i),\bnabla v_i)_{\Omega_i}-\sum_i(\bq_i(u_i)\cdot\bn_i,v_i)_{\partial\Omega_i},
\end{equation}
where we use the notation $(u,v)_\omega=\int_{\omega}u\cdot{v}~\mathrm{d}\omega$ for integrals over domains, boundaries, or interfaces $\omega$.
Note that the integrand includes a dot product for vector arguments $u$, $v$.
Substituting boundary conditions~\eqref{eqn: diff gammaN}--\eqref{eqn: diff dirich} yields that
\begin{equation}\label{eqn: inter 1}
    0=\sum_{i}(\bq_i(u_i),\bnabla v_i)_{\Omega_i}-\sum_i(g,v_i)_{\Gamma_N}-(\bq_1\cdot\bn_1,v_1)_{\Gamma_{12}}-(\bq_2\cdot\bn_2,v_2)_{\Gamma_{12}}.
\end{equation}
Defining the weighted mean operators $\mean{u}=\kappa_1u_1+\kappa_2u_2$ and $\meant{u}=\kappa_2u_1+\kappa_1u_2$ with $\kappa_1+\kappa_2=1$, a substitution shows that $\jump{\bq v}=\jump{v}\cdot\mean{\bq} + \meant{v}\jump{\bq}$. 
Thus, \eqref{eqn: inter 1} can be rewritten as
\begin{equation}
    0=\sum_{i}(\bq_i(u_i),\bnabla v_i)_{\Omega_i}-\sum_i(g,v_i)_{\Gamma_N}-(\mean{\bq(u)},\jump{v})_{\Gamma_{12}}-(\jump{\bq(u)},\meant{v})_{\Gamma_{12}},
\end{equation}
and using the interface jump condition~\eqref{eqn: diff flux jump}, it follows that
\begin{equation}\label{eqn: varform1}
    0=\sum_{i}(\bq_i(u_i),\bnabla v_i)_{\Omega_i}-\sum_i(g,v_i)_{\Gamma_N}-(\mean{\bq(u)},\jump{v})_{\Gamma_{12}}.
\end{equation}

From here, we enforce the continuity condition in \eqref{eqn: diff jump} using a symmetric Nitsche method \citep{10.1002/nme.4823_2015}. 
This method relies on adding, to the variational form, a symmetry term $(\mean{\bq(v)},\jump{u})_{\Gamma_{12}}$ and a penalty term $(\mu\jump{u},\jump{v})_{\Gamma_{12}}$,
where $\mu$ is a penalty parameter. 
These are consistent additions in the sense that $\jump{u}=0$ in the case of $u$ being a solution to boundary value problem~\eqref{eqn: diff} subject to transmission conditions~\eqref{eqn: jumps}. 

Adding the the symmetry and penalty term to variational expression~\eqref{eqn: varform1} yields the following variational formulation of the boundary value problem~\eqref{eqn: diff} under jump conditions~\eqref{eqn: jumps}: 
\begin{equation}\label{eqn: cont var form}
\begin{aligned}
&\text{Find $(u_1,u_2)\in\mathcal{V}_1\times\mathcal{V}_2$ such that}
\\ &\qquad
    a([u_1,u_2],[v_1,v_2])=l([v_1,v_2]) &\forall (v_1,v_2)\in\mathcal{V}_1\times\mathcal{V}_2,    
\end{aligned}
\end{equation}
where 
\begin{equation}\label{eqn: cts bilinear}
    \begin{aligned}
        a([u_1,u_2],[v_1,v_2]) &= \sum_{i}(\bq_i(u_i),\bnabla v_i)_{\Omega_i}
        -(\mean{\bq(u)},\jump{v})_{\Gamma_{12}}
        -(\mean{\bq(v)},\jump{u})_{\Gamma_{12}}
        +(\mu\jump{u},\jump{v})_{\Gamma_{12}},
    \end{aligned}
\end{equation}
and
\begin{equation}\label{eqn: cts linear}
    l([v_1,v_2]) = \sum_i(g,v_i)_{\Gamma_N}.
\end{equation}

\subsection{Discrete formulation}\noindent
Suppose that we partition the background domain $D$ by cells $K\in\mathcal{T}_h$ where $\mathcal{T}_h$ is the set of cells. We denote the facets of $\mathcal{T}_h$ to be $\mathcal{F}_h$. Suppose that $D$ is partitioned by two level-set functions $(\phi_1,\phi_2)\in \mathcal{V}_{h}^2$ such that
\begin{equation}
        D_{\phi_i}=\{\bx\in D:\phi_i(\bx)<0\},\quad D\setminus \overline{D}_{\phi_i}=\{\bx\in D:\phi_i(\bx)>0\},
        \quad \partial D_{\phi_i}=\{\bx\in D:\phi_i(\bx)=0\},
\end{equation}
for $i=1,2$, where $\mathcal{V}_{h}$ is the space of continuous, piecewise linear functions on $\mathcal{T}_h$:
\begin{equation}
    \mathcal{V}_{h} = \{v\in C^0(\mathcal{T}_h):v\rvert_{K}\in\mathbb{P}_1(K)~\forall K\in\mathcal{T}_h\}.
\end{equation}
Using set operations on $D_{\phi_1}$ and $D_{\phi_2}$, we can define four non-overlapping subdomains. Figure~\ref{fig:fig2-new} shows an example of this.
\begin{figure}[!t]
    \centering
    \def\svgwidth{\textwidth}
    \input{Figure2_new_tex}
    \caption{An example of non-overlapping subdomains $\Omega_j$ defined by set operations on the domains $D_{\phi_i}$ that are generated by two level-set functions $\phi_i$. The interfaces between each subdomain $\Omega_i$ can be written as $\Gamma_{ij}=\partial\Omega_i\cap\partial\Omega_j$ for $i\neq j$.}
    \label{fig:fig2-new}
\end{figure}

For the purpose of this demonstration, we consider parametrising $\Omega_1$ and $\Omega_2$ as $\Omega_1(\phi_1,\phi_2)=D_{\phi_1}\cap \overline{D}_{\phi_2}^\complement$ and $\Omega_2(\phi_1,\phi_2)=D_{\phi_1}\cap D_{\phi_2}$, respectively. As a result, $\Gamma_{12}$ can be written as $\Gamma_{12}(\phi_1,\phi_2)=D_{\phi_1}\cap\partial D_{\phi_2}$.

Suppose that we wish to discretise using CutFEM over the mesh shown in Figure~\ref{fig:fig2-new}. 
The CutFEM is based on variational form~\eqref{eqn: cont var form} to which is added ghost penalty terms to control the condition number of the system matrix~\cite{Bu10}. 
In this case, we add a ghost penalty term over the \textit{ghost skeleton} $\mathcal{F}_{G_i}$ of each of $\Omega_i$, where $\mathcal{F}_{G_i}$ is defined as follows: for distinct cells $K_1\in\mathcal{T}_h$ and $K_2\in\mathcal{T}_h$, a facet $F\in\mathcal{F}_{G_i}$ is given by $F=K_1\cap K_2$ where at least one of $K_1$ or $K_2$ intersect the interface $\partial\Omega_i$ \citep{10.1002/nme.4823_2015}. In particular, we add terms of the form
\begin{equation}
    j([u_1,u_2],[v_1,v_2]) = \sum_i\sum_{F\in\mathcal{F}_{G_i}}(\eta_{i}(h)\jump{\partial_{\boldsymbol{n}_F}u_i},\jump{\partial_{\boldsymbol{n}_F}v_i})_F,
\end{equation}
where $\eta_{i}(h)$ are penalty parameters.

The CutFEM approximation of boundary-value problem~\eqref{eqn: diff}--\eqref{eqn: jumps} is then: 
\begin{equation}
\begin{aligned}
&\text{Find $(u_1,u_2)\in\mathcal{V}_{1,h}\times\mathcal{V}_{2,h}$ such that}
\\ &\qquad
A([u_1,u_2],[v_1,v_2])=l([v_1,v_2]) \qquad\forall (v_1,v_2)\in\mathcal{V}_{1,h}\times\mathcal{V}_{2,h},    
\end{aligned}
\end{equation}
with
\begin{equation}
    A([u_1,u_2],[v_1,v_2]) = a([u_1,u_2],[v_1,v_2])+j([u_1,u_2],[v_1,v_2]),
\end{equation}
where $a$ and $l$ are defined in \ref{eqn: cts bilinear} and \eqref{eqn: cts linear}, respectively, and $\mathcal{V}_{i,h} = \{v\in C^0(\mathcal{T}_{i,h}):v\rvert_{K}\in\mathbb{P}_1(K)~\forall K\in\mathcal{T}_h,~v\rvert_{\Gamma_D}=0\}$, in which $\mathcal{T}_{i,h}$ are the active elements associated with phase $i$ given by
\begin{equation}
    \mathcal{T}_{i,h}=\{K\in\mathcal{T}_h:\Omega_i\cap K\neq\emptyset\}.
\end{equation}

Now that we have constructed a discrete forward problem,  we will in the next section consider how to compute derivative information under perturbations of $\phi_1$ and $\phi_2$ using level-set shape calculus.

\section{Shape differentiation via level-set dilation}\label{sec: ls sd}\noindent
In the following, we extend the shape calculus of \citet{Berggren_2023} to the case of intersecting domains defined by level-set functions. There are three types of directional derivatives under a perturbation of a level-set function that must be considered:
\begin{itemize}
    \item[(i)] \textit{Domain integrals} (Theorem~\ref{theorem 1}), that is, the directional derivative of a functional defined over the intersection of several domains, where the level-set function being perturbed defines one of these domains. An example is a functional defined on $\Omega_1 = D_{\phi_1}\cap  \overline{D}_{\phi_2}^\complement$ in Figure~\ref{fig:fig2-new} under a perturbation of $\phi_1$ or $\phi_2$.
    \item[(ii)] \textit{Boundary integrals truncated by a perturbed domain} (Theorem~\ref{theorem 2}), that is, the directional derivative of a functional defined over the boundary of a domain that is intersected with several other domains, and where the level-set function that is being perturbed defines one of the domains intersecting the boundary. An example is a functional defined on $\Gamma_{23}=\partial D_{\phi_1}\cap D_{\phi_2}$ in Figure~\ref{fig:fig2-new} under a perturbation of $\phi_2$.
    \item[(iii)] \textit{Boundary integrals} (Theorem~\ref{theorem 3}). Here, the functional is of the same type as for (ii); however the level-set function that is being perturbed to compute the directional derivative is the one that defines the boundary. An example is a functional defined on $\Gamma_{23}=\partial D_{\phi_1}\cap D_{\phi_2}$ in Figure~\ref{fig:fig2-new} under a perturbation of $\phi_1$.
\end{itemize}
Our results for Cases (i) and (iii) reduce to those of \citet{Berggren_2023} and \citet{WEGERT2025118203} for the case of a single level-set function. Case (ii) is new and appears only in the case of two or more intersecting level-set functions. 
We restrict the scope to simplex-type meshes in $\mathbb{R}^d$.

\subsection{Preliminaries}\noindent
For clarity, we restate some of the previously used notation. Let $D\subset\mathbb{R}^d$ be a hold-all domain, to be referred to as the background domain. 
We partition $D$ by simplices $K\in\mathcal{T}_h$, where $\mathcal{T}_h$ is the set of cells, and denote the facets of $\mathcal{T}_h$ to be $\mathcal{F}_h$. Note that the facets are of codimension one. We consider the following space of level-set functions that partition $D$:
\begin{equation}
    \mathcal{V}_{h} = \{v\in C^0(\mathcal{T}_h):v\rvert_{K}\in\mathbb{P}_1(K)~\forall K\in\mathcal{T}_h\}.
\end{equation}
Assume now that $D$ is partitioned by a level-set function $\phi_i\in \mathcal{V}_{h}$ such that
\begin{equation}
        D_{\phi_i}=\{\bx\in D:\phi_i(\bx)<0\},\quad D\setminus \overline{D}_{\phi_i}=\{\bx\in D:\phi_i(\bx)>0\},
        \quad \partial D_{\phi_i}=\{\bx\in D:\phi_i(\bx)=0\},
\end{equation}
and consider perturbations of the form
\begin{equation}\label{eqn: perturbation}
    \phi_{i,t}(\bx)=\phi_i(\bx)+tw(\bx),
\end{equation}
where $t\geq0$ and $w$ is a linear Lagrangian basis function of $\mathcal{V}_{h}$. We define the associated perturbed domains
\begin{equation}
    D_{\phi_i,t}=\{\bx\in D:\phi_{i,t}(\bx)<0\}.
\end{equation}
Note that by construction $D_{\phi_i,t}\subseteq D_{\phi_i,t^\prime}$ for $t^\prime\leq t$, since $w\geq0$ by convention.
In other words, because of our choice of perturbation and how the level-set function defines $D_{\phi_i}$, the perturbed domain is always shrinking for increased $t$ \citep{Berggren_2023}.

Before proceeding, we recall the following assumptions from \citet{Berggren_2023}:
\begin{assumption}\label{assumption 1} There is a $t_\mathrm{max}>0$ such that for each $K\in\mathcal{T}_h$, if $\partial D_{\phi_{i,t}}\cap K$ is either empty or non-empty for some $t\in(0,t_\mathrm{max}]$, it has the same property for each $t\in(0,t_\mathrm{max}]$.
\end{assumption}
This is an assumption on the smallness of the perturbation, motivated by the fact that we will in the end be taking limits as $t\to0^+$.
\begin{assumption}\label{assumption 2} The boundary $\partial D_{\phi_i}$ does not intersect any mesh points of the triangulation $\mathcal{T}_h$.
\end{assumption}
Under this assumption, the semiderivatives will agree in the limits $t\rightarrow0^+$ and $t\rightarrow0^-$, respectively.

\subsection{The conceptual approach}\noindent
To illustrate the conceptual approach of level-set shape differentiation for multiple level-set functions, suppose we consider a functional of the form
\begin{equation}
    J_1(\phi_1)=\int_{D_{\phi_1}\cap D_{\phi_2}}f(\bx)~\mathrm{d}\bx
\end{equation}
where $\phi_2$ is fixed. For $t>0$ sufficiently small, it holds that
\begin{equation}\label{eqn: dom integral concept}
    \frac{1}{t}\bigl(J_1(\phi_{1,t})-J_1(\phi_1)\bigr)=\frac{1}{t}\int_{D_{\phi_{1,t}}\cap D_{\phi_2}}f(\bx)~\mathrm{d}\bx-\frac{1}{t}\int_{D_{\phi_1}\cap D_{\phi_2}}f(\bx)~\mathrm{d}\bx=-\frac{1}{t}\int_{E_t}f(\bx)~\mathrm{d}\bx,
\end{equation}
where, since $(D_{\phi_{1,t}}\cap D_{\phi_2})\subset (D_{\phi_1}\cap D_{\phi_2})$,  $E_t = (D_{\phi_1}\cap D_{\phi_2})\setminus (D_{\phi_{1,t}}\cap D_{\phi_2})$.
Moreover, in the algebra of subsets $A$, $B$, $C$ of a universe $D$, it holds that $(A\cap B)\setminus(C\cap B) = (A\setminus C)\cap B$.
Thus, $E_t=(D_{\phi_{1}}\cap D_{\phi_2})\setminus(D_{\phi_1,t}\cap D_{\phi_2}) = (D_{\phi_1}\setminus{D_{\phi_{1,t}}})\cap D_{\phi_2}$. Figure~\ref{fig:Et_fig} shows an example region $E_t$ in two dimensions.
\begin{figure}[!t]
    \centering
    \def\svgwidth{0.35\textwidth}
    \input{Et_two_ls_tex}
    \caption{An example region $E_t=(D_{\phi_1}\setminus{D_{\phi_{1,t}}})\cap D_{\phi_2}$ under a perturbation $\phi_{1,t}=\phi_1+tw$. We denote the support of a linear Lagrangian basis $w$ located at $\bx_w$ by the shaded blue region. Here we only depict two domains defined by level-set functions. In the rest of the article, we consider the case of an arbitrary number of domains.}
    \label{fig:Et_fig}
    \end{figure}
Broadly speaking, an approach to compute the directional derivative $\mathrm{d}J_1(\phi_1)(w)$ is to: (i) parametrise $E_t$; (ii) express the final term in \eqref{eqn: dom integral concept} as a kind of coarea formula (that is, an integral of surface integrals); and (iii) take the limit as $t\rightarrow0$. 



Without additional mathematical machinery, the number of cases needed to derive the parametrisation of $E_t$ and the corresponding coarea formula is combinatorial in the number of level-set functions. In the below we therefore borrow some concepts from convex geometry to generalise the approach of \citet{Berggren_2023} to arbitrary convex polytopes.

\subsection{Domain integrals}\label{subsec:dom int}\noindent
In order to prove a general result for several intersecting domains, we will consider the generalisation of the results of \citet{Berggren_2023} to arbitrary convex polytopes $\mathcal{P}\subseteq K\in\mathcal{T}_h$.
Let $V(\mathcal P)$ denote the extreme points of convex polytope $\mathcal P$.
We begin with the following results based on concepts in \citet{Gruber_2007}.
\begin{proposition}\label{prop: 1}
    An intersection plane $\Pi$ cuts a convex polytope $\mathcal{P}$ into two convex polytopes.
\end{proposition}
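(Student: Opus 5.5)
We realise the cut algebraically: write $\Pi=\{\bx\in\mathbb{R}^d:\boldsymbol{a}\cdot\bx=b\}$ with $\boldsymbol{a}\neq\boldsymbol{0}$, and take the two closed half-spaces $H^-=\{\boldsymbol{a}\cdot\bx\le b\}$ and $H^+=\{\boldsymbol{a}\cdot\bx\ge b\}$. The candidate pieces are $\mathcal{P}^-=\mathcal{P}\cap H^-$ and $\mathcal{P}^+=\mathcal{P}\cap H^+$. By construction $\mathcal{P}^-\cup\mathcal{P}^+=\mathcal{P}$ and $\mathcal{P}^-\cap\mathcal{P}^+=\mathcal{P}\cap\Pi$, which has empty interior relative to $\mathcal{P}$. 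So the proof has to establish two things: each piece is a convex polytope, and (when $\Pi$ genuinely cuts $\mathcal{P}$) each piece is nonempty and full-dimensional.

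\textbf{Convexity and polytopality.} Convexity is immediate, since half-spaces are convex and intersections of convex sets are convex. For the polytope property we use the H-representation $\mathcal{P}=\{\bx:\boldsymbol{a}_k\cdot\bx\le b_k,\ k=1,\dots,m\}$, which is bounded. Adding the single inequality $\boldsymbol{a}\cdot\bx\le b$ (respectively $-\boldsymbol{a}\cdot\bx\le -b$) gives a bounded intersection of finitely many half-spaces. By the Minkowski--Weyl theorem (as in \citet{Gruber_2007}) this is a polytope, i.e.\ the convex hull of finitely many points.

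\textbf{Explicit vertex description.} Since later results will parametrise these pieces, we also record their vertices:
\[
V(\mathcal{P}^\pm)\subseteq\{\boldsymbol{v}\in V(\mathcal{P}):\pm(\boldsymbol{a}\cdot\boldsymbol{v}-b)\ge0\}\cup(\Pi\cap\mathcal{P}),
\]
where the points of $\Pi\cap\mathcal{P}$ that occur are exactly the intersections of $\Pi$ with edges $[\boldsymbol{v}_i,\boldsymbol{v}_j]$ of $\mathcal{P}$ whose endpoints lie strictly on opposite sides. To justify this, take an extreme point of $\mathcal{P}^-$ and count active constraints. Either the new constraint is inactive, in which case the point is extreme in $\mathcal{P}$. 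Or it is active, in which case the point lies on $\Pi$ with $d-1$ further independent active constraints of $\mathcal{P}$, which places it on an edge of $\mathcal{P}$.

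\textbf{Main obstacle: nondegeneracy.} The hardest part is making precise what "cuts" means, so that both pieces are genuine $d$-dimensional polytopes rather than $\mathcal{P}$ itself and a lower-dimensional face. I would assume there exist vertices $\boldsymbol{v}^\pm\in V(\mathcal{P})$ with $\boldsymbol{a}\cdot\boldsymbol{v}^-<b<\boldsymbol{a}\cdot\boldsymbol{v}^+$; this is consistent with Assumption~\ref{assumption 2}. Take an interior point $\bx_0$ of $\mathcal{P}$ that lies on $\Pi$. Such a point exists because the open segment from an interior point near $\boldsymbol{v}^-$ to one near $\boldsymbol{v}^+$ crosses $\Pi$ in the interior. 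Then a small ball around $\bx_0$ is split by $\Pi$ into two half-balls, one contained in each of $\mathcal{P}^\pm$. Hence both pieces have nonempty interior, which completes the argument.
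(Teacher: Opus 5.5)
Your proof is correct and follows the same route as the paper: take the Minkowski--Weyl halfspace representation of $\mathcal{P}$, intersect it with each of the two halfspaces determined by $\Pi$, and use that an intersection of convex sets is convex. Your version is more complete than the paper's, which only asserts convexity. The paper leaves implicit both that the bounded intersection is again a polytope (Minkowski--Weyl in the reverse direction) and what ``cuts'' means; your vertex description and nondegeneracy argument are extras not found there, and the latter tacitly assumes $\mathcal{P}$ is full-dimensional.
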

\begin{proof}
    Suppose we have a convex polytope $\mathcal{P}$ generated by a finite set of vertices $V(\mathcal{P})$. By the Minkowski--Weyl theorem \citep[Theorem 14.2,][]{Gruber_2007}, $\mathcal{P}$ has a halfspace representation
    \begin{equation}
        \mathcal{P}=\bigcap_{i=1}^mH_i,
    \end{equation}
    where $\{H_i\}_{i=1}^m$ is a finite set of halfspaces. Note that halfspaces are defined as one of the two convex sets generated by partitioning $\mathbb{R}^d$ with a hyperplane, that is,
    \begin{equation}
        H_i=\{\bx\in\mathbb{R}^d:\bx\cdot  \boldsymbol{\alpha}_i<c_i\}.
    \end{equation}
    Now, suppose that $\Pi$ divides $\mathbb{R}^d$ into halfspaces $H^+_\Pi$ and $H^-_\Pi$. Then, the resulting polytopes are 
    \begin{equation}
        \mathcal{P}_\pm=\mathcal{P}\cap H^\pm_\Pi.
    \end{equation}
    Since $\mathcal{P}$ and $H^\pm_\Pi$ are convex, their intersection is convex.    
\end{proof}

\begin{lemma}\label{lemma: lemma 2}
    Given a convex polytope $\mathcal{P}$ and halfspaces $H^\pm$ generated by the plane $\Pi$ cutting $\mathcal{P}$, there exists $\hat{\bx}_w\in V(\mathcal{P}\cap H^\pm)$  and $S\subset\partial(\mathcal{P}\cap H^\mp)$ such that all points $\bx\in\mathcal{P}\cap H^\mp$ can be parametrised by $\bx=\hat{\bx}_w+\sigma(\bx_S-\hat{\bx}_w)$ for $\sigma\in(0,1)$ and $\bx_S\in S$.
\end{lemma}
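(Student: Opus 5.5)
Throughout, the pieces $\mathcal{P}\cap H^\pm$ are convex polytopes by Proposition~\ref{prop: 1}, and I will argue by a star-shapedness construction. I read the statement as: choose an extreme point $\hat{\bx}_w$ of one piece (in the intended application, the vertex carrying the hat function $w$, or a vertex of $\mathcal{P}$ on the side where $w$ is largest) that also belongs to the closure of the other piece $\mathcal{P}\cap H^\mp$. The set $S$ will be the part of $\partial(\mathcal{P}\cap H^\mp)$ ``visible'' from $\hat{\bx}_w$, namely the union of the facets of $Q\coloneqq\overline{\mathcal{P}\cap H^\mp}$ that do not contain $\hat{\bx}_w$.

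\textbf{Step 1: choose the base point.} Every vertex of $Q$ is either a vertex of $\mathcal{P}$ lying in $\overline{H^\mp}$, or a point of $\Pi\cap\mathcal{P}$. Points of the second kind are also extreme points of $\overline{\mathcal{P}\cap H^\pm}$. Hence I can pick $\hat{\bx}_w\in V(\mathcal{P}\cap H^\pm)$ that is simultaneously a vertex of $Q$, for instance a vertex of the common face $\Pi\cap\mathcal{P}$. Alternatively, in the intended use, $\hat{\bx}_w$ is the vertex of the cell where $w=1$, and the cut plane $\Pi=\{\phi_{i,t}=0\}$ is what separates the pieces. I will state the choice explicitly, since the lemma as written is ambiguous about which side $\hat{\bx}_w$ sits on.

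\textbf{Step 2: the radial parametrisation.} Because $Q$ is convex and compact and $\hat{\bx}_w\in Q$, take any $\bx\in Q$ with $\bx\neq\hat{\bx}_w$. The ray $\hat{\bx}_w+s(\bx-\hat{\bx}_w)$, $s\geq0$, leaves $Q$ at a unique last point $\bx_S$, with $s^*\geq1$; this uses convexity together with boundedness. Set $\sigma=1/s^*\in(0,1]$. Then $\bx=\hat{\bx}_w+\sigma(\bx_S-\hat{\bx}_w)$. The exit point $\bx_S$ lies on a facet of $Q$ not containing $\hat{\bx}_w$, because the ray enters the interior of $Q$ immediately and so cannot exit through a supporting facet through $\hat{\bx}_w$. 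Conversely, every point $\hat{\bx}_w+\sigma(\bx_S-\hat{\bx}_w)$ with $\sigma\in(0,1)$ and $\bx_S\in S$ lies in $Q$ by convexity. Hence $S=\bigcup\{F \text{ facet of } Q:\hat{\bx}_w\notin F\}\subset\partial Q$.

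\textbf{Step 3: open versus closed and uniqueness.} Points with $\sigma=1$ are exactly $S$, which is a boundary set of measure zero. The base point itself and boundary points on facets through $\hat{\bx}_w$ are likewise negligible. Therefore the open interval $\sigma\in(0,1)$ parametrises $\mathcal{P}\cap H^\mp$ up to a null set, and exactly if the lemma is read with interiors.

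The main obstacle is the degenerate case in which $\hat{\bx}_w$ lies on a facet whose exit ray could run along $\partial Q$. Related to this is showing that the map $(\sigma,\bx_S)\mapsto\bx$ is injective off a null set, which is what later justifies a coarea-type formula. I would handle this by noting that each ray from $\hat{\bx}_w$ into the interior meets $\partial Q\setminus\{\hat{\bx}_w\}$ exactly once, by strict convexity along rays through interior points.
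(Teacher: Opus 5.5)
Your proof is correct for the lemma as literally stated, but it takes a different route from the paper.

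\textbf{The paper's route.} The paper takes the apex $\hat{\bx}_w$ to be a vertex of the \emph{opposite} piece $\mathcal{P}\cap H^\pm$; in the application, this vertex lies strictly on the far side of the cut. It joins $\hat{\bx}_w$ by a ray to each $\bx\in\mathcal{P}\cap H^\mp$, appealing to convexity, and extends that ray past $\bx$ to $\partial(\mathcal{P}\cap H^\mp)$. The exit points form $S$, which is the far part of that boundary. Consequently every ray crosses $\Pi$, and $\hat{\bx}_w$ and $S$ lie on opposite sides of it.

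\textbf{Your route.} You put the apex on the cut face $\Pi\cap\overline{\mathcal{P}}$, as a common vertex of both closures, and use the standard cone decomposition of $Q$ from one of its own vertices, relying only on convexity of $Q$. This is more careful than the paper's two-line sketch:
\begin{itemize}
\item you identify $S$ explicitly;
\item you show that the exit point of a ray through an interior point cannot lie on a facet through the apex;
\item you show that $\sigma\in(0,1)$ holds exactly for interior points.
\end{itemize}
Your injectivity claim follows from the line-segment principle you already use in Step~2. ``Strict convexity'' is the wrong justification, since polytopes are not strictly convex.

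\textbf{What your choice gives up.} It loses exactly the property that Lemma~\ref{lemma: lemma 3} relies on. There, $\hat{\bx}_w$ and every $\bx_S\in S$ must lie on opposite sides of each moving plane $\Pi_\tau$. This is what guarantees $\bnabla\phi_\tau\rvert_K\cdot(\bx_S-\hat{\bx}_w)\neq 0$ and that each ray meets each $\Pi_\tau$ exactly once, for a fixed $S$. With the apex on $\Pi=\Pi_0$, this separation is lost in general. In the extreme case $w(\hat{\bx}_w)=0$, one has $\phi_\tau(\hat{\bx}_w)=0$ for all $\tau$, so the map $\hat{\boldsymbol{X}}$ of Lemma~\ref{lemma: lemma 3} collapses to the single point $\hat{\bx}_w$.

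\textbf{The alternative apex you mention.} Your Step~2 does not cover an apex strictly on the far side of $\Pi$, because it assumes $\hat{\bx}_w\in Q$. That case needs convexity of $\mathcal{P}$ rather than of $Q$: the segment from the apex to $\bx$ lies in $\overline{\mathcal{P}}$, and its extension exits at a point of $\partial\mathcal{P}\cap H^\mp$. This is the paper's argument.
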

\begin{proof}
    The proof follows by convexity of $\mathcal{P}\cap H^\pm$ from Proposition~\ref{prop: 1}. In particular, by convexity, all points $\bx\in \mathcal{P}\cap H^\mp$ can be connected by a ray to a point $\hat{\bx}_w\in V(\mathcal{P}\cap H^\pm)$. Extending this ray to the boundary $\partial(\mathcal{P}\cap H^\mp)$ recovers the set $S$.
\end{proof}

A conceptual picture of the statement in Lemma~\ref{lemma: lemma 2} is shown in Figure~\ref{fig: concept a} for the case of $\hat{\bx}_w\in V(\mathcal{P}\cap H^+)$ and $S\subset\partial(\mathcal{P}\cap H^{-})$.

\begin{figure}[t]
    \centering
    \begin{subfigure}{0.4\textwidth}
        \centering
        \includegraphics[width=\textwidth]{half_no_anim_end.tikz}
        \caption{}
        \label{fig: concept a}
    \end{subfigure}
    \begin{subfigure}{0.435\textwidth}
        \centering
        \includegraphics[width=\textwidth]{half_param_end.tikz}
        \caption{}
        \label{fig: concept b}
    \end{subfigure}
    \caption{(a) A conceptual picture of the statement in Lemma~\ref{lemma: lemma 2}. (b) A visualisation of the quantities $\Pi_t$ and $\mathcal{E}_t=(H^-\setminus\overline{H^-_t})\cap\mathcal{P}$ where $\Pi_t$ satisfies $\Pi_t\rvert_{K}=\{\bx\in K:\phi_t(\bx)=0\}$.} 
    \label{fig: concept}
\end{figure}

Suppose that we now specify $\Pi$ \textit{locally} in a polytope $\mathcal{P}\subseteq K\in\mathcal{T}_h$ using a level-set function $\phi(\bx)=0$. Without lack of generality, we assume that $\Pi_t$, defined such that $\Pi_t\rvert_{K}=\{\bx\in K:\phi_t(\bx)=0\}$, is \textit{pushed} into the halfspace $H^-$ generated by $\Pi$ for $t\geq0$. We can then define the perturbed region to be $\mathcal{E}_t=(H^-\setminus\overline{H^-_t})\cap\mathcal{P}$, which is itself convex. Figure~\ref{fig: concept b} shows a conceptual picture of this setup.

We require extensions of Assumption~\ref{assumption 1} and \ref{assumption 2} because we are working on a general polytope $\mathcal{P}\subseteq K$. In the first, we restrict the perturbation to be sufficiently small such that the following condition holds.
\begin{assumption}\label{assumption 3}
    There is a $t_\mathrm{max}>0$ such that, for the convex polytope $\mathcal{P}\subseteq K\in\mathcal{T}_h$, the geometric elements (that is, facets, edges, and vertices for $d=3$) of $H^\pm\cap \mathcal{P}$ are topologically equivalent to those of $H^\pm_t\cap \mathcal{P}$ for all $t\in(0,t_\mathrm{max}]$.
\end{assumption}
Next, we introduce an assumption on the intersection of multiple domain boundaries. 
\begin{assumption}\label{assumption 4}
    For a subfacet of dimension $d-2$ (that is, a point for $d=2$ or an edge for $d=3$) and where level-set function $\phi_k$ is perturbed, $\partial D_{\phi_k}$ intersects at most one other $\partial D_{\phi_i}$ along the subfacet, and any intersection does not occur on the boundary of a cell $\partial K$.
\end{assumption}

We can now establish the existence of a parametrisation of $\mathcal{E}_t$.
\begin{lemma}\label{lemma: lemma 3}
    Consider a moving plane $\Pi_t$ that is locally parametrised by $\phi_{t}(\bx)=\phi(\bx)+tw(\bx)=0$ for some $t \in (0, t_{max}]$, according to Assumption~\ref{assumption 3}, and where $w$ is a linear Lagrangian basis function. 
    Suppose $K\in\mathcal{T}_h$ is a cell in the support of the basis function $w$ and that $\Pi_t$ cuts a convex polytope $\mathcal{P}\subseteq K$ into two sub-polytopes $\mathcal{P}\cap H_{t}^\pm$, where $H_{t}^\pm$ are the halfspaces defined by $\Pi_t$. Then, there exists a set $S$ and a diffeomorphism $\hat{\boldsymbol{X}}:S\times(0,t)\rightarrow \mathcal{E}_t$ such that $\hat{\boldsymbol{X}}(S,\tau)=\Pi_\tau\cap\mathcal{P}$ for each $\tau\in(0,t)$ where $\mathcal{E}_t=(H^-\setminus\overline{H^-_t})\cap\mathcal{P}$.
\end{lemma}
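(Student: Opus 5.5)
We construct $\hat{\boldsymbol{X}}$ as a ray parametrisation from a vertex of the polytope, in the spirit of \citet{Berggren_2023}, and use Lemma~\ref{lemma: lemma 2} to supply the rays.

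\textbf{Choice of apex and of $S$.} On $K$ both $\phi$ and $w$ are affine, so $\phi_\tau=\phi+\tau w$ is affine in $\bx$ and in $\tau$. Each $\Pi_\tau\cap K$ is therefore a planar section. Since $w$ is the hat function at the node $\bx_w$ and vanishes on the opposite facet of $K$, all planes $\Pi_\tau$ pass through the common $(d-2)$-dimensional set $\{\phi=0\}\cap\{w=0\}$. I would take $\hat{\bx}_w\in V(\mathcal{P}\cap H^+)$, the apex provided by Lemma~\ref{lemma: lemma 2} (using Proposition~\ref{prop: 1} for convexity). I would choose it so that $w(\hat{\bx}_w)>0$ is extremal, for instance the vertex closest in $w$-value to $\bx_w$. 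I would then take $S=\Pi_{t}\cap\mathcal{P}$, or equivalently the portion of $\partial(\mathcal{P}\cap H^-)$ on which the rays from $\hat{\bx}_w$ terminate. Along the ray $\bx(\sigma)=\hat{\bx}_w+\sigma(\bx_S-\hat{\bx}_w)$, the function $\phi_\tau(\bx(\sigma))$ is affine in $\sigma$. It is positive at $\sigma=0$, because $\hat{\bx}_w\in H^+$ and $\phi_\tau(\hat{\bx}_w)\ge\phi(\hat{\bx}_w)>0$ since $w\ge0$. Hence the equation $\phi_\tau(\bx(\sigma))=0$ has the unique root
\begin{equation*}
\sigma(\bx_S,\tau)=\frac{\phi_\tau(\hat{\bx}_w)}{\phi_\tau(\hat{\bx}_w)-\phi_\tau(\bx_S)} .
\end{equation*}

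\textbf{Definition of the map.} I would set $\hat{\boldsymbol{X}}(\bx_S,\tau)=\hat{\bx}_w+\sigma(\bx_S,\tau)(\bx_S-\hat{\bx}_w)$. By construction $\phi_\tau(\hat{\boldsymbol{X}}(\bx_S,\tau))=0$, so $\hat{\boldsymbol{X}}(S,\tau)\subseteq\Pi_\tau\cap\mathcal{P}$. The reverse inclusion holds because every point of $\Pi_\tau\cap\mathcal{P}$ lies on a ray from $\hat{\bx}_w$ through $S$ (Lemma~\ref{lemma: lemma 2}). Injectivity in $\bx_S$ follows since distinct rays from a common apex meet only at the apex, and $\sigma>0$. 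Injectivity in $\tau$ follows from monotonicity: $\partial_\tau\phi_\tau=w\ge0$ along the ray, and differentiating the root shows $\sigma$ is strictly monotone in $\tau$ wherever $w>0$. The union over $\tau\in(0,t)$ of the slices $\Pi_\tau\cap\mathcal{P}$ is exactly $\mathcal{E}_t=(H^-\setminus\overline{H^-_t})\cap\mathcal{P}$. This is because each $\bx\in\mathcal{E}_t$ has $\phi(\bx)<0\le\phi_t(\bx)$, and by continuity in $\tau$ there is a unique $\tau$ with $\phi_\tau(\bx)=0$. Smoothness of $\hat{\boldsymbol{X}}$ and of its inverse $\bx\mapsto(\bx_S,\tau)=(\text{ray projection},\,-\phi(\bx)/w(\bx))$ is immediate because the denominators are affine and nonvanishing.

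\textbf{Main obstacle.} I expect the hard step to be guaranteeing that these denominators do not vanish and that the slices stay combinatorially stable. Concretely, we need $\phi_\tau(\hat{\bx}_w)-\phi_\tau(\bx_S)\neq0$ and $w>0$ on $\mathcal{E}_t$; degenerate points where $w=0$ lie only on the common hinge set, which has measure zero. Here I would invoke Assumption~\ref{assumption 3}: it ensures that the face structure of $\mathcal{P}\cap H^\pm_\tau$ is constant for $\tau\in(0,t_{\max}]$, so $S$ can be fixed independently of $\tau$ and no vertex of $\mathcal{P}$ is crossed by $\Pi_\tau$. The hinge set $\{w=0\}$ would be handled by excluding this null set, or by noting that it lies on $\partial K$.
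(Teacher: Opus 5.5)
Your construction follows the paper's proof. You take an apex $\hat{\bx}_w$ from Lemma~\ref{lemma: lemma 2}, send rays to a set $S$, and solve for the root $\sigma=\phi_\tau(\hat{\bx}_w)/(\phi_\tau(\hat{\bx}_w)-\phi_\tau(\bx_S))$. This is the paper's formula, since $\phi_\tau(\bx_S)-\phi_\tau(\hat{\bx}_w)=\bnabla\phi_\tau\rvert_K\cdot(\bx_S-\hat{\bx}_w)$. Your explicit inverse $\tau(\bx)=-\phi(\bx)/w(\bx)$ and the monotonicity of $\sigma$ in $\tau$ spell out what the paper only asserts, namely that the slices are disjoint and exhaust $\mathcal{E}_t$. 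Your worry about $w=0$ is unnecessary: $\phi<0<\phi+tw$ on $\mathcal{E}_t$ already forces $w>0$ there.

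One step is wrong: $S=\Pi_t\cap\mathcal{P}$ is \emph{not} equivalent to the set of exit points. Take $p\in\Pi_\tau\cap\mathcal{P}$, $\tau<t$, on a facet of $\mathcal{P}$ not containing $\hat{\bx}_w$, with $w(p)>0$. The ray from $\hat{\bx}_w$ through $p$ leaves $\mathcal{P}$ at $p$. Since $\phi_t(p)=(t-\tau)w(p)>0$ and $\phi_t(\hat{\bx}_w)>0$, the segment $[\hat{\bx}_w,p]$ lies strictly on the positive side of $\Pi_t$, so this ray meets $\Pi_t$ only outside $\mathcal{P}$. Hence the cone from $\hat{\bx}_w$ over $\Pi_t\cap\mathcal{P}$ generally misses part of $\Pi_\tau\cap\mathcal{P}$, and surjectivity fails. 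You must use the exit-point set from Lemma~\ref{lemma: lemma 2}.

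The same computation refutes your final claim that Assumption~\ref{assumption 3} lets $S$ be fixed independently of $\tau$. The assumption fixes which faces of $\mathcal{P}$ the slices meet. But for relative-boundary points of $\Pi_\tau\cap\mathcal{P}$ on such facets, the exit points of their rays are those boundary points themselves, and they move with $\tau$. For example, take $\mathcal{P}$ a square, $\Pi_\tau$ crossing its left and right sides, and $\hat{\bx}_w$ the upper-left corner. So the set of ray directions needed at time $\tau$ changes with $\tau$. In general no choice of $S$ makes the ray map a diffeomorphism from a product $S\times(0,t)$. It is a diffeomorphism onto $\mathcal{E}_t$ only from the non-product set of pairs $(\bx_S,\tau)$ whose crossing point lies in $\mathcal{P}$.

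The paper's proof passes over exactly the same point; its $S\subset\partial(\mathcal{P}\cap H^-_\tau)$ even depends on $\tau$. So this is not a departure from the paper, and it is harmless downstream. The change of variables in Lemma~\ref{lemma: 4} works verbatim on a non-product domain. Alternatively, it follows at once from the coarea formula applied to your $T(\bx)=-\phi(\bx)/w(\bx)$, for which $\lvert\bnabla T\rvert=\lVert\bnabla\phi_\tau\rVert/w$ on $\Pi_\tau$.

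If you want the product structure literally, abandon rays. By Assumption~\ref{assumption 3}, every slice has the same face lattice, with vertices on fixed edges of $\mathcal{P}$. A vertex-matching piecewise-affine map from a reference slice onto $\Pi_\tau\cap\mathcal{P}$ then gives the product. That is where your first choice, $S=\Pi_t\cap\mathcal{P}$, legitimately fits.
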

\begin{proof}
    By Lemma~\ref{lemma: lemma 2}, there exists $\hat{\bx}_w\in V(\mathcal{P}\cap H_{0}^+)$ and $S\subset\partial(\mathcal{P}\cap H_{\tau}^-)$ such that all points $\bx_\tau\in\Pi_\tau\cap\mathcal{P}$ can be parametrised by
    \begin{equation}
        \bx_\tau=\hat{\bx}_w+\sigma(\bx_S-\hat{\bx}_w)
    \end{equation}
    for $\sigma\in(0,1)$ and $\bx_S\in S$. Note that $\hat{\bx}_w$ is fixed for all $t\in(0,t_{\rm max})$ by Assumption~\ref{assumption 3}.

    We may then proceed in a similar way to \citet[Lemma~6.5]{Berggren_2023}.
    Since $\phi_\tau$ vanishes on $\Pi_\tau$ we have
    \begin{equation}\label{eqn: 3nef25fs}
        0=\phi_\tau(\bx_\tau)=\phi_\tau(\hat{\bx}_w+\sigma(\bx_S-\hat{\bx}_w))=\phi_\tau(\hat{\bx}_w)+\sigma \bnabla\phi_\tau\rvert_{K}\cdot(\bx_S-\hat{\bx}_w)
    \end{equation}
    where we have used the fact that $\phi_\tau\rvert_K$ is linear. Since $\hat{\bx}_w$ and each $\bx_S\in S$ are on opposite sides of $\Pi_\tau$, the last term in \eqref{eqn: 3nef25fs} cannot vanish. Thus, for each $\bx_S\in S$, we can solve \eqref{eqn: 3nef25fs} for $\sigma$, and define $\hat{\boldsymbol{X}}:S\times(0,t)\rightarrow \mathcal{E}_t$ by
    \begin{equation}
        \hat{\boldsymbol{X}}(\bx_S,\tau)=\hat{\bx}_w-\frac{\phi_\tau(\hat{\bx}_w)}{\bnabla\phi_\tau\rvert_{K}\cdot(\bx_S-\hat{\bx}_w)}(\bx_S-\hat{\bx}_w),
    \end{equation}
    which satisfies $\hat{\boldsymbol{X}}(S,\tau)=\Pi_\tau\cap\mathcal{P}$. Thus, $\hat{\boldsymbol{X}}$ is surjective. Furthermore, $\hat{\boldsymbol{X}}$ is injective because $\Pi_\tau\cap\mathcal{P}$ and $\Pi_{\tau^\prime}\cap\mathcal{P}$ nowhere intersect for $\tau\neq\tau^\prime$, and because of uniqueness of \eqref{eqn: 3nef25fs} with respect to $\sigma$ and $\bx_S$. Finally, $\hat{\boldsymbol{X}}$ is smooth because it is rational and non-singular. 
\end{proof}
Using Lemma~\ref{lemma: lemma 3} we can now parametrise the integral over $\mathcal{E}_t$ where $\mathcal{P}$ is a convex polytope equal to, or a subset of, $K\in\mathcal{T}_h$. 
\begin{lemma}\label{lemma: 4}
    Consider a perturbation $\phi_{t}(\bx)=\phi(\bx)+tw(\bx)$ for some $t \in (0, t_{max}]$, according to Assumption~\ref{assumption 3}, and where $w$ is a linear Lagrangian basis function.
    Let $\mathcal{P}\subseteq K\in\mathcal{T}_h$ be a convex polytope 
    where $K$ is in the support of $w$ and $\Pi_t$ be a moving plane parametrised locally by $\Pi_t\rvert_{K}=\{\bx\in K:\phi_t(\bx)=0\}$. For $f\in C^0(\overline{\mathcal{E}_t})$  where $\mathcal{E}_t=(H^-\setminus\overline{H^-_t})\cap\mathcal{P}$, it holds that
    \begin{equation}      \int_{\mathcal{E}_t}f~\mathrm{d}\bx=\int_0^t\int_{\Pi_\tau\cap\mathcal{P}}f\frac{w}{\lvert\partial_{\bn_\tau}\phi_{\tau}\rvert}~\mathrm{d}S~\mathrm{d}\tau.
    \end{equation}
\end{lemma}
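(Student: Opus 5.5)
The plan is a change of variables through the diffeomorphism $\hat{\boldsymbol{X}}:S\times(0,t)\rightarrow\mathcal{E}_t$ of Lemma~\ref{lemma: lemma 3}. This rewrites $\int_{\mathcal{E}_t}f~\mathrm{d}\bx$ as an iterated integral: first over $\tau\in(0,t)$, then over the slices $\hat{\boldsymbol{X}}(S,\tau)=\Pi_\tau\cap\mathcal{P}$. Rather than computing the full Jacobian of $\hat{\boldsymbol{X}}$, I will use the structure that the slices are level sets of the map $\bx\mapsto\tau(\bx)$. This map is defined implicitly on $\mathcal{E}_t$ by $\phi(\bx)+\tau w(\bx)=0$, so $\tau(\bx)=-\phi(\bx)/w(\bx)$. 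With this, the coarea formula gives
\begin{equation*}
\int_{\mathcal{E}_t}f~\mathrm{d}\bx=\int_0^t\int_{\Pi_\tau\cap\mathcal{P}}\frac{f}{\lvert\bnabla\tau\rvert}~\mathrm{d}S~\mathrm{d}\tau .
\end{equation*}
Lemma~\ref{lemma: lemma 3} serves to guarantee that this level-set map is well defined, smooth and bijective onto $\mathcal{E}_t$. In particular, every point of $\mathcal{E}_t$ lies on exactly one slice, and $\tau(\cdot)$ is smooth there.

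The key computation is $\lvert\bnabla\tau\rvert$ on $\Pi_\tau$. Differentiating the identity $\phi(\bx)+\tau(\bx)w(\bx)=0$ gives
\[
\bnabla\phi+\tau\bnabla w+w\bnabla\tau=\boldsymbol 0,
\]
and hence $\bnabla\tau=-\bnabla\phi_\tau/w$. Here $\bnabla\phi_\tau$ is evaluated piecewise-constantly in $K$, and $w>0$ on $\mathcal{E}_t$ because $K$ lies in the support of $w$ and $\mathcal{E}_t$ avoids the zero set of $w$ (we have $\phi<0$ and $\phi_t\geq0$ there). Since $\bnabla\phi_\tau$ is normal to $\Pi_\tau$, we get $\lvert\bnabla\phi_\tau\rvert=\lvert\partial_{\bn_\tau}\phi_\tau\rvert$. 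Therefore $1/\lvert\bnabla\tau\rvert=w/\lvert\partial_{\bn_\tau}\phi_\tau\rvert$, which is exactly the claimed weight.

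Finally, I will check the integrability hypotheses. Because $f\in C^0(\overline{\mathcal{E}_t})$ and the weight is bounded on $\overline{\mathcal{E}_t}$, the integrand is integrable. The weight is bounded because $\bnabla\phi_\tau\rvert_K\neq0$ by Assumption~\ref{assumption 3}: the cut persists, so $\phi_\tau$ is nonconstant on $K$, and $w$ is bounded. Fubini then justifies the iterated form.

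I expect the main obstacle to be justifying that the coarea/change-of-variables formula applies on the convex polytope $\mathcal{P}$ rather than on all of $K$. Specifically, I need the slices $\Pi_\tau\cap\mathcal{P}$ to sweep out $\mathcal{E}_t$ exactly, with the boundary pieces contributing only measure-zero sets. For this I would rely on the bijectivity in Lemma~\ref{lemma: lemma 3} together with Assumption~\ref{assumption 3}, which fixes the combinatorial type of $\mathcal{P}\cap H^\pm_\tau$, so that $\tau(\cdot)$ is Lipschitz on $\overline{\mathcal{E}_t}$. A fallback is to use the explicit $\hat{\boldsymbol{X}}$ and compute its Jacobian determinant directly, following \citet[Lemma~6.6]{Berggren_2023}, with $\hat{\bx}_w$ and $S$ replacing the simplex vertex and opposite facet.
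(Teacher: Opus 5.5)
Your argument is correct and takes a different route from the paper. The paper never introduces an arrival-time function. It takes the explicit map $\boldsymbol{X}(\boldsymbol{u},\tau)=\hat{\boldsymbol{X}}(\bx_S(\boldsymbol{u}),\tau)$ from Lemma~\ref{lemma: lemma 3} and writes $\det D\boldsymbol{X}=\pm\,\partial_\tau\boldsymbol{X}\cdot\boldsymbol{N}_\tau\sigma$, using a generalised cross product of the tangent vectors $\partial\boldsymbol{X}/\partial u_i$. It then differentiates $\phi_\tau\circ\boldsymbol{X}=0$ in $\tau$ to get $\boldsymbol{N}_\tau\cdot\partial_\tau\boldsymbol{X}=-w/\lVert\bnabla\phi_\tau\rVert$, and finishes with the ordinary change-of-variables theorem. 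Your identity $1/\lvert\bnabla\tau\rvert=w/\lvert\partial_{\bn_\tau}\phi_\tau\rvert$ is the same fact (the normal speed of the sweeping plane), seen from the inverse side.

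Your route makes the appeal to Lemma~\ref{lemma: lemma 3} unnecessary. Since $\tau(\bx)=-\phi(\bx)/w(\bx)$ is explicit, and $\phi<0<\phi_t$ holds exactly when $w>0$ and $0<\tau(\bx)<t$, you get directly that $\mathcal{E}_t\subset\{w>0\}$ is swept by the slices $\Pi_\tau\cap\mathcal{P}$ (up to $\mathcal{H}^{d-1}$-null sets). You therefore need no choice of $\hat\bx_w$ and $S$, no piecewise parametrisation of $S$, and no orientation bookkeeping. Convexity of $\mathcal{P}$ is never used either, so the formula holds for any measurable $\mathcal{P}\subseteq K$. The paper's route needs only classical change of variables rather than the coarea formula, and its explicit diffeomorphism is reused in the proof of Theorem~\ref{theorem 3} to build $\boldsymbol{\Psi}^{(t)}$. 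With your $\tau(\bx)$, that field could simply be defined as $\bnabla\phi_{\tau(\bx)}/\lVert\bnabla\phi_{\tau(\bx)}\rVert$.

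One step of your justification is wrong as stated: $\tau(\cdot)$ is in general not Lipschitz on $\overline{\mathcal{E}_t}$, and Assumption~\ref{assumption 3} does not help. Suppose $\Pi$ meets the facet of $K$ on which $w=0$ inside $\overline{\mathcal{P}}$; in two dimensions, this means $\Pi$ crosses the edge opposite the node of $w$. Then every $\Pi_\tau$ passes through the fixed hinge $\Pi\cap\{w=0\}$, and $\mathcal{E}_t$ is a wedge with its apex there. So $\tau$ takes every value in $(0,t)$ in each neighbourhood of the apex, and $\lvert\bnabla\tau\rvert=\lvert\bnabla\phi_\tau\rvert/w$ is unbounded. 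This pivoting configuration is generic and compatible with Assumption~\ref{assumption 3}.

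The repair is routine. On $\{w>\delta\}\cap K$, $\tau$ is smooth with bounded, nonvanishing gradient. Apply the coarea formula to $\mathcal{E}_t\cap\{w>\delta\}$ and let $\delta\to0$ by dominated convergence, using that $f\,w/\lvert\partial_{\bn_\tau}\phi_\tau\rvert$ is bounded. That boundedness needs $\min_{\tau\in[0,t]}\lvert\bnabla\phi+\tau\bnabla w\rvert>0$, which follows by continuity and compactness once $\bnabla\phi_\tau\rvert_K\neq\boldsymbol{0}$ for each $\tau$.
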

\begin{proof}
    Suppose $\hat{\boldsymbol{X}}$ is the mapping from Lemma~\ref{lemma: lemma 3} and $\mathcal{U}\subset\mathbb{R}^{d-1}$ is the domain of a piecewise smooth parametrisation of $S$, again from Lemma~\ref{lemma: lemma 3}. We define a parametrisation of $\mathcal{E}_t$ by $\boldsymbol{X}:\mathcal{U}\times(0,t)\rightarrow\mathcal{E}_t$ as
    \begin{equation}
        \boldsymbol{X}(\boldsymbol{u},\tau)=\hat{\boldsymbol{X}}(\bx_S(\boldsymbol{u}),\tau),
    \end{equation}
    where $\tau\in(0,t)$ and the mapping $\boldsymbol u\mapsto \boldsymbol x_S(\boldsymbol u)$ is piecewise affine such that $\boldsymbol{X}(\mathcal{U},\tau)=\Pi_\tau\cap\mathcal{P}$.
    The Jacobian of $\boldsymbol{X}$ is given by
    \begin{equation}\label{eqn: lem 4 inter 1}
        \det D\boldsymbol{X}=\pm\pderiv{\boldsymbol{X}}{\tau}\cdot \boldsymbol{N}_\tau\sigma,
    \end{equation}
    where, in the general case for $d>2$, using the notation of Spivak~\cite[Ch.~4]{Spivak_2018}, 
    \begin{equation}
    \begin{aligned}
    \boldsymbol{N}_\tau\sigma=\pderiv{\boldsymbol{X}}{u_1}\times\dots\times\pderiv{\boldsymbol{X}}{u_{d-1}},
    \qquad
    \sigma=\left\lVert\pderiv{\boldsymbol{X}}{u_1}\times\dots\times\pderiv{\boldsymbol{X}}{u_{d-1}}\right\rVert,
    \end{aligned}
    \end{equation}
in which $\times$ is the generalised cross product defined such that $\boldsymbol{N}_\tau\sigma$ is the vector perpendicular to each of the tangent vectors $\partial\boldsymbol X/\partial u_i$, 
and the plus/minus depends on the orientation of the parametrisation. For the case of $d=2$, we can define 
\begin{equation}
\boldsymbol{N}_\tau\sigma = \left(-\pderiv{\boldsymbol{X}_2}{u_1},\pderiv{\boldsymbol{X}_1}{u_1}\right)^\intercal     
\end{equation}
By these definitions, $\boldsymbol{N}_\tau$ will be the unit normal vector to the hyperplane $\Pi_\tau\cap\mathcal{P}$ outward with respect to $H^-_\tau$.
    
    We may now follow similar steps to those in \citet{Berggren_2023}. Namely, since $\boldsymbol{X}(\mathcal{U},\tau)=\Pi_t\cap\mathcal{P}$ and $\phi_\tau\rvert_{\Pi_\tau\cap\mathcal{P}}=0$ we have
    \begin{equation}
        \phi_\tau\circ\boldsymbol{X}=\phi\circ\boldsymbol{X}+\tau w\circ\boldsymbol{X}=0.
    \end{equation}
    Differentiating with respect to $\tau$, it follows that
    \begin{equation}\label{eqn: lem 4 inter 2}
    \begin{aligned}
            0&=\pderiv{}\tau\bigl(\phi\circ\boldsymbol{X}+\tau w\circ\boldsymbol{X}\bigr)=
            (\bnabla\phi\circ\boldsymbol X)\cdot\pderiv{\boldsymbol X}\tau
            + w\circ\boldsymbol X + \tau\bigl(\bnabla w\circ\boldsymbol X\bigr)\cdot\pderiv{\boldsymbol X}\tau
            \\
            &= w\circ\boldsymbol{X} + \bigl(\bnabla\phi_\tau\circ\boldsymbol X\bigr)\cdot\pderiv{\boldsymbol X}\tau 
            =
            w\circ\boldsymbol{X}+\lVert\bnabla \phi_\tau\circ\boldsymbol{X}\rVert\boldsymbol{N}_\tau\cdot\pderiv{\boldsymbol{X}}{\tau},    
    \end{aligned}
    \end{equation}
     where, in the fourth equality, we have used  the property of the level-set function that 
    \begin{equation}
        \boldsymbol{N}_\tau=\frac{\bnabla \phi_\tau\circ\boldsymbol{X}}{\lVert\bnabla \phi_\tau\circ\boldsymbol{X}\rVert}.
    \end{equation}
    Combining \eqref{eqn: lem 4 inter 1} and \eqref{eqn: lem 4 inter 2} and taking the absolute value then gives
    \begin{equation}
        \lvert\det D\boldsymbol{X}\rvert=\frac{w\circ\boldsymbol{X}}{\lVert\bnabla \phi_\tau\circ\boldsymbol{X}\rVert}\sigma,
    \end{equation}
    where we have used that $w\geq0$. We can now parametrise the integral in terms of $\boldsymbol{X}$ as follows,
    \begin{equation}
        \int_{\mathcal{E}_t}f~\mathrm{d}\bx=\int_0^t\int_\mathcal{U}f\circ\boldsymbol{X}\lvert\det D\boldsymbol{X}\rvert~\mathrm{d}\boldsymbol{u}~\mathrm{d}\tau=\int_0^t\int_\mathcal{U}f\circ\boldsymbol{X}\frac{w\circ\boldsymbol{X}}{\lVert\bnabla \phi_\tau\circ\boldsymbol{X}\rVert}\sigma~\mathrm{d}\boldsymbol{u}~\mathrm{d}\tau.
    \end{equation}
    Thus a change of variables and using the fact that the tangential derivative of $\phi_\tau$ vanishes on $\Pi_\tau\cap\mathcal{P}$ yields
    \begin{equation}
        \int_0^t\int_\mathcal{U}f\circ\boldsymbol{X}\frac{w\circ\boldsymbol{X}}{\lVert\bnabla \phi_\tau\circ\boldsymbol{X}\rVert}\sigma~\mathrm{d}\boldsymbol{u}~\mathrm{d}\tau=\int_0^t\int_{\Pi_\tau\cap\mathcal{P}}f\frac{w}{\lvert\partial_{\bn_\tau} \phi_\tau\rvert}~\mathrm{d}S~\mathrm{d}\tau,
    \end{equation}
    which completes the proof.
\end{proof}

Now having a result for a generic polytope, we need to relate this back to the case of several intersecting domains. To do so, we introduce some additional notation as follows.

For $n$ level-set functions, there are $2^n$ different unique subdomains that combinatorically can be defined by the signs of each level-set function. For example, for the case of two level-set functions as in Figure~\ref{fig:fig2-new}, there are four subdomains $\Omega_1,\dots,\Omega_4$ defined as 
\begin{equation}
    D_{\phi_1}\cap  \overline{D}_{\phi_2}^\complement,~D_{\phi_1}\cap  D_{\phi_2},~\overline{D}_{\phi_1}^\complement\cap  D_{\phi_2},~\text{and}~\overline{D}_{\phi_1}^\complement\cap  \overline{D}_{\phi_2}^\complement.
\end{equation}
More generally, the $2^n$ subdomain can be enumerated as \citep{10.1051/cocv/2013076_2014}
\begin{equation}\label{eqn: all subdomains}
\begin{cases}
    \Omega_1&=D_{\phi_1}\cap D_{\phi_2}\cap\dots\cap D_{\phi_n}\\
    \Omega_2 &= \overline{D}_{\phi_1}^\complement\cap D_{\phi_2}\cap\dots\cap D_{\phi_n}\\
    &~\vdots\\
    \Omega_{2^n} &= \overline{D}_{\phi_1}^\complement\cap \overline{D}_{\phi_2}^\complement\cap\dots\cap \overline{D}_{\phi_n}^\complement
\end{cases},
\end{equation}
where the order can be arbitrarily chosen. Taking inspiration from this representation, we define $F$ to be any one of the subdomains $\Omega_1$, $\Omega_2$, $\dots$, $\Omega_{2^n}$. We then define $F_k$ to be the region with the domain defined by $\phi_k$, which will be either $D_{\phi_k}$ or $\overline{D}_{\phi_k}^\complement$, removed from the construction of $F$. Analogously,  also define $F_{kl}$ to be the region with both of the domains defined by $\phi_k$ and $\phi_l$ removed from the construction of $F$. For example, if $F=\Omega_2=\overline{D}_{\phi_1}^\complement\cap D_{\phi_2}\cap\dots\cap D_{\phi_n}$, then $F_1$ and $F_{1n}$ are given by $F_1=D_{\phi_2}\cap\dots\cap D_{\phi_n}$ and $F_{1n} = D_{\phi_2}\cap\dots\cap D_{\phi_{n-1}}$, respectively. 

Finally, we need a way to keep track of which domains are and are not complemented in a particular $F$. To do so, we denote the set of all domains $\mathcal{D}=\{D_{\phi_1},D_{\phi_2},\dots,D_{\phi_n}\}$ and define $\mathcal{D}_c\subset\mathcal{D}$ and $\mathcal{D}_p\subset\mathcal{D}$ to be the sets of domains that appear with and without complements in $F$, respectively. By definition, these sets of domains should satisfy $\mathcal{D}_c\cap\mathcal{D}_p=\emptyset$ and $\mathcal{D}_c\cup\mathcal{D}_p=\mathcal{D}$. In terms of our previous example for $F=\Omega_2=\overline{D}_{\phi_1}^\complement\cap D_{\phi_2}\cap\dots\cap D_{\phi_n}$, we have $\mathcal{D}_c=\{D_{\phi_1}\}$ and $\mathcal{D}_p=\{D_{\phi_2},\dots,D_{\phi_n}\}$. 

\begin{remark}
Using the sets of domains $\mathcal{D}_c$ and $\mathcal{D}_p$, we can alternatively write $F$ as
\begin{equation}
    F=\left[\bigcap_{A\in\mathcal{D}_p\cup\{D\}}A\right]\cap\left[\bigcap_{B\in\mathcal{D}_c}B^\complement\right],
\end{equation}
where $D^\complement=\emptyset$ because $D$ is the hold-all domain. Similar expressions can be written for $F_k$ and $F_{kl}$, however for brevity we do not write them here.
\end{remark}

The above notation is convenient, because when we perturb the $k^{\rm th}$ level-set function, we are actually cutting a fixed domain $F_k$ with $\partial D_{\phi_{k},\tau}$. As such, the perturbed domain $E_t$ can be redefined as $E_t=(D_{\phi_k}\setminus\overline{D_{\phi_{k},t}})\cap F_k$, assuming $D_{\phi_k}\in\mathcal{D}_p$. We can then establish equivalence to the polytopal representation as follows: given $K\in\mathcal{T}_h$, we choose
\begin{equation}
    \mathcal{P}\equiv F_k\cap K.
\end{equation}
We then use that $\partial D_{\phi_{k},t}\cap K$ is planar, and we can therefore choose $\Pi_t\equiv\partial D_{\phi_{k},t}\cap K$. Finally, by construction \begin{equation}
    E_t\cap K\equiv\mathcal{E}_t
\end{equation} 
inside $K$. Figure~\ref{fig:representations of the boundary} shows a visualisation of this.
\begin{figure}[!t]
    \centering
    \begin{tikzpicture}
        \coordinate (A) at (0,0);
        \coordinate (B) at (5,0);
        \coordinate (C) at (2.5,4);
        
        \path[name path=tri] (A) -- (B) -- (C) -- cycle;
        
        \path[name path=line1] ($(A)!0.3!(C)$) -- ($(A)!0.4!(B)$);
        \path[name path=line3] ($(A)!0.2!(B)$) -- ($(B)!0.3!(C)$);
        \path[name path=line4] ($(A)!0.7!(C)$) -- ($(B)!0.6!(C)$);
        \path[name path=cut_line] ($(A)!0.65!(B)$) -- ($(B)!0.75!(C)$);
        
        \fill [name intersections={of=line1 and line3, by=int}];
        \fill [name intersections={of=tri and line4, by=int4tri}];
        \fill[name intersections={of=tri and line4, name=i, total=\t}];
    
        \fill [name intersections={of=line3 and cut_line, by=int_line3_cl}];
        \fill [name intersections={of=line4 and cut_line, by=int_line4_cl}];
        \shade[top color=myred!50] (i-2) -- ($(A)!0.3!(C)$) -- (int) -- (int_line3_cl) -- (int_line4_cl) -- cycle;
        \draw[very thick] (A) -- (B) -- (C) -- cycle;
        
        \draw[very thick, dashed] ($(A)!0.3!(C)$) -- (int);
        \draw[very thick, dashed] (int) -- ($(A)!0.4!(B)$);
        \draw[very thick, dashed] ($(A)!0.2!(B)$) -- (int);
        \draw[very thick, dashed] (int) -- ($(B)!0.3!(C)$);
        \draw[very thick, dashed] (i-1) -- (i-2);
        \draw[very thick, dashed, name path=cut_line] ($(A)!0.65!(B)$) -- ($(B)!0.75!(C)$);
        
        \path[name path=blue_top] ($(A)!0.5!(C)$) -- ($(B)!0.45!(C)$);
        \path[name path=blue_bot] ($(A)!0.41666!(C)$) -- ($(B)!0.36!(C)$);
        
        \path [name intersections={of=blue_top and cut_line, by=cut_top}];
        \path [name intersections={of=blue_bot and cut_line, by=cut_bot}];
        \draw[anotherblue, very thick] ($(A)!0.5!(C)$) -- (cut_top);
        \draw[anotherblue, dashed, very thick, dash phase=4pt] (cut_top) -- ($(B)!0.45!(C)$);
        \draw[anotherblue,dashed, very thick, dash phase=4.5pt] ($(A)!0.41666!(C)$) -- ($(B)!0.36!(C)$) node[midway,below left] {$E_t$};
        
        \coordinate (blueL_boundary) at ($(A)!0.5!(C)$);
        \coordinate (blueR_boundary) at ($(B)!0.45!(C)$);
        \coordinate (blueL_extended) at ($(blueR_boundary)!1.20!(blueL_boundary)$); 
        \coordinate (blueR_extended) at ($(blueL_boundary)!1.20!(blueR_boundary)$);
        \coordinate (blueL_extended2) at ($ (blueL_extended) + (-0.5,0.5) $); 
        \coordinate (blueR_extended2) at ($ (blueR_extended) + (0.5,0.5) $); 
        \draw[anotherblue, dashed, very thick] (blueL_extended2) -- (blueL_boundary);
        \draw[anotherblue, dashed, very thick] (blueR_boundary) -- (blueR_extended2);
        \coordinate (blueL_boundary2) at ($(A)!0.41666!(C)$);
        \coordinate (blueR_boundary2) at ($(B)!0.36!(C)$);
        \coordinate (blueL_extended3) at ($ (blueL_boundary2) + (-1,0.65) $); 
        \coordinate (blueR_extended3) at ($ (blueR_boundary2) + (1,0.6) $); 
        \draw[anotherblue, dashed, very thick] (blueL_extended3) -- (blueL_boundary2);
        \draw[anotherblue, dashed, very thick] (blueR_boundary2) -- (blueR_extended3);
        
        \draw[pattern={Lines[angle=30,distance=4pt]},pattern color=anotherblue,draw=none]
            ($(A)!0.5!(C)$) -- 
            (cut_top) --
            (cut_bot) -- 
            ($(A)!0.41666!(C)$) -- 
            cycle;
            
        \node[above] at (blueL_extended2) {\textcolor{anotherblue}{$\partial D_{\phi_{k}}$}};
        \node[below] at (blueL_extended3) {\textcolor{anotherblue}{$\partial D_{\phi_{k},t}$}};
        \node[above right] at ($(i-2) + (0.05,-0.1)$) {\textcolor{myred}{$F_k\cap K$}};
        \node[right] at (B) {$K$};
    
        \coordinate (K1) at (5.5,2.8);
        \coordinate (K2) at (5.5+8/4,2.8);
        \draw [<->,very thick] (K1) to [out=30,in=150] (K2);
    
        \coordinate (A) at (0+7.5,0);
        \coordinate (B) at (5+7.5,0);
        \coordinate (C) at (2.5+7.5,4);
        
        \path[name path=tri] (A) -- (B) -- (C) -- cycle;
        
        \path[name path=line1] ($(A)!0.3!(C)$) -- ($(A)!0.4!(B)$);
        \path[name path=line3] ($(A)!0.2!(B)$) -- ($(B)!0.3!(C)$);
        \path[name path=line4] ($(A)!0.7!(C)$) -- ($(B)!0.6!(C)$);
        \path[name path=cut_line] ($(A)!0.65!(B)$) -- ($(B)!0.75!(C)$);
        
        \fill [name intersections={of=line1 and line3, by=int}];
        \fill [name intersections={of=tri and line4, by=int4tri}];
        \fill[name intersections={of=tri and line4, name=i, total=\t}];
        
        \fill [name intersections={of=line3 and cut_line, by=int_line3_cl}];
        \fill [name intersections={of=line4 and cut_line, by=int_line4_cl}];
        \shade[top color=myred!50] (i-2) -- ($(A)!0.3!(C)$) -- (int) -- (int_line3_cl) -- (int_line4_cl) -- cycle;
        \draw[very thick] (A) -- (B) -- (C) -- cycle;
        
        \draw[very thick, dashed] ($(A)!0.3!(C)$) -- (int);
        \draw[very thick, dashed] (int) -- ($(A)!0.4!(B)$);
        \draw[very thick, dashed] ($(A)!0.2!(B)$) -- (int);
        \draw[very thick, dashed] (int) -- ($(B)!0.3!(C)$);
        \draw[very thick, dashed] (i-1) -- (i-2);
        \draw[very thick, dashed, name path=cut_line] ($(A)!0.65!(B)$) -- ($(B)!0.75!(C)$);
        \path[name path=blue_top] ($(A)!0.5!(C)$) -- ($(B)!0.45!(C)$);
        \path[name path=blue_bot] ($(A)!0.41666!(C)$) -- ($(B)!0.36!(C)$);
        \path [name intersections={of=blue_top and cut_line, by=cut_top}];
        \path [name intersections={of=blue_bot and cut_line, by=cut_bot}];
        \draw[anotherblue, very thick] ($(A)!0.5!(C)$) -- (cut_top);
        
        \coordinate (blueL_boundary) at ($(A)!0.5!(C)$);
        \coordinate (blueR_boundary) at ($(B)!0.45!(C)$);
        \coordinate (blueL_extended) at ($(blueR_boundary)!1.3!(blueL_boundary)$); 
        \coordinate (blueR_extended) at ($(blueL_boundary)!1.3!(blueR_boundary)$); 
        \draw[anotherblue, very thick,dashed] (blueL_extended) -- (blueL_boundary);
        \draw[anotherblue, very thick,dashed] (cut_top) -- (blueR_extended);
        
        \coordinate (blueL_boundary2) at ($(A)!0.41666!(C)$);
        \coordinate (blueR_boundary2) at ($(B)!0.36!(C)$);
        \coordinate (blueL_extended2) at ($(blueR_boundary2)!1.3!(blueL_boundary2)$); 
        \coordinate (blueR_extended2) at ($(blueL_boundary2)!1.3!(blueR_boundary2)$); 
        \draw[anotherblue, very thick,dashed] (blueL_extended2) -- (blueR_extended2) node[midway,below left] {$\mathcal{E}_t$};
        \draw[pattern={Lines[angle=30,distance=4pt]},pattern color=anotherblue,draw=none]
            ($(A)!0.5!(C)$) -- 
            (cut_top) --
            (cut_bot) -- 
            ($(A)!0.41666!(C)$) -- 
            cycle;
            
        \node[above] at (blueL_extended) {\textcolor{anotherblue}{$\Pi$}};
        \node[below] at (blueL_extended2) {\textcolor{anotherblue}{$\Pi_t$}};
        \node[above right] at ($(i-2) + (0.5,-0.1)$) {\textcolor{myred}{$\mathcal{P}$}};
        \node[right] at (B) {$K$};
    \end{tikzpicture}
    \caption{An illustration of the two equivalent representations of a domain being cut by several level-set functions. The left figure shows the domain $F_k\cap K$ cut by a boundary $\partial D_{\phi_{k},t}$ for some $t\in(0,t_{\rm max}]$ satisfying Assumption~\ref{assumption 3} and the resulting perturbed domain $E_t=(D_{\phi_k}\setminus\overline{D_{\phi_{k},t}})\cap F_k$. Note that the dashed black lines represent fixed boundaries of other domains $D_{\phi_i}$ for $i\neq k$. The right figure shows the equivalent representation in terms of a polytope $\mathcal{P}$ cut by a plane $\Pi_t$, again for some $t\in(0,t_{\rm max}]$. In this case, the perturbed region is defined using the halfspaces for $\Pi$ and $\Pi_t$ as $\mathcal{E}_t=(H^-\setminus\overline{H^-_t})\cap\mathcal{P}$. Since the $\partial D_{\phi_{k},t}\cap K$ is planar, it holds that inside $K$ we have $\partial D_{\phi_{k},t}\cap K\equiv\Pi_t$. An immediate corollary of this is that we also have $E_t\cap K\equiv\mathcal{E}_t$.}
    \label{fig:representations of the boundary}
\end{figure}

In Theorem~\ref{theorem 1} below, as well as in Theorems \ref{theorem 2}, and \ref{theorem 3}, we use the notation $C^k(\overline{\mathcal{T}}_{\!h})$, where $k\in\mathbb N$, for the space of functions $f$ such that $f|_K\in C^k(\bar K)$ for each $K\in\mathcal T_h$.
Note that such functions may possess jump discontinuities between elements, a generality that is needed for some functions emanating from a finite-element discretisation. 

\begin{theorem}\label{theorem 1}
    Let $F$ be any one of the subdomains~\eqref{eqn: all subdomains} defined by $n$ level set functions.
    Given a perturbation $\phi_{k,t}(\bx)=\phi_k(\bx)+tw(\bx)$, where $w$ is a linear Lagrangian basis function, the directional semideriative of the integral
    \begin{equation}
        J(\phi_k)=\int_{F}f~\mathrm{d}\bx,
    \end{equation}
    with $D_{\phi_k}\in\mathcal{D}_p$ and $f\in C^0(\overline{\mathcal{T}}_{\!h})$, satisfies
    \begin{equation}\label{eqn: theorem 1 main}
        \mathrm{d}J(\phi_k)(w)=\lim_{t\rightarrow0^+}\frac{1}{t}\bigl(J(\phi_{k,t})-J(\phi_k)\bigr)=-\int_{\partial D_{\phi_k}\cap F_k}f\frac{w}{\lvert\partial_{\bn}\phi_{k}\rvert}~\mathrm{d}S,
    \end{equation}
    where $\bn$ is the unit normal to $\partial D_{\phi_k}$. If $D_{\phi_k}\in\mathcal{D}_c$, the right-hand side is of opposite sign.
    \begin{enumerate}
        \item[(i)] If Assumption~\ref{assumption 2} is violated, then $f$ and $\partial_{\bn}\phi$ are the limits of these functions from the interior of $F$, whenever these quantities possess jump discontinuities on $\partial D_{\phi_k}\cap F_k$.
        \item[(ii)] If Assumption~\ref{assumption 2} is satisfied, the semiderivatives $t \rightarrow 0^-$ and $t \rightarrow 0^+$ agree.
    \end{enumerate}
\end{theorem}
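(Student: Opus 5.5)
We follow the conceptual outline given after \eqref{eqn: dom integral concept}: rewrite the difference quotient as an integral over a thin perturbed region, localise it to cells, apply the polytopal coarea formula of Lemma~\ref{lemma: 4} in each cell, and then let $t\to0^+$.

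\emph{Reduction to $E_t$.} Assume first $D_{\phi_k}\in\mathcal{D}_p$. Then $F=D_{\phi_k}\cap F_k$, where $F_k$ does not depend on $\phi_k$. Since $w\ge0$, we have $D_{\phi_k,t}\subseteq D_{\phi_k}$. The set identity $(A\cap B)\setminus(C\cap B)=(A\setminus C)\cap B$ then gives
\begin{equation*}
\frac{1}{t}\bigl(J(\phi_{k,t})-J(\phi_k)\bigr)=-\frac{1}{t}\int_{E_t}f~\mathrm{d}\bx,\qquad E_t=(D_{\phi_k}\setminus\overline{D_{\phi_k,t}})\cap F_k,
\end{equation*}
up to the null set $\partial D_{\phi_k,t}$. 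Only cells $K$ in the support of $w$ that are cut by $\partial D_{\phi_k}$, or by $\partial D_{\phi_k,t}$ for small $t$, contribute. By Assumption~\ref{assumption 1} this is a fixed finite set of cells for $t\in(0,t_{\max}]$, so we may split $\int_{E_t}=\sum_K\int_{E_t\cap K}$.

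\emph{Per-cell coarea formula.} In each such $K$ set $\mathcal{P}=F_k\cap K$, which is a convex polytope as an intersection of $K$ with halfspaces (each $\phi_i|_K$ is affine). Also set $\Pi_t=\partial D_{\phi_k,t}\cap K$, so that $E_t\cap K=\mathcal{E}_t$ as explained before Figure~\ref{fig:representations of the boundary}. Shrinking $t_{\max}$ if necessary so that Assumption~\ref{assumption 3} holds for every one of the finitely many $\mathcal{P}$, Lemma~\ref{lemma: 4} (which relies on Lemma~\ref{lemma: lemma 3}) applied to $f|_K\in C^0(\bar K)$ gives
\begin{equation*}
\int_{E_t\cap K}f~\mathrm{d}\bx=\int_0^t g_K(\tau)~\mathrm{d}\tau,\qquad g_K(\tau)=\int_{\Pi_\tau\cap\mathcal{P}}f\frac{w}{\lvert\partial_{\bn_\tau}\phi_{k,\tau}\rvert}~\mathrm{d}S.
\end{equation*}
The case where $\mathcal{P}$ is not cut by $\Pi_0$ but is cut for $\tau>0$ is excluded by Assumption~\ref{assumption 3}. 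By the fundamental theorem of calculus, $\frac1t\int_0^t g_K\to g_K(0^+)$ provided $g_K$ is right-continuous at $0$.

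\emph{Continuity in $\tau$: the main obstacle.} Right-continuity of $g_K$ is the delicate step. I would pull $g_K(\tau)$ back to the fixed parameter domain $\mathcal{U}$ through the map $\boldsymbol X(\cdot,\tau)$ of Lemma~\ref{lemma: 4}. The map $\hat{\boldsymbol X}$ is rational in $\tau$ and non-singular, and by Assumption~\ref{assumption 3} the combinatorial structure of $\Pi_\tau\cap\mathcal{P}$ is unchanged for $\tau\in[0,t_{\max}]$. Moreover $\bnabla\phi_{k,\tau}|_K=\bnabla\phi_k|_K+\tau\bnabla w|_K$ is nonzero for small $\tau$, because $\Pi_0$ genuinely cuts $K$. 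Hence the integrand is continuous in $(\boldsymbol u,\tau)$ on a compact set, using that $f|_K$ extends continuously to $\bar K$. Dominated convergence then gives $g_K(\tau)\to g_K(0)=\int_{\partial D_{\phi_k}\cap F_k\cap K}fw/\lvert\partial_{\bn}\phi_k\rvert~\mathrm{d}S$. Summing over $K$ yields \eqref{eqn: theorem 1 main}.

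\emph{Complemented case and items (i)--(ii).} If $D_{\phi_k}\in\mathcal{D}_c$, then $F=\overline{D}_{\phi_k}^\complement\cap F_k$ grows with $t$. The same argument, applied with $E_t=(\overline{D}_{\phi_k,t}^\complement\setminus\overline{D}_{\phi_k}^\complement)\cap F_k$ and with $\mathcal{P}$ on the other side of $\Pi$, gives the opposite sign. For (i), when Assumption~\ref{assumption 2} fails, $\partial D_{\phi_k}$ may lie on cell facets. The region $E_t$ then lies in the cells on the interior side of $F$, so the limit uses the one-sided traces $f|_K$ and $\bnabla\phi_k|_K$ from those cells. For (ii), when Assumption~\ref{assumption 2} holds, the same construction with $t<0$ sweeps the complementary sliver. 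Its per-cell integrand again tends to $g_K(0)$, because the boundary is interior to the cells and $f|_K$ and $\bnabla\phi_k|_K$ are single-valued there, so the two one-sided limits coincide.
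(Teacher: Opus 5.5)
Your proposal is correct and follows the paper's proof: you rewrite the difference quotient as $-\frac1t\int_{E_t}f$, identify $E_t\cap K$ with $\mathcal{E}_t$ for $\mathcal{P}=F_k\cap K$, apply Lemma~\ref{lemma: 4} cell by cell, and pass to the limit, and you argue (i) and (ii) from one-sided traces and a.e.\ continuity exactly as the paper does. Beyond the paper, you justify the convexity of $F_k\cap K$ and the right-continuity of the slice integral $g_K$ that the limit needs, and you handle $D_{\phi_k}\in\mathcal{D}_c$ directly as the same sliver entering with opposite sign rather than via $D_{\phi_k}^\complement=D_{-\phi_k}$ and a chain rule; these are refinements of the same route, not a different one.
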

\begin{proof}
    Suppose $D_{\phi_k}\in\mathcal{D}_p$ and let $t\in(0,t_{\rm max}]$ according to Assumption~\ref{assumption 3}. 
    Then we have
    \begin{equation}
        \frac{1}{t}\bigl(J(\phi_{k,t})-J(\phi_k)\bigr)=\frac{1}{t}\left(\int_{D_{\phi_{k},t}\cap F_k}f~\mathrm{d}\bx-\int_{D_{\phi_{k}}\cap F_k}f~\mathrm{d}\bx\right)=-\frac{1}{t}\sum_{K\in\mathcal{T}_h}\int_{E_t\cap K}f~\mathrm{d}\bx,
    \end{equation}
    where, using the same arguments as used for \eqref{eqn: dom integral concept}, $E_t=(D_{\phi_k}\setminus\overline{D_{\phi_{k},t}})\cap F_k$.
    By the equivalence of the domain representation and the polytopal representation inside a cell $K$, we can apply Lemma~\ref{lemma: 4} to find 
    \begin{equation}
        \frac{1}{t}\bigl(J(\phi_{k,t})-J(\phi_k)\bigr)=-\frac{1}{t}\sum_{K\in\mathcal{T}_h}\int_0^t\int_{\partial{D_{\phi_{k},\tau}}\cap F_k\cap K}f\frac{w}{\lvert\partial_{\bn}\phi_{\tau}\rvert}~\mathrm{d}S~\mathrm{d}\tau,
    \end{equation}
    where we have identified $\Pi_t\equiv\partial{D_{\phi_{k},t}}\cap K$. Taking the limit as then yields \eqref{eqn: theorem 1 main}. The case for $D_{\phi_k}\in\mathcal{D}_c$ follows similarly with the addition of a chain-rule computation because $D_{\phi_k}^\complement = D_{-\phi_k}$.
    
    Results (i) and (ii) follow by similar arguments to those of \citet{Berggren_2023}. Namely, (i) follows from the fact that $\partial D_{\phi_{k},\tau}\cap F_k$ is interior to $F$ for each $\tau\in(0, t)$. Furthermore, an analogous analysis can be carried out for $t \leq 0$ that would also lead to \eqref{eqn: theorem 1 main} for $t \rightarrow 0^-$, but with $f$ and $\partial_{\bn}\phi$ being the limits from the exterior of $F$. Finally, (ii) follows from the fact that $f$ and $\partial_{\bn}\phi$ are continuous almost everywhere on $\partial D_{\phi_{k},\tau}\cap F_k$ when Assumption~\ref{assumption 2} is satisfied. Note that in this case, the violation of Assumption~\ref{assumption 4} has no effect by the same continuity on $\partial D_{\phi_{k},\tau}\cap F_k$.
\end{proof}
\begin{remark}\label{remark: likelihood}
    It is worth noting that in reality, provided that the domains are not initialised with such an intersection, the likelihood that Assumption~\ref{assumption 2} or Assumption~\ref{assumption 4} are violated when the level-set function is updated by an optimisation algorithm is very close to zero.
    Put in measure-theoretic terms, Assumptions~\ref{assumption 2} and~\ref{assumption 4} are satisfied for almost all level-set functions.
\end{remark}


\subsection{Boundary truncated by perturbed domains}\label{subsec:boundary int trunc}\noindent
It turns out that in the case of several intersecting domains, there is a new case to consider --- a case not occurring with a single level-set function --- in which a boundary is truncated by a perturbed domain. 
This situation naturally arises when integrating on interfaces between two regions. 
One example is when integrating on $\Gamma_{23}$ in Figure~\ref{fig:fig2-new} and computing a derivative of  
\begin{equation}
    F(\phi_2)=\int_{\partial D_{\phi_1}\cap D_{\phi_2}}f~\mathrm{d}\bx
\end{equation}
under a perturbation of the level-set function $\phi_2$.
Generally, we will consider the case shown in Figure~\ref{fig: truncated bdry}, integrating over a part of $\Pi_{\int}$. Note that we allow $\Pi_{\int}$ to align with an edge $e\in E(\mathcal{P})$, which turns out to be useful later.

\begin{figure}[H]
    \centering 
    \begin{subfigure}{0.49\textwidth}
    \centering
    \begin{tikzpicture}
        \coordinate (A) at (2,0.6);
        \coordinate (B) at (5,1);
        \coordinate (C) at (6,3);
        \coordinate (D) at (5,5);
        \coordinate (E) at (2,5);
        \coordinate (F) at (0,2);
        \draw[very thick] (A) -- (B) -- (C) -- (D) -- (E) -- (F) -- cycle;
        \node[left] at (E) {$\mathcal{P}$};
        \draw[anotherblue, very thick, name path=blue0] ($(F)!0.5!(E)$) -- ($(D)!0.2!(C)$);
        \draw[anotherblue, dashed, very thick, name path=blue1] ($(F)!0.1!(E)$) -- ($(D)!0.6!(C)$);
        \coordinate (blueL_boundary) at ($(F)!0.5!(E)$);
        \coordinate (blueR_boundary) at ($(D)!0.2!(C)$);
        \coordinate (blueL_extended) at ($(blueR_boundary)!1.2!(blueL_boundary)$); 
        \coordinate (blueR_extended) at ($(blueL_boundary)!1.2!(blueR_boundary)$); 
        \draw[anotherblue, very thick,dashed] (blueL_extended) -- (blueL_boundary);
        \draw[anotherblue, very thick,dashed] (blueR_extended) -- (blueR_boundary);
        \coordinate (blueL_boundary2) at ($(F)!0.1!(E)$);
        \coordinate (blueR_boundary2) at ($(D)!0.6!(C)$);
        \coordinate (blueL_extended2) at ($(blueR_boundary2)!1.2!(blueL_boundary2)$); 
        \coordinate (blueR_extended2) at ($(blueL_boundary2)!1.2!(blueR_boundary2)$); 
        \draw[anotherblue, very thick,dashed] (blueL_extended2) -- (blueL_boundary2);
        \draw[anotherblue, very thick,dashed] (blueR_extended2) -- (blueR_boundary2);
        \node[right] at (blueR_extended) {\textcolor{anotherblue}{$\Pi$}};
        \node[right] at (blueR_extended2) {\textcolor{anotherblue}{$\Pi_{t}$}};
        \draw[pattern={Lines[angle=61,distance=4pt]},pattern color=anotherblue,draw=none]
            (blueL_boundary) -- (blueR_boundary) -- (blueR_boundary2) -- (blueL_boundary2) -- cycle;
        \node[below] at ($(blueL_boundary)!0.3!(blueR_boundary)$) {\colorbox{white}{\textcolor{anotherblue}{$\mathcal{E}_t$}}};
        \path[name path=red] ($(A)!0.6!(B)$) -- ($(D)!0.6!(E)$);
        \fill [name intersections={of=blue0 and red, by=int0}];
        \fill [name intersections={of=blue1 and red, by=int1}];
        \draw[myred,very thick] ($(A)!0.6!(B)$) -- (int1);
        \draw[myred,very thick] (int0) -- (int1);
        \draw[myred,very thick,dashed] (int0) -- ($(D)!0.6!(E)$);
        \coordinate (redL_boundary) at ($(A)!0.6!(B)$);
        \coordinate (redR_boundary) at ($(D)!0.6!(E)$);
        \coordinate (redL_extended) at ($(redR_boundary)!1.15!(redL_boundary)$); 
        \coordinate (redR_extended) at ($(redL_boundary)!1.15!(redR_boundary)$); 
        \draw[myred, very thick,dashed] (redL_extended) -- (redL_boundary);
        \draw[myred, very thick,dashed] (redR_extended) -- (redR_boundary);
        \node[right] at (redL_extended) {\textcolor{myred}{$\Pi_{\int}$}};
    \end{tikzpicture}
    \caption{}
    \end{subfigure}
        \begin{subfigure}{0.49\textwidth}
    \centering
    \begin{tikzpicture}
        \coordinate (A) at (2,0.6);
        \coordinate (B) at (5,1);
        \coordinate (C) at (6,3);
        \coordinate (D) at (5,5);
        \coordinate (E) at (2,5);
        \coordinate (F) at (0,2);
        \draw[very thick] (A) -- (B) -- (C) -- (D) -- (E) -- (F) -- cycle;
        \node[left] at (E) {$\mathcal{P}$};
        \draw[anotherblue, very thick, name path=blue0] ($(F)!0.5!(E)$) -- ($(D)!0.2!(C)$);
        \draw[anotherblue, dashed, very thick, name path=blue1] ($(F)!0.1!(E)$) -- ($(D)!0.6!(C)$);
        \coordinate (blueL_boundary) at ($(F)!0.5!(E)$);
        \coordinate (blueR_boundary) at ($(D)!0.2!(C)$);
        \coordinate (blueL_extended) at ($(blueR_boundary)!1.2!(blueL_boundary)$); 
        \coordinate (blueR_extended) at ($(blueL_boundary)!1.2!(blueR_boundary)$); 
        \draw[anotherblue, very thick,dashed] (blueL_extended) -- (blueL_boundary);
        \draw[anotherblue, very thick,dashed] (blueR_extended) -- (blueR_boundary);
        \coordinate (blueL_boundary2) at ($(F)!0.1!(E)$);
        \coordinate (blueR_boundary2) at ($(D)!0.6!(C)$);
        \coordinate (blueL_extended2) at ($(blueR_boundary2)!1.2!(blueL_boundary2)$); 
        \coordinate (blueR_extended2) at ($(blueL_boundary2)!1.2!(blueR_boundary2)$); 
        \draw[anotherblue, very thick,dashed] (blueL_extended2) -- (blueL_boundary2);
        \draw[anotherblue, very thick,dashed] (blueR_extended2) -- (blueR_boundary2);
        \node[right] at (blueR_extended) {\textcolor{anotherblue}{$\Pi$}};
        \node[right] at (blueR_extended2) {\textcolor{anotherblue}{$\Pi_{t}$}};
        \draw[pattern={Lines[angle=61,distance=4pt]},pattern color=anotherblue,draw=none]
            (blueL_boundary) -- (blueR_boundary) -- (blueR_boundary2) -- (blueL_boundary2) -- cycle;
        \node[below] at ($(blueL_boundary)!0.3!(blueR_boundary)$) {\colorbox{white}{\textcolor{anotherblue}{$\mathcal{E}_t$}}};
        \path[name path=red] (C) -- (D);
        \fill [name intersections={of=blue0 and red, by=int0}];
        \fill [name intersections={of=blue1 and red, by=int1}];
        \draw[myred,very thick] (C) -- (int1);
        \draw[myred,very thick] (int0) -- (int1);
        \draw[myred,very thick,dashed] (int0) -- (D);
        \coordinate (redL_boundary) at (C);
        \coordinate (redR_boundary) at (D);
        \coordinate (redL_extended) at ($(redR_boundary)!1.15!(redL_boundary)$); 
        \coordinate (redR_extended) at ($(redL_boundary)!1.15!(redR_boundary)$); 
        \draw[myred, very thick,dashed] (redL_extended) -- (redL_boundary);
        \draw[myred, very thick,dashed] (redR_extended) -- (redR_boundary);
        \node[right] at (redL_extended) {\textcolor{myred}{$\Pi_{\int}$}};
    \end{tikzpicture}
    \caption{}
    \end{subfigure}
    \caption{An illustration of the case of a plane $\Pi_{\int}$ truncated by a moving plane $\Pi_t$ with: (a) $\Pi_{\int}$ inside a convex polytope $\mathcal{P}$; or (b) $\Pi_{\int}$ on an edge of $\mathcal{P}$. Conceptually, we can think of $\Pi_{\int}\cap\mathcal{P}$ as a $(d-1)$--dimensional convex polytope that is being cut by the plane $\Pi_t$.}
    \label{fig: truncated bdry}
\end{figure}

\begin{lemma}\label{lemma: lemma 5}
    Consider a moving plane $\Pi_t$ that is locally parametrised by $\phi_{t}(\bx)=\phi(\bx)+tw(\bx)=0$ for some $t \in (0, t_{max}]$, according to Assumption~\ref{assumption 3}, and where $w$ is a linear Lagrangian basis function. Suppose $K\in\mathcal{T}_h$ is a cell in the support of the basis function $w$, $\mathcal{P}\subseteq K$ is a $d$--dimensional convex polytope, $\Pi_{\int}$ is a plane that has a non-empty intersection with $\overline{\mathcal{P}}$, and $\Pi_t$ cuts the $(d-1)$--dimensional convex polytope $\Pi_{\int}\cap\overline{\mathcal{P}}$ into two sub-polytopes $\Pi_{\int}\cap\overline{\mathcal{P}}\cap H_{t}^\pm$, where $H_{t}^\pm$ are the halfspaces defined by $\Pi_t$. Then, there exists a set $S$ and a diffeomorphism $\hat{\boldsymbol{X}}:S\times(0,t)\rightarrow \mathcal{E}_t\cap\Pi_{\int}$ such that $\hat{\boldsymbol{X}}(S,\tau)=\Pi_\tau\cap\Pi_{\int}$ for each $\tau\in(0,t)$.
\end{lemma}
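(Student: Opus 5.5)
The plan is to reduce Lemma~\ref{lemma: lemma 5} to Lemma~\ref{lemma: lemma 3}, with the ambient space replaced by the hyperplane $\Pi_{\int}$. Set $\mathcal{Q}\coloneqq\Pi_{\int}\cap\overline{\mathcal{P}}$. This is the intersection of a convex polytope with an affine hyperplane, so it is a convex polytope of dimension at most $d-1$ lying in $\Pi_{\int}\cong\mathbb{R}^{d-1}$. Its halfspace representation is inherited from that of $\mathcal{P}$ (Minkowski--Weyl, as in Proposition~\ref{prop: 1}). The case where $\Pi_{\int}$ contains a facet or edge of $\mathcal{P}$ is included. Within $\Pi_{\int}$, the moving plane $\Pi_\tau$ induces a relative hyperplane $\Pi_\tau\cap\Pi_{\int}$, with halfspaces $H^\pm_\tau\cap\Pi_{\int}$. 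This requires $\Pi_\tau$ to be transversal to $\Pi_{\int}$, which holds because $\Pi_t$ genuinely cuts $\mathcal{Q}$ into two sub-polytopes by hypothesis. By Assumption~\ref{assumption 3} transversality persists for all $\tau\in(0,t]$. Then $\mathcal{E}_t\cap\Pi_{\int}=(H^-\setminus\overline{H^-_t})\cap\mathcal{Q}$ is exactly the analogue of $\mathcal{E}_t$ one dimension lower.

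Next, apply Proposition~\ref{prop: 1} and Lemma~\ref{lemma: lemma 2} inside $\Pi_{\int}$. This gives a vertex $\hat{\bx}_w\in V(\mathcal{Q}\cap H^+_0)$ and a set $S\subset\partial(\mathcal{Q}\cap H^-_\tau)$, where the boundary is taken relative to $\Pi_{\int}$. Every point of $\Pi_\tau\cap\mathcal{Q}$ can then be written as $\hat{\bx}_w+\sigma(\bx_S-\hat{\bx}_w)$ with $\sigma\in(0,1)$. Since $\hat{\bx}_w$ and $\bx_S$ both lie in $\Pi_{\int}$, the whole segment between them does too. By Assumption~\ref{assumption 3}, $\hat{\bx}_w$ can be chosen independently of $\tau$.

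Then repeat the computation~\eqref{eqn: 3nef25fs} verbatim. The restriction $\phi_\tau\rvert_{K}$ is affine, so $0=\phi_\tau(\hat{\bx}_w)+\sigma\bnabla\phi_\tau\rvert_K\cdot(\bx_S-\hat{\bx}_w)$. The coefficient of $\sigma$ is nonzero because $\hat{\bx}_w$ and $\bx_S$ lie strictly on opposite sides of $\Pi_\tau$. This defines
\[
\hat{\boldsymbol{X}}(\bx_S,\tau)=\hat{\bx}_w-\frac{\phi_\tau(\hat{\bx}_w)}{\bnabla\phi_\tau\rvert_K\cdot(\bx_S-\hat{\bx}_w)}(\bx_S-\hat{\bx}_w),
\]
which takes values in $\Pi_{\int}$ and satisfies $\hat{\boldsymbol{X}}(S,\tau)=\Pi_\tau\cap\Pi_{\int}\cap\overline{\mathcal{P}}$. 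Surjectivity onto $\mathcal{E}_t\cap\Pi_{\int}$ follows by taking the union over $\tau\in(0,t)$. Since $w\ge 0$ on $K$, the level sets $\Pi_\tau$ sweep $H^-\setminus\overline{H^-_t}$ monotonically, so every point is reached.

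Injectivity also follows. The slices $\Pi_\tau\cap\Pi_{\int}$ are pairwise disjoint for distinct $\tau$, and for fixed $\tau$ each ray from $\hat{\bx}_w$ meets the slice once. Smoothness follows because the map is rational with nonvanishing denominator. The smooth inverse comes from the fact that $\tau$ is recovered smoothly as $\tau=-\phi(\bx)/w(\bx)$, using $w>0$ on the relevant region, and $\bx_S$ is recovered by radial projection from $\hat{\bx}_w$.

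The main obstacle I expect is the degenerate geometry when $\Pi_{\int}$ coincides with a facet of $\mathcal{P}$ (Figure~\ref{fig: truncated bdry}b). There one must check that the relative boundary $S$ and the vertex $\hat{\bx}_w$ are taken within $\mathcal{Q}$ rather than $\mathcal{P}$. One must also check that $\Pi_\tau$ does not become parallel to $\Pi_{\int}$. I would handle both through Assumption~\ref{assumption 3}, which fixes the combinatorial type of $\mathcal{Q}\cap H^\pm_\tau$, together with the hypothesis that $\Pi_t$ properly cuts $\mathcal{Q}$.
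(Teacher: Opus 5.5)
Your proposal is correct and follows essentially the same route as the paper: apply Lemma~\ref{lemma: lemma 2} to the $(d-1)$-dimensional polytope $\Pi_{\int}\cap\overline{\mathcal{P}}$, fix $\hat{\bx}_w$ via Assumption~\ref{assumption 3}, and repeat the construction of Lemma~\ref{lemma: lemma 3} inside $\Pi_{\int}$. The paper uses the surface gradient on $\Pi_{\int}$, whereas you use the ambient gradient $\bnabla\phi_\tau\rvert_K$; the two are equivalent because $\bx_S-\hat{\bx}_w$ is tangent to $\Pi_{\int}$, and your explicit smooth inverse via $\tau=-\phi(\bx)/w(\bx)$ usefully spells out what the paper leaves implicit.
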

\begin{proof}
    We can apply Lemma~\ref{lemma: lemma 2} to the case of a $(d-1)$--dimensional convex polytope $\Pi_{\int}\cap\overline{\mathcal{P}}$ that is cut by a plane $\Pi_t$. We thus establish the existence of $\hat{\bx}_w\in V(\Pi_{\int}\cap\overline{\mathcal{P}}\cap H^+)$ and $S\subset\partial(\Pi_{\int}\cap\overline{\mathcal{P}}\cap H_{\tau}^-)$ such that all points $\bx_\tau\in\Pi_\tau\cap\Pi_{\int}$ can be parametrised by
    \begin{equation}
        \bx_\tau=\hat{\bx}_w+\sigma(\bx_S-\hat{\bx}_w)
    \end{equation}
    for $\sigma\in(0,1)$ and $\bx_S\in S$. Note that $\hat{\bx}_w$ is fixed for all $t\in(0,t_{\rm max})$ by Assumption~\ref{assumption 3}. We may then restrict ourselves to working on the plane $\Pi_{\int}$, i.e., the gradient operator $\bnabla$ is the intrinsic derivative on $\Pi_{\int}$, which is given extrinsically as the surface gradient $\bnabla_{\Pi_{\int}}$. From here, the proof follows similarly to Lemma~\ref{lemma: lemma 3}.
\end{proof}
Figure~\ref{fig:surface param} shows a conceptual picture of the parametrisation $\hat{\boldsymbol{X}}$ in two and three dimensions.
\begin{figure}[t]
    \centering
    \begin{subfigure}{\textwidth}
        \centering
        \begin{tikzpicture}
            \coordinate (A) at (1.41176,0.35294);
            \coordinate (B) at (6.68824,1.64706);
            \coordinate (C) at (6.11765,3.52941);
            \coordinate (D) at (5.17647,7.29412);
            \coordinate (E) at (0,6);
            \coordinate (F) at (0.94118,2.23529);
            \draw[anotherblue, very thick, dashed] (0.72,3.12) -- (6,4);
            \draw[anotherblue, dashed, very thick] (0.48,4.08) -- (5.76,4.96);
            \node[right] at (6,4) {\textcolor{anotherblue}{$\Pi_\tau$}};
            \node[right] at (5.76,4.96) {\textcolor{anotherblue}{$\Pi$}};
            \draw[myred,very thick] ($(A)!0.6!(B)$) -- ($(D)!0.6!(E)$);
            \coordinate (redb) at ($(A)!0.6!(B)$);
            \coordinate (redt) at ($(D)!0.6!(E)$);
            \node[right] at ($(redb)!0.35!(redt)$) {\textcolor{myred}{$\Pi_{\int}\cap\overline{\mathcal{P}}$}};
            \coordinate (K1) at (7.2,4);
            \coordinate (K2) at (8.2,4);
            \draw [<->,very thick] (K1) to (K2);
            \coordinate (redt) at (9.5,4);
            \coordinate (redb) at (15.0,4);
            \draw[myred,very thick] (redb) -- (redt);
            \coordinate (redb) at (redb);
            \coordinate (redt) at (redt);
            \node[below] at ($(redb)!0.5!(redt) + (0,2.0)$) {\textcolor{myred}{on $\Pi_{\int}\cap\overline{\mathcal{P}}$}};
            \filldraw[anotherblue] ($(redb)!0.5!(redt)$) circle (2pt) node[anchor=north west] {$\Pi_\tau\cap\Pi_{\int}$};
            \draw [gray,-{Latex[width=10pt, length=10pt]},dashed,line width=3pt] (redt) to ($(redb)!0.5!(redt)$);
            \node[gray,below] at ($(redb)!0.75!(redt)$) {$\hat{\boldsymbol{X}}(\bx_S,\tau)$};
            \filldraw (redb) circle (2pt) node[anchor=west] {\textcolor{black}{$\bx_S\in S$}};
            \filldraw (redt) circle (2pt) node[anchor=east] {$\hat{\bx}_w$};
        \end{tikzpicture}
        \caption{$d=2$}
    \end{subfigure}
    \begin{subfigure}{\textwidth}
        \centering
        \begin{tikzpicture}[
            x={(-0.866cm,-0.5cm)}, 
            y={(0.866cm,-0.5cm)}, 
            z={(0cm,1cm)}, 
            scale=0.8,
            plane1/.style={fill=anotherblue!40, opacity=0.7, thick, line join=round},
            plane2/.style={fill=myred!50, opacity=0.7, thick, line join=round},
            plane3/.style={fill=anotherblue!20, opacity=0.7, thick, line join=round}
        ]
            \def\xa{-2.5} \def\xb{2.5} 
            \def\ya{-2.5} \def\yb{2.5} 
            \def\za{-2.8} \def\zb{2}   
            \def\zc{-1.4}              
            \def\slant{0.5}            
            \draw[myred, very thick] ({\slant*\za}, \ya, \za) -- ({\slant*\za}, \yb, \za) -- 
                    ({\slant*\zb}, \yb, \zb) -- ({\slant*\zb}, \ya, \zb) -- cycle;
            \fill[plane2] 
                ({\slant*\za}, \ya, \za) -- ({\slant*\za}, \yb, \za) -- 
                ({\slant*\zc}, \yb, \zc) -- ({\slant*\zc}, \ya, \zc) -- cycle;
            \fill[plane3] 
                (\xa, \ya, \zc) -- (\xb, \ya, \zc) -- (\xb, \yb, \zc) -- (\xa, \yb, \zc) -- cycle;
            \fill[plane2] 
                ({\slant*\zc}, \ya, \zc) -- ({\slant*\zc}, \yb, \zc) -- 
                (0, \yb, 0) -- (0, \ya, 0) -- cycle;
            \draw[ultra thick, anotherblue, dashed] ({\slant*\zc}, \ya, \zc) -- ({\slant*\zc}, \yb, \zc); 
            \fill[plane1] 
                (\xa, \ya, 0) -- (\xb, \ya, 0) -- (\xb, \yb, 0) -- (\xa, \yb, 0) -- cycle;
            \fill[plane2] 
                (0, \ya, 0) -- (0, \yb, 0) -- ({\slant*\zb}, \yb, \zb) -- ({\slant*\zb}, \ya, \zb) -- cycle;
            \draw[myred, very thick] 
                (0, \ya, 0) -- 
                ({\slant*\zb}, \ya, \zb) -- 
                ({\slant*\zb}, \yb, \zb) -- 
                (0, \yb, 0);
            \draw[myred, very thick] 
                ({\slant*\zc}, \yb, \zc) --
                (0, \yb, 0);
            \draw[ultra thick, dashed, anotherblue] (0, \ya, 0) -- (0, \yb, 0); 
            \node[anotherblue,left] at (\xb, \ya, 0) {$\Pi$};
            \node[anotherblue,left] at (\xb, \ya, \zc) {$\Pi_\tau$};
            \node[myred,left] at ({\slant*\zb}, \ya, \zb) {$\Pi_{\int}\cap\overline{\mathcal{P}}$};
        \end{tikzpicture}
        \begin{tikzpicture}[scale=0.9]
            \def\sx{1} 
            \coordinate (K1) at (0,2.5);
            \coordinate (K2) at (1,2.5);
            \draw [<->,very thick] (K1) to (K2);
            \coordinate (xw) at (\sx+1.5,5);
            \draw[fill=myred!50, opacity=0.7,draw=myred, very thick]
                (\sx+1.5,0) --
                (\sx+6.5,0) --
                (\sx+6.5,5) --
                (xw) -- cycle;
            \draw[ultra thick, anotherblue, dashed, name path=pi] 
                (\sx+0.5,1.5) -- (\sx+7.5,1.5);
            \node[above,anotherblue] at (\sx+7,1.5) {$\Pi_\tau$};
            \node[above,myred] at ({(\sx+7.5)/2+0.75},5) {on $\Pi_{\int}\cap\overline{\mathcal{P}}$};
            \draw[line width=2.5pt, green]
                (\sx+1.5,0) --
                (\sx+6.5,0) --
                (\sx+6.5,1.5);
            \node[green!50!black,below] at (\sx+2,0) {$S$};
            \path[name path=x] (xw) -- (\sx+4.5,0);
            \fill [name intersections={of=pi and x, by=int}];
            \draw [gray,-{Latex[width=10pt, length=10pt]},dashed,line width=3pt] (xw) to (int);
            \draw [gray,dashed,line width=1pt] (int) to (\sx+4.5,0);
            \node[gray,right] at ($(xw)!0.75!(int)$) {$\hat{\boldsymbol{X}}(\bx_S,\tau)$};
            \filldraw (xw) circle (2pt) node[anchor=east] {$\hat{\bx}_w$};
            \filldraw (\sx+4.5,0) circle (2pt) node[anchor=north] {\textcolor{black}{$\bx_S$}};
        \end{tikzpicture}
        \caption{$d=3$}
    \end{subfigure}
    \caption{A conceptual picture of the parametrisation $\hat{\boldsymbol{X}}$ from Lemma~\ref{lemma: lemma 5} in two and three dimensions. Note that $\hat{\boldsymbol{X}}$ is defined only on $\Pi_{\int}\cap\overline{\mathcal{P}}$.}
    \label{fig:surface param}
\end{figure}
Using Lemma~\ref{lemma: lemma 5} we can now parametrise the integral over $\mathcal{E}_t\cap\Pi_{\int}$.
\begin{lemma}\label{lemma: 6}
    Consider a perturbation $\phi_{t}(\bx)=\phi(\bx)+tw(\bx)$ for some $t \in (0, t_{max}]$, according to Assumption~\ref{assumption 3}, and where $w$ is a linear Lagrangian basis function. Let $\mathcal{P}\subseteq K\in\mathcal{T}_h$ be a convex polytope where $K$ is in the support of $w$, $\Pi_t$ be a moving plane parametrised locally by $\Pi_t\rvert_{K}=\{\bx\in K:\phi_t(\bx)=0\}$, and $\Pi_{\int}$ be a plane that has a non-empty intersection with $\Pi_t\cap \overline{\mathcal{P}}$ for all $t$. For $g\in C^0(\overline{\mathcal{E}_t\cap\Pi_{\int}})$ where $\mathcal{E}_t=(H^-\setminus\overline{H^-_t})\cap\mathcal{P}$, it holds that
    \begin{equation}\label{eqn: lemma 6 int}
        \int_{\mathcal{E}_t\cap\Pi_{\int}}g~\mathrm{d}S=\int_0^t\int_{\Pi_\tau\cap\Pi_{\int}\cap\overline{\mathcal{P}}}g\frac{w}{\lvert\partial_{\boldsymbol{N}_\tau}\phi_{\tau}\rvert}~\mathrm{d}\gamma~\mathrm{d}\tau.
    \end{equation}
    where $\boldsymbol{N}_\tau$ is the unit normal to $\Pi_\tau$ inside $\Pi_{\int}$, outward with respect to $H^-_\tau$.
\end{lemma}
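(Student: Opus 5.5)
The plan is to repeat the proof of Lemma~\ref{lemma: 4} one dimension lower, working intrinsically on the plane $\Pi_{\int}$. The parametrisation will be the map $\hat{\boldsymbol{X}}:S\times(0,t)\rightarrow\mathcal{E}_t\cap\Pi_{\int}$ supplied by Lemma~\ref{lemma: lemma 5}. Here $S$ is a subset of the relative boundary of the $(d-1)$--dimensional polytope $\Pi_{\int}\cap\overline{\mathcal{P}}\cap H^-_\tau$, so $S$ is a piecewise affine $(d-2)$--dimensional set. I parametrise $S$ piecewise affinely by $\boldsymbol u\mapsto\bx_S(\boldsymbol u)$ with $\boldsymbol u\in\mathcal{U}\subset\mathbb{R}^{d-2}$ and set $\boldsymbol{X}(\boldsymbol u,\tau)=\hat{\boldsymbol{X}}(\bx_S(\boldsymbol u),\tau)$. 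This gives a parametrisation of $\mathcal{E}_t\cap\Pi_{\int}$ with $\boldsymbol{X}(\mathcal{U},\tau)=\Pi_\tau\cap\Pi_{\int}\cap\overline{\mathcal{P}}$. For $d=2$ the set $S$ is a single point, $\mathcal{U}$ is trivial, and the inner integral reduces to a point evaluation, which should be read as the counting measure $\mathrm{d}\gamma$.

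The next step is the Jacobian. Fix an orthonormal frame of $\Pi_{\int}$ so that it is identified with $\mathbb{R}^{d-1}$. The $(d-1)$--dimensional area element of $\boldsymbol X$ is then, exactly as in \eqref{eqn: lem 4 inter 1},
\begin{equation*}
\pm\pderiv{\boldsymbol X}{\tau}\cdot\boldsymbol N_\tau\sigma ,
\end{equation*}
where $\sigma$ is the $(d-2)$--dimensional area density of $\boldsymbol u\mapsto\boldsymbol X(\boldsymbol u,\tau)$. The vector $\boldsymbol N_\tau\sigma$ is the generalised cross product of the $d-2$ tangent vectors $\partial\boldsymbol X/\partial u_i$, taken inside $\Pi_{\int}$, so $\boldsymbol N_\tau$ is the in-plane unit normal to $\Pi_\tau\cap\Pi_{\int}$.

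I then differentiate the identity $\phi_\tau\circ\boldsymbol X=0$ in $\tau$, as in \eqref{eqn: lem 4 inter 2}. Since $\partial\boldsymbol X/\partial\tau$ lies in $\Pi_{\int}$, only the surface gradient $\bnabla_{\Pi_{\int}}\phi_\tau$ contributes, which gives
\begin{equation*}
0=w\circ\boldsymbol X+\bnabla_{\Pi_{\int}}\phi_\tau\cdot\pderiv{\boldsymbol X}\tau .
\end{equation*}
The restriction of $\phi_\tau$ to $\Pi_{\int}$ is affine and vanishes on the $(d-2)$--plane $\Pi_\tau\cap\Pi_{\int}$. Hence $\bnabla_{\Pi_{\int}}\phi_\tau=\pm\lvert\partial_{\boldsymbol N_\tau}\phi_\tau\rvert\boldsymbol N_\tau$, and the sign is fixed by $\boldsymbol N_\tau$ pointing outward from $H^-_\tau$. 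Combining the two relations and using $w\ge0$ yields
\begin{equation*}
\lvert\det D\boldsymbol X\rvert=\frac{w\circ\boldsymbol X}{\lvert\partial_{\boldsymbol N_\tau}\phi_\tau\circ\boldsymbol X\rvert}\,\sigma .
\end{equation*}
The change-of-variables formula on $\Pi_{\int}$, followed by recognising $\sigma\,\mathrm{d}\boldsymbol u$ as $\mathrm{d}\gamma$ on $\Pi_\tau\cap\Pi_{\int}\cap\overline{\mathcal{P}}$, gives \eqref{eqn: lemma 6 int}. The continuity of $g$ on the closure, together with boundedness of the integrand, justifies the change of variables and the use of Fubini.

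The main obstacle is nondegeneracy: I must show that $\partial_{\boldsymbol N_\tau}\phi_\tau\neq0$ on $\Pi_{\int}$, i.e.\ that $\Pi_\tau$ is transversal to $\Pi_{\int}$. This is also exactly what makes $\hat{\boldsymbol X}$ of Lemma~\ref{lemma: lemma 5} well defined. I would argue it from the hypothesis that $\Pi_\tau$ genuinely cuts $\Pi_{\int}\cap\overline{\mathcal{P}}$ into two sub-polytopes for every $\tau$. If $\Pi_\tau$ and $\Pi_{\int}$ were parallel, their intersection would be empty or all of $\Pi_{\int}$, contradicting a proper cut. Assumption~\ref{assumption 3} then keeps the combinatorial type fixed, and the nonzero quantity stays nonzero for all $\tau\in(0,t)$.

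A second point needs care when $\Pi_{\int}$ coincides with a facet of $\mathcal{P}$: one has to check that $\mathcal{E}_t\cap\Pi_{\int}$ should be read with $\overline{\mathcal{P}}$. This is why the closure appears in the statement, and no other change to the argument is needed.
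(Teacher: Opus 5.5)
Your proposal is correct and follows essentially the same route as the paper. Both arguments parametrise $\mathcal{E}_t\cap\Pi_{\int}$ by composing the map $\hat{\boldsymbol{X}}$ of Lemma~\ref{lemma: lemma 5} with a piecewise affine parametrisation of $S$, compute the Jacobian intrinsically on $\Pi_{\int}$ via the generalised cross product of the $d-2$ tangent vectors, and differentiate $\phi_\tau\circ\boldsymbol{X}=0$ using the surface gradient to obtain $\lvert\det D\boldsymbol{X}\rvert=w\,\sigma/\lvert\partial_{\boldsymbol{N}_\tau}\phi_\tau\rvert$. Your transversality argument ($\bnabla_{\Pi_{\int}}\phi_\tau\neq\boldsymbol{0}$ because $\Pi_\tau$ properly cuts $\Pi_{\int}\cap\overline{\mathcal{P}}$) and your sign bookkeeping for $\boldsymbol{N}_\tau$ are details the paper leaves implicit, but they do not change the argument.
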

\begin{proof}
    Suppose $\hat{\boldsymbol{X}}$ is the mapping from Lemma~\ref{lemma: lemma 5} and $\mathcal{U}\subset\mathbb{R}^{d-2}$ is the domain of a piecewise smooth parametrisation of $S$, again from Lemma~\ref{lemma: lemma 5}. We define a parametrisation of $\mathcal{E}_t\cap\Pi_{\int}$ by $\boldsymbol{X}:\mathcal{U}\times(0,t)\rightarrow\mathcal{E}_t\cap\Pi_{\int}$ as
    \begin{equation}\label{eqn: parametr}
        \boldsymbol{X}(\boldsymbol{u},\tau)=\hat{\boldsymbol{X}}(\bx_S(\boldsymbol{u}),\tau),
    \end{equation}
    where $\tau\in(0,t)$, and the mapping $\boldsymbol u\mapsto\boldsymbol x_S(\boldsymbol u)$ is piecewise affine such that $\boldsymbol{X}(\mathcal{U},\tau)=\Pi_\tau\cap\Pi_{\int}$. Note that the parametrisation is defined only on $\Pi_{\int}\cap\overline{\mathcal{P}}$. Furthermore, in the case $d=2$, $S$ consists of a single point $\bx_{S}$, so the above parametrisation reduces to vectors $\boldsymbol{X}(\tau)=\hat{\boldsymbol{X}}(\bx_S,\tau)$ of one spatial dimension, that is, effectively a real number. 
    However, for simplicity, we use notation~\eqref{eqn: parametr} for all dimensions $d$.
    
    Similarly to Lemma~\ref{lemma: 4}, it can be shown that the Jacobian is given by
    \begin{equation}\label{eqn: lem 6 inter 1}
        \det D\boldsymbol{X}=\pm\pderiv{\boldsymbol{X}}{\tau}\cdot \boldsymbol{N}_\tau\sigma,
    \end{equation}
    where, in this case, 
    \begin{equation}
    \boldsymbol{N}_\tau\sigma=\pderiv{\boldsymbol{X}}{u_1}\times\dots\times\pderiv{\boldsymbol{X}}{u_{d-2}}, \qquad\sigma=\left\lVert\pderiv{\boldsymbol{X}}{u_1}\times\dots\times\pderiv{\boldsymbol{X}}{u_{d-2}}\right\rVert.        
    \end{equation}
 For $d=3$, we again have
 \begin{equation}
 \boldsymbol{N}_\tau\sigma = \left(-\pderiv{\boldsymbol{X}_2}{u_1},\pderiv{\boldsymbol{X}_1}{u_1}\right)^\intercal.     
 \end{equation}
 In the case of $d=2$, we can define $\boldsymbol{N}_\tau\sigma = \left(1\right)^\intercal$, which can be thought of as a unit tangent along $\Pi_\tau$. 
 By these definitions, $\boldsymbol{N}_\tau$ will be the unit normal vector to $\Pi_\tau$ inside $\Pi_{\int}$ and outward with respect to $H^-_\tau$.

    Following very similar steps to the proof of Lemma~\ref{lemma: 4} leads to the expression
    \begin{equation}
        \lvert\det D\boldsymbol{X}\rvert=\frac{w\circ\boldsymbol{X}}{\lVert\bnabla \phi_\tau\circ\boldsymbol{X}\rVert}\sigma,
    \end{equation}
    where $\bnabla$ is the gradient on $\Pi_{\int}\cap\overline{\mathcal{P}}$ and $w$ and $\phi_\tau$ are restricted to $\Pi_{\int}\cap\overline{\mathcal{P}}$. \eqref{eqn: lemma 6 int} then follows by similar arguments to those given at the end of the proof of Lemma~\ref{lemma: 4}.
\end{proof}
\begin{remark}\label{remark: extrinsic N}
    The $\boldsymbol{N}_\tau$ can be written extrinsically in terms of the normal $\bn_\tau$ of $\Pi_\tau$ and the normal $\bn_{\int}$ of $\Pi_{\int}$. In particular, $\boldsymbol{N}_\tau=\pm\tilde{\boldsymbol{N}}_\tau/\lVert\tilde{\boldsymbol{N}}_\tau\rVert$ where $\tilde{\boldsymbol{N}}_\tau = \bn_\tau-(\bn_\tau\cdot\bn_{\int})\bn_{\int}$ and the sign is chosen such that $\boldsymbol{N}_\tau$ points outwards from $H^-_\tau$.
\end{remark}
Using Lemma~\ref{lemma: 6}, we establish the following result.
\begin{theorem}\label{theorem 2}
    Given a perturbation $\phi_{k,t}(\bx)=\phi_k(\bx)+tw(\bx)$, where $w$ is a linear Lagrangian basis function, the directional semideriative of boundary integral 
    \begin{equation}\label{eqn: thrm 2 boundary int}
        J(\phi_k)=\int_{\partial D_{\phi_i}\cap F_i}g~\mathrm{d}S,
    \end{equation}
    for $i\neq k$ and $D_{\phi_k}\in\mathcal{D}_p$ satisfying Assumption~\ref{assumption 4} and $g\in C^0(\overline{\mathcal{T}}_{\!h})$, satisfies
    \begin{equation}\label{eqn: theorem 2 main}
        \mathrm{d}J(\phi_k)(w)=\lim_{t\rightarrow0^+}\frac{1}{t}\bigl(J(\phi_{k,t})-J(\phi_k)\bigr)=\int_{\partial D_{\phi_i}\cap\partial D_{\phi_k}\cap F_{ik}}g\frac{w}{\lvert\partial_{\boldsymbol{N}^i}\phi_{k}\rvert}~\mathrm{d}\gamma,
    \end{equation}
    where $\boldsymbol{N}^i$ is the unit normal to $\partial D_{\phi_k}$ inside $\partial D_{\phi_i}$, outward with respect to $ D_{\phi_k}$ (see Remark~\ref{remark: extrinsic N}). If $D_{\phi_k}\in\mathcal{D}_c$, the right-hand side is of opposite sign.
    \begin{enumerate}
        \item[(i)] If Assumption~\ref{assumption 2} is violated, then $g$ and $\partial_{\boldsymbol{N}^i}\phi$ are the limits of these functions from the interior of $\partial D_{\phi_i}\cap F_i$ whenever these quantities possess jump discontinuities on $\partial D_{\phi_i}\cap\partial D_{\phi_k}\cap F_{ik}$.
        \item[(ii)] If Assumption~\ref{assumption 2} is satisfied, the semiderivatives $t \rightarrow 0^-$ and $t \rightarrow 0^+$ agree.
    \end{enumerate}
\end{theorem}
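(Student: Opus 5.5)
The proof mirrors that of Theorem~\ref{theorem 1}, with Lemma~\ref{lemma: 6} used in place of Lemma~\ref{lemma: 4}. Assume first $D_{\phi_k}\in\mathcal{D}_p$ and fix $t\in(0,t_{\rm max}]$ as in Assumption~\ref{assumption 3}. Since $i\neq k$, the domain $D_{\phi_k}$ enters $\partial D_{\phi_i}\cap F_i$ only as one factor of the intersection. I therefore write
\[
\partial D_{\phi_i}\cap F_i=\partial D_{\phi_i}\cap D_{\phi_k}\cap F_{ik}.
\]
The set identity $(A\cap B)\setminus(C\cap B)=(A\setminus C)\cap B$ already used for \eqref{eqn: dom integral concept} then gives
\[
J(\phi_{k,t})-J(\phi_k)=-\sum_{K\in\mathcal{T}_h}\int_{E_t\cap\partial D_{\phi_i}\cap K}g~\mathrm{d}S,
\qquad E_t=(D_{\phi_k}\setminus\overline{D_{\phi_k,t}})\cap F_{ik}.
\]
Only cells $K$ in the support of $w$ contribute.

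In each such cell I pass to the polytopal picture. I take $\mathcal{P}\equiv F_{ik}\cap K$, which is a convex polytope by repeated use of Proposition~\ref{prop: 1}, since $F_{ik}\cap K$ is an intersection of half-spaces with the simplex $K$. I take $\Pi_t\equiv\partial D_{\phi_k,t}\cap K$, which is planar because $\phi_{k,t}|_K$ is affine. I take $\Pi_{\int}\equiv\partial D_{\phi_i}\cap K$, which is also planar. With these identifications, $E_t\cap\partial D_{\phi_i}\cap K\equiv\mathcal{E}_t\cap\Pi_{\int}$, and $\Pi_\tau\cap\Pi_{\int}\cap\overline{\mathcal{P}}$ corresponds to $\partial D_{\phi_i}\cap\partial D_{\phi_k,\tau}\cap F_{ik}\cap K$.

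The role of Assumption~\ref{assumption 4} is to guarantee that $\Pi_\tau$ genuinely cuts the $(d-1)$-dimensional polytope $\Pi_{\int}\cap\overline{\mathcal{P}}$. It does so by ensuring that no third boundary passes through the same $(d-2)$-dimensional intersection and that the intersection does not lie on $\partial K$. This is exactly the hypothesis needed for Lemma~\ref{lemma: lemma 5} and Lemma~\ref{lemma: 6}.

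Applying Lemma~\ref{lemma: 6} cellwise gives
\[
\frac1t\bigl(J(\phi_{k,t})-J(\phi_k)\bigr)=\pm\frac1t\sum_K\int_0^t\int_{\partial D_{\phi_i}\cap\partial D_{\phi_k,\tau}\cap F_{ik}\cap K} g\frac{w}{\lvert\partial_{\boldsymbol{N}_\tau}\phi_{k,\tau}\rvert}~\mathrm{d}\gamma~\mathrm{d}\tau .
\]
Here $\boldsymbol{N}_\tau$ is identified with $\boldsymbol{N}^i$ at $\tau=0$ through Remark~\ref{remark: extrinsic N}. The inner integral is continuous in $\tau$ on $[0,t]$. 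This holds because $g$ is continuous on each closed cell, the intersection set moves affinely in $\tau$, and $\lvert\partial_{\boldsymbol{N}_\tau}\phi_{k,\tau}\rvert$ is bounded away from zero. The last point is again a consequence of Assumption~\ref{assumption 4}: $\partial D_{\phi_k}$ is transversal to $\partial D_{\phi_i}$, so the tangential gradient of $\phi_k$ on $\Pi_{\int}$ does not vanish. The mean-value theorem for integrals then lets me take $t\to0^+$ and obtain the integral over $\partial D_{\phi_i}\cap\partial D_{\phi_k}\cap F_{ik}$.

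The step I expect to be the main obstacle is the sign and orientation bookkeeping. The overall sign in \eqref{eqn: theorem 2 main} depends on how the orientation of $\boldsymbol{N}^i$ (outward from $D_{\phi_k}$, taken inside $\partial D_{\phi_i}$) interacts with the orientation of the parametrisation in Lemma~\ref{lemma: 6}, that is, with the $\pm$ in $\det D\boldsymbol{X}$. I would fix this by taking the explicit choice $\boldsymbol{N}_\tau=\pm\tilde{\boldsymbol{N}}_\tau/\lVert\tilde{\boldsymbol{N}}_\tau\rVert$ from Remark~\ref{remark: extrinsic N} and following it through the Jacobian identity. The target is the sign as stated in \eqref{eqn: theorem 2 main}; I would then check it against the $d=2$ case, where $\mathcal{E}_t\cap\Pi_{\int}$ is a segment whose length is explicitly $tw/\lvert\partial_{\boldsymbol{N}}\phi_k\rvert+o(t)$.

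The case $D_{\phi_k}\in\mathcal{D}_c$ follows from $\overline{D}_{\phi_k}^\complement=D_{-\phi_k}$ and the chain rule, which flips the sign. Items (i) and (ii) follow exactly as in Theorem~\ref{theorem 1}. For (i), the curves $\partial D_{\phi_i}\cap\partial D_{\phi_k,\tau}\cap F_{ik}$ lie in the relative interior of $\partial D_{\phi_i}\cap F_i$ for $\tau\in(0,t)$, so the limit selects the one-sided traces of $g$ and of $\partial_{\boldsymbol{N}^i}\phi_k$. For (ii), under Assumption~\ref{assumption 2} these traces coincide almost everywhere, so the $t\to0^-$ computation, done analogously, gives the same value.
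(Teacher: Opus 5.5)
Your route is the paper's route. You use the same set identity $J(\phi_{k,t})-J(\phi_k)=-\sum_K\int_{E_t\cap\partial D_{\phi_i}\cap K}g\,\mathrm{d}S$ and the same cellwise identifications $\mathcal{P}\equiv F_{ik}\cap K$, $\Pi_{\int}\equiv\partial D_{\phi_i}\cap K$, $\Pi_\tau\equiv\partial D_{\phi_k,\tau}\cap K$. You then apply Lemma~\ref{lemma: 6} in each cell, use Assumption~\ref{assumption 4} to keep the intersection from appearing or disappearing, and let $t\to0^+$; your continuity-in-$\tau$ justification of the limit is more explicit than the paper's.

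The genuine problem is the sign. After applying Lemma~\ref{lemma: 6} you write $\pm$ and plan to recover the printed $+$ from the orientation of the parametrisation. That cannot work. Lemma~\ref{lemma: 6} is an unsigned change of variables built on $\lvert\det D\boldsymbol{X}\rvert$, so the $\pm$ in $\det D\boldsymbol{X}$ never reaches the final identity. Its right-hand side is also nonnegative whenever $g\geq0$, because $w\geq0$. Together with your own first identity, the sign is already fixed:
\[
\frac{1}{t}\bigl(J(\phi_{k,t})-J(\phi_k)\bigr)=-\frac{1}{t}\sum_{K\in\mathcal{T}_h}\int_0^t\int_{\partial D_{\phi_i}\cap\partial D_{\phi_k,\tau}\cap F_{ik}\cap K}g\,\frac{w}{\lvert\partial_{\boldsymbol{N}_\tau}\phi_{k,\tau}\rvert}\,\mathrm{d}\gamma\,\mathrm{d}\tau .
\]
This coincides with the last display of the paper's own proof.

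The argument, yours and the paper's, therefore yields $\mathrm{d}J(\phi_k)(w)=-\int_{\partial D_{\phi_i}\cap\partial D_{\phi_k}\cap F_{ik}}g\,\frac{w}{\lvert\partial_{\boldsymbol{N}^i}\phi_k\rvert}\,\mathrm{d}\gamma$. This is the same minus sign as in Theorem~\ref{theorem 1}, for the same reason: $D_{\phi_k,t}\subseteq D_{\phi_k}$, so the truncated boundary $\partial D_{\phi_i}\cap D_{\phi_k,t}\cap F_{ik}$ can only shrink, and $J$ is non-increasing in $t$ when $g\geq0$.

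The $d=2$ check you suggest confirms this. Take $\phi_i=x$, $\phi_k=y-c$ and $g=1$, so that $\lvert\partial_{\boldsymbol{N}^i}\phi_k\rvert=1$. The intersection point moves to $y\approx c-tw$, the segment $\{x=0,\ y<c\}$ loses length $tw+o(t)$, and the derivative is $-w$ there.

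So do not aim at the $+$ in \eqref{eqn: theorem 2 main}. The paper's proof also ends with the minus sign and then asserts \eqref{eqn: theorem 2 main} without reconciling the two. The inconsistency lies in the printed statement, not in the method. Drop the $\pm$, carry the minus through to the limit, and state the result with a leading minus for $D_{\phi_k}\in\mathcal{D}_p$ (opposite sign for $D_{\phi_k}\in\mathcal{D}_c$). The rest of your proof, including items (i) and (ii), then goes through as written.
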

\begin{proof}
    Suppose $D_{\phi_k}\in\mathcal{D}_p$ and let $t\in(0,t_{\rm max}]$ according to Assumption~\ref{assumption 3}. Then for $E_t=(D_{\phi_k}\setminus\overline{D_{\phi_{k},t}})\cap F_{ik}$, we have
    \begin{equation}\label{eqn: FDBE1}
        \frac{1}{t}\bigl(J(\phi_{k,t})-J(\phi_k)\bigr)=\frac{1}{t}\left(\int_{\partial D_{\phi_i}\cap D_{\phi_{k,t}}\cap F_{ik}}g~\mathrm{d}\bx-\int_{\partial D_{\phi_i}\cap D_{\phi_{k}}\cap F_{ik}}g~\mathrm{d}\bx\right)=-\frac{1}{t}\sum_{K\in\mathcal{T}_h}\int_{\partial D_{\phi_i}\cap E_t\cap K}g~\mathrm{d}S.
    \end{equation}
    
    
    To apply Lemma~\eqref{lemma: 6} to the right side of expression~\eqref{eqn: FDBE1}, we will first need to identify its integration domain $\partial D_{\phi_i}\cap E_t\cap K$ with the $\mathcal E_t\cap\Pi_{\int}$ of formula~\eqref{eqn: lemma 6 int}.
    For this, select $\mathcal E_t = E_t\cap K$ and $\Pi_{\int}\equiv\partial D_{\phi_{i}}\cap K$, which means that, indeed, $\mathcal E_t\cap\Pi_{\int} =\partial D_{\phi_i}\cap E_t\cap K$. 
    Note that the identification $\mathcal E_t = E_t\cap K$ implies, by the definitions of $\mathcal E_t$ and $E_t$, that
    \begin{equation}
       \mathcal E_t \equiv (H^-\setminus \overline{H^-_t})\cap\mathcal P = E_t\cap K \equiv \left(D_{\phi_k}\setminus\overline{D_{\phi_{k},t}}\right)\cap F_{ik}\cap K, 
    \end{equation}
    which means that we can identify $H^-\setminus \overline{H^-_t} = \left(D_{\phi_k}\setminus\overline{D_{\phi_{k},t}}\right)\cap K$ and $\mathcal P = F_{ik}\cap K$.

    Next, for the integration domain on the right side of formula~\eqref{eqn: lemma 6 int}, that is $\Pi_\tau\cap\Pi_{\int}\cap\overline{\mathcal P}$,  note that since $H^-\setminus \overline{H^-_t} = \left(D_{\phi_k}\setminus\overline{D_{\phi_{k},t}}\right)\cap K$, it follows that $\Pi_\tau = \partial D_{\phi_k,\tau}\cap K$.
    All together, these definitions yield that $\Pi_\tau\cap\Pi_{\int}\cap\overline{\mathcal P} = \partial D_{\phi_k,\tau}\cap\partial D_{\phi_i}\cap F_{ik}\cap K$.
    
    Then, applying Lemma~\ref{lemma: 6}, we find 
    \begin{equation}
        \frac{1}{t}\left(J(\phi_{k,t})-J(\phi_k)\right)=-\frac{1}{t}\sum_{K\in\mathcal{T}_h}\int_0^t\int_{\partial D_{\phi_i}\cap\partial{D_{\phi_{k},\tau}}\cap F_{ik}\cap K}g\frac{w}{\lvert\partial_{\boldsymbol{N}^i}\phi_{\tau}\rvert}~\mathrm{d}\gamma~\mathrm{d}\tau,
    \end{equation}
    where we use Assumption~\ref{assumption 4} to ensure that the intersection $\partial D_{\phi_i} \cap \partial D_{\phi_{k},\tau}$ does not change to become empty or non-empty for some $t$ depending on whether we take $t\rightarrow0^+$ or $t\rightarrow0^-$. Taking the limit gives \eqref{eqn: theorem 2 main}. The case for $D_{\phi_k}\in\mathcal{D}_c$ follows similarly with the addition of a chain-rule computation because $D_{\phi_k}^\complement = D_{-\phi_k}$.

    The cases of (i) and (ii) follow by similar arguments to those in the proof of Theorem~\ref{theorem 1}. 
\end{proof}
\begin{remark}\label{remark: point wise 1}
    For $d=2$, \ref{eqn: theorem 2 main} is evaluated pointwise. Namely, for $D_{\phi_k}\in\mathcal{D}_p$,
    \begin{equation}
        \mathrm{d}J(\phi_k)(w)=\sum_{K\in \mathcal{T}_h} \left.\left(g\frac{w}{\lvert\partial_{\boldsymbol{N}^i}\phi_{k}\rvert}\right)\right\rvert_{\partial D_{\phi_i}\cap\partial D_{\phi_k}\cap F_{ik}\cap K},
    \end{equation}
    where $\partial D_{\phi_i}\cap\partial D_{\phi_k}\cap F_{ik}$ is the set of intersection points of $\partial D_{\phi_i}\cap\partial D_{\phi_k}$ in $F_{ik}$.
\end{remark}

\subsection{Boundary integrals}\label{subsec:boundary int}\noindent
The final case considered is the directional derivative of functionals of the form
\begin{equation}
    J(\phi_k)=\int_{\partial{D_{\phi_k}}\cap F_k}g~\mathrm{d}S.
\end{equation}
Note that this case is different from \eqref{eqn: thrm 2 boundary int}, because now $\phi_k$ appears in $\partial{D_{\phi_k}}$ rather than implicitly inside $F_i$. 
An example of this case is when integrating on $\Gamma_{23}$ in Figure~\ref{fig:fig2-new} and computing the derivative of  
\begin{equation}
    F(\phi_1)=\int_{\partial D_{\phi_1}\cap D_{\phi_2}}f~\mathrm{d}\bx
\end{equation}
under a perturbation of the level-set function $\phi_1$. 

We begin by establishing the following integration by parts result.
\begin{lemma}\label{lemma: int by parts}
    Consider a perturbation $\phi_{k,t}(\bx)=\phi_k(\bx)+tw(\bx)$ for some $t \in (0, t_{max}]$, according to Assumption~\ref{assumption 3}, and where $w$ is a linear Lagrangian basis function. For $g\in C^1(\overline{\mathcal{T}}_{\!h})$
    and $\bTheta\in L^\infty(E_t)^d$ such that $\bTheta\rvert_{E_t\cap K}\in C^1(\overline{E_t\cap K})^d$ for $K\in\mathcal{T}_h$ where $E_t=(D_{\phi_k}\setminus\overline{D_{\phi_{k},t}})\cap F_k$. Then, it holds that  
    \begin{equation}\label{eqn: int by parts}
        \begin{aligned}
            \sum_{K\in\mathcal{T}_h}\int_{E_t\cap K}[g\bnabla\cdot\bTheta+\bTheta\cdot\bnabla g]&~\mathrm{d}\bx=\int_{\partial E_t\setminus (F_k^\partial \cup \partial D) }\bn\cdot\bTheta g~\mathrm{d}S
            \\&+\int_0^t\int_{\partial D \cap \partial D_{\phi_{k},\tau} \cap F_{k}}\bn_{\partial D}\cdot\bTheta g\frac{w}{\lvert\partial_{\boldsymbol{N}_\tau^D}\phi_{k,\tau}\rvert}~\mathrm{d}\gamma~\mathrm{d}\tau
            \\&+\sum_{D_{\phi_i}\in\mathcal{D}\setminus \{D_{\phi_k}\}}\int_0^t\int_{\partial D_{\phi_i} \cap \partial D_{\phi_{k},\tau} \cap F_{ik}}\pm\bn_{\phi_i}\cdot\bTheta g\frac{w}{\lvert\partial_{\boldsymbol{N}_\tau^i}\phi_{k,\tau}\rvert}~\mathrm{d}\gamma~\mathrm{d}\tau
            \\&+\sum_{S\in\mathcal{F}_h\setminus\partial D}\int_0^t\int_{S \cap F_{k} \cap \partial D_{\phi_{k},\tau}}\jump{
            \bTheta g}\frac{w}{\lvert\partial_{\boldsymbol{N}_\tau^S}\phi_{k,\tau}\rvert}~\mathrm{d}\gamma~\mathrm{d}\tau,
        \end{aligned}
    \end{equation}
    where
    \begin{equation}\label{eqn: facet jump}
        \jump{\bTheta g} = \bn_{S,1}\cdot\bTheta_1 g_1 + \bn_{S,2}\cdot\bTheta_2 g_2,
    \end{equation}
    in which $\bn_{S,1}$ and $\bn_{S,2}=-\bn_{S,1}$ are the unit normals to the plane $S\in\mathcal{F}_h\setminus\partial D$; $\bn$, $\bn_{\partial D}$, and $\bn_{\phi_i}$ are the unit normals to $\partial E_t\setminus(F_k^\partial\cup\partial D)$, $\partial D$, and $\partial D_{\phi_i}$ outward with respect to $E_t$, $D$ and $\partial D_{\phi_i}$, respectively; $\boldsymbol{N}_\tau^D$, $\boldsymbol{N}_\tau^i$, and $\boldsymbol{N}_\tau^S$ are the unit normals to $\partial D_{\phi_{k},\tau}$ inside $\partial D$, $\partial D_{\phi_i}$, and $S$, respectively, outward with respect to $D_{\phi_{k},\tau}$; and $F_k^\partial\coloneqq \bigcup_{A\in\mathcal{D}\setminus D_{\phi_k}}\partial A$ is the union of all boundaries except $\partial D_{\phi_k}$. The sign in the third term is positive if $D_{\phi_i}\in\mathcal{D}_p$ and negative if $D_{\phi_i}\in\mathcal{D}_c$.
\end{lemma}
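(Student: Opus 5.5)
The plan is to apply the divergence theorem cell by cell on $E_t\cap K$ to the field $g\bTheta$, and then to sort the boundary pieces of $E_t\cap K$ by type. Each piece that lies on a fixed plane and is swept out by the moving plane $\partial D_{\phi_k,\tau}$ is converted into a $\tau$-integral using Lemma~\ref{lemma: 6}.

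Fix $K\in\mathcal{T}_h$. Since $g\in C^1(\overline{\mathcal{T}}_{\!h})$ and $\bTheta\rvert_{E_t\cap K}\in C^1$, the product rule $\bnabla\cdot(g\bTheta)=g\bnabla\cdot\bTheta+\bTheta\cdot\bnabla g$ holds on $E_t\cap K$. This set is a convex polytope, by Proposition~\ref{prop: 1} and the identification $E_t\cap K\equiv\mathcal{E}_t$, so the divergence theorem applies and gives $\int_{\partial(E_t\cap K)}\bn\cdot\bTheta g\,\mathrm{d}S$.

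The facets of $E_t\cap K$ fall into four classes:
\begin{enumerate}
\item[(a)] pieces of $\partial D_{\phi_k}$ and $\partial D_{\phi_k,t}$, i.e.\ the planes $\Pi$ and $\Pi_t$;
\item[(b)] pieces of $\partial D\cap\partial K$;
\item[(c)] pieces of $\partial D_{\phi_i}$, $i\neq k$, which are the facets of $\mathcal{P}=F_k\cap K$ coming from the other level sets;
\item[(d)] pieces of interior facets $S\in\mathcal{F}_h\setminus\partial D$.
\end{enumerate}
Summing over $K$, class (a) gives exactly $\int_{\partial E_t\setminus(F_k^\partial\cup\partial D)}\bn\cdot\bTheta g\,\mathrm{d}S$. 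Class (d) contributions from the two neighbouring cells sharing $S$ combine into the jump $\jump{\bTheta g}$ of \eqref{eqn: facet jump}, since $\bn_{S,2}=-\bn_{S,1}$. For class (c), the outward normal of $E_t$ on $\partial D_{\phi_i}$ is $+\bn_{\phi_i}$ when $E_t\subset D_{\phi_i}$ ($D_{\phi_i}\in\mathcal{D}_p$) and $-\bn_{\phi_i}$ otherwise. This produces the stated sign.

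Each fixed-plane piece in (b), (c), (d) has the form $\mathcal{E}_t\cap\Pi_{\int}$ with $\Pi_{\int}\in\{\partial D\cap K,\ \partial D_{\phi_i}\cap K,\ S\}$. I would therefore apply Lemma~\ref{lemma: 6} with integrand $\bn_{\Pi_{\int}}\cdot\bTheta g$, which is continuous on the closure of each piece by the regularity assumptions. Lemma~\ref{lemma: 6} rewrites each such piece as $\int_0^t\int_{\Pi_\tau\cap\Pi_{\int}\cap\overline{\mathcal{P}}}(\cdot)\,w/\lvert\partial_{\boldsymbol{N}_\tau}\phi_{k,\tau}\rvert\,\mathrm{d}\gamma\,\mathrm{d}\tau$. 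Here $\boldsymbol{N}_\tau$ is the in-plane normal of Remark~\ref{remark: extrinsic N}, i.e.\ $\boldsymbol{N}_\tau^D$, $\boldsymbol{N}_\tau^i$ or $\boldsymbol{N}_\tau^S$. The domains are then identified as in the proof of Theorem~\ref{theorem 2}:
\begin{enumerate}
\item[(b)] $\Pi_\tau\cap\Pi_{\int}\cap\overline{\mathcal{P}}=\partial D\cap\partial D_{\phi_k,\tau}\cap F_k\cap K$;
\item[(c)] $\Pi_\tau\cap\Pi_{\int}\cap\overline{\mathcal{P}}=\partial D_{\phi_i}\cap\partial D_{\phi_k,\tau}\cap F_{ik}\cap K$, where $F_{ik}$ appears because $\mathcal{P}$ must be taken as $F_{ik}\cap K$ when the plane $\partial D_{\phi_i}$ is itself the integration plane;
\item[(d)] $\Pi_\tau\cap\Pi_{\int}\cap\overline{\mathcal{P}}=S\cap F_k\cap\partial D_{\phi_k,\tau}$, after summing over cells.
\end{enumerate}
Lemma~\ref{lemma: 6} explicitly permits $\Pi_{\int}$ to coincide with a facet of $\mathcal{P}$, which is what is needed here.

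The main obstacle is justifying that Lemma~\ref{lemma: 6} applies uniformly for all $\tau\in(0,t)$: each fixed-plane facet must be genuinely cut by $\Pi_\tau$ for every $\tau$, with no facets appearing or disappearing. I would invoke Assumption~\ref{assumption 3} for the combinatorial stability of $\mathcal{P}\cap H^\pm_\tau$. Assumption~\ref{assumption 4} then ensures that $\partial D_{\phi_k,\tau}$ meets at most one other $\partial D_{\phi_i}$ along a codimension-two set, and not on $\partial K$, so that the (c), (b) and (d) contributions do not double count on lower-dimensional sets. Pieces where $\Pi_\tau$ does not meet $\Pi_{\int}$ contribute zero on both sides and are handled trivially. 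Pieces lying in $\mathrm{supp}\,w^\complement$ are fixed, so they do not belong to $E_t$ at all.
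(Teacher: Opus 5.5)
Your proposal is correct and follows essentially the same route as the paper: apply the divergence theorem to $g\bTheta$ cell by cell on $E_t\cap K$, then split the boundary into four parts. These are the $\partial D_{\phi_k}$ and $\partial D_{\phi_k,t}$ pieces, the $\partial D$ pieces, the $\partial D_{\phi_i}$ pieces (with $\mathcal{P}\equiv F_{ik}\cap K$ and the $\mathcal{D}_p/\mathcal{D}_c$ sign), and the interior-facet pieces, and the last three are converted via Lemma~\ref{lemma: 6}. The only cosmetic difference is that you form $\jump{\bTheta g}$ on $S\cap E_t$ before invoking Lemma~\ref{lemma: 6}, whereas the paper applies it separately on $S\cap\bar K_1$ and $S\cap\bar K_2$ and then merges, using that $\boldsymbol{N}_\tau^S$ and the weight $w/\lvert\partial_{\boldsymbol{N}_\tau^S}\phi_{k,\tau}\rvert$ agree from both sides by continuity of $\phi_{k,\tau}$ across $S$; your ordering relies on the same fact implicitly.
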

\begin{proof}
Let $E_t=(D_{\phi_k}\setminus\overline{D_{\phi_{k},t}})\cap F_{k}$ and $\hat{E_t}=(D_{\phi_k}\setminus\overline{D_{\phi_{k},t}})\cap F_{ik}$. 
Then, by integration by parts applied to $g\bnabla\cdot\bTheta$ on each $E_t\cap K$ for $K\in\mathcal{T}_h$, we have
\begin{equation}\label{eqn: int by parts inter 1}
    \begin{aligned}
        \sum_{K\in\mathcal{T}_h}\int_{E_t\cap K}[g\bnabla\cdot\bTheta&+\bTheta\cdot\bnabla g]~\mathrm{d}\bx=\int_{\partial E_t\setminus (F_k^\partial \cup \partial D) }\bn\cdot\bTheta g~\mathrm{d}S+\sum_{K\in\mathcal{T}_h}\int_{\partial D\cap E_t \cap K}\bn_{\partial D}\cdot\bTheta g~\mathrm{d}S\\&+\sum_{D_{\phi_i}\in\mathcal{D}\setminus \{D_{\phi_k}\}}\sum_{K\in\mathcal{T}_h}\int_{\partial D_{\phi_i} \cap \hat{E_t} \cap K}\pm\bn_{\phi_i}\cdot\bTheta g~\mathrm{d}S+\sum_{S\in\mathcal{F}_h\setminus\partial D}\int_{S \cap E_t}\jump{\bTheta g}~\mathrm{d}S,
    \end{aligned}
\end{equation}
where the first term on the right-hand side corresponds to the part of $\partial E_t$ that aligns with $\partial D_{\phi_k}$ and $\partial D_{\phi_{k},t}$; the second is the part that aligns with the boundary of the hold-all domain; the third is the part that aligns with any of the boundaries $\partial D_{\phi_i}$, $i\neq k$), where the normal is outward with respect to $D_{\phi_i}$ and has negative sign if $D_{\phi_i}\in\mathcal{D}_c$; and the final term comes from the jump, defined in expression~\eqref{eqn: facet jump}, across a mesh facet in $\mathcal{F}_h$. Note that some of the elements of the sum in the third term will vanish because $\partial D_{\phi_i}$ will have an empty intersection with $\hat{E_t}$. 

Identifying $E_t\cap K\equiv \mathcal{E}_t$ and $\partial D\cap K\equiv\Pi_{\int}$ and $\mathcal{P}\equiv F_k\cap K$ and using the fact that $\partial D_{\phi_i}$ is planar in $K$, we can apply Lemma~\ref{lemma: 6} to the second term on the right-hand side of \eqref{eqn: int by parts inter 1} to obtain
\begin{equation}\label{eqn: int by parts a}
    \int_{\partial D\cap E_t \cap K}\bn_{\partial D}\cdot\bTheta g~\mathrm{d}S=\int_0^t\int_{\partial D \cap \partial D_{\phi_{k},\tau} \cap F_k \cap K}\bn_{\partial D}\cdot\bTheta g\frac{w}{\lvert\partial_{\boldsymbol{N}_\tau^D}\phi_{k,\tau}\rvert}~\mathrm{d}\gamma~\mathrm{d}\tau,
\end{equation}
where $\boldsymbol{N}_\tau^D$ is the normal to $\partial D_{\phi_{k},\tau}$ inside $\partial D$, outward with respect to $D_{\phi_{k},\tau}$. 

Next, identifying $\partial D_{\phi_i}\cap K\equiv\Pi_{\int}$ and $\mathcal{P}\equiv F_{ik}\cap K$, we can apply Lemma~\ref{lemma: 6} to the third term on the right-hand side of \eqref{eqn: int by parts inter 1} to obtain
\begin{equation}\label{eqn: int by parts b}
    \int_{\partial D_{\phi_i} \cap \hat{E_t} \cap K}\bn_{\phi_i}\cdot\bTheta g~\mathrm{d}S=\int_0^t\int_{\partial D_{\phi_i} \cap \partial D_{\phi_{k},\tau} \cap F_{ik} \cap K}\bn_{\phi_i}\cdot\bTheta g\frac{w}{\lvert\partial_{\boldsymbol{N}_\tau^i}\phi_{k,\tau}\rvert}~\mathrm{d}\gamma~\mathrm{d}\tau,
\end{equation}
where $\boldsymbol{N}_\tau^i$ is the normal to $\partial D_{\phi_{k},\tau}$ inside $\partial D_{\phi_{i}}$, outward with respect to $D_{\phi_{k},\tau}$. 

Finally, identifying $S\equiv\Pi_{\int}$ and $\mathcal{P}\equiv F_{k}\cap K$, we may also apply Lemma~\ref{lemma: 6} to the final term on the right-hand side of \eqref{eqn: int by parts inter 1} to obtain \begin{equation}\label{eqn: int by parts c}
\begin{aligned}
    &\int_{S \cap E_t}\jump{\bTheta g}~\mathrm{d}S=\int_{S \cap E_t \cap\bar K_1}{\bn_1\cdot\bTheta_1 g_1}~\mathrm{d}S+\int_{S \cap E_t \cap \bar K_2}{\bn_2\cdot\bTheta_2 g_2}~\mathrm{d}S\\
    &~=\int_0^t\int_{S \cap \partial D_{\phi_{k},\tau} \cap F_k \cap\bar K_1}{\bn_1\cdot\bTheta_1 g_1}\frac{w}{\lvert\partial_{\boldsymbol{N}_\tau^S}\phi_{k,\tau}\rvert}~\mathrm{d}\gamma~\mathrm{d}\tau+\int_0^t\int_{S \cap \partial D_{\phi_{k},\tau} \cap F_k \cap\bar K_2}{\bn_2\cdot\bTheta_2 g_2}\frac{w}{\lvert\partial_{\boldsymbol{N}_\tau^S}\phi_{k,\tau}\rvert}~\mathrm{d}\gamma~\mathrm{d}\tau\\
    &~=\int_0^t\int_{S \cap \partial D_{\phi_{k},\tau}\cap F_k}\jump{\bTheta g}\frac{w}{\lvert\partial_{\boldsymbol{N}_\tau^S}\phi_{k,\tau}\rvert}~\mathrm{d}\gamma~\mathrm{d}\tau,
\end{aligned}
\end{equation}
where $\boldsymbol{N}_\tau^S$ is the normal to $\partial D_{\phi_{k},\tau}$ inside $S$, outward with respect to $D_{\phi_{k},\tau}$. 
In the first equality, we split $S \cap E_t$ into $S \cap E_t\cap\bar K_1$ and $S \cap E_t\cap\bar K_2$ where $K_1$ and $K_2$ are the two cells that share the facet $S$. In the second equality, we apply Lemma~\ref{lemma: 6} to each of these integrals, identifying $S$ as $\Pi_{\int}$ and $\partial D_{\phi_{k},\tau}$ as $\Pi_\tau$. In the final equality, we combine the two integrals into a single integral on $S \cap \partial D_{\phi_{k},\tau}$ using the fact that $\boldsymbol{N}_\tau^S$ is the same on $S\cap\bar K_1$ or $S\cap\bar K_2$ by continuity of $\partial D_{\phi_{k},\tau}$.

Substituting \eqref{eqn: int by parts a}, \eqref{eqn: int by parts b}, and \eqref{eqn: int by parts c} into \eqref{eqn: int by parts inter 1} and rewriting the sums over $K\in\mathcal{T}_h$ completes the proof.
\end{proof}
Thanks to the machinery developed above and in the previous sections, we may state the following result, whose proof follows steps similar to the proof of Theorem~6.11 in \citet{Berggren_2023}. 
\begin{theorem}\label{theorem 3}
    Given a perturbation $\phi_{k,t}(\bx)=\phi_k(\bx)+tw(\bx)$,where $w$ is a linear Lagrangian basis function, the directional semideriative of a boundary integral
    \begin{equation}
        J(\phi_k)=\int_{\partial{D_{\phi_k}}\cap F_k}g~\mathrm{d}S,
    \end{equation}
    for $D_{\phi_k}$ 
    satisfying Assumption~\ref{assumption 2} and Assumption~\ref{assumption 4} and $g\in C^1(\overline{\mathcal{T}}_{\!h})$, satisfies
    \begin{equation}\label{thrm 3 main}
        \begin{aligned}
            \mathrm{d}J&(\phi_k)(w)=\lim_{t\rightarrow0^+}\frac{1}{t}\left(J(\phi_{k,t})-J(\phi_k)\right)\\
            &= -\int_{\partial D_{\phi_{k}}\cap F_k}\frac{\partial g}{\partial{\bn_{\phi_k}}}\frac{w}{\lvert\partial_{\bn_{\phi_k}}\phi_k\rvert}~\mathrm{d}S
            +\sum_{S\in\mathcal{F}_h\setminus\partial D}\int_{S \cap F_k \cap \partial D_{\phi_{k}}}\jump{\bn_{\phi_{k}} g}\frac{w}{\lvert\partial_{\boldsymbol{N}^S}\phi_{k}\rvert}~\mathrm{d}\gamma
            \\&\quad+\int_{\partial D \cap \partial D_{\phi_{k}} \cap F_k}\bn_{\partial D}\cdot\bn_{\phi_{k}} g\frac{w}{\lvert\partial_{\boldsymbol{N}^D}\phi_{k}\rvert}~\mathrm{d}\gamma
            +\sum_{D_{\phi_i}\in\mathcal{D}\setminus \{D_{\phi_k}\}}\int_{\partial D_{\phi_i} \cap \partial D_{\phi_{k}} \cap F_{ik}}\pm\bn_{\phi_i}\cdot\bn_{\phi_{k}} g\frac{w}{\lvert\partial_{\boldsymbol{N}^i}\phi_{k}\rvert}~\mathrm{d}\gamma,
        \end{aligned}
    \end{equation}
    where $\bn_{\phi_k}$ is the outward normal to $\partial D_{\phi_k}$; $\jump{\bn_{\phi_{k}} g} = \bn_{S,1}\cdot\bn_{\phi_{k},1} g_1+\bn_{S,2}\cdot\bn_{\phi_{k},2} g_2$ in which $\bn_{S,j}$ are the outward normals to the plane $S\in\mathcal{F}_h$ and $\bn_{\phi_{k},j}$ are the outward normals on $\partial D_{\phi_k}$ on either side of $S$; $\bn_{\partial D}$ and $\bn_{\phi_i}$ are the unit normals to $\partial D$ and $\partial D_{\phi_i}$ outward with respect to $D$ and $\partial D_{\phi_i}$, respectively; $\boldsymbol{N}^D$, $\boldsymbol{N}^i$, and $\boldsymbol{N}^S$ are the outward unit normals to $\partial D_{\phi_{k}}$ inside $\partial D$, $\partial D_{\phi_i}$, and $S$, respectively. The sign in the final term is positive if $D_{\phi_i}\in\mathcal{D}_p$ and negative if $D_{\phi_i}\in\mathcal{D}_c$. In addition, the index $i$ in the final term corresponds to $D_{\phi_i}\in\mathcal{D}\setminus D_{\phi_k}$ under the sum.
\end{theorem}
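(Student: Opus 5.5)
The plan is to follow the strategy of Theorem~6.11 in \citet{Berggren_2023}: convert the difference of boundary integrals into a volume integral over the thin region $E_t=(D_{\phi_k}\setminus\overline{D_{\phi_k,t}})\cap F_k$ by the divergence theorem, and then evaluate it with Lemma~\ref{lemma: int by parts} and Lemma~\ref{lemma: 4}. We treat $D_{\phi_k}\in\mathcal{D}_p$ first. The sign of $\phi_k$ plays no role in the boundary set $\partial D_{\phi_k}$ itself, but it fixes which side the region $E_t$ lies on.

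\textbf{Choice of $\bTheta$.} In each cell $K$ the level-set function $\phi_{k,\tau}$ is affine. We therefore take $\bTheta$ to be a piecewise-smooth field whose restriction to $E_t\cap K$ is a unit vector field that equals $\bn_{\phi_{k}}$ on $\partial D_{\phi_k}\cap K$ and $\bn_{\phi_{k,t}}$ on $\partial D_{\phi_{k,t}}\cap K$. A convenient choice is $\bTheta|_K=\bnabla\phi_{k,\tau}/\lVert\bnabla\phi_{k,\tau}\rVert$ evaluated at the level $\tau$ through $\bx$. An even simpler choice is the constant field $\bnabla\phi_k|_K/\lVert\bnabla\phi_k|_K\rVert$, at the cost of an $O(t)$ error on the top surface. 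With this $\bTheta$, the first term on the right of \eqref{eqn: int by parts} is
\[
\int_{\partial D_{\phi_{k,t}}\cap F_k}g\,\mathrm{d}S-\int_{\partial D_{\phi_k}\cap F_k}g\,\mathrm{d}S+o(t)=J(\phi_{k,t})-J(\phi_k)+o(t),
\]
where the sign comes from the outward normal of $E_t$. The sign is fixed by the fact that $E_t$ lies between the two surfaces, with $\bn=\bn_{\phi_k}$ on the outer face $\partial D_{\phi_k}$ and $\bn=-\bn_{\phi_{k,t}}$ on the inner face; it must be tracked carefully.

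\textbf{Rearranging Lemma~\ref{lemma: int by parts}.} Solving \eqref{eqn: int by parts} for this first term expresses $J(\phi_{k,t})-J(\phi_k)$ as minus the volume integral $\int_{E_t}[g\bnabla\cdot\bTheta+\bTheta\cdot\bnabla g]$, plus the three families of $\int_0^t\int\cdots\,\mathrm{d}\gamma\,\mathrm{d}\tau$ terms over $\partial D$, over the $\partial D_{\phi_i}$, and over the interior facets $S$. The bookkeeping of signs here is the main place where errors can enter. Dividing by $t$ and letting $t\to0^+$, the time integrals converge to their integrands at $\tau=0$ by continuity in $\tau$. This continuity follows from Assumption~\ref{assumption 3}, from Assumption~\ref{assumption 4} (which keeps the intersections $\partial D_{\phi_i}\cap\partial D_{\phi_k,\tau}$ from appearing or disappearing), and from Assumption~\ref{assumption 2} (which avoids jumps at vertices). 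In this limit $\bTheta\to\bn_{\phi_k}$, which produces the facet-jump term, the $\partial D$ term and the $\pm\bn_{\phi_i}\cdot\bn_{\phi_k}$ terms of \eqref{thrm 3 main}.

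\textbf{Volume term.} For the choice $\bTheta|_K$ constant we have $\bnabla\cdot\bTheta=0$, so the volume term reduces to $\int_{E_t}\bn_{\phi_k}\cdot\bnabla g$. Lemma~\ref{lemma: 4} applied cellwise with $f=\partial_{\bn_{\phi_k}}g$ (continuous on each closed cell since $g\in C^1(\overline{\mathcal T}_h)$) gives the limit $\int_{\partial D_{\phi_k}\cap F_k}\partial_{\bn}g\,w/\lvert\partial_{\bn}\phi_k\rvert$, which enters with the minus sign as in \eqref{thrm 3 main}. If instead the $\tau$-dependent field is used, its divergence is bounded and the volume term is still $O(t)$ with the same limit.

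\textbf{Main obstacle.} The delicate step is the first one: showing rigorously that the flux term over $\partial E_t\setminus(F_k^\partial\cup\partial D)$ equals exactly $J(\phi_{k,t})-J(\phi_k)$ up to $o(t)$. With constant $\bTheta$ per cell, $\bn_{\phi_{k,t}}\cdot\bn_{\phi_k}=1+O(t^2)$, since the normals differ by $O(t)$; so the error is $O(t^2)$ times a bounded area and therefore harmless. This is what I would verify first.

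Finally, the case $D_{\phi_k}\in\mathcal{D}_c$ follows by the substitution $D_{\phi_k}^\complement=D_{-\phi_k}$, exactly as in Theorems~\ref{theorem 1} and~\ref{theorem 2}. Under this substitution $\partial D_{\phi_k}$ is unchanged while the normal and the side of $E_t$ flip, and one checks that the formula is form-invariant.
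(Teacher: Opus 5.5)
Your argument is correct in substance, but your main choice of vector field differs from the paper's.

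\textbf{The paper's route.} The paper takes $\bTheta=\boldsymbol{\Psi}^{(t)}$, built from the diffeomorphism $\hat{\boldsymbol X}$ of Lemma~\ref{lemma: lemma 3} so that it equals $\bn_{\phi_k,\tau}$ on every intermediate level set $\partial D_{\phi_k,\tau}\cap K$. This makes the flux term \emph{exactly} $J(\phi_k)-J(\phi_{k,t})$ for each $t$. Via Lemma~\ref{lemma: 4}, it turns the volume integrand into $\partial g/\partial\bn_{\phi_k,\tau}$ leaf by leaf. It also relies on $\bnabla\cdot\boldsymbol{\Psi}^{(t)}=0$, which holds because the leaves are planar.

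\textbf{Your route.} Your constant-per-cell field $\bTheta|_K=\bn_{\phi_k}|_K$ is more elementary. Its divergence vanishes trivially, and the regularity hypotheses of Lemma~\ref{lemma: int by parts} are immediate. Lemma~\ref{lemma: 4} is then applied to the fixed function $\bn_{\phi_k}|_K\cdot\bnabla g\in C^0(\bar K)$. The only cost is the defect $1-\bn_{\phi_k,t}\cdot\bn_{\phi_k}=O(t^2)$ on the inner face. This is indeed $o(t)$, because $\bnabla\phi_k|_K\neq\boldsymbol 0$ on cut cells by Assumption~\ref{assumption 2}. The facet, $\partial D$ and $\partial D_{\phi_i}$ terms of Lemma~\ref{lemma: int by parts} then pass to the limit as you describe. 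The paper's field buys an exact finite-$t$ identity; yours buys a shorter proof with the same limit.

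\textbf{Points to fix.}

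First, your displayed flux identity has the wrong sign. With $\bn=\bn_{\phi_k}$ on the outer face and $\bn=-\bn_{\phi_k,t}$ on the inner face, the flux equals $J(\phi_k)-J(\phi_{k,t})+O(t^2)$, not $J(\phi_{k,t})-J(\phi_k)$. Your subsequent rearrangement (``minus the volume integral, plus the three families'') is consistent only with the corrected sign. It is that version which reproduces the signs in \eqref{thrm 3 main}.

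Second, your aside on the $\tau$-dependent field is mis-justified. A merely bounded $\bnabla\cdot\bTheta$ would make $\frac1t\int_{E_t}g\,\bnabla\cdot\bTheta$ contribute an $O(1)$ amount to the limit. What actually holds is $\bnabla\cdot\bTheta\equiv0$. In $K$, write $\bTheta(\bx)=\boldsymbol m(\tau(\bx))$ with $\boldsymbol m(\tau)=(\bnabla\phi_k|_K+\tau\bnabla w|_K)/\lVert\bnabla\phi_k|_K+\tau\bnabla w|_K\rVert$ and $\tau(\bx)=-\phi_k(\bx)/w(\bx)$. Then $\bnabla\tau$ is parallel to $\boldsymbol m$, and $\boldsymbol m'\cdot\boldsymbol m=0$, so $\bnabla\cdot\bTheta=\boldsymbol m'(\tau)\cdot\bnabla\tau=0$. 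This is the paper's planar-leaf (zero mean curvature) observation.

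Finally, the $\mathcal D_c$ paragraph is unnecessary. The functional involves $D_{\phi_k}$ only through $\partial D_{\phi_k}$, and $F_k$ omits $D_{\phi_k}$. Moreover, $E_t$ always lies on the $D_{\phi_k}$ side because $w\ge0$.
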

\begin{proof}   
    Suppose $\phi_{t}(\bx)=\phi(\bx)+tw(\bx)$ for some $t \in (0, t_{max}]$, according to Assumption~\ref{assumption 3} and suppose that we have $E_t=(D_{\phi_k}\setminus\overline{D_{\phi_{k},t}})\cap F_{k}$. Owing to Assumption~\ref{assumption 2}, almost everywhere on $\partial D_{\phi_{k},\tau}$, for $\tau\in[0,t]$, the outwards normal can be written as
    \begin{equation}
        \bn_{\phi_{k},\tau}=\left.\frac{\bnabla\phi_{k,\tau}}{\lVert\bnabla\phi_{k,\tau}\rVert}\right\rvert_{\partial D_{\phi_{k},\tau}}.
    \end{equation}
    Suppose $K\in\mathcal{T}_h$ has a non-empty intersection with $E_t$. Identifying $E_t\cap K\equiv\mathcal{E}_t$, we may apply Lemma~\ref{lemma: lemma 3} to obtain a diffeomorphism $\hat{\boldsymbol{X}}$ on $\mathcal{E}_t$, or equivalently on $E_t\cap K$. Using this, we can define a function $\boldsymbol{\Psi}_K^{(t)}:E_t\cap K\rightarrow\mathbb{R}^d$ such that
    \begin{equation}
        \boldsymbol{\Psi}_K^{(t)}(\hat{\boldsymbol{X}}\left(\bx_S,\tau)\right)=\frac{\bnabla\phi_{k,\tau}(\hat{\boldsymbol{X}}\left(\bx_S,\tau)\right)}{\left\lVert\bnabla\phi_{k,\tau}(\hat{\boldsymbol{X}}\left(\bx_S,\tau)\right)\right\rVert},\quad \forall(\bx_S,\tau)\in S\times(0,t).
    \end{equation}
    We may then define $\boldsymbol{\Psi}^{(t)}\in L^\infty(E_t)^d$ such that $\left.\boldsymbol{\Psi}^{(t)}\right\rvert_{E_t\cap K}=\boldsymbol{\Psi}_K^{(t)}$. By construction, we then have
    \begin{equation}\label{eqn: proof thrm 3 normal inter}
        \left.\boldsymbol{\Psi}^{(t)}\right\rvert_{\partial D_{\phi_{k},\tau}\cap K}=\left.\bn_{\phi_{k},\tau}\right\rvert_{\partial D_{\phi_{k},\tau}\cap K},\quad\forall\tau\in(0,t).
    \end{equation}
    The function $\boldsymbol{\Psi}^{(t)}$ now fulfils the conditions for $\bTheta$ in Lemma~\ref{lemma: int by parts}, which implies that for $g\in\prod_{K\in\mathcal{T}_h} C^1(\overline{K})$,
    \begin{equation}\label{eqn: int by parts, thrm 3 inter 1}
        \begin{aligned}
        \sum_{K\in\mathcal{T}_h}\int_{E_t\cap K}[g\bnabla\cdot\boldsymbol{\Psi}^{(t)}+\boldsymbol{\Psi}^{(t)}&\cdot\bnabla g]~\mathrm{d}\bx=\int_{\partial E_t\setminus (F_k^\partial \cup \partial D) }\bn\cdot\boldsymbol{\Psi}^{(t)} g~\mathrm{d}S
        \\&+\int_0^t\int_{\partial D \cap \partial D_{\phi_{k},\tau} \cap F_k}\bn_{\partial D}\cdot\boldsymbol{\Psi}^{(t)} g\frac{w}{\lvert\partial_{\boldsymbol{N}_\tau^D}\phi_{k,\tau}\rvert}~\mathrm{d}\gamma~\mathrm{d}\tau
        \\&+\sum_{D_{\phi_i}\in\mathcal{D}\setminus \{D_{\phi_k}\}}\int_0^t\int_{\partial D_{\phi_i} \cap \partial D_{\phi_{k},\tau} \cap F_{ik}}\pm\bn_{\phi_i}\cdot\boldsymbol{\Psi}^{(t)} g\frac{w}{\lvert\partial_{\boldsymbol{N}_\tau^i}\phi_{k,\tau}\rvert}~\mathrm{d}\gamma~\mathrm{d}\tau
        \\&+\sum_{S\in\mathcal{F}_h\setminus\partial D}\int_0^t\int_{S \cap F_k \cap \partial D_{\phi_{k},\tau}}\jump{\boldsymbol{\Psi}^{(t)} g}\frac{w}{\lvert\partial_{\boldsymbol{N}_\tau^S}\phi_{k,\tau}\rvert}~\mathrm{d}\gamma~\mathrm{d}\tau,
    \end{aligned}
    \end{equation}
    where we use Assumption~\ref{assumption 4} to ensure that the intersection $\partial D_{\phi_i} \cap \partial D_{\phi_{k},\tau}$ does not change to become empty or non-empty for some $i$ depending on whether we take $t\rightarrow0^+$ or $t\rightarrow0^-$. 
    
    Using Lemma~\ref{lemma: 4} and \eqref{eqn: proof thrm 3 normal inter}, we can rewrite each term in the sum in left-hand side of \eqref{eqn: int by parts, thrm 3 inter 1} as
    \begin{equation}\label{eqn: thrm 3 b}
        \begin{aligned}
            \int_{E_t\cap K}[g\bnabla\cdot\boldsymbol{\Psi}^{(t)}+\boldsymbol{\Psi}^{(t)}\cdot\bnabla g]~\mathrm{d}\bx &= \int_0^t\int_{\partial D_{\phi_{k},\tau}\cap F_k\cap K}[g\bnabla\cdot\bn_{\phi_{k},\tau}+\bn_{\phi_{k},\tau}\cdot\bnabla g]\frac{w}{\lvert\partial_{\bn}\phi_{k,\tau}\rvert}~\mathrm{d}S~\mathrm{d}\tau\\
            &=\int_0^t\int_{\partial D_{\phi_{k},\tau}\cap F_k\cap K}\frac{\partial g}{\partial{\bn_{\phi_{k},\tau}}}\frac{w}{\lvert\partial_{\bn}\phi_{k,\tau}\rvert}~\mathrm{d}S~\mathrm{d}\tau,
        \end{aligned}
    \end{equation}
    where we have identified $E_t\cap K\equiv\mathcal{E}_t$, $\Pi_\tau\equiv\partial D_{\phi_{k},\tau}\cap K$, and $\mathcal{P}\equiv F_k\cap K$, and used that $\bnabla\cdot\bn_{\phi_{k},\tau}$ is the mean curvature that vanishes because $\partial D_{\phi_{k},\tau}\cap K$ is planar. 
    
    By construction, the first term on the right-hand side of \eqref{eqn: int by parts, thrm 3 inter 1} consists only of the boundaries $\partial D_{\phi_{k}}$ and $\partial D_{\phi_{k},t}$ and the integrand satisfies
    \begin{equation}
        \left.\bn\cdot\boldsymbol{\Psi}^{(t)} g\right\rvert_{\partial E_t\setminus (F_k^\partial \cup \partial D)}=\begin{cases}
            g&\text{on }\partial D_{\phi_{k}}\cap E_t\\
            -g&\text{on }\partial D_{\phi_{k},t}\cap E_t
        \end{cases},
    \end{equation}
    where the minus sign comes from the fact that $\bn$ is the outward normal to $E_t$, which aligns with $\boldsymbol{\Psi}^{(t)}$ on $\partial D_{\phi_k}$ but points in the opposite direction on $\partial D_{\phi_{k},t}$. Using this, we can write the first term on the right-hand side of \eqref{eqn: int by parts, thrm 3 inter 1} as
    \begin{equation}\label{eqn: thrm 3 a}
        \int_{\partial E_t\setminus (F_k^\partial \cup \partial D) }\bn\cdot\boldsymbol{\Psi}^{(t)} g~\mathrm{d}S = \int_{\partial D_{\phi_{k}}\cap E_t}g~\mathrm{d}S - \int_{\partial D_{\phi_{k},t}\cap E_t}g~\mathrm{d}S = \int_{\partial D_{\phi_{k}}\cap F_k}g~\mathrm{d}S - \int_{\partial D_{\phi_{k},t}\cap F_k}g~\mathrm{d}S,
    \end{equation} 
    by definition of $E_t$.

    Substituting \eqref{eqn: thrm 3 b} and \eqref{eqn: thrm 3 a} into \eqref{eqn: int by parts, thrm 3 inter 1} and rearranging, we obtain
    \begin{equation}
        \begin{aligned}
            &\int_{\partial D_{\phi_{k}}\cap F_k}g~\mathrm{d}S - \int_{\partial D_{\phi_{k},t}\cap F_k}g~\mathrm{d}S = 
            \\&=\int_0^t\int_{\partial D_{\phi_{k},\tau}\cap F_k}\frac{\partial g}{\partial{\bn_{\phi_{k},\tau}}}\frac{w}{\lvert\partial_{\bn_{\phi_k,\tau}}\phi_{k,\tau}\rvert}~\mathrm{d}S~\mathrm{d}\tau
            -\sum_{S\in\mathcal{F}_h\setminus\partial D}\int_0^t\int_{S \cap F_k \cap \partial D_{\phi_{k},\tau}}\jump{\bn_{\phi_{k},\tau} g}\frac{w}{\lvert\partial_{\boldsymbol{N}_\tau^S}\phi_{k,\tau}\rvert}~\mathrm{d}\gamma~\mathrm{d}\tau
            \\&\quad-\int_0^t\int_{\partial D \cap \partial D_{\phi_{k},\tau} \cap F_k}\bn_{\partial D}\cdot\bn_{\phi_{k},\tau} g\frac{w}{\lvert\partial_{\boldsymbol{N}_\tau^D}\phi_{k,\tau}\rvert}~\mathrm{d}\gamma~\mathrm{d}\tau
            \\&\quad-\sum_{D_{\phi_i}\in\mathcal{D}\setminus \{D_{\phi_k}\}}\int_0^t\int_{\partial D_{\phi_i} \cap \partial D_{\phi_{k},\tau} \cap F_{ik}}\pm\bn_{\phi_i}\cdot\bn_{\phi_{k},\tau} g\frac{w}{\lvert\partial_{\boldsymbol{N}_\tau^i}\phi_{k,\tau}\rvert}~\mathrm{d}\gamma~\mathrm{d}\tau.
        \end{aligned}
    \end{equation}
    Dividing by $t$ and taking the limit completes the proof.
\end{proof}
\begin{remark}
    As discussed in Remark~\ref{remark: point wise 1}, when $d=2$ the expressions on the intersections of boundaries become pointwise evaluations.
\end{remark}
\begin{remark}
    To avoid introducing additional geometric objects that may lose meaning in a general $\mathbb{R}^d$ setting, we do not use the triple product as in Lemma~6.10 and Theorem~6.11 of \citet{Berggren_2023}. Note that if the triple product were used, the last three terms in \eqref{thrm 3 main} would have a factor of $-1$, and in the case of a single level-set function we recover Theorem~3 of \citet{WEGERT2025118203}.
\end{remark}
\begin{remark}
Theorem~\ref{theorem 3} was proven under the assumption that $g\in C^1(\overline{\mathcal T}_{\!h})$, independent of $\phi_k$.
For the derivation of one of  directional derivatives in Section~3.6 below, we will, however, encounter a case when the integrand also is a function of $\phi_k$, that is, when the objective function reads
\begin{equation}
\hat J(\phi_k) = \int_{\partial D_{\phi_k}\cap F_k} g(\phi_k)\,\mathrm dS.    
\end{equation}
Denote by $g(t)$ the function $t\mapsto g(\phi_{k,t})$.
Provided that $t\mapsto g(t)$ and $t\to g'(t)$ are continuous and $g(t), g'(t)\in C^1(\overline{\mathcal T}_{\!h})$ in the vicinity of $t=0$, we may use the product rule of differentiation to conclude that
\begin{equation}
d\hat J(\phi_k)(w) =   \int_{\partial D_{\phi_k}\cap F_k} g'\,\mathrm dS + dJ(\phi_k)(w), 
\end{equation}
where $dJ(\phi_k)(w)$ is given by expression~\eqref{thrm 3 main}.
\end{remark}

\subsection{Shape calculus example}\label{subsec: example shape calc}\noindent
To demonstrate how these derivatives can be used in the context of shape and topology optimisation, we return to the example discussed in Section~\ref{sec: multi-phase unfitted fes}. In particular,  consider minimising the total thermal dissipation for both phases, that is, the quantity
\begin{equation}\label{eqn: F def}
    F(U,U;\phi_1,\phi_2)=(\bq_1(u_1),\bnabla u_1)_{\Omega_1}+(\bq_2(u_2),\bnabla u_2)_{\Omega_2},
\end{equation}
quadratic in $U = [u_1,u_2]$.
For the optimisation, we introduce the \textit{reduced objective function}
\begin{equation}\label{eqn: reduced objective}
J(\phi_1,\phi_2) = F\bigl(U(\phi),U(\phi)(\phi_1,\phi_2);\phi_1,\phi_2\bigr),   
\end{equation}
where $\phi = [\phi_1,\phi_2]$ and
\begin{equation}\label{eqn: state eqn example}
\begin{gathered}
U\in\mathcal{V}_{1,h}\times\mathcal{V}_{2,h} \quad\text{such that} 
\\
A(U,V;\phi_1,\phi_2)=l(V;\phi_1,\phi_2)\qquad \forall V\in\mathcal{V}_{1,h}\times\mathcal{V}_{2,h},
\end{gathered}
\end{equation}
 and consider the optimisation problem
\begin{equation}\label{eqn: optim problem example}
\min_{(\phi_1,\phi_2)\in\mathcal{V}_{h}^2} J(\phi_1, \phi_2).
\end{equation}

For this example, assume that the flux is given by $\bq_i=\Ab_i\bnabla u_i$, $i=1,2$, where matrices $\Ab_i$ are symmetric, positive definite, and constant on $\overline D_{\phi_i}$.
For the weighting parameters $\kappa_1$ and $\kappa_2$, we take inspiration from \citet{10.1016/j.cma.2019.01.009_2019} and define these on $\Gamma_{ij}$ to be
\begin{align}\label{eqn: weighting aniso}
    \kappa_1=\frac{c_j}{c_i+c_j},\quad
    \kappa_2=\frac{c_i}{c_i+c_j},
\end{align}
where $c_i=\Ab_i:(\bn\otimes\bn)=\bn\cdot\Ab_i\bn$, which accounts for the possible material anisotropy.

Next, recall that $\Omega_1(\phi_1,\phi_2)=D_{\phi_1}\cap  \overline{D}_{\phi_2}^\complement$, $\Omega_2(\phi_1,\phi_2)=D_{\phi_1}\cap D_{\phi_2}$, and $\Gamma_{12}(\phi_1,\phi_2)=D_{\phi_1}\cap\partial D_{\phi_2}$ (see Fig.~\ref{fig:fig2-new}). As a result, $F$, $A$ and $l$ can be written as
\begin{equation}
    F(U,U;\phi_1,\phi_2)=(\Ab_1\bnabla u_1,\bnabla u_1)_{D_{\phi_1}\cap  \overline{D}_{\phi_2}^\complement}+(\Ab_2\bnabla u_2,\bnabla u_2)_{D_{\phi_1}\cap D_{\phi_2}},
\end{equation}
\begin{equation}\label{eqn: A explicit}
    \begin{aligned}
        A(U,V;\phi_1,\phi_2) &= (\Ab_1\bnabla v_1,\bnabla v_1)_{D_{\phi_1}\cap  \overline{D}_{\phi_2}^\complement}+(\Ab_2\bnabla u_2,\bnabla v_2)_{D_{\phi_1}\cap D_{\phi_2}}+j([u_1,u_2],[v_1,v_2])\\
        &\quad
        -(\mean{\Ab\bnabla u},\jump{v})_{D_{\phi_1}\cap\partial D_{\phi_2}}-(\mean{\Ab\bnabla v},\jump{u})_{D_{\phi_1}\cap\partial D_{\phi_2}}\\&\quad+(\mu\jump{u},\jump{v})_{D_{\phi_1}\cap\partial D_{\phi_2}},
    \end{aligned}
\end{equation}
and
\begin{equation}\label{eqn: l explicit}
    l(V;\phi_1,\phi_2) = \sum_i(g,v_i)_{\Gamma_N}.
\end{equation}

To differentiate $J$ with respect to $\phi_1$ and $\phi_2$, we use Céa's formal method \citep{10.1051/m2an/1986200303711_1986}, which is a version of the method of Lagrange multipliers applied to shape optimisation problems. 
Since this method also is encoded in our later use of forward mode automatic differentiation for shape optimisation, we will give a brief review of the approach for the current optimisation problem~\eqref{eqn: optim problem example}. 

The method is based on a Lagrangian functional constructed by subtracting the residual of the state equation from the the objective function, that is,
\begin{equation}\label{eqn: Lagrangian def}
    \mathcal{L}(P,\Lambda;\phi_1,\phi_2)=F(P,P;\phi_1,\phi_2)-A(P,\Lambda;\phi_1,\phi_2)+l(\Lambda;\phi_1,\phi_2),
\end{equation}
where $P=[p_1,p_2]$ and $\Lambda=[\lambda_1,\lambda_2]$. 

Since the Lagrangian is linear in $\Lambda$ (the test function in the variational form), differentiation of the Lagrangian with respect to $\Lambda$ in the direction $W$ generates the state equation residual, that is,
\begin{equation}
d_\Lambda\mathcal L(P,\Lambda;\phi_1,\phi_2)(W) \equiv 
\frac d{dt}\mathcal L(P, \Lambda + tW;\phi_1,\phi_2)\big|_{t=0} =
-A(P, W; \phi_1,\phi_2) + l(W;\phi_1,\phi),
\end{equation}
which vanishes for each $W\in \mathcal V_{1,h}\times\mathcal V_{2,h}$ when $P=U\coloneqq[u_1,u_2]$ due to state equation~\eqref{eqn: state eqn example}.

Moreover, differentiation of the Lagrangian with respect to $P$ in the direction $R$ generates a residual of the so-called adjoint equation, that is,
\begin{equation}\label{eqn: adj residual example}
\begin{aligned}
d_P\mathcal L(P,\Lambda;\phi_1,\phi_2)(R)  &\equiv 
\frac d{dt}\mathcal L(P+tR,\Lambda;\phi_1,\phi_2)\big|_{t=0}
= d_P F(P,P;\phi_1,\phi_2)(R) - d_P A(P, \Lambda;\phi_1,\phi_2)(R)  
\\&
= 2F(P,R;\phi_1,\phi_2) - A(R, \Lambda;\phi_1,\phi_2).
\end{aligned}
\end{equation}
The adjoint state $\hat\Lambda$ is the $\Lambda$ for which the residual~\eqref{eqn: adj residual example}, evaluated for $P=U$, vanishes for each $R\in\mathcal V_{1,h}\times\mathcal V_{2,h}$, that is, the solution to the following problem: find $\hat{\Lambda}\in\mathcal{V}_{1, h}\times\mathcal{V}_{2, h}$ such that
\begin{equation}\label{eqn: adjoint equation example}
    A(R, \hat\Lambda;\phi_1,\phi_2) = 2F(U,R;\phi_1,\phi_2)\qquad\forall R\in\mathcal{V}_{1, h}\times\mathcal{V}_{2, h}.
\end{equation}

Finally, the directional derivative of $J$ in the direction $w$ is obtained by differentiating the Lagrangian with respect to $\phi_i$ in the direction $w$ and evaluating the resulting expression for $P=U$ and $\Lambda = \hat\Lambda$, that is,
\begin{equation}\label{eqn: lagrangian deriv}
\begin{aligned}
d_{\phi_i}J(\phi_1,\phi_2)(w) &= d_{\phi_i}\mathcal L(P,\Lambda;\phi_1,\phi_2)\big|_{P=U, \Lambda=\hat\Lambda}(w) 
\\
&= d_{\phi_i} F(U,U;\phi_1,\phi_2)(w) - d_{\phi_i} A(U,\hat\Lambda;\phi_1,\phi_2)(w) + d_{\phi_i}l(\hat\Lambda;\phi_1,\phi_2)(w).    
\end{aligned}
\end{equation}
\begin{remark}
It may not be entirely obvious why the directional derivative of the reduced objective function is obtained according to expression~\eqref{eqn: lagrangian deriv}, which is why we here give a short justification.
This explanation also highlights the main reason why the procedure is merely formal.
According to definitions~\eqref{eqn: reduced objective} and~\eqref{eqn: Lagrangian def}, the reduced objective satisfies
\begin{equation}
J(\phi_1,\phi_2) = \mathcal L\bigl(U(\phi),\Lambda;\phi_1,\phi_2\bigr)
\qquad\forall\Lambda\in\mathcal{V}_{1, h}\times\mathcal{V}_{2, h}. 
\end{equation}
Differentiating with respect to $\phi_i$ in direction $w$ and applying the chain rule yields
\begin{equation}\label{eqn: dJ=dL equality}
d_{\phi_i}J(\phi_1,\phi_2)(w) = d_P\mathcal L\bigl(U(\phi),\Lambda;\phi_1,\phi_2\bigr)(dU) 
+ d_{\phi_i}\mathcal L\bigl(U(\phi),\Lambda;\phi_1,\phi_2\bigr)(w)
\qquad\forall\Lambda\in\mathcal{V}_{1, h}\times\mathcal{V}_{2, h}, 
\end{equation}
where $\phi = [\phi_1,\phi_2]$, and where
\begin{equation}
dU = \frac d{dt} U(\phi + tw)\big|_{t=0}
\end{equation}
is the \textit{shape derivative} of the state $U$. 
Now note that, due to adjoint equation~\eqref{eqn: adjoint equation example}, the left side of expression~\eqref{eqn: adj residual example} vanishes when $P=U$ and $\Lambda=\hat\Lambda$, that is,
\begin{equation}\label{eqn: dPL vanishes}
d_P\mathcal L\bigl(U(\phi),\hat\Lambda;\phi_1,\phi_2\bigr)_{}(R) = 0
\qquad\forall R\in\mathcal{V}_{1, h}\times\mathcal{V}_{2, h}. 
\end{equation}
Thus, if the shape derivative $dU$ exists and resides in $\mathcal{V}_{1, h}\times\mathcal{V}_{2, h}$, we may choose $R = dU$ in equation~\eqref{eqn: dPL vanishes}, which makes the first term on the right side of expression~\eqref{eqn: dJ=dL equality} to vanish for $\Lambda=\hat\Lambda$, leaving
\begin{equation}
d_{\phi_i}J(\phi_1,\phi_2)(w) = d_{\phi_i}\mathcal L\bigl(U(\phi),\hat\Lambda;\phi_1,\phi_2\bigr)(w), 
\end{equation}
which was the claim in expression~\eqref{eqn: lagrangian deriv}. 
Hence, the main assumption in this scheme is that \textit{the shape derivative $dU$ exists and resides in $\mathcal{V}_{1, h}\times\mathcal{V}_{2, h}$}.
The existence and regularity of shape derivatives of state variables are generally delicate issues.
For instance, if body-fitted discretisations are used, and mesh deformations are utilised to keep the mesh conformed to changes in the level-set function, the shape derivative of a continuous state variable, if it exists, will typically contain jump discontinuities~\cite[\S~4.2]{Berggren_2023}.
That is, the shape derivative of the state is then less regular than the state itself.
In contrast, for the unfitted discretisation used here, the shape derivative, if it exists, will stay in the same space as the state variable itself~\cite[\S~5.4]{Berggren_2023}.  
\end{remark}

Using the procedure above, we  arrive at the following result for the directional derivative of $J$. Note that for brevity, we only state the derivative with respect to $\phi_1$. 

    Suppose $D_{\phi_1}$ and $D_{\phi_2}$ satisfy Assumption~\ref{assumption 2}. Then, given a perturbation $\phi_{1,t}(\bx)=\phi_1(\bx)+tw(\bx)$, where $w$ is a Lagrangian basis function, the directional derivative of objective function~\eqref{eqn: reduced objective} is
    \begin{equation}\label{eqn: shape deriv}
        \begin{aligned}
            \mathit d_{\phi_1}J(\phi_1,\phi_2)(w)&=\int_{\partial D_{\phi_1}\cap \overline{D}_{\phi_2}^\complement}(\Ab_1\bnabla u_1)\cdot(\bnabla \hat\lambda_1-\bnabla u_1)\frac{w}{\lvert\partial_{\bn_{\phi_1}}\phi_1\rvert}~\mathrm{d}S
            \\&\quad+\int_{\partial D_{\phi_1}\cap D_{\phi_2}}(\Ab_2\bnabla u_2)\cdot(\bnabla \hat\lambda_2-\bnabla u_2)\frac{w}{\lvert\partial_{\bn_{\phi_1}}\phi_1\rvert}~\mathrm{d}S
            \\&\quad+\int_{\partial D_{\phi_1}\cap\partial D_{\phi_2}}\left(\mean{\Ab\bnabla u}\cdot\jump{\hat\lambda}+\mean{\Ab\bnabla \hat\lambda}\cdot\jump{u}-\mu\jump{u}\cdot\jump{\hat\lambda}\right)\frac{w}{\lvert\partial_{\bN^2}\phi_1\rvert}~\mathrm{d}\gamma,
        \end{aligned}
    \end{equation}
    where $\hat\lambda_1$ and $\hat\lambda_2$ satisfy the adjoint problem: find $[\hat\lambda_1,\hat\lambda_2]\in\mathcal{V}_{1,h}\times\mathcal{V}_{2,h}$ such that
    \begin{equation}
        \begin{aligned}
            &(\Ab_1\bnabla r_1,\bnabla \hat\lambda_1)_{D_{\phi_1}\cap \overline{D}_{\phi_2}^\complement}+(\Ab_2\bnabla r_2,\bnabla \hat\lambda_2)_{D_{\phi_1}\cap D_{\phi_2}}+j([r_1,r_2],[\hat\lambda_1,\hat\lambda_2])+(\mu\jump{r},\jump{\hat\lambda})_{D_{\phi_1}\cap\partial D_{\phi_2}}\\
            &\quad
            -(\mean{\Ab\bnabla \hat\lambda},\jump{r})_{D_{\phi_1}\cap\partial D_{\phi_2}}-(\mean{\Ab\bnabla r},\jump{\hat\lambda})_{D_{\phi_1}\cap\partial D_{\phi_2}}=2F(U, [r_1,r_2];\phi_1,\phi_2),
        \end{aligned}
    \end{equation}
    for all $[r_1,r_2]\in\mathcal{V}_{1,h}\times\mathcal{V}_{2,h}$. The derivation of this result is given in the Supplementary Material.

\begin{remark}
The differentiation method outlined above constitutes a general procedure to devise expressions for reduced gradients, a procedure that is applicable to a wide range of shape and topology optimisation problems.
However, in some particular cases, the optimisation problem can be formulated to possess symmetries such that no separate adjoint state is needed. 
In fact, the thermal dissipation example we consider here can be reformulated to provide such a symmetry by minimising $j(\phi_1,\phi_2) = l(U;\phi_1,\phi_2) = \sum_i (g,u_i)_{\Gamma_N}$ instead of $J(\phi_1,\phi_2) = F(U,U; \phi_1, \phi_2)$.
After making this change and carrying out the above differentiation scheme, it turns out that the adjoint state for the reformulated problem satisfies $\tilde\lambda_i = u_i$; that is, a separate adjoint equation will not be needed.
The gradient expression for the reformulated problem will then be
\begin{equation}
\begin{aligned}
\mathit d_{\phi_1}j(\phi_1,\phi_2)(w)&=\int_{\partial D_{\phi_1}\cap \overline{D}_{\phi_2}^\complement}(\Ab_1\bnabla u_1)\cdot\bnabla u_1\frac{w}{\lvert\partial_{\bn_{\phi_1}}\phi_1\rvert}~\mathrm{d}S
+\int_{\partial D_{\phi_1}\cap D_{\phi_2}}(\Ab_2\bnabla u_2)\cdot\bnabla u_2\frac{w}{\lvert\partial_{\bn_{\phi_1}}\phi_1\rvert}~\mathrm{d}S
\\&\qquad+\int_{\partial D_{\phi_1}\cap\partial D_{\phi_2}}\Bigl(2\mean{\Ab\bnabla u}\cdot\jump{u}-\mu\jump{u}\cdot\jump{u}\Bigr)\frac{w}{\lvert\partial_{\bN^2}\phi_1\rvert}\,\mathrm{d}\gamma,
\end{aligned}
\end{equation}
The quantity $l(U;\phi_1,\phi_2)$ is the measure of heat power supplied to the system through the boundary $\Gamma_N$, whereas $F(U,U; \phi_1, \phi_2)$ is the heat power dissipated in the domain.
Due to energy conservation, these quantities are the same for the problem before discretisation, and the difference between these problem formulations is only due to discretisation effects --- namely the Nitsche interface terms --- that will vanish as $h\to0$.  
\end{remark}

This example serves to show how level-set shape derivatives \textit{can} be computed by hand for a multi-phase problem. However, not only are the resulting expressions complicated, particularly for the case given in the Supplementary Material, their implementation in source code is tedious, error prone, and narrowly specific, making it an arduous task to update for new cases.
To this end, in the next section, we develop automatic shape differentiation techniques to provide an alternate way to compute these quantities.

\section{Automatic shape differentiation}\label{sec: asd}\noindent
In this section, we discuss how the automatic differentiation techniques developed by \citet{WEGERT2025118203,GridapTopOpt} can be extended as an alternative to using the formulas devised in Theorems~\ref{theorem 1}, \ref{theorem 2}, and \ref{theorem 3}. 

\subsection{Preliminaries}\noindent
There are two important concepts that we discuss to motivate the rest of this section. The first is forward-mode automatic differentiation via \textit{dual numbers}. This approach involves computing the derivative of a composition of functions by propagating dual numbers from input to output \cite{Baydin_Pearlmutter_Radul_Siskind_2018}. A dual number is defined as $a+b\varepsilon$ with the property $\varepsilon^2=0$ where $a\in\mathbb{R}$ is the real component and $b\in\mathbb{R}$ is the dual component. The Taylor series of a differentiable function $f(a+b\varepsilon)$ about $a$ is
\begin{equation}\label{eqn: taylor duals}
    f(a+b\varepsilon)=f(a)+b f'(a)\varepsilon
\end{equation}
where the higher order terms vanish because $\varepsilon^2=0$. The derivative of $f$ can therefore be computed exactly by evaluating the dual component of $f(a+b\varepsilon)$. It is also possible to define $N$-dimensional dual numbers $\boldsymbol{\varepsilon}$ with the property that $\varepsilon_i\varepsilon_j=0$ \citep{Revels_Lubin_Papamarkou_2016}. Using this, one can show for a function $g:\mathbb{R}^N\rightarrow\mathbb{R}$,
\begin{equation}\label{eqn: taylor duals higher dim}
    g\left(\bx+\boldsymbol{\varepsilon}\right)=g(\bx)+\sum_{i=1}^N\frac{\partial g}{\partial x_i}\varepsilon_i.
\end{equation}
As a result, all the partial derivatives of $g$ at $\boldsymbol{x}$ can be computed with a single evaluation of $g(\boldsymbol{x}+\boldsymbol{\varepsilon})$. We call this a \textit{dualised} function. We refer to \citet{Revels_Lubin_Papamarkou_2016} and the references therein for further discussion of vectorised forward-mode automatic differentiation. 

In \citet{WEGERT2025118203}, dual numbers were utilised to compute level-set shape derivatives for unfitted discretisations defined by a single level-set function $\phi$ for a fixed integrand. 
As an example, assume that we wish to compute the shape derivative of an integral over a domain in which $\phi<0$.
That is, we will compute the derivative of the integral with respect to the nodal values of the level-set function.
Fundamentally, this approach relies on propagating dual numbers through the \textit{local} geometric maps from the reference background cell to the physical cut subcell, illustrated in Figure~\ref{fig: wegert recreated}.
\begin{figure}[t]
    \centering
    \def\svgwidth{0.65\textwidth}
    \input{Unfitted_Discretisation_mapping_only_tex}
    \caption{An illustration of the geometric map $T_\phi:\hat{K}\rightarrow K_{\rm sub}(\phi)$ from the reference cell $\hat{K}$ to a cut subcell $K_{\rm sub}(\phi)$ (pink). The intersection points $\boldsymbol{v}_1(\phi)$ and $\boldsymbol{v}_2(\phi)$ of the cut subcell are computed using $\phi(\boldsymbol{q}_1)$, $\phi(\boldsymbol{q}_2)$, and $\phi(\boldsymbol{q}_3)$ via interpolation. Figure recreated from \citet{WEGERT2025118203}.}
    \label{fig: wegert recreated}
\end{figure}
In particular, given a cut subcell, the derivative can be evaluated by: 1) propagating dual numbers through the cell-wise degrees of freedom of the level-set function; 2) computing the \textit{dualised} nodal coordinates of the intersection of the interface defined by the level set and the boundary of the background cell; 3) constructing the geometric map from the reference background cell to the physical subcell; 4) evaluating the cell contribution to the integral via numerical quadrature and extracting the dual components. The local derivatives can then be assembled into a global gradient using routine finite element assembly. When the integrand $f$ is not fixed (that is, it depends on the solution to a PDE that in turn depends on $\phi$), an additional step is added in which the adjoint problem is constructed and solved.
Thus, the shape derivative can be computed in tandem with the assembly of the integral.

This approach assumes that there is a single level-set function that is used to cut the background mesh~\citep{WEGERT2025118203}. This means that intersections of the interface and the boundary of a background cell can be computed a priori using a precomputed non-conforming triangulation of the interface. It is therefore possible to implement forward-mode automatic differentiation without propagating dual numbers through the cutting algorithm. For the case of several level-set functions, more complex intersections between the zero level sets can occur within a cell. Therefore, propagation of dual numbers through the cutting algorithm is necessary and requires special care for the sake of efficiency. We outline our approach below.

\subsection{Cell-wise polytope cutter}\noindent
In the following, we develop a polytope cutter to extend the automatic shape differentiation approach of \citet{WEGERT2025118203} to multiple level-set functions, $\phi_i$ for $i=1,\dots,L$. We make the following design choices for the cutter:
\begin{enumerate}
    \item[(i)] Cell-wise: the cutter should operate on a cell-wise basis so that derivatives can be computed locally.
    \item[(ii)] Polytopal splitting: the cutter should be able to split an arbitrary polytope contained inside a cell $K$ with a local level-set finite element function $\phi^{(K)}(\bx)$. We denote this operation \texttt{Split}.
    \item[(iii)] Recursive: to support several level-set functions $\phi^{(K)}_i(\bx)$, the subsequent split polytopes from (ii) should themselves be splittable by the cutter with another level-set function.
    \item[(iv)] Simplex polytopes: once (iii) has been completed for all level-set functions, the resulting polytopes should be simplexified for the purpose of numerical quadrature. We denote this operation \texttt{Simplexify}.
\end{enumerate}
Figure \ref{fig:fig3} and \ref{fig:fig4} show examples of cell-wise \texttt{Split} and \texttt{Simplexify} operations in two and three dimensions. For \texttt{Split}, we reuse the methods from \citet[Sec. 4.3.1,][]{WEGERT2025118203}. In particular, we adapt the framework and algorithms in STLCutters.jl \cite{Badia_Martorell_Verdugo_2022,Martorell_Badia_2025} for the case where cuts are determined using a local level-set finite element function $\phi^{(K)}(\bx)$ inside a given cell $K$. For \texttt{Simplexify}, we adapt the algorithms from Gridap~\citep{Verdugo2022}. We refer the reader to \citet{Badia_Martorell_Verdugo_2022,Martorell_Badia_2025,Verdugo2022} for further discussion of the \texttt{Split} and \texttt{Simplexify} algorithms.
\begin{figure}[t]
    \centering
    \begin{subfigure}{0.49\textwidth}
        \centering
        \def\svgwidth{\textwidth}
        \input{split2d_tex}
        \caption{}
        \label{fig:fig3a}    
    \end{subfigure}
    \begin{subfigure}{0.49\textwidth}
        \centering
        \def\svgwidth{\textwidth}
        \input{split3d_tex}
        \caption{}
        \label{fig:fig3b}    
    \end{subfigure}
    \caption{Visualisation of the polytope splitting operation \texttt{Split} in (a) two dimensions and (b) three dimensions.}
    \label{fig:fig3}
 \end{figure}
 \begin{figure}[t]
    \centering
    \begin{subfigure}{0.4\textwidth}
        \centering
        \def\svgwidth{\textwidth}
        \input{simplexify2d_tex}
        \caption{}
        \label{fig:fig4a}    
    \end{subfigure}
    \begin{subfigure}{0.4\textwidth}
        \centering
        \def\svgwidth{\textwidth}
        \input{simplexify3d_tex}
        \caption{}
        \label{fig:fig4b}    
    \end{subfigure}
    \caption{Visualisation of the polytope simplexify operation \texttt{Simplexify} in (a) two dimensions and (b) three dimensions.}
    \label{fig:fig4}
\end{figure}

Using the \texttt{Split} and \texttt{Simplexify} operations, we can develop the \texttt{RecursiveCut} operation. Here, we utilise a binary tree with nodes that hold data related to a polytope $\mathcal{P}$ and a state vector $\boldsymbol{S}$ with components $S_j$, $j=1,\dots,L$. When $\mathcal{P}$ is a cell, $S_j$ indicates whether that cell is in or out with respect to each $\phi_j$. When $\mathcal{P}$ is a facet, $S_j$ indicates whether that facet is in, out, or is a cut generated by the zero level-set of $\phi_j$. We assume the following functions implemented for the binary tree, where $\mathcal{N}$ refers to a node within the tree:
\begin{itemize}
    \itemsep0em 
    \item \texttt{Polytope}($\mathcal{N}$): Return the polytope stored in node $\mathcal{N}$.
    \item \texttt{State}($\mathcal{N}$): Return $\boldsymbol{S}$ which gives the in, out, or cut state stored in node $\mathcal{N}$.
    \item \texttt{Children}($\mathcal{N}$): Return the children, $\mathcal{N}_{\rm Left}$ and $\mathcal{N}_{\rm Right}$, of node $\mathcal{N}$.
    \item \texttt{Leaves}($\mathcal{N}$): Return a list of polytopes and their respective states at the end points, also known as leaves, of the branch associated with node $\mathcal{N}$ of the binary tree. When $\mathcal{N}$ is the root node, this returns all leaves of the binary tree.
    \item \texttt{Root}($\mathcal{P}$,$L$): Initialise the root node of a binary tree where $\mathcal{P}$ is the root cell or facet and $L$ is the total number of level-set functions.
\end{itemize}
Finally, the states in $\boldsymbol{S}$ are denoted to either be $\rm{IN}=-1$, OUT$=1$, or CUT$=0$. The recursive cutting algorithm is then as follows.
\begin{algorithm}[H]
    \caption{$\texttt{RecursiveCut}(\mathcal{N}, \boldsymbol{\phi}^{(K)},L,s)$}
    \justifying
    \begin{algorithmic}[1]
    \IF{$\texttt{IsEmpty}(\boldsymbol{\phi}^{(K)})$}
        \RETURN
    \ENDIF
    \STATE $\mathcal{P}\gets\texttt{Polytope}(\mathcal{N}),\quad \boldsymbol{S}\gets\texttt{State}(\mathcal{N}),\quad n \gets L - \texttt{Length}(\boldsymbol{\phi}^{(K)}) + 1$
    \IF{$s = n$}
        \STATE $S_n \gets \text{CUT}$
        \STATE $\texttt{RecursiveCut}(\mathcal{N}, [\phi_2^{(K)},\dots,\phi_N^{(K)}],L,s)$
        \RETURN
    \ENDIF
    \STATE $(\text{state}, \mathcal{P}_\text{in}, \mathcal{P}_\text{out}) \gets \texttt{Split}(\mathcal{P}, \phi_1^{(K)})$
    \IF{$\text{state} = \text{IN}$ \OR $\text{state} = \text{OUT}$}
        \STATE ${S}_n \gets \text{state}$
        \STATE $\texttt{RecursiveCut}(\mathcal{N}, [\phi_2^{(K)},\dots,\phi_N^{(K)}],L, s)$
        \RETURN
    \ENDIF
    \STATE $(\mathcal{N}_{\textrm{Left}}, \mathcal{N}_{\textrm{Right}}) \gets \texttt{Children}(\mathcal{N})$
    \STATE $\texttt{Polytope}(\mathcal{N}_{\textrm{Left}}) \gets \mathcal{P}_\text{in},\quad \texttt{Polytope}(\mathcal{N}_{\textrm{Right}}) \gets \mathcal{P}_\text{out}$
    \STATE $\texttt{RecursiveCut}(\mathcal{N}_{\textrm{Left}}, [\phi_2^{(K)},\dots,\phi_N^{(K)}],L, s)$
    \STATE $\texttt{RecursiveCut}(\mathcal{N}_{\textrm{Right}}, [\phi_2^{(K)},\dots,\phi_N^{(K)}],L, s)$
    \RETURN
    \end{algorithmic}\label{alg:recursively_cut}
\end{algorithm}
\noindent In the above, we denote $\boldsymbol{\phi}^{(K)}$ to be the vector of remaining local level-set finite element functions, $L$ is the total number of level-set functions, and $s$ is an integer specifying whether to skip a particular level-set function when cutting facets. The integer $s$ specifies which level-set was used to generate an initial facet and is only non-zero when cutting facets. In line 10, we split the polytope $\mathcal{P}$ with a local level-set finite element function. If the level-set function does not split $\mathcal{P}$, then we specify whether that polytope is IN or OUT with respect to the level-set function in `state'. In the case that $\mathcal{P}$ is split, we take `state = nothing' and return the two resulting polytopes from splitting $\mathcal{P}$. Note that Algorithm~\ref{alg:recursively_cut} returns nothing and instead modifies the binary tree.

Using the algorithm and operations described above, we now describe \texttt{CutAndSimplexifyCell} whereby a cell $K\in\mathcal{T}_h$ is cut recursively using a vector of level-set functions and then the resulting polytopes are simplexified. The resulting simplices make up $[\cup_i (D_{\phi_i}\cap K)]\cup[\cup_i (D_{\phi_i}^\complement\cap K)]$ and cover $K$.
\begin{algorithm}[H]
    \caption{$\texttt{CutAndSimplexifyCell}(K,\boldsymbol{\phi}^{(K)})$}
    \justifying
    \begin{algorithmic}[1]
    \STATE $L \gets \texttt{Length}(\boldsymbol{\phi}^{(K)})$
    \STATE $\mathcal{N}\gets\texttt{Root}(K,L)$
    \STATE $\texttt{RecursiveCut}(\mathcal{N}, \boldsymbol{\phi}^{(K)},L,0)$
    \STATE $\boldsymbol{\mathcal{L}}\gets\texttt{Leaves}(\mathcal{N})$
    \RETURN $\texttt{Simplexify}(\boldsymbol{\mathcal{L}})$
    \end{algorithmic}\label{alg:cut_and_simplexify_cell}
\end{algorithm}
\noindent Note that \texttt{Simplexify} is applied element-wise to the vector of polytopes and returns a list of all simplicies and their state $\boldsymbol{S}$. Figure~\ref{fig:fig5} shows a visualisation of this algorithm. Note that the first three rows of this figure show the polytopes associated with the nodes of the binary tree.
\begin{figure}[p]
    \centering
    \def\svgwidth{0.9\textwidth}
    \large
    \input{rec_cutter_tex}
    \caption{Visualisation of Algorithm~\ref{alg:cut_and_simplexify_cell} in which a two-dimensional simplex is recursively cut and simplexified with two level-set functions. The dashed blue and red lines are the cuts defined by $\phi_1$ and $\phi_2$ at the cell degrees of freedom, respectively. The coloured vertices represent the level-set functions used for splitting, for example, a blue and red dot means that both $\phi_1$ and $\phi_2$ contributed to the splitting process.}
    \label{fig:fig5}
\end{figure}

Finally consider the \texttt{CutAndSimplexifyFacet} algorithm. This algorithm splits a cell $K\in\mathcal{T}_h$ with the $i^{\rm th}$ level-set function, constructs the facet connecting the two subcells, then recursively cuts the facet with the remaining level-set functions, and returns a list of simplices making up the original facet, $\partial D_{\phi_i}\cap K$.
\begin{algorithm}[H]
    \caption{\texttt{CutAndSimplexifyFacet}$(K,\boldsymbol{\phi}^{(K)},i)$}
    \justifying
    \begin{algorithmic}[1]
    \STATE $(\text{state}, \mathcal{P}_\text{in}, \mathcal{P}_\text{out}) \gets \texttt{Split}(K, \phi_i^{(K)})$
    \IF{$\text{state} = \text{IN}$ \OR $\text{state} = \text{OUT}$}
        \STATE \textbf{continue}
    \ENDIF
    \STATE $\mathcal{F}_0 \gets \partial\mathcal{P}_\text{in} \cap \partial\mathcal{P}_\text{out}$,\quad$L \gets \texttt{Length}(\boldsymbol{\phi}^{(K)})$
    \STATE $\mathcal{N}\gets\texttt{Root}(\mathcal{F}_0,L)$
    \STATE $\texttt{RecursiveCut}(\mathcal{N}, \boldsymbol{\phi}^{(K)},L ,i)$
    \STATE $\boldsymbol{\mathcal{L}}\gets\texttt{Leaves}(\mathcal{N})$
    \RETURN $\texttt{Simplexify}(\boldsymbol{\mathcal{L}})$
    \end{algorithmic}\label{alg:cut_and_simplexify_facet}
\end{algorithm}
\noindent All of the facets $(\cup_i\partial D_{\phi_i})\cap K$ can then be constructed by looping over the index of each level-set function and applying Algorithm~\ref{alg:cut_and_simplexify_facet}.

\begin{figure}[p]
    \centering
    \def\svgwidth{0.9\textwidth}
    \large
    \input{rec_cutter_facet_tex}
    \caption{Visualisation of Algorithm~\ref{alg:cut_and_simplexify_facet} in which a one-dimensional facet is computed from a level-set function, recursively cut with the remaining level-set functions, then simplexified. Here we start with a facet $\mathcal{F}_0$ computed by splitting the polytope with $\phi_2$ whose cut is shown by the dashed blue line. The resulting facet is then cut with $\phi_1$ and $\phi_3$ whose cuts are shown by the red and green dashed lines, respectively. The coloured vertices represent the level-set functions used for splitting. Note that in this example, facets are line segements and so \texttt{Simplexify} is simply the identity.}
    \label{fig:fig5_facet}
\end{figure}

Applying Algorithm~\ref{alg:cut_and_simplexify_cell} and \ref{alg:cut_and_simplexify_facet} over the entire mesh constructs a non-conforming triangulation for $(\cup_i D_{\phi_i})\cup(\cup_i D_{\phi_i}^\complement)$ that covers the entire computational domain $D$. Figure~\ref{fig: building-phys-trian a} shows an example. Using the states $\boldsymbol{S}$ for each simplex, we can isolate a particular domain $D_{\phi_j}$ or apply more complicated set operations to several domains. Figure~\ref{fig: building-phys-trian b} shows an example of the resulting triangulation of $D_{\phi_1}\cap \overline{D}_{\phi_2}^\complement\cap \overline{D}_{\phi_3}^\complement$.

\begin{figure}[t]
    \centering
    \begin{subfigure}{0.32\textwidth}
        \centering
        \includegraphics[width=\textwidth]{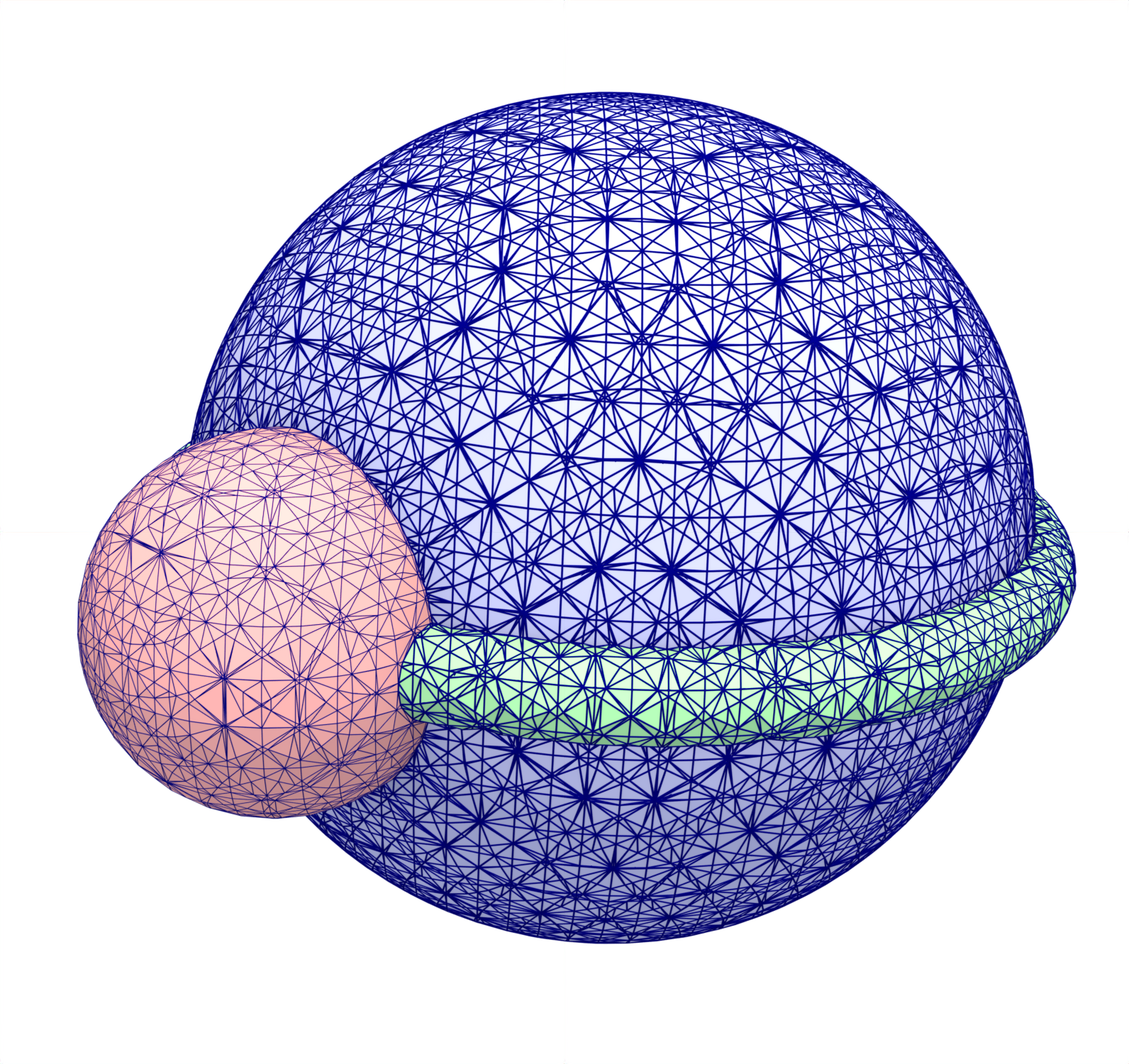}
        \caption{}
        \label{fig: building-phys-trian a}
    \end{subfigure}
    \begin{subfigure}{0.32\textwidth}
        \centering
        \includegraphics[width=\textwidth]{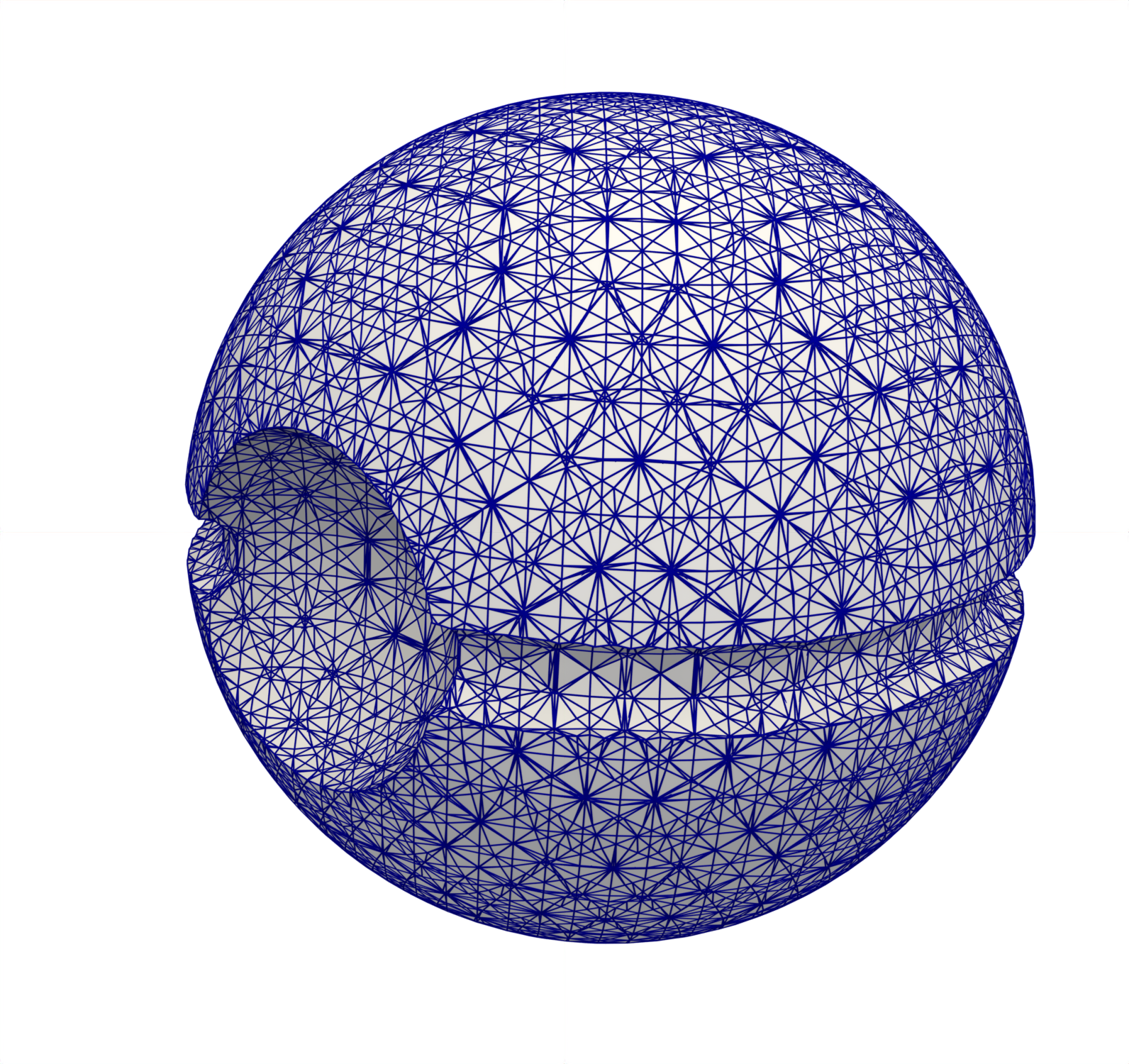}
        \caption{}
        \label{fig: building-phys-trian b}
    \end{subfigure}
    \begin{subfigure}{0.32\textwidth}
        \centering
        \includegraphics[width=0.8\textwidth]{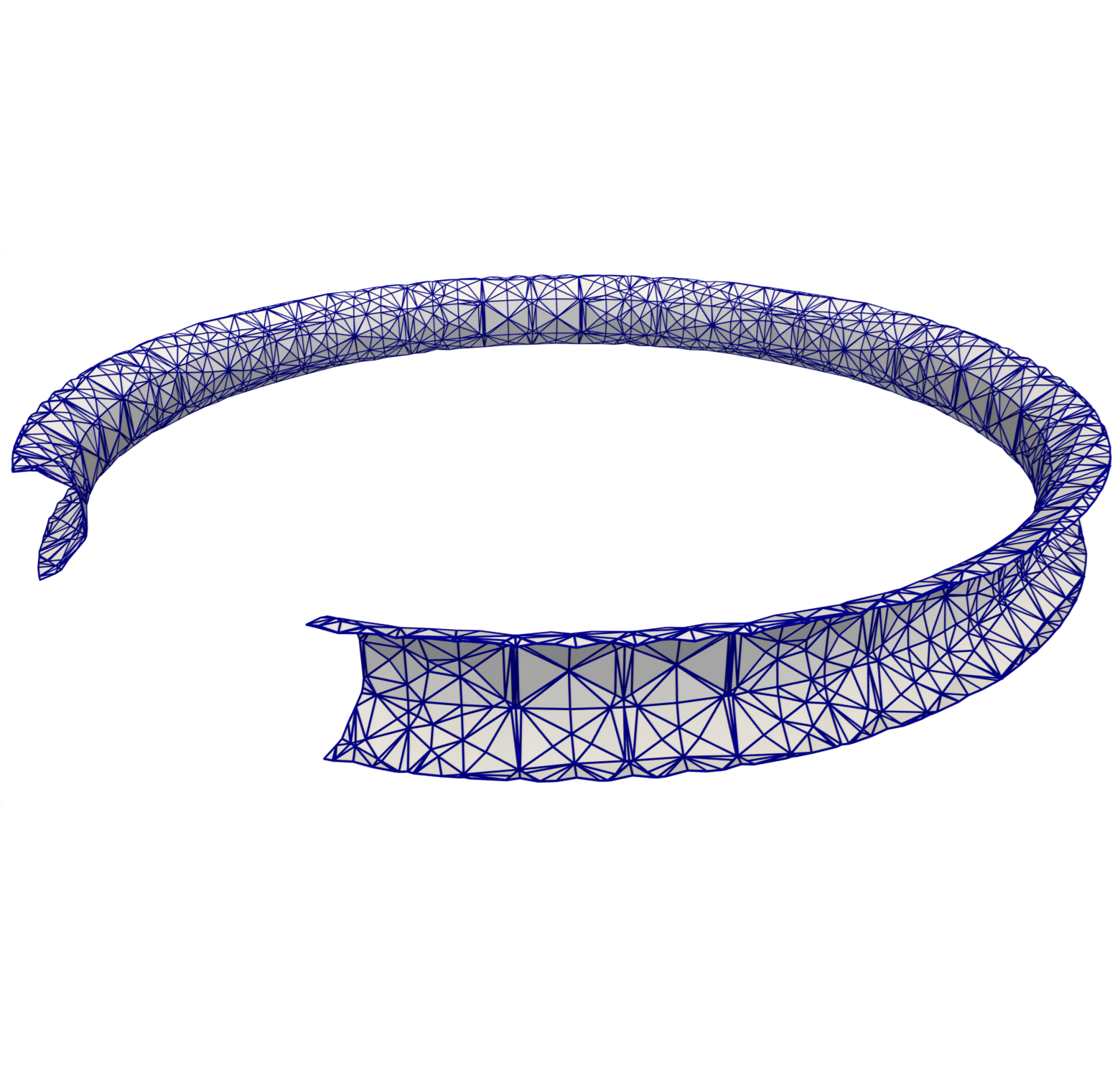}
        \caption{}
        \label{fig: building-phys-trian c}
    \end{subfigure}
    \caption{(a) Three non-conforming triangulations of $D_{\phi_i}$ ($i=1,2,3$) constructed from level-set functions $\phi_i$ via Algorithm~\ref{alg:cut_and_simplexify_cell}. (b) The triangulation of a domain $\Omega=D_{\phi_1}\cap \overline{D}_{\phi_2}^\complement\cap \overline{D}_{\phi_3}^\complement$ that is constructed by masking with the vectors $\boldsymbol{S}$ for each simplex that result from cutting. (c) The triangulation of the interface $\partial\Omega\cap\partial D_{\phi_3}$ resulting from Algorithm~\ref{alg:cut_and_simplexify_facet} and masking. The code to generate these is given in Listing~\ref{code:implementation example}.}
    \label{fig: building-phys-trian}
\end{figure}


\subsection{Forward-mode automatic differentiation}\label{sec: forward-mode ad}\noindent
We now consider how to utilise forward-mode automatic differentiation through Algorithms~\ref{alg:recursively_cut}, \ref{alg:cut_and_simplexify_cell}, and \ref{alg:cut_and_simplexify_facet}. These algorithms are cell-wise operations and rely on the local level-set finite element functions $\phi_i^{(K)}(\bx)$. To compute directional derivatives in $\phi_i$ using dual numbers, we can construct a \textit{dualised} version of $\phi_i^{(K)}(\bx)$ whereby the basis functions $\boldsymbol{\eta}_{i}$ are assigned a vector of duals $\boldsymbol{\varepsilon}_i$, where $i$ denotes that these are related to the $i^\mathrm{th}$ level set. These dual numbers then append the corresponding values of $\phi_i^{(K)}$ at each basis node. The following \texttt{Dualise} operation constructs a vector of level-set functions in which the $k^\mathrm{th}$ level set is dualised.
\begin{algorithm}[H]
    \caption{\texttt{Dualise}$(\boldsymbol{\phi}^{(K)},k)$}
    \justifying
    \begin{algorithmic}[1]
     \STATE $\boldsymbol{\eta}_k\gets\texttt{Basis}({\phi}^{(K)}_k)$
     \STATE $\phi^{(K)}_{\varepsilon,k}(\bx) \gets \sum_{n=1}^{N_j}(\phi_{n,k}^{(K)}+\varepsilon_n)\eta_{n,k}(\bx)$, 
     \STATE $\phi_{\epsilon,j}^{(K)} \gets \phi_{j}^{(K)}$ for $j\neq k$
    \RETURN $\boldsymbol{\phi}^{(K)}_\varepsilon(\bx)$
    \end{algorithmic}\label{alg:dualise}
\end{algorithm}
\noindent In the above, $\eta_{n,j}$ denotes the $n^{\rm th}$ basis of $\phi_j^{(K)}$, $\phi_{n,j}^{(K)}$ is the value of $\phi_j^{(K)}$ at the node where the $n^{\rm th}$ basis is unity, and $\delta_{jk}$ is the Kronecker delta. Note that only the $k^\mathrm{th}$ index of $\boldsymbol{\phi}^{(K)}_\varepsilon(\bx)$ that relates to level-set function $\phi_k^{(K)}$ is dualised.

We now consider how a shape derivative under a perturbation of $\phi_k$ can be computed using forward-mode automatic differentiation. In the following, assume that $\Omega\subset\mathbb R^d$ is constructed using set operations on the domains $D_{\phi_i}$. The level-set shape derivative of a functional $J(\boldsymbol{\phi})=\int_{\Omega(\boldsymbol{\phi})}f~\mathrm{d}\bx$, where $f\in C^0(\overline{\mathcal T}_h)$, with respect to $\phi_k$ can be computed as follows:
\begin{algorithm}[H]
    \caption{$\texttt{ShapeDerivative}(f,\Omega,\mathcal{T}_h,\boldsymbol{\phi},k)$}
    \justifying
    \begin{algorithmic}[1]
    \STATE $\mathcal{T}_{h,\rm cut}\gets\{K\in\mathcal{T}_h:{\Large{\cup}}_{i}[K\cap \partial D_{\phi_i}]\neq\emptyset\}$
    \FOR{$K \in \mathcal{T}_{h,\rm cut}$}
        \STATE $\boldsymbol{\phi}^{(K)}\gets \boldsymbol{\phi}\rvert_{K}$
        \STATE $\boldsymbol{\phi}^{(K)}_\varepsilon\gets\texttt{Dualise}(\boldsymbol{\phi}^{(K)},k)$
        \STATE $(\boldsymbol{\mathcal{P}}_\mathrm{subcells},\boldsymbol{S})\gets\texttt{CutAndSimplexifyCell}(K,\boldsymbol{\phi}^{(K)}_\varepsilon)$
        \FOR{${\mathcal{P}}\in\texttt{Mask}(\boldsymbol{\mathcal{P}}_\mathrm{subcells},\boldsymbol{S},\Omega)$}
            \STATE $({T},{J}_T)\gets \texttt{CellMap}(K,{\mathcal{P}})$
            \STATE $\boldsymbol{\mathrm{d}J}_K \gets \boldsymbol{\mathrm{d}J}_K + \mathrm{Du}(\int_{{T}^{-1}({\mathcal{P}})}(f\circ T)J_T~\mathrm{d}\boldsymbol{\xi})$
        \ENDFOR
    \ENDFOR
    \RETURN $\boldsymbol{\mathrm{d}J}_\mathrm{glob}\gets\texttt{Assemble}(\boldsymbol{\mathrm{d}J})$
    \end{algorithmic}\label{alg:auto-diff}
\end{algorithm}
\noindent In the above, \texttt{Mask} should return the simplices that partition $\Omega$ as defined by the state vector $\boldsymbol{S}$; \texttt{CellMap}$(K,\mathcal{P})$ should return the map $T$ from the reference cell to the simplex $\mathcal{P}$ cell, as well as its Jacobian determinant $J_T$; \texttt{Assemble} should take a vector of local contributions and assemble them into a global vector using standard finite element assembly; $\rm{Du}$ returns the dual part of a dual number or vector; and $\boldsymbol{\mathrm{d}J}_K$ is a vector of local contributions in a cell $K$ with length equal to the number of basis functions of $\phi_k^{(K)}$. An equivalent algorithm can be used to compute the shape derivative of a functional defined on an interface by replacing the cell-wise cutting with facet-wise cutting and modifying the integrand accordingly.

\begin{remark}\label{remark: single pass}
    It is possible to compute all derivatives of $J$ in a single pass of Algorithm~\ref{alg:auto-diff}. This is achieved by modifying Algorithm~\ref{alg:dualise} to dualise all level-set functions by replacing $\varepsilon_n$ with dual numbers $\varepsilon_{nj}$ that satisfy $\varepsilon_{nj}\varepsilon_{ml}=0$ for all $n,j,m,l$. 
\end{remark}

\subsection{Implementation}\label{subsec: implementation}\noindent
We extend the GridapTopOpt package \citep{GridapTopOpt} to include the polytope cutting and forward-mode automatic differentiation algorithms discussed above.
These are compatible with the wider Gridap package ecosystem~\citep{Badia2020,Verdugo2022}. Our code depends on and extends GridapEmbedded \citep{GridapEmbedded}, which is the Gridap package that implements unfitted discretisations. As a result, much of the functionality available in GridapEmbedded (for instance the cell aggregation technique AgFEM \citep{Badia_Verdugo_Martin_2018}) could be used in our context, although we do not explore that in this work. Our implementation utilises Gridap's \texttt{LazyArray} functionality to cache and defer the execution of operations until the specific elements of an array are explicitly accessed. Furthermore, we interface with GridapDistributed \cite{Badia2022} so that our implementation can be leveraged in both serial and distributed CPU computing frameworks. The distributed computing implementation readily follows from the serial implementation because all computations are cell-wise and can be mapped over each local mesh partition.

Like Gridap, our implementation attempts to provide a clear and intuitive application programming interface (API). We demonstrate this in Listing~\ref{code:implementation example}, whereby we construct the domain $\Omega$ and interface $\Gamma$ shown in Figures~\ref{fig: building-phys-trian b} and \ref{fig: building-phys-trian c}, respectively, and computes the derivatives of a functional $J(\phi_i)=\int_{\Omega(\phi_i)}1~\mathrm{d}\bx+\int_{\Gamma(\phi_i)}1~\mathrm{d}S$ with respect to the three level-set functions $\phi_i$ for $i=1,\dots3$.

\begin{listing}[!t]
\caption{An example script demonstrating how the polytopal cutter and forward-mode automatic differentiation techniques can be leveraged in the wider Gridap ecosystem. The domain $\Omega$ and interface $\Gamma$ are shown in Figures~\ref{fig: building-phys-trian b} and \ref{fig: building-phys-trian c}.}
\begin{minted}[fontsize=\footnotesize,breaklines,linenos,xleftmargin=1.5em]{julia}
using Gridap, Gridap.Adaptivity, Gridap.Geometry
using GridapEmbedded, GridapEmbedded.LevelSetCutters
using GridapTopOpt
# Background mesh
base_model = CartesianDiscreteModel((0,1,0,1,0,1),(41,41,41))
ref_model = refine(UnstructuredDiscreteModel(base_model), refinement_method = "barycentric")
model = get_model(ref_model)
# Level-set functions and geometries Dφᵢ
order = 1
reffe = ReferenceFE(lagrangian,Float64,order)
Vφᵢ = MultiFieldFESpace([TestFESpace(model,reffe) for i in 1:3])
φ₁ = x->sqrt((x[1]-0.5)^2+(x[2]-0.5)^2+(x[3]-0.5)^2)-0.25
φ₂ = x->sqrt((x[1]-0.75)^2+(x[2]-0.5)^2+(x[3]-0.5)^2)-0.1
φ₃ = x->sqrt((0.25-sqrt((x[1]-0.5)^2+(x[2]-0.5)^2))^2 + (x[3]-0.5)^2) - 0.025
φhᵢ = interpolate([φ₁,φ₂,φ₃],Vφᵢ)
Dφ₁ = DiscreteGeometryFromFEFunction(φhᵢ[1],model)
Dφ₂ = DiscreteGeometryFromFEFunction(φhᵢ[2],model)
Dφ₃ = DiscreteGeometryFromFEFunction(φhᵢ[3],model)
# Triangulation of Ω and Γ
Ω_geo = Dφ₁ ∩ !Dφ₂ ∩ !Dφ₃
cutgeo = cut(PolytopalLevelSetCutter(),model,Ω_geo)
Ω = DifferentiableTriangulation(cutgeo,Ω_geo)
Γ = DifferentiableEmbeddedBoundary(cutgeo,Ω_geo,Dφ₃)
dΩ = Measure(Ω,2*order)
dΓ = Measure(Γ,2*order)
# Functional and gradient
J(φhᵢ) = ∫(1)dΩ + ∫(1)dΓ
dJ = gradient(J,φhᵢ)
vec_dJ = assemble_vector(dJ,Vφᵢ)
\end{minted}
\label{code:implementation example}
\end{listing}

Our forward-mode automatic differentiation implementation is fully compatible with GridapTopOpt's adjoint methods for arbitrary PDE-constrained maps. At the lowest level, GridapTopOpt uses Céa's method to compute derivatives for PDE-constrained functionals. For the case of a variational form similar as the one in \eqref{eqn: optim problem example}, the method proceeds analogously  as described in Section~\ref{subsec: example shape calc}. 
In particular, the adjoint equation is obtained using Gridap's element-wise, forward-mode automatic differentiation to compute $d_P\mathcal{L}(P,\Lambda;\phi_1,\phi_2)(R)\big|_{P=U}$. 
For this computation, $R$ and $\Lambda$ are, in each element, replaced with the local finite element basis functions, which assembled together produces the adjoint equation,  whose solution $\hat\Lambda$ satisfies $d_P\mathcal{L}(U, \hat\Lambda;\phi_1,\phi_2)=0$.
The computation of the adjoint equation through this method exhibits the same computational complexity as a regular assembly of a finite-element matrix. 
The terms $d_{\phi_i}F$, $d_{\phi_i}A$, and $d_{\phi_i}l$ appearing in \eqref{eqn: lagrangian deriv} are then computed using the forward-mode automatic differentiation techniques proposed in Section~\ref{sec: forward-mode ad} instead of via Theorem~\ref{theorem 1}, \ref{theorem 2} and \ref{theorem 3}. For more complicated PDE-constrained maps (for instance $\phi$ mapping to $J(\phi)$ through several variational problems), the highest layer of GridapTopOpt's automatic differentiation system enables reverse-mode automatic differentiation tools such as Zygote.jl \citep{innes2019dontunrolladjointdifferentiating} to efficiently accumulate the derivative using the adjoint methods available in GridapTopOpt. For further discussion of this, we refer to the GridapTopOpt documentation.

\subsection{Verification and benchmarks}\label{subsec: valid and bench}\noindent
In Table~\ref{tab:verify} we verify our implementation of the forward-mode automatic differentiation implementation against the derivative expressions given in Theorems~\ref{theorem 1}, \ref{theorem 2}, and \ref{theorem 3} and finite differences for the geometries shown in Figure~\ref{fig:fig6}. Our implementation of the derivative formulas is not robust enough to handle randomly generated geometries (Fig.~\ref{fig:fig6c} \& \ref{fig:fig6d}), we therefore only use these geometries for validating the robustness of the automatic differentiation implementation against finite differences. For brevity, we only report relative errors in Table~\ref{tab:verify} for a few domain combinations. Our implementation in GridapTopOpt includes an extensive suite of unit tests that cover all derivative cases for one, two and three level-set functions as well as tests for solving PDEs on the unfitted discretisations generated by the polytopal cutter.

\begin{figure}[!t]
    \centering
    \begin{subfigure}{0.4\textwidth}
        \centering
        \includegraphics[width=0.8\linewidth]{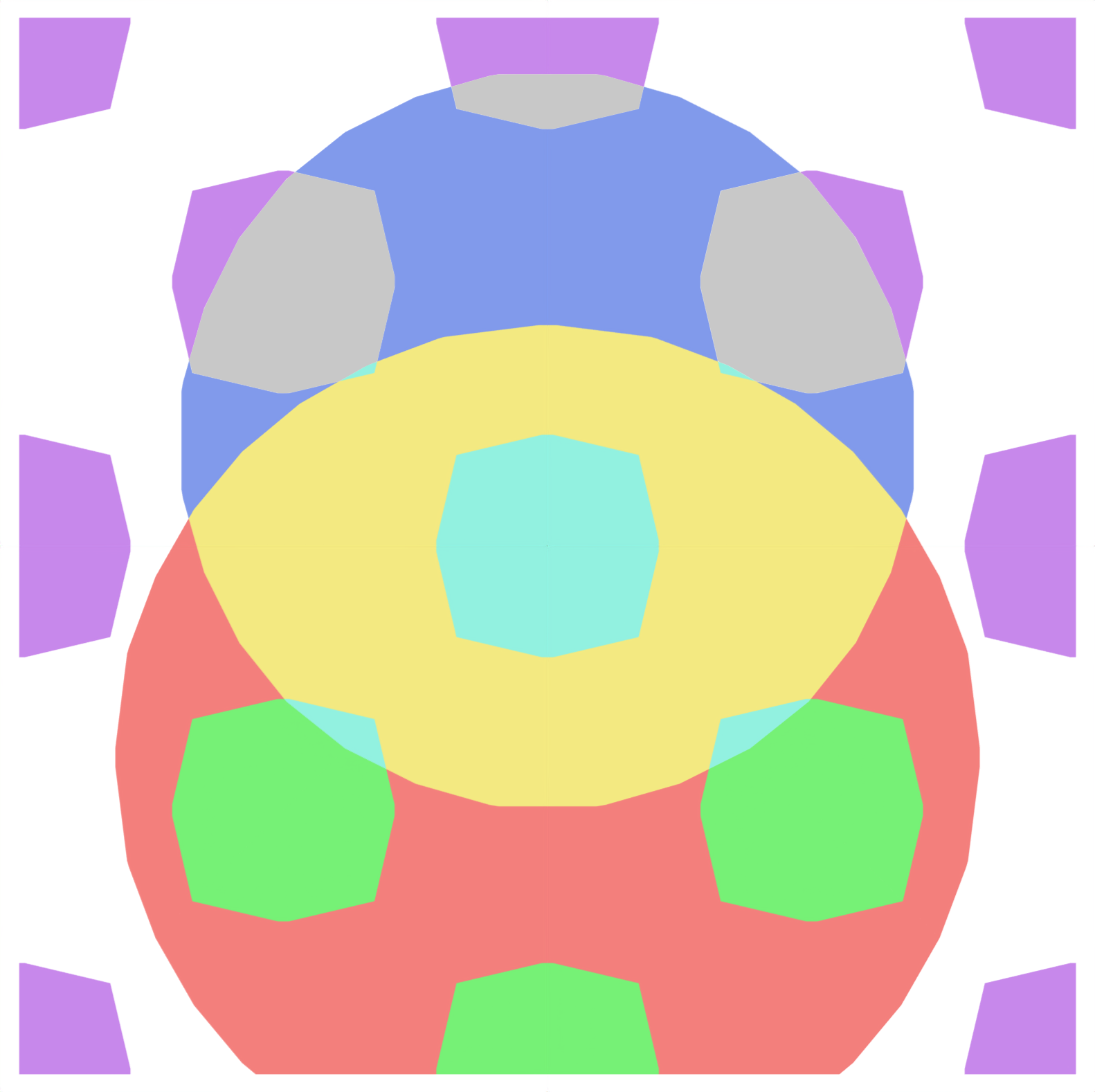}
        \caption{}
        \label{fig:fig6a}
    \end{subfigure}
        \begin{subfigure}{0.4\textwidth}
        \centering
        \includegraphics[width=0.8\linewidth]{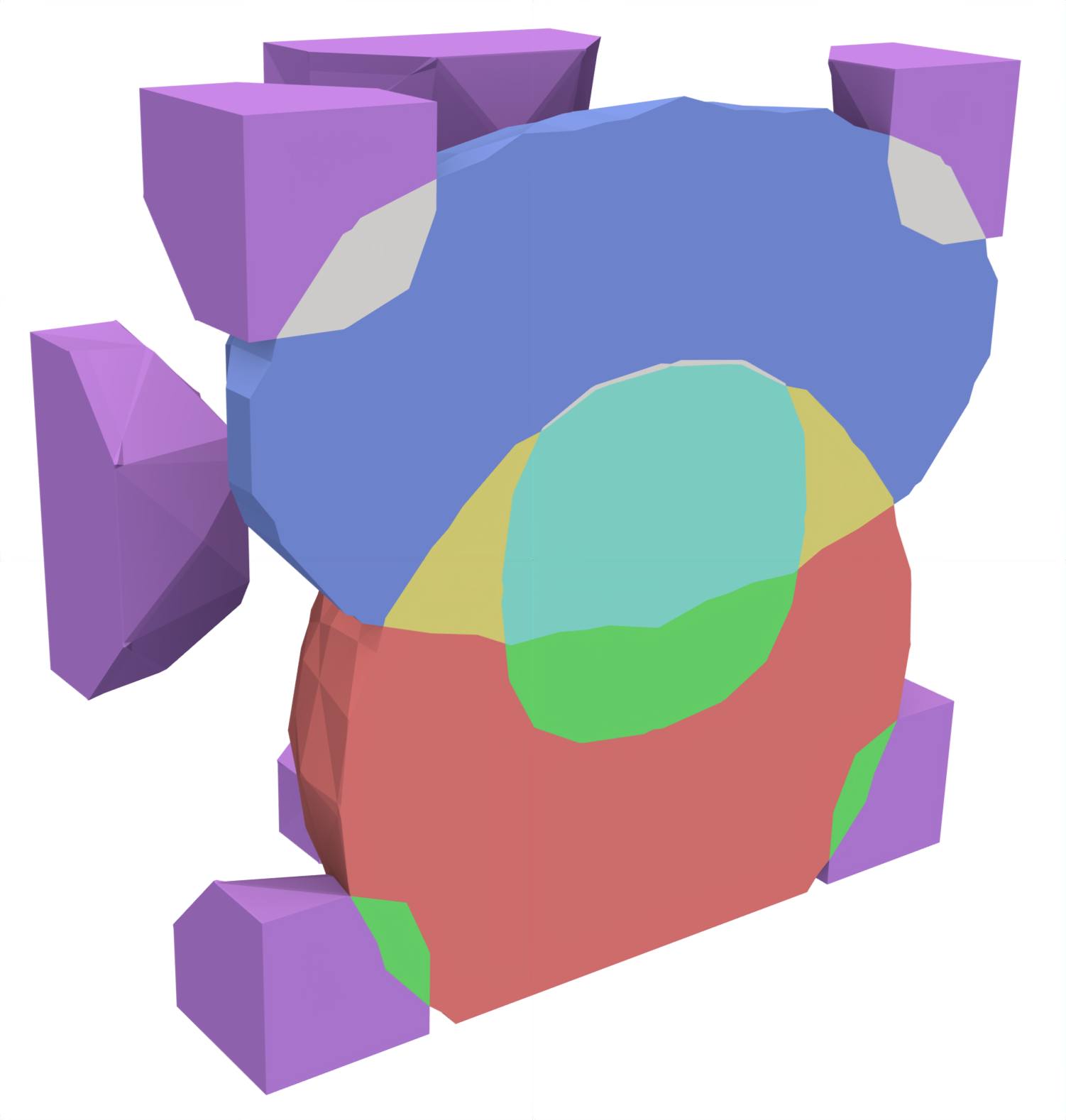}
        \caption{}
        \label{fig:fig6b}
    \end{subfigure}
    \begin{subfigure}{0.4\textwidth}
        \centering
        \includegraphics[width=0.8\linewidth]{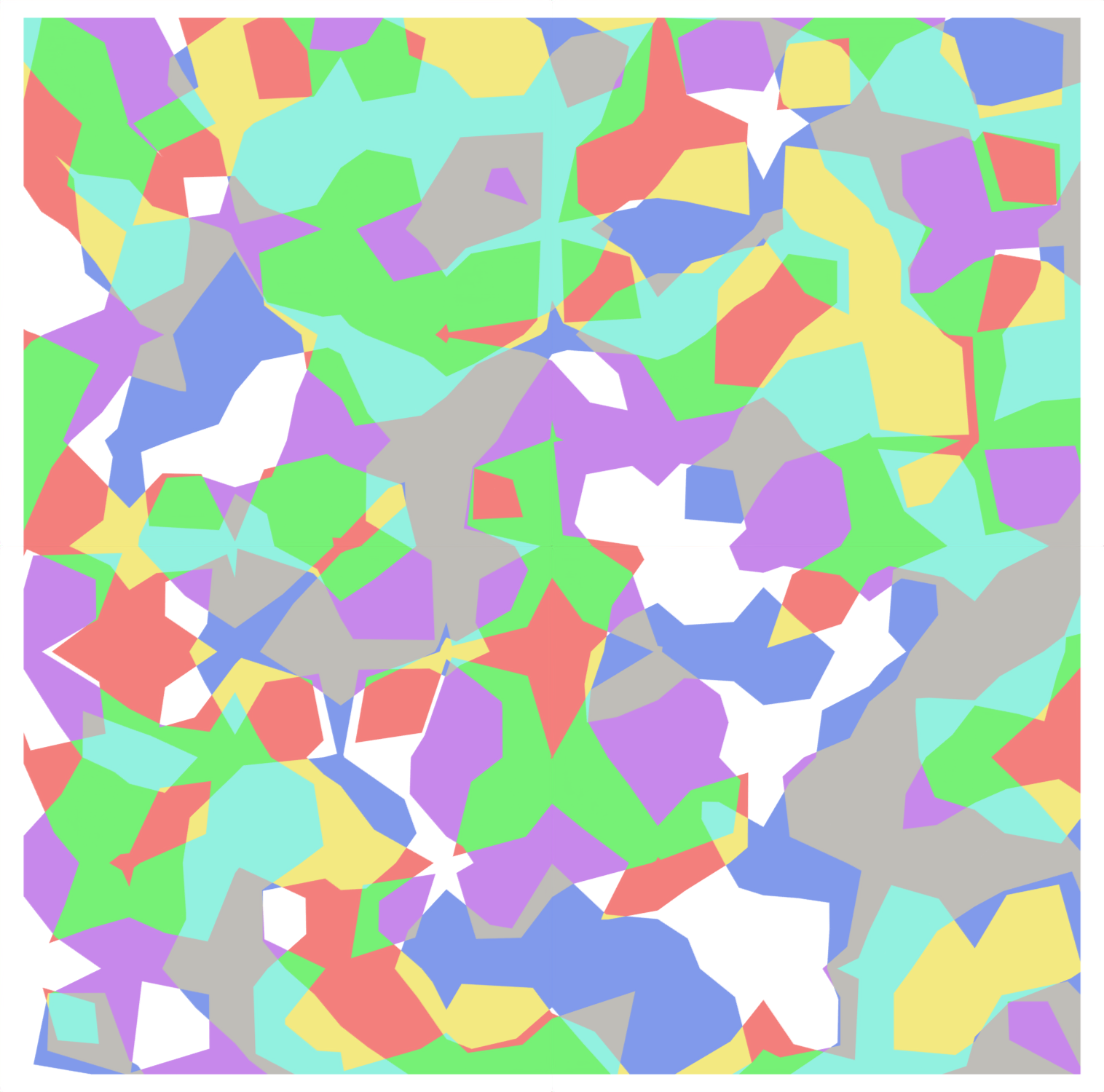}
        \caption{}
        \label{fig:fig6c}
    \end{subfigure}
        \begin{subfigure}{0.4\textwidth}
        \centering
        \includegraphics[width=0.8\linewidth]{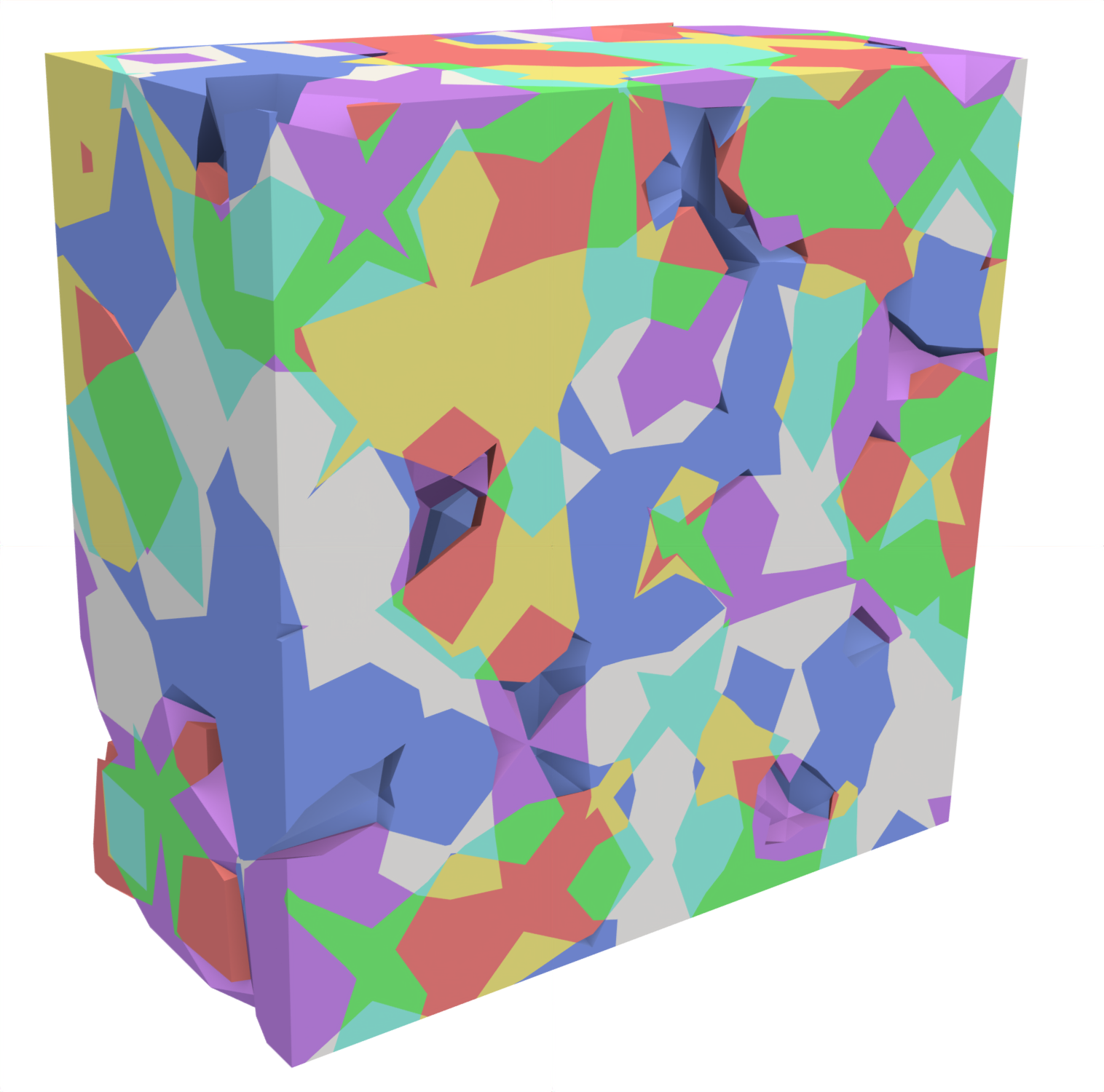}
        \caption{}
        \label{fig:fig6d}
    \end{subfigure}
    \caption{Geometries generated by three level-set functions for comparing the forward-mode automatic differentiation implementation with the exact derivatives provided in Section~\ref{sec: ls sd} and finite differences. (a) and (c) are in two dimensions. (b) and (d) are in three dimensions showing a visualisation of half of the background domain and with one phase removed to reveal internal structure. (c) and (d) are generated using random seeded values for the level-set functions.}
    \label{fig:fig6}
%
    \centering
    \captionof{table}{Verification of the forward-mode automatic differentiation implementation ($\mathrm{d}F_{\rm AD}$) against finite differences ($\mathrm{d}F_{\rm FDM}$) and the exact derivatives provided in Section~\ref{sec: ls sd} ($\mathrm{d}F$) for the functional $\phi\mapsto\int_{\Omega}\sin(x)\sin(y)~\mathrm{d}\Omega$ with respect to all three level-set functions.}
    \label{tab:verify}
    \footnotesize
    \begin{tabular}{c|c|c|c|c|c|c}
    & \multicolumn{4}{c|}{${\lVert\mathrm{d}F_{\mathrm{AD}}-\mathrm{d}F_{\mathrm{FDM}}\rVert_{\infty}}/{\lVert\mathrm{d}F_{\mathrm{AD}}\rVert_{\infty}}$} & \multicolumn{2}{c}{${\lVert\mathrm{d}F_{\mathrm{AD}}-\mathrm{d}F\rVert_{\infty}}/{\lVert\mathrm{d}F\rVert_{\infty}}$} \\ \hline
    \diagbox{$\Omega$}{Case} & Fig.~\ref{fig:fig6a} & Fig.~\ref{fig:fig6b} & Fig.~\ref{fig:fig6c} & Fig.~\ref{fig:fig6d} & Fig.~\ref{fig:fig6a} & Fig.~\ref{fig:fig6b} \\ \hline
    $D_{\phi_1}\cap D_{\phi_2} \cap D_{\phi_3}$ & 4.46e-8 & 1.34e-8 & 1.86e-9 & 2.82e-9 & 1.02e-14 & 5.24e-16 \\\hline
    $\overline{D}_{\phi_1}^\complement\cap \overline{D}_{\phi_2}^\complement \cap \overline{D}_{\phi_3}^\complement$ & 3.53e-8 & 1.51e-8 & 7.69e-9 & 2.69e-9 & 9.12e-15 & 2.50e-15  \\\hline
    $\partial D_{\phi_1}\cap D_{\phi_2} \cap D_{\phi_3}$ & 7.28e-8 & 4.31e-8 & 9.27e-10 & 1.08e-8 & 5.17e-15 & 1.93e-14  \\\hline
    $D_{\phi_1}\cap\partial D_{\phi_2} \cap \overline{D}_{\phi_3}^\complement$ & 5.94e-8 & 4.63e-8 & 8.88e-10 & 2.00e-9 & 6.74e-15 & 8.49e-15  \\\hline
    $D_{\phi_1}\cap \overline{D}_{\phi_2}^\complement \cap\partial D_{\phi_3}$ & 8.89e-9 & 2.91e-8 & 1.78e-9 & 3.00e-9 & 3.62e-15 & 5.26e-15  
    \end{tabular}
\end{figure}

\begin{figure}[t]
    \centering
    \begin{subfigure}{0.7\textwidth}
        \centering
        \includegraphics[width=\linewidth]{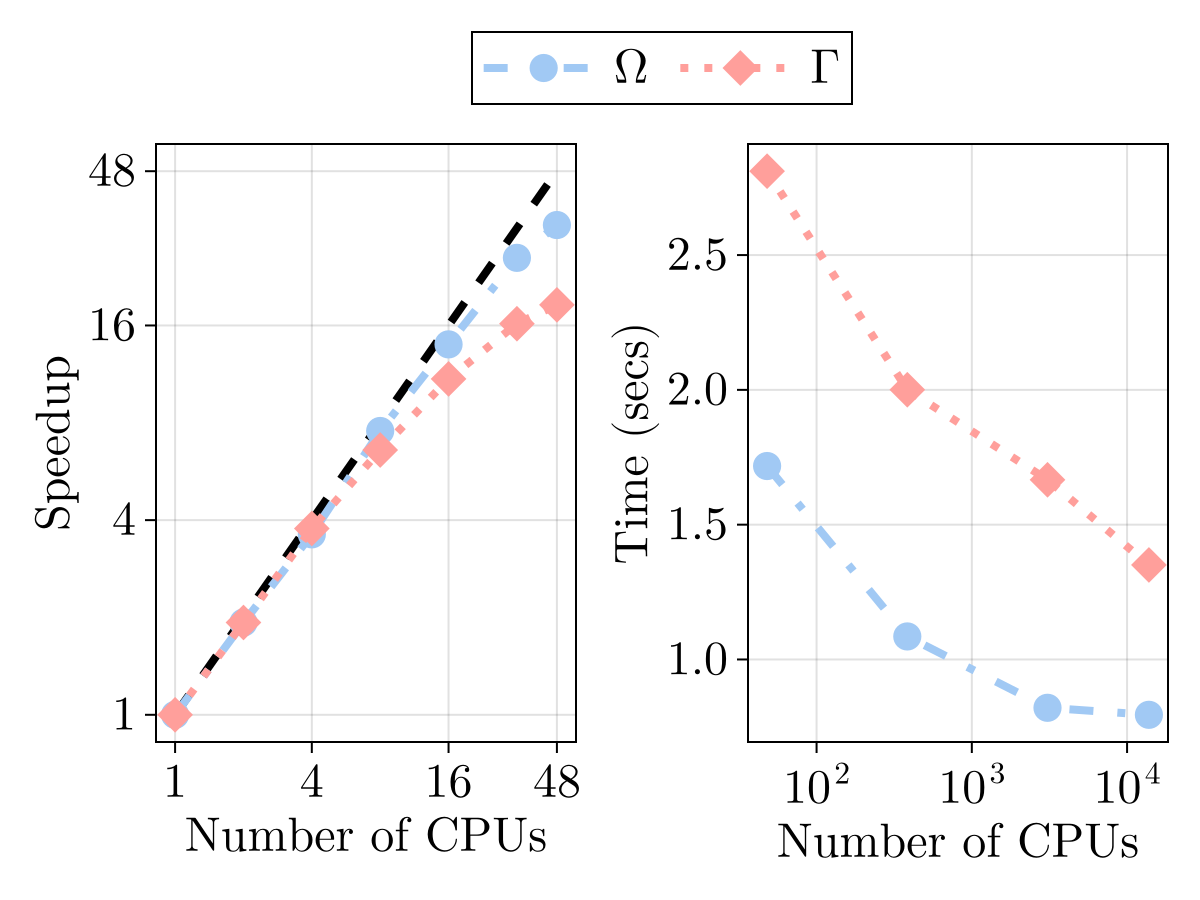}
        \caption{}
        \label{fig:bench}
    \end{subfigure}
    \hfill
    \begin{subfigure}{0.25\textwidth}
    \begin{subfigure}{\textwidth}
        \centering
        \includegraphics[width=\linewidth]{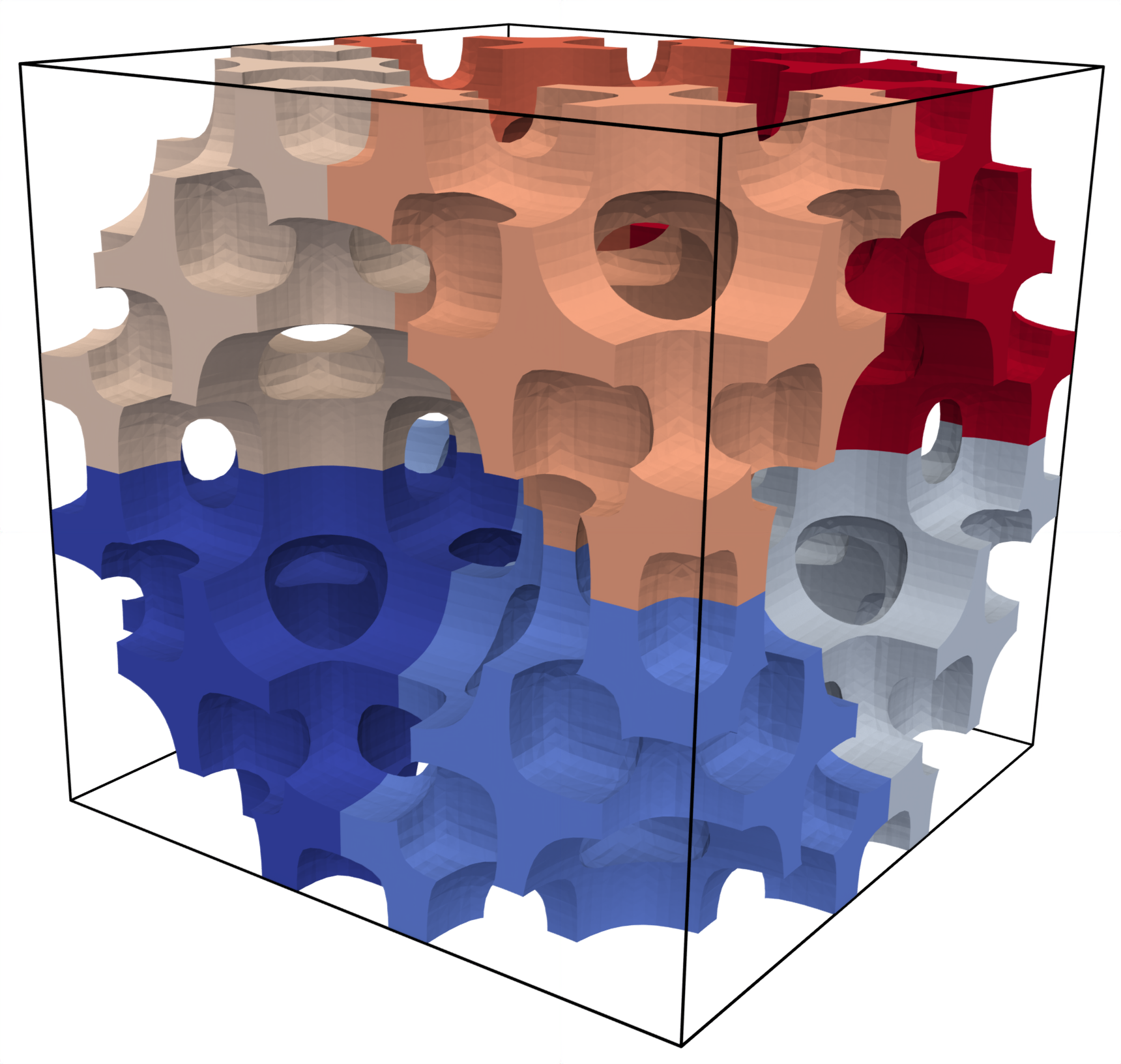}
        \caption{$\Omega=D_{\phi_1}\cap D_{\phi_2}\cap D_{\phi_3}$}
        \label{fig:benchgeo_a}
    \end{subfigure}
    \begin{subfigure}{\textwidth}
        \centering
        \includegraphics[width=\linewidth]{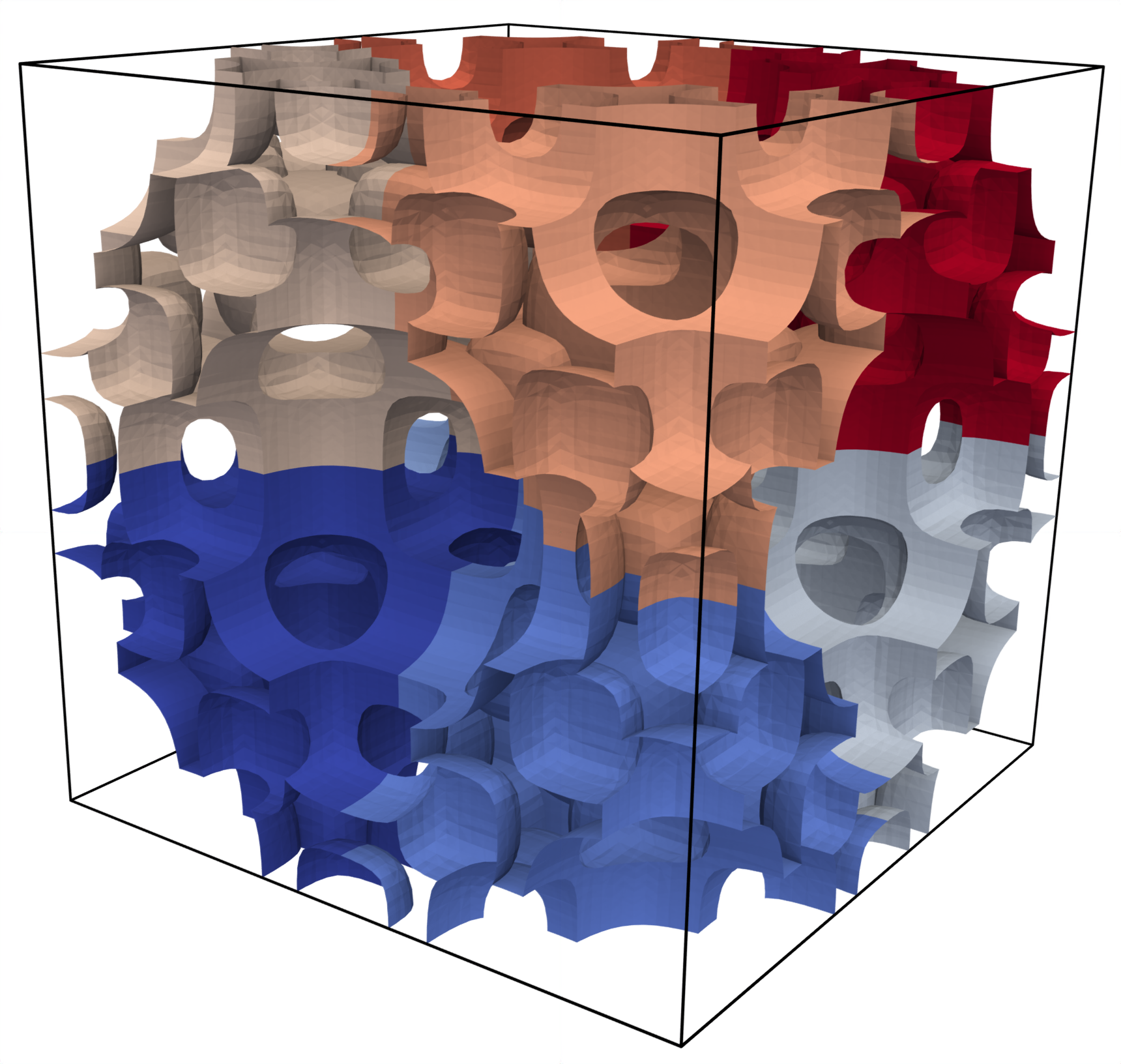}
        \caption{$\Gamma=\partial(D_{\phi_1}\cap D_{\phi_2}\cap D_{\phi_3})$}
        \label{fig:benchgeo_b}
    \end{subfigure}
    \end{subfigure}
    \caption{(a) Scalability benchmarks for computing all derivatives of $(\phi_1,\phi_2,\phi_3)\mapsto\int_{M}\cos(x)\sin(y)~\mathrm{d}M$ where $M=\Omega$ or $\Gamma$. The strong scaling benchmark is given in the left plot and weak scaling in the right plot. (b) and (c) Visualisations of the domains $\Omega$ and $\Gamma$, respectively, that are generated by $\phi_i(x,y,z)=\cos(a_i(x-b_i))\cos(a_iy)\cos(a_iz)-c$ for $i=1,2,3$ with $a_i=(2,4,6)$, $b_i=(0,1/4,1/6)$, and $c=\sqrt{2}/10$. The colours show the partitioning over eight CPUs for the strong scaling benchmark.}
    \label{fig:benchgeo}
\end{figure}

Next, we consider strong and weak scaling benchmarks of the automatic differentiation functionality for the geometries shown in Figure~\ref{fig:benchgeo}. We utilise the Gadi supercomputing cluster at the NCI (Canberra, Australia). Each node used has 192 GB of RAM and an Intel Xenon Platinum 8274 (Cascade Lake) processor. We use OpenMPI 5.0.8 and Julia 1.12.6 for all computation. For strong scaling benchmarks, we use a $\sim $5M cell tetrahedral background mesh constructed from a Cartesian mesh with $62^3$ cells that undergo one round of barycentric refinement. We partition this mesh over several CPUs, $P=\{1,2,4,8,16,32,48\}$. For weak scaling benchmarks, we set the number of cells per CPU to be 120,000 and scale up the number of CPUs, $P=\{48,384,3072,13824\}$.

Figure~\ref{fig:bench} shows the strong- and weak-scaling results. For strong scaling, the performance is suboptimal, which is expected due to load imbalance across the processors. In contrast, the weak-scaling experiments demonstrate nearly ideal scalability up to 1,658,880,000 cells across 13824 CPUs. In fact, the runtime even decreases as the processor count grows, which is expected because the number of cut cells is still decreasing for finer meshes. Finally, across all our numerical experiments, we observe that computing all derivatives in a single pass of Algorithm~\ref{alg:auto-diff} is more efficient than separately for each $\phi_i$.

Finally, it should be noted that the use of automatic differentiation will invariably require more floating-point operations than an assembly of the directional derivatives given in Section~\ref{sec: ls sd}. As an illustration, we consider taking $J(\phi_1,\phi_2,\phi_3)=\int_\Omega\cos(x)\cos(y)\cos(z)~\mathrm{d}x$ in Listing~\ref{code:implementation example} and benchmarking Line 28--29 against the assembly of the exact derivative. For this case, we observe that assembling the exact derivative takes approximately 0.44 seconds, whereas automatic differentiation takes approximately 1.46 seconds. As per the scalability results above, the forward-mode automatic differentiation can be substantially accelerated using a distributed CPU framework. In practice, the decision to rely on automatic differentiation should be balanced against the intellectual effort needed to derive and implement the exact expressions. In situations where the explicit formulas are cumbersome (for instance $d_{\phi_2}J(\phi_1,\phi_2)(w)$ in the Supplementary Material), or where the formulation of the optimisation problem is subject to change, automatic differentiation provides a practical route to reduce the time-to-first-result, to avoid errors in sensitivity derivations and implementation, and to make the code easier to maintain and update.

\section{Level-set topology optimisation}\label{sec: Level-set topology optimisation}\noindent
In the following, we briefly discuss our implementation of multi-phase level-set topology optimisation with unfitted finite elements. We follow the approach of \citet{WEGERT2025118203} for each level-set function.

\subsection{Boundary evolution}\noindent
To update the level-set functions we use the approach proposed by \citet{10.1002/nme.4823_2015}. Namely, we evolve the $i^{\rm th}$ level-set function $\phi_i$ by solving the transport equation
\begin{equation}\label{eqn: transport}
    \begin{cases}
        \displaystyle\pderiv{\phi_i(t,\bx)}{t}+\boldsymbol{\beta}_i\cdot\boldsymbol{\bnabla}\phi_i(t,\bx)=0,\\
        \phi_i(0,\bx)=\phi_{i,0}(\bx),\\
        \bx\in D,~t\in(0,T),
    \end{cases}
\end{equation}
where $\boldsymbol{\beta}_i=g_i\boldsymbol{n}_i$ is the velocity field and $\phi_{i,0}$ is the initial value of the $i^{\rm th}$ level-set function. 
In this work, $g_i$ is the gradient resulting from the Hilbertian extension approach \cite{10.1016/bs.hna.2020.10.004_978-0-444-64305-6_2021}, and the normal $\boldsymbol{n}_i$ is computed as $\boldsymbol{n}_i={\boldsymbol{\bnabla}\phi_i/}{\lVert\boldsymbol{\bnabla}\phi_i\rVert}$. The Hilbertian extension approach identifies the directional derivative as a gradient in a Hilbert space $H$, defined on the background domain $D$, by solving the variational problem 
\begin{equation}\label{eqn: hilb extension wf}
\begin{aligned}
&\text{Find $g_i\in H$ such that}
\\
&\langle g_i,w\rangle_H=\mathrm{d}J(\phi_i;w),~\forall w\in H,
\end{aligned}
\end{equation}
where $\langle\cdot,\cdot\rangle_H$ is the inner product on $H$.
Here we use a weighted $H^1(D)$ bilinear form and solve using standard Lagrangian finite elements \citep{WEGERT2025118203}.
This choice provided a smooth gradient whose support is extended throughout the background domain $D$.  

To solve \eqref{eqn: transport} and \eqref{eqn: hilb extension wf}, we utilise the approach of \citet{WEGERT2025118203} for each level-set function. In particular, the transport equation in \eqref{eqn: transport} is solved using an interior penalty approach and Crank--Nicolson for the discretisation in time \citep{10.1016/j.cma.2017.09.005_2018,Burman_Elfverson_Hansbo_Larson_Larsson_2017,Burman_Fernández_2009}. 
\subsection{Reinitialisation}\noindent
A smooth evolution of the boundary described above requires that the level-set function $\phi_i$ has well-behaved gradients. This is usually achieved by regularly reinitialising $\phi_i$ to be an approximation of the signed distance function \citep{978-0-387-22746-7_2006} for the corresponding domain $D_{\phi_i}$. That is, $\phi_i$ is reinitialised as
\begin{equation}
\phi_i(\bx)=\begin{cases}
-d(\bx, \partial D_{\phi_i})&\text{if }\bx \in D_{\phi_i}, \\
0&\text{if } \bx \in \partial D_{\phi_i}, \\
d(\bx, \partial D_{\phi_i})&\text{if } \bx \in D \backslash \overline{D}_{\phi_i},
\end{cases}
\end{equation}
where $d(\bx, \partial D_{\phi_i}):=\min _{\boldsymbol{p} \in \partial D_{\phi_i}}\lvert\bx-\boldsymbol{p}\rvert$ is the minimum distance from $\bx$ to the boundary $\partial D_{\phi_i}$. To find an approximate signed distance function for each level-set function, we utilise the approach of \citet{mallon2024neurallevelsettopology,WEGERT2025118203}. In particular, we solve the reinitialisation problem using Picard iterations on the eikonal equation,  stabilising the scheme via an artificial viscosity term, and prevent boundary movement via a surface penalty term. We refer to \citet{mallon2024neurallevelsettopology} for further discussion.

\subsection{Isolated volumes}\label{sec: isolated vol tagging}\noindent
Non-connected domains with isolated volumes may easily appear during the optimisation iterations.
In the context of elliptic problems like those in Sections~\ref{sec: multi-phase unfitted fes} and~\ref{sec: anisotropic diffusion}, the state equation restricted to an isolated volume will become singular due to the natural boundary condition on the volume’s boundary.
In this case, there will be a local one-dimensional null space of constant functions in the isolated volume. 
To deal with this issue in the context of multi-phase unfitted finite elements, we follow the approach of \citet{WEGERT2025118203}. In particular, we construct an indicator function $\chi$ using isolated volume tagging \citep{WEGERT2025118203} and add a term to the bilinear form that constrains the average of the solution to be zero in the regions marked by $\chi$. 
This addition removes the null space from isolated regions appearing in intermediate designs.

\section{Examples}\label{sec: Examples}\noindent
In this section, we consider example multi-phase topology optimisation problems using unfitted finite elements to demonstrate the capabilities provided by our approach. All scripts are provided open-source as described in the \textit{Data availability} section.

\subsection{Anisotropic diffusion}\label{sec: anisotropic diffusion}\noindent
We first consider minimising the total thermal diffusion in two-dimensional multi-phase anisotropic media. 
This problem is also known as the \textit{thermal compliance problem}, due to the resemblance with the analogue problem in linear elasticity. 
The formulation is as discussed in Sections~\ref{sec: multi-phase unfitted fes} and \ref{subsec: example shape calc}, with the addition of volume constraints. 

The optimisation problem is given by
\begin{equation}\label{ansi optim}
    \begin{aligned}
        \min_{(\phi_1,\phi_2)\in\mathcal{V}_{h}^2}&~(\Ab_1\bnabla u_1,\bnabla u_1)_{D_{\phi_1}\cap \overline{D}_{\phi_2}^\complement}+(\Ab_2\bnabla u_2,\bnabla u_2)_{D_{\phi_1}\cap D_{\phi_2}}\\
        {\rm s.t.}&~\mathrm{Vol}(D_{\phi_1}\cap \overline{D}_{\phi_2}^\complement)=0.05\mathrm{Vol}(D),\\
        &~\mathrm{Vol}(D_{\phi_1}\cap D_{\phi_2})=0.05\mathrm{Vol}(D),
    \end{aligned}
\end{equation}
where $u_1$ and $u_2$ satisfy $A([u_1,u_2],[v_1,v_2];\phi_1,\phi_2)=l([v_1,v_2];\phi_1,\phi_2)$ with $A$ and $l$ given by 
\begin{equation}\label{eqn: A aniso example}
    \begin{aligned}
        A([u_1,u_2]&,[v_1,v_2];\phi_1,\phi_2)= (\Ab_1\bnabla v_1,\bnabla v_1)_{D_{\phi_1}\cap \overline{D}_{\phi_2}^\complement}+(\Ab_2\bnabla u_2,\bnabla v_2)_{D_{\phi_1}\cap D_{\phi_2}}
        \\&\quad+\sum_{F\in\mathcal{F}_{G_1}}(\eta_{1}(h)\jump{\partial_{\boldsymbol{n}_F}u_1},\jump{\partial_{\boldsymbol{n}_F}v_1})_F+\sum_{F\in\mathcal{F}_{G_2}}(\eta_{2}(h)\jump{\partial_{\boldsymbol{n}_F}u_2},\jump{\partial_{\boldsymbol{n}_F}v_2})_F,
        \\&\quad-(\mean{\Ab\bnabla_i u_i},\jump{ v_i})_{D_{\phi_1}\cap\partial D_{\phi_2}}-(\mean{\Ab\bnabla_i v_i},\jump{ u_i})_{D_{\phi_1}\cap\partial D_{\phi_2}}+(\mu(h)\jump{u_i},\jump{v_i})_{D_{\phi_1}\cap\partial D_{\phi_2}}
        \\&\quad+(\chi u_1,v_1)_{D_{\phi_1}\cap \overline{D}_{\phi_2}^\complement}+(\chi u_2,v_2)_{D_{\phi_1}\cap D_{\phi_2}},
    \end{aligned}
\end{equation}
and
\begin{equation}
    l(V;\phi_1,\phi_2) = \sum_i(g,v_i)_{\Gamma_N}.
\end{equation}
The last two terms of \eqref{eqn: A aniso example} constrains $u_1$ and $u_2$ to be zero within isolated volumes marked by $\chi$ (see Section \ref{sec: isolated vol tagging}). In this example, we take $D$ to be the unit square and choose $\Gamma_N$ and $\Gamma_D$ to be given by the dark blue and black regions in Figure~\ref{fig:fig7a}, respectively. Furthermore, we take the flux $g$ on $\Gamma_N$ to be unitary and the anisotropic diffusion tensors to be
\begin{equation}
    \Ab_1=\begin{pmatrix}
        5 & 0\\
        0 & 1
    \end{pmatrix}, \quad \Ab_2=\begin{pmatrix}
        1 & 0\\
        0 & 5
    \end{pmatrix}.
\end{equation}
For the weighting parameters $\kappa_1$ and $\kappa_2$, we use expressions~\eqref{eqn: weighting aniso}. Finally,  the ghost penalty parameters $\eta_i(h)$ and the Nitsche penalty parameter $\mu(h)$ are set to
\begin{equation}
    \eta_i(h) = 0.1\alpha_i h,\quad \mu(h)=\frac{10^2\max(\alpha_1,\alpha_2)}{h},
\end{equation}
where $\alpha_i=\lVert \Ab_i\rVert_\infty$. Finally, for the numerical experiments, we fix the geometry near the Dirichlet region to be a small ball. This is done by creating a level-set function for the region with domain $D_{\rm Diri}$ and modifying the descriptions of the phases to be $(D_{\phi_1}\cap \overline{D}_{\phi_2}^\complement)\setminus \overline{D}_{\rm Diri}$ and $(D_{\phi_1}\cap D_{\phi_2})\cup D_{\rm Diri}$. 

We discretise the above problem over a structured background mesh made of 40,000 triangles and solve the constrained optimisation problem in \eqref{ansi optim} using a projection method \citep{Wegert_2023b}. Automatic shape differentiation computes all sensitivities. 

Figure~\ref{fig:fig7} visualises the material phases at some iterations of the optimisation algorithm and shows the iteration history of the objective and volume constraints. The final optimised design has a cross-like structure, with vertical struts comprised of the material phase that has a large vertical diffusion coefficient, and horizontal struts comprised of the material phase that has a large horizontal diffusion coefficient. The design therefore gives efficient transfer of heat from the sources (Neumann boundary conditions) to the sink (Dirichlet boundary condition). The increase of the objective function over the optimisation that we see in Figure~\ref{fig:fig7} is expected due to the high volume fraction of each phase in the initial design. 

We can include obstacles using the same machinery that is used to fix the geometry around the Dirichlet region. In the example in Figure~\ref{fig:aniso_2d_obst}, we solve the anisotropic minimum thermal compliance problem with four square obstacles blocking the straight paths from the Neumann boundaries to the Dirichlet region, prescribing zero Neumann boundary conditions one the obstacle boundaries. For this example, the volume fraction constraint is $0.1$ rather than $0.05$ for each phase.   

\begin{figure}[!t]
    \centering
    \begin{subfigure}{0.32\textwidth}
        \centering
        \includegraphics[width=\linewidth]{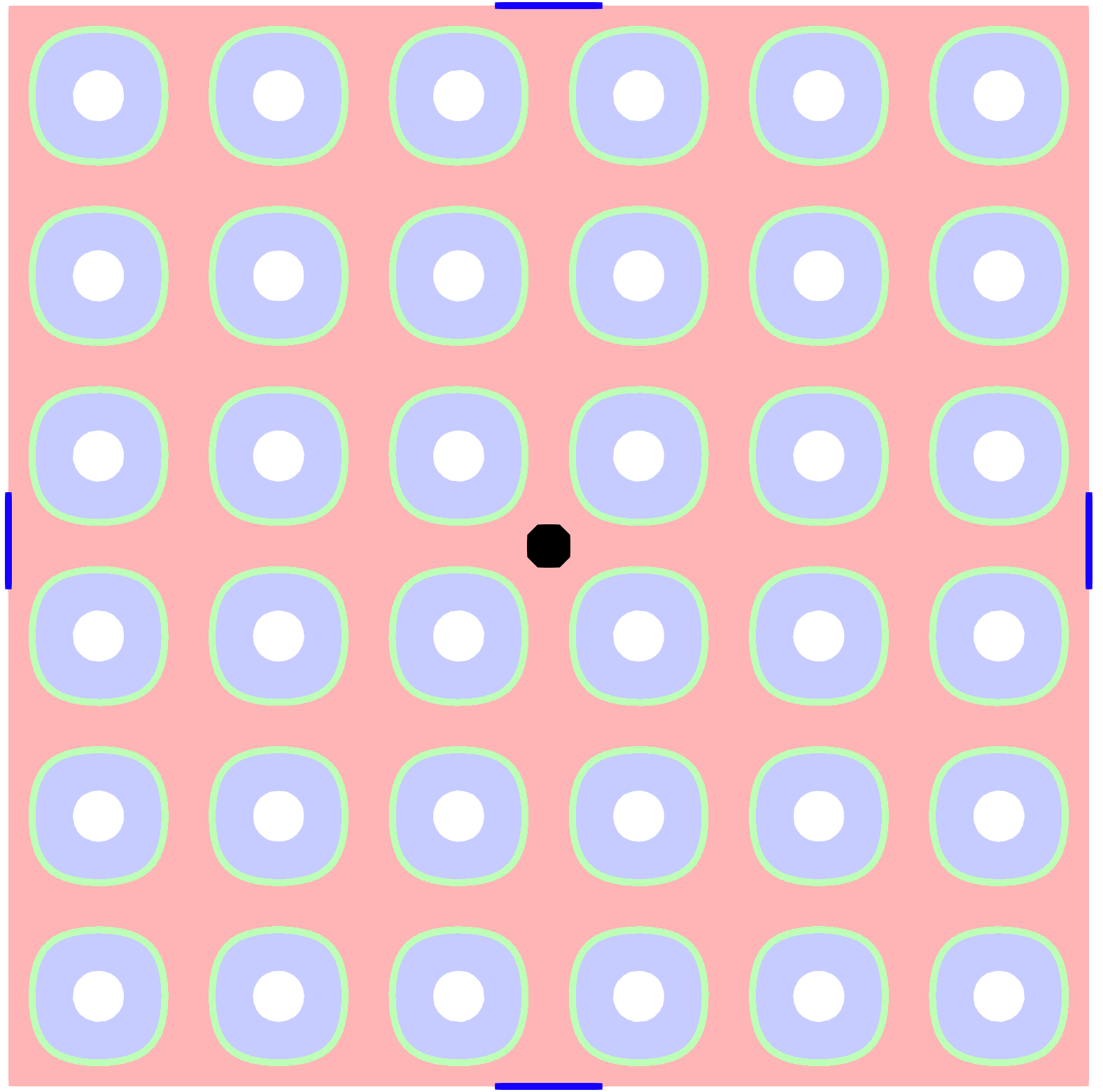}
        \caption{Initial iteration}
        \label{fig:fig7a}
    \end{subfigure}
    \begin{subfigure}{0.32\textwidth}
        \centering
        \includegraphics[width=\linewidth]{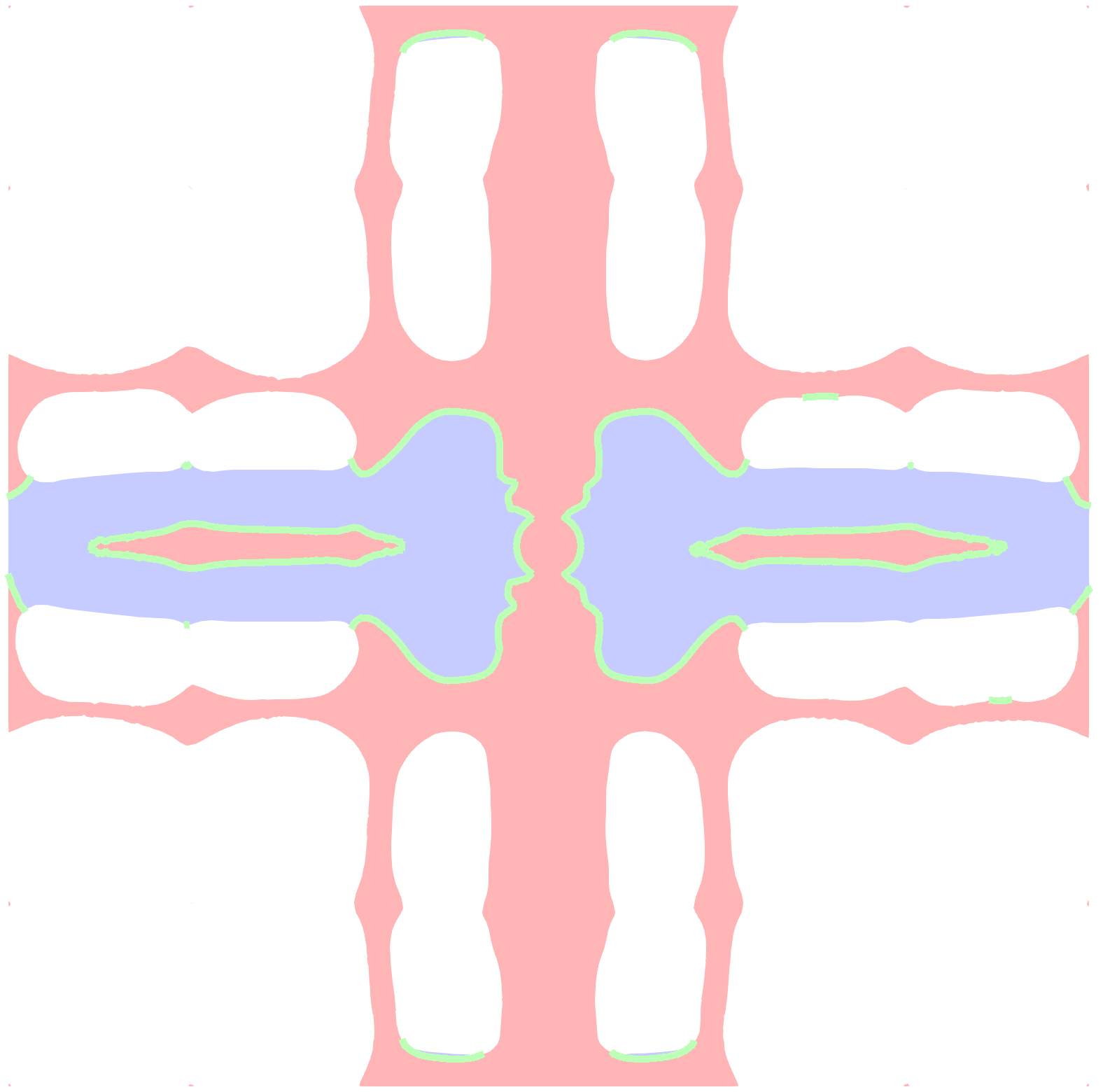}
        \caption{Intermediate iteration}
    \end{subfigure}
    \begin{subfigure}{0.32\textwidth}
        \centering
        \includegraphics[width=\linewidth]{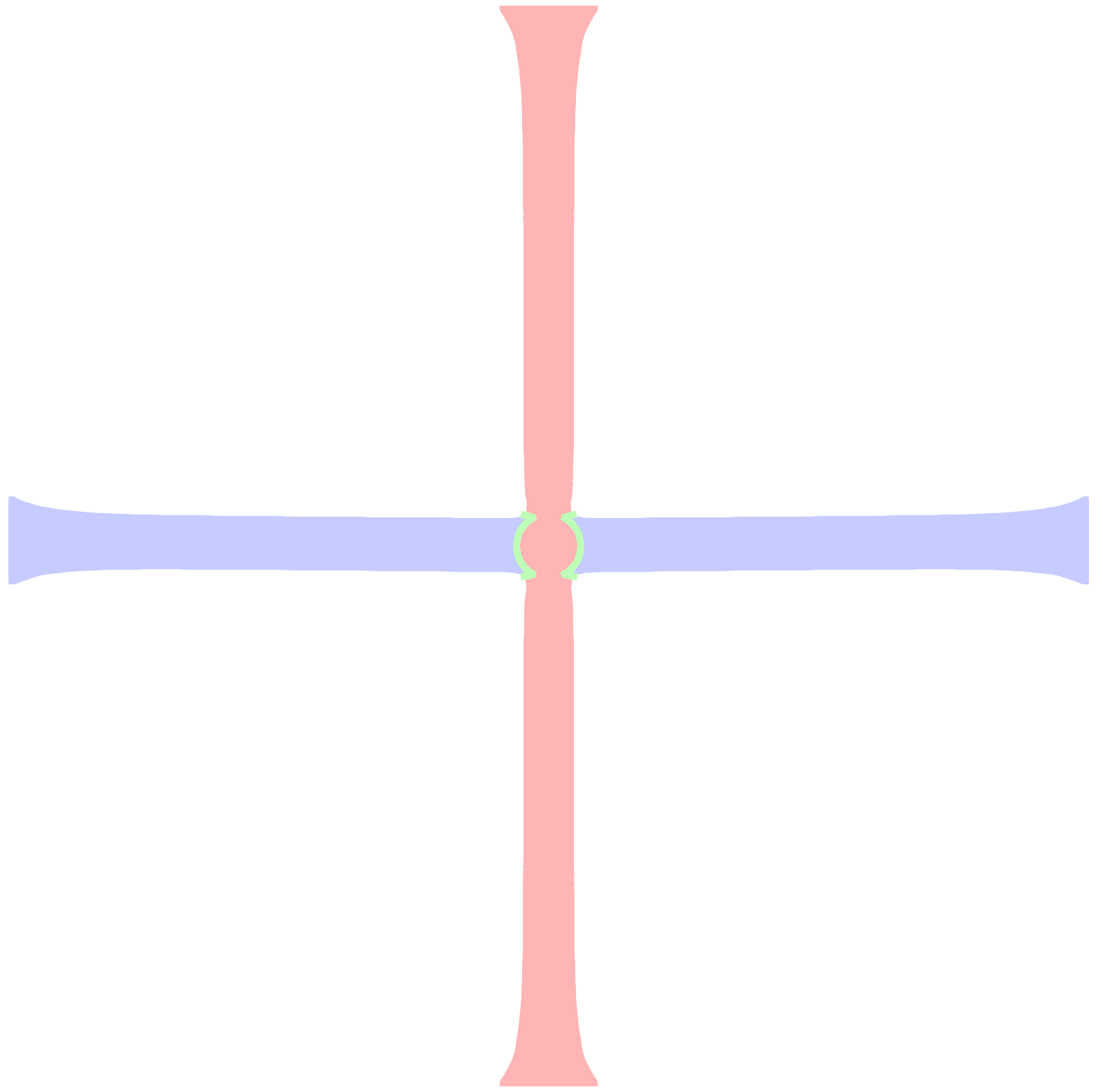}
        \caption{Final iteration}
    \end{subfigure}
    \begin{subfigure}{0.8\linewidth}
        \centering
        \includegraphics[width=\linewidth]{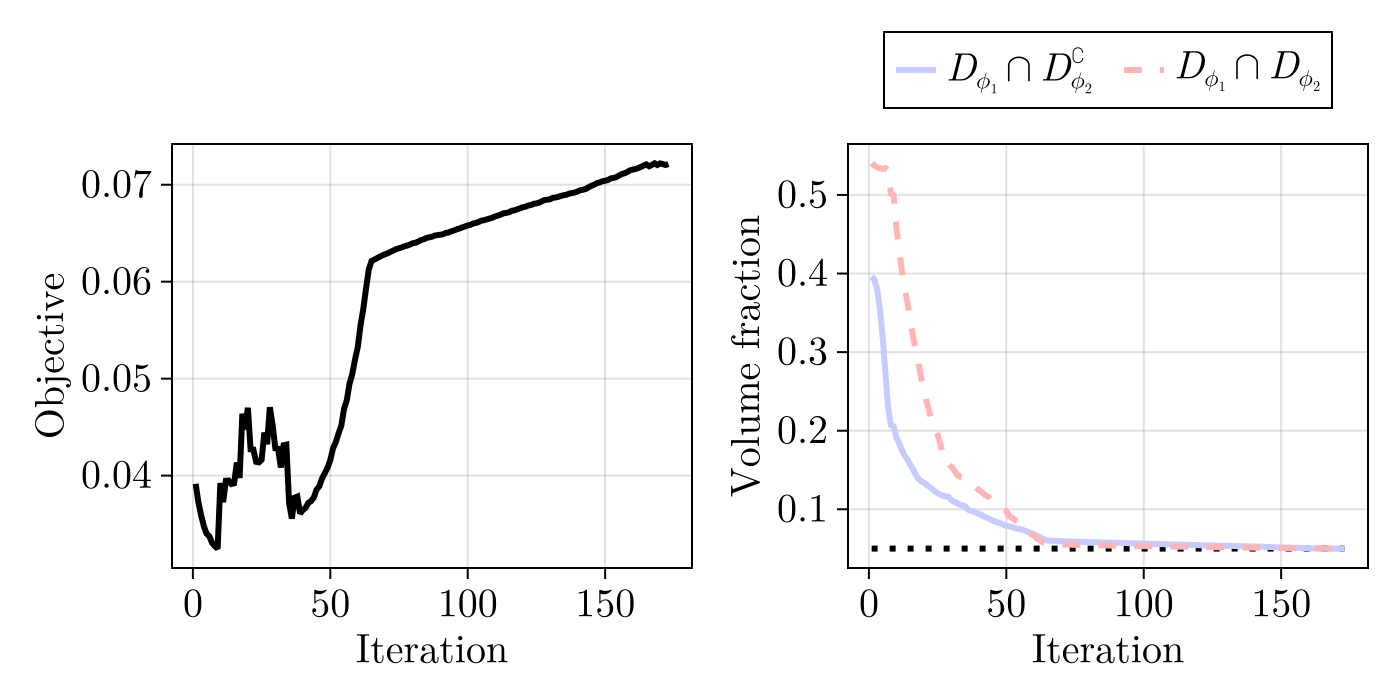}
        \caption{}
    \end{subfigure}
    \caption{(a)--(c) Visualisations of the material phases for the two-dimensional anisotropic minimum thermal compliance problem. (d) Iteration history for the objective and volume fractions of each phase compared to the required volume fraction of 0.05. The blue and red phases correspond to materials with large diffusion coefficients in the horizontal and vertical directions, respectively. In green we visualise the interface between the two material phases. In (a), we show the Neumann and Dirichlet regions in dark blue and black, respectively.}
    \label{fig:fig7}
\end{figure}

\begin{figure}[!t]
    \centering
    \begin{subfigure}{0.32\textwidth}
        \centering
        \includegraphics[width=\linewidth]{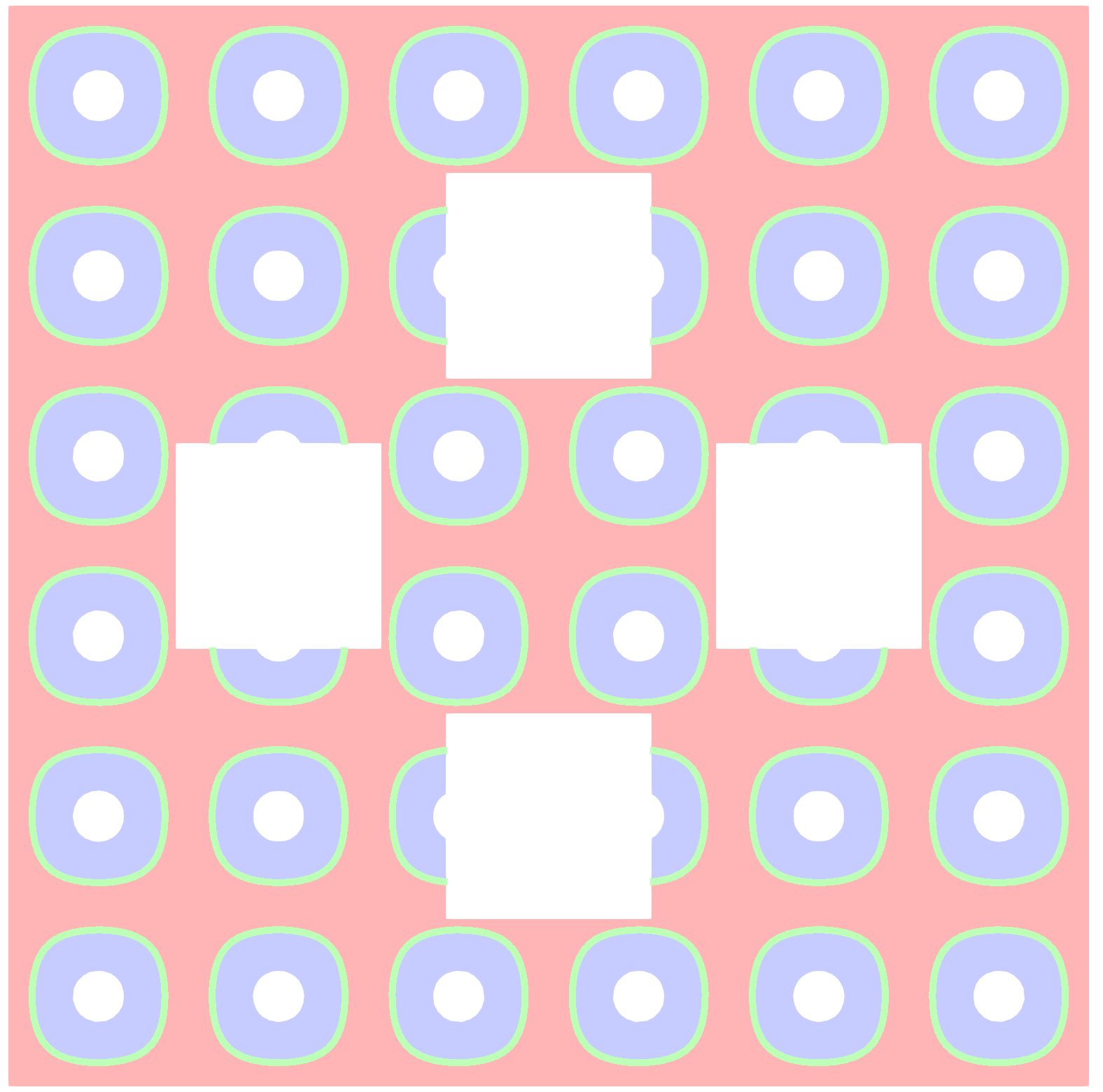}
        \caption{Initial iteration}
    \end{subfigure}
    \begin{subfigure}{0.32\textwidth}
        \centering
        \includegraphics[width=\linewidth]{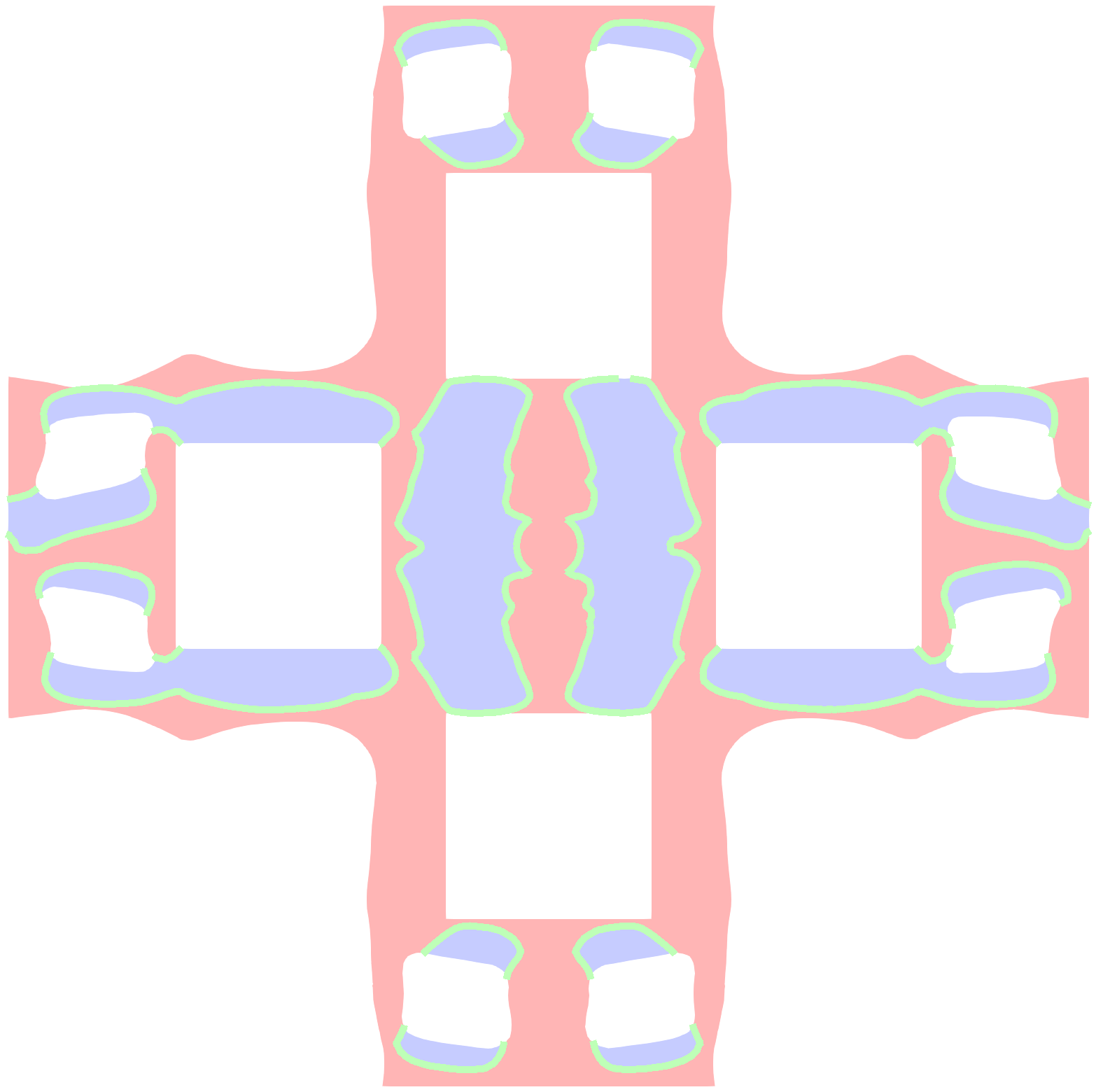}
        \caption{Intermediate iteration}
    \end{subfigure}
    \begin{subfigure}{0.32\textwidth}
        \centering
        \includegraphics[width=\linewidth]{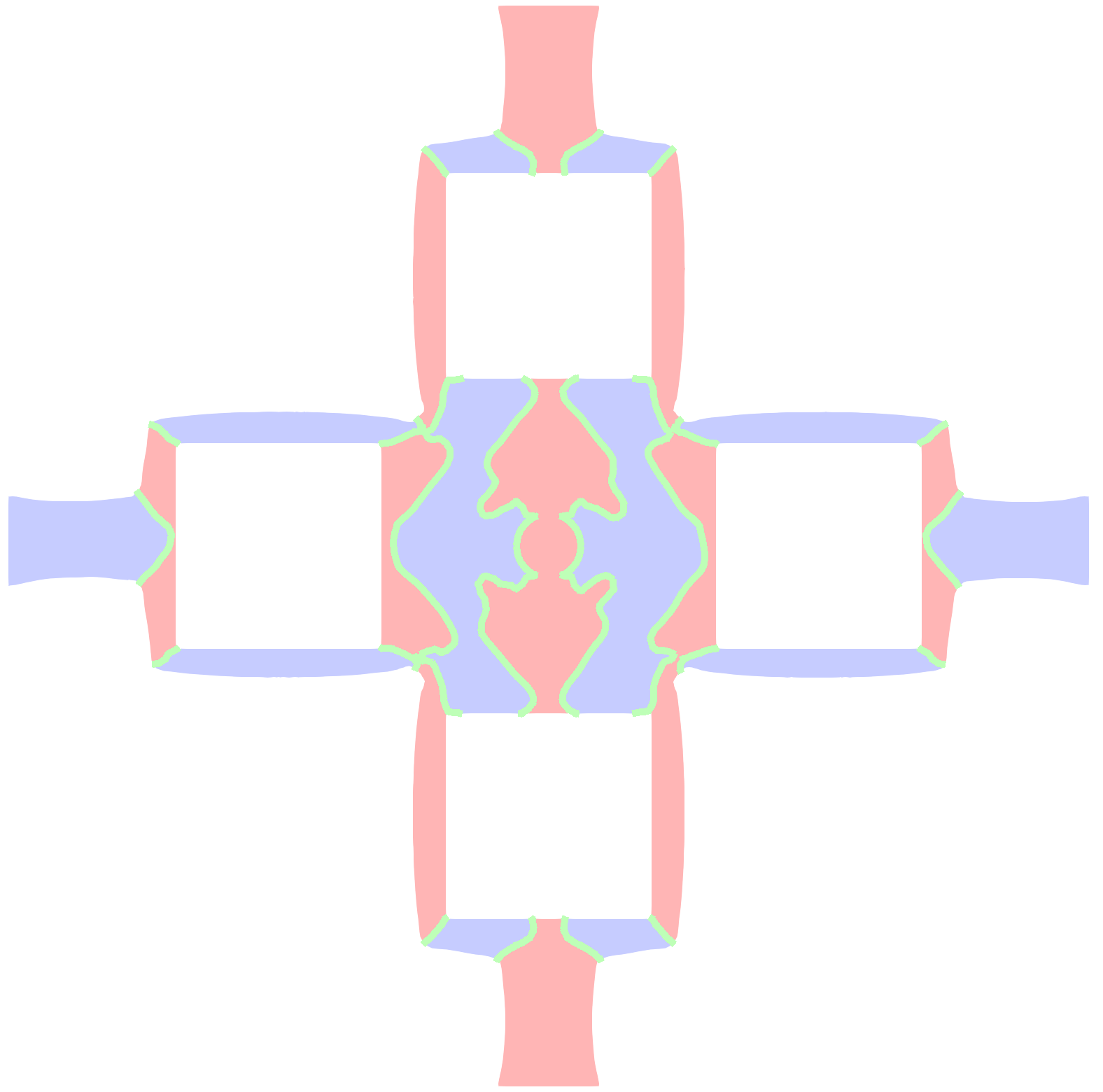}
        \caption{Final iteration}
    \end{subfigure}
    \caption{Visualisations of the material phases for the two-dimensional anisotropic minimum thermal compliance problem with the inclusion of four obstacles. The blue and red phases corresponds to materials with large diffusion coefficients in the horizontal and vertical directions, respectively. In green we visualise the interface between the two material phases.}
    \label{fig:aniso_2d_obst}
\end{figure}

We also minimise the thermal compliance of a three-dimensional multi-phase anisotropic media. This example uses add a third phase that has an anisotropic diffusion tensor with a large $z$ component. We constrain the three material phases to each have a volume fraction of $0.02$. The boundary conditions are analogous to the two dimensional case; that is, the Dirichlet region is now a small spherical inclusion at the centre of the domain and the Neumann boundaries are square regions in the middle of each face. We solve this three-dimensional problem on the Gadi supercomputing cluster using 16 CPUs.  The Hilbertian extension problem \eqref{eqn: hilb extension wf} is solved using an algebraic multigrid preconditioned conjugate gradient method as outlined in \cite{GridapTopOpt}, whereas the direct parallel solver MUMPS \cite{MUMPS:1,MUMPS:2} is employed for the state equation, the transport equation, and the reinitialisation equation. Finally, we use 1/8$^{\rm th}$ symmetry of the problem to further reduce the computational burden. We note that the investigation of iterative methods is important for solving these problems at large scales; however this is outside the scope of the current contribution.

In Figure~\ref{fig:aniso_3d} we show results for this three-dimensional anisotropic thermal diffusion topology optimisation problem. Similar to the two-dimensional case, the final design has a cross-like structure where each member has a large diffusion coefficient in the most appropriate direction for efficient transfer of heat from the sources to the sink. In Figure~\ref{fig:aniso_3da} we see that the green phase is separated from the void phase by a thin layer consisting of the other material phases. This occurs because we have used two level-set functions to describe the three material phases and the void phase. The interface connecting the green phase $D_{\phi_1}\cap D_{\phi_2}$ to the void phase $\overline{D}_{\phi_1}^\complement\cap \overline{D}_{\phi_2}^\complement$ can therefore only ever have measure zero (see Fig.~\ref{fig:fig2-new}). To allow an interface between these phases, another level-set function could be added \citep[e.g.,][]{10.1016/j.cma.2014.11.002_2015}); however, further exploration of this is outside the scope of this work.

\begin{figure}[!t]
    \centering
    \begin{subfigure}{0.32\textwidth}
        \centering
        \includegraphics[width=\linewidth]{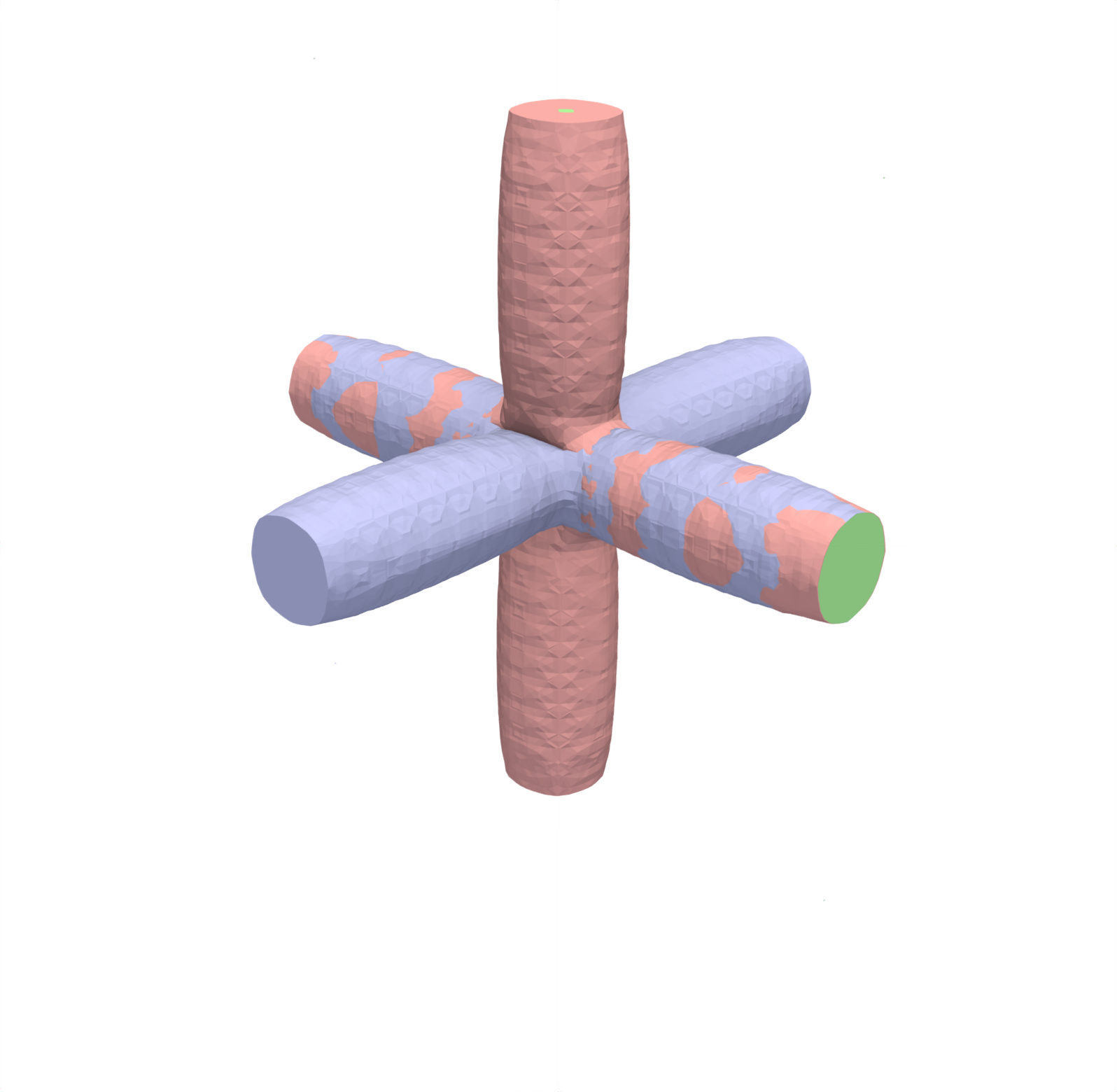}
        \caption{Final iteration}
        \label{fig:aniso_3da}
    \end{subfigure}
    \begin{subfigure}{0.32\textwidth}
        \centering
        \includegraphics[width=\linewidth]{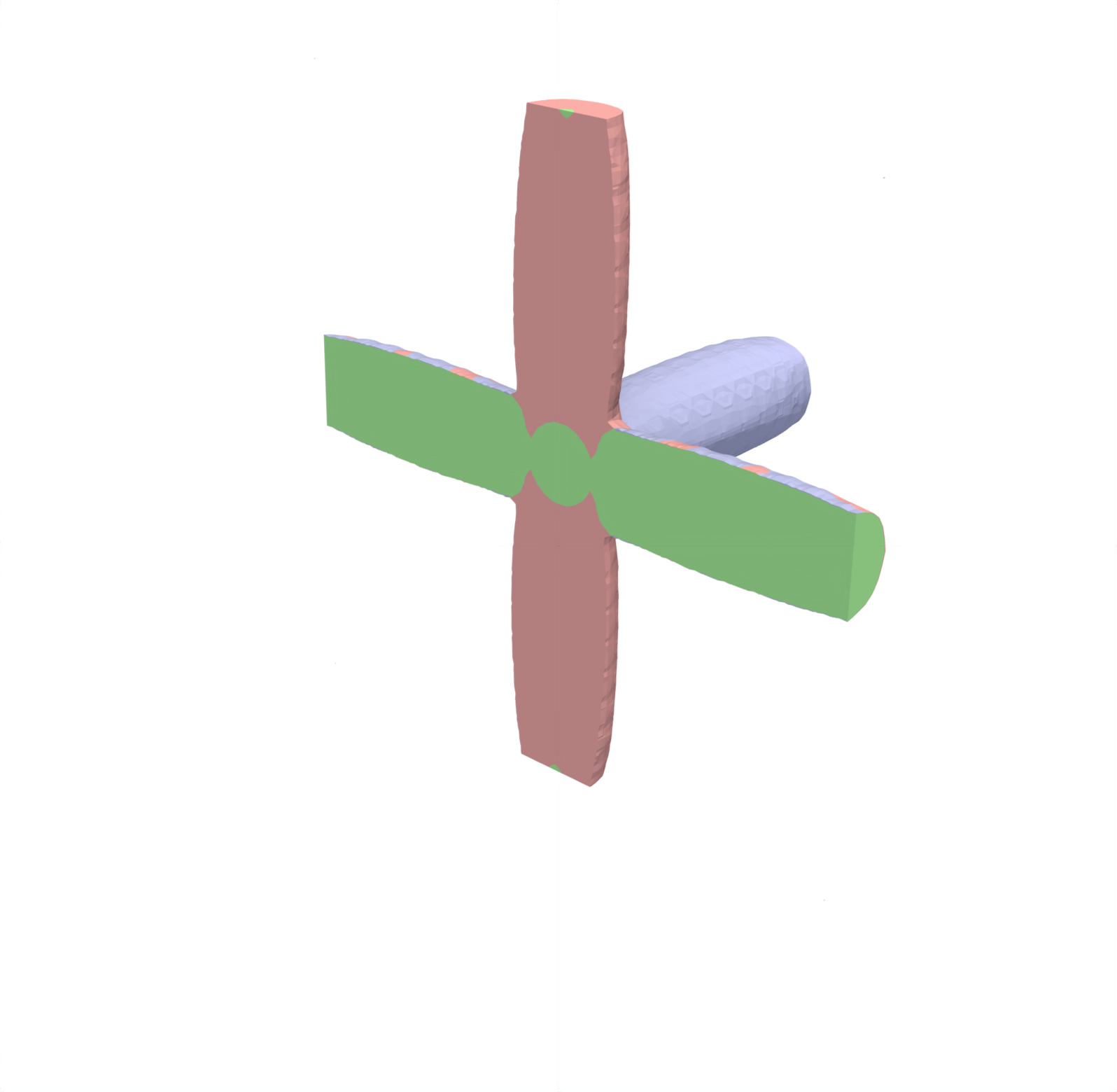}
        \caption{}
    \end{subfigure}
    \begin{subfigure}{0.32\textwidth}
        \centering
        \includegraphics[width=\linewidth]{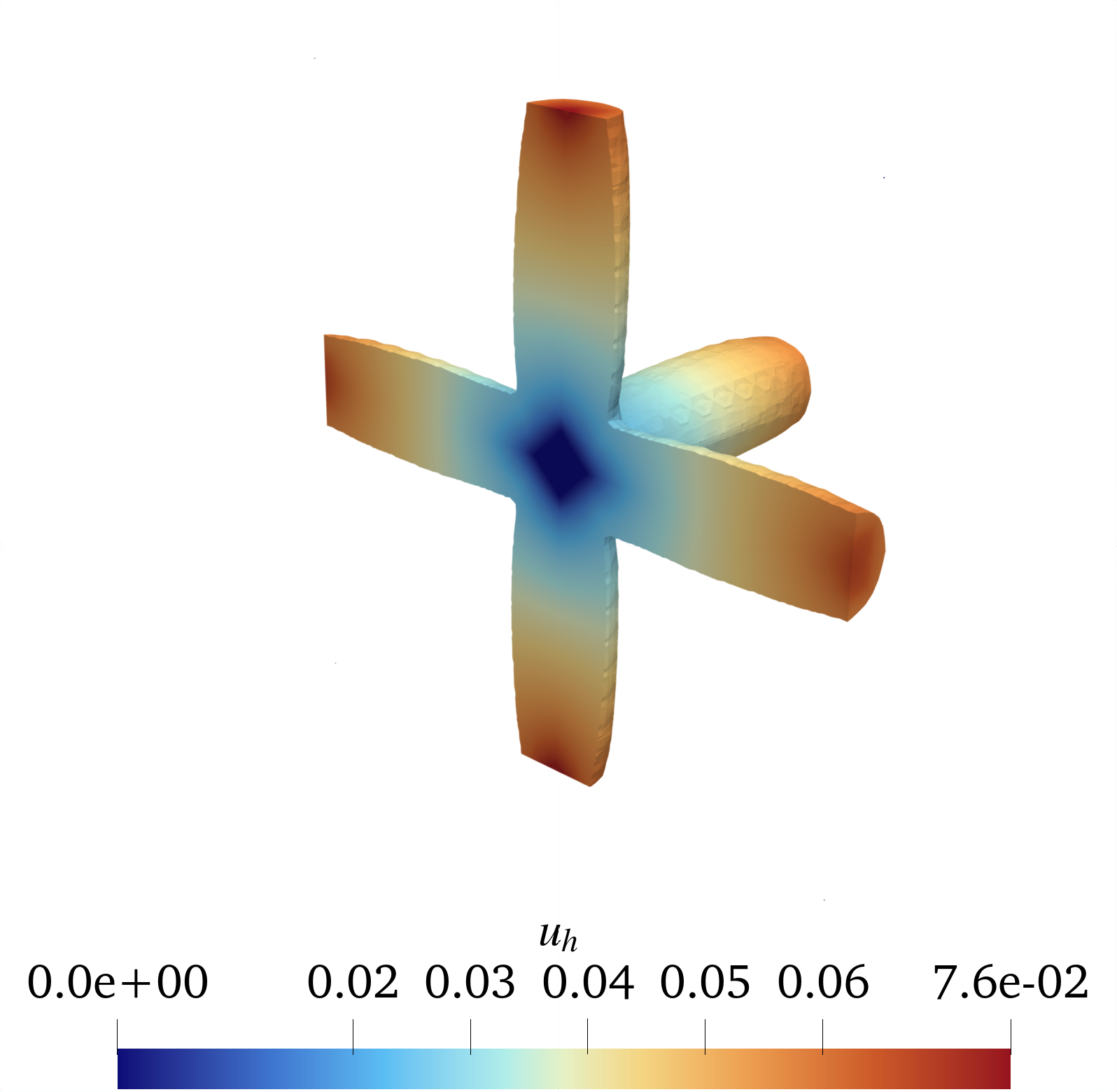}
        \caption{}
    \end{subfigure}
    \caption{(a) Visualisation of the optimised material phases for the three-dimensional anisotropic minimum thermal compliance problem with three phases. (b) The internal structure of the phases. (e) Visualisation of the temperature field across all phases.}
    \label{fig:aniso_3d}
\end{figure}

\subsection{Cantilever}\noindent
The next example that we consider is minimising the compliance of a multi-phase linear elastic cantilever subject to a volume constraint. This is a classical problem in the literature, and we extend the formulation described by \citet{WEGERT2025118203} and references therein.

The optimisation problem is given by
\begin{equation}\label{canti optim}
    \begin{aligned}
        \min_{(\phi_1,\phi_2)\in\mathcal{V}_{h}^2}&~(\boldsymbol{\varepsilon}(\boldsymbol{u}_1),\boldsymbol{\sigma}_1(\boldsymbol{u}_1))_{\overline{D}_{\phi_1}^\complement\cap D_{\phi_2}}+(\boldsymbol{\varepsilon}(\boldsymbol{u}_2),\boldsymbol{\sigma}_2(\boldsymbol{u}_2)_{D_{\phi_1}\cap D_{\phi_2}}\\
        {\rm s.t.}&~\mathrm{Vol}(\overline{D}_{\phi_1}^\complement\cap D_{\phi_2})=0.15\mathrm{Vol}(D),\\
        &~\mathrm{Vol}(D_{\phi_1}\cap D_{\phi_2})=0.15\mathrm{Vol}(D),
    \end{aligned}
\end{equation}
where the displacement fields $\boldsymbol u_1$ and $\boldsymbol  u_2$ satisfy the variational form of the two-phase linear elasticity equations:
\begin{equation}\label{eq: 2phase linear elasticity}
\begin{aligned}
&\text{Find $[\boldsymbol u_1, \boldsymbol u_2]\in\mathcal W_{1,h}\times\mathcal W_{2,h}$ such that}
\\
&B([\boldsymbol{u}_1,\boldsymbol{u}_2],[\boldsymbol{v}_1,\boldsymbol{v}_2];\phi_1,\phi_2)=l_s([\boldsymbol{v}_1,\boldsymbol{v}_2];\phi_1,\phi_2)\qquad\forall [\boldsymbol v_1,\boldsymbol v_2]\in\mathcal W_{1,h}\times\mathcal W_{2,h},
\end{aligned}    
\end{equation}
where
\begin{equation}\label{eqn: A canti example}
    \begin{aligned}
        B([\boldsymbol{u}_1,\boldsymbol{u}_2]&,[\boldsymbol{v}_1,\boldsymbol{v}_2];\phi_1,\phi_2)= (\boldsymbol{\varepsilon}(\boldsymbol{v}_1),\boldsymbol{\sigma}_1(\boldsymbol{u}_1))_{\overline{D}_{\phi_1}^\complement\cap D_{\phi_2}}+(\boldsymbol{\varepsilon}(\boldsymbol{v}_2),\boldsymbol{\sigma}_2(\boldsymbol{u}_2))_{D_{\phi_1}\cap D_{\phi_2}}
        \\&\quad+\sum_{F\in\mathcal{F}_{G_2}}(\eta_{1}(h)\jump{\partial_{\boldsymbol{n}_F}\boldsymbol{u}_1},\jump{\partial_{\boldsymbol{n}_F}\boldsymbol{v}_1})_F+\sum_{F\in\mathcal{F}_{G_3}}(\eta_{2}(h)\jump{\partial_{\boldsymbol{n}_F}\boldsymbol{u}_2},\jump{\partial_{\boldsymbol{n}_F}\boldsymbol{v}_2})_F,
        \\&\quad-(\mean{\boldsymbol{\sigma}_i(\boldsymbol{u}_i)},\jump{ \boldsymbol{v}_i})_{\partial D_{\phi_1}\cap D_{\phi_2}}-(\mean{\boldsymbol{\sigma}_i(\boldsymbol{v}_i)},\jump{ \boldsymbol{u}_i})_{ \partial D_{\phi_1}\cap D_{\phi_2}}+(\mu(h)\jump{\boldsymbol{u}_i},\jump{\boldsymbol{v}_i})_{\partial D_{\phi_1}\cap D_{\phi_2}}
        \\&\quad+(\chi \boldsymbol{u}_1,\boldsymbol{v}_1)_{\overline{D}_{\phi_1}^\complement\cap D_{\phi_2}}+(\chi \boldsymbol{u}_2,\boldsymbol{v}_2)_{D_{\phi_1}\cap D_{\phi_2}},
    \end{aligned}
\end{equation}
and
\begin{equation}
    l_s([\boldsymbol{v}_1,\boldsymbol{v}_2];\phi_1,\phi_2) = \sum_i(\boldsymbol{g},\boldsymbol{v}_i)_{\Gamma_N}.
\end{equation}
Here, $\mathcal{W}_{i,h}=\bigl\{\,\boldsymbol{v}\in C^0(\mathcal{T}_{i,h})^d:\boldsymbol{v}\rvert_{K}\in\mathbb{P}_1(K)^d~\forall K\in\mathcal{T}_{i,h},~\boldsymbol{v}\rvert_{\Gamma_D}=\boldsymbol{0}\,\bigr\}$ and $\mathcal{T}_{1,h}$ and $\mathcal{T}_{2,h}$ are the active elements for $\overline{D}_{\phi_1}^\complement\cap D_{\phi_2}$ and $D_{\phi_1}\cap D_{\phi_2}$, respectively; $\boldsymbol{\sigma}_i$ is the stress tensor in each phase given by
\begin{equation}\label{eqn: stress}
    \boldsymbol{\sigma}_i(\boldsymbol{u}_i)=\lambda_i\operatorname{tr}(\boldsymbol{\varepsilon}(\boldsymbol{u}_i))\boldsymbol{I}+2\mu_i\boldsymbol{\varepsilon}(\boldsymbol{u}_i),\quad i=1,2
\end{equation}
where $\boldsymbol{\varepsilon}(\boldsymbol{u})=\frac{1}{2}\left(\boldsymbol{\nabla u}+(\boldsymbol{\nabla u})^\intercal\right)$ is the strain tensor, and $\lambda_i$ and $\mu_i$ are the Lamé parameters corresponding to the Young's modulus $E_i$ and Poisson's ratio $\nu_i$ of each material phase; $\boldsymbol{g}$ is the loading on $\Gamma_N$ given by $\boldsymbol{g}=(0,-1)$; and $\Gamma_N$ and $\Gamma_D$ are given by the dark blue and black regions in Figure~\ref{fig:fig9a}, respectively. For the weighting parameters $\kappa_1$ and $\kappa_2$, we take 
\begin{align}
    \kappa_1=\frac{E_2}{E_1+E_2},\quad
    \kappa_2=\frac{E_1}{E_1+E_2}.
\end{align}
Finally, the ghost penalty parameters $\eta_i(h)$ and Nitsche penalty parameter $\mu(h)$ are set to
\begin{equation}
    \eta_i(h) = 10^{-7}\alpha_i h^3,\quad \mu(h)=\frac{10^2\max(\alpha_1,\alpha_2)}{h},
\end{equation}
where we scale by $\alpha_i=\lambda_i+\mu_i$, based on \citet{10.1016/j.cma.2017.09.005_2018}.

We discretise the above problem over a structured background mesh made of 80,000 simplices and solve the constrained optimisation problem in \eqref{canti optim} using a projection method \citep{Wegert_2023b}, relying on automatic shape differentiation to compute all derivatives. 

Figure~\ref{fig:fig9b} shows the optimisation result for $E_1=1.0$, $E_2=0.5$ and $\nu_1=\nu_2=0.3$. As expected from previous results in the literature \citep[e.g.,][]{10.1051/cocv/2013076_2014}, the material with a larger Young's modulus is concentrated at high-stress regions. It is important to note that although this problem is classical, the inclusion of a sharp interface between the phases enables several interesting extensions, such as more complicated interface conditions \citep[e.g.,][]{10.1002/nme.4823_2015}.

\begin{figure}[!t]
    \centering
    \begin{subfigure}{0.49\textwidth}
        \centering
        \includegraphics[width=\linewidth]{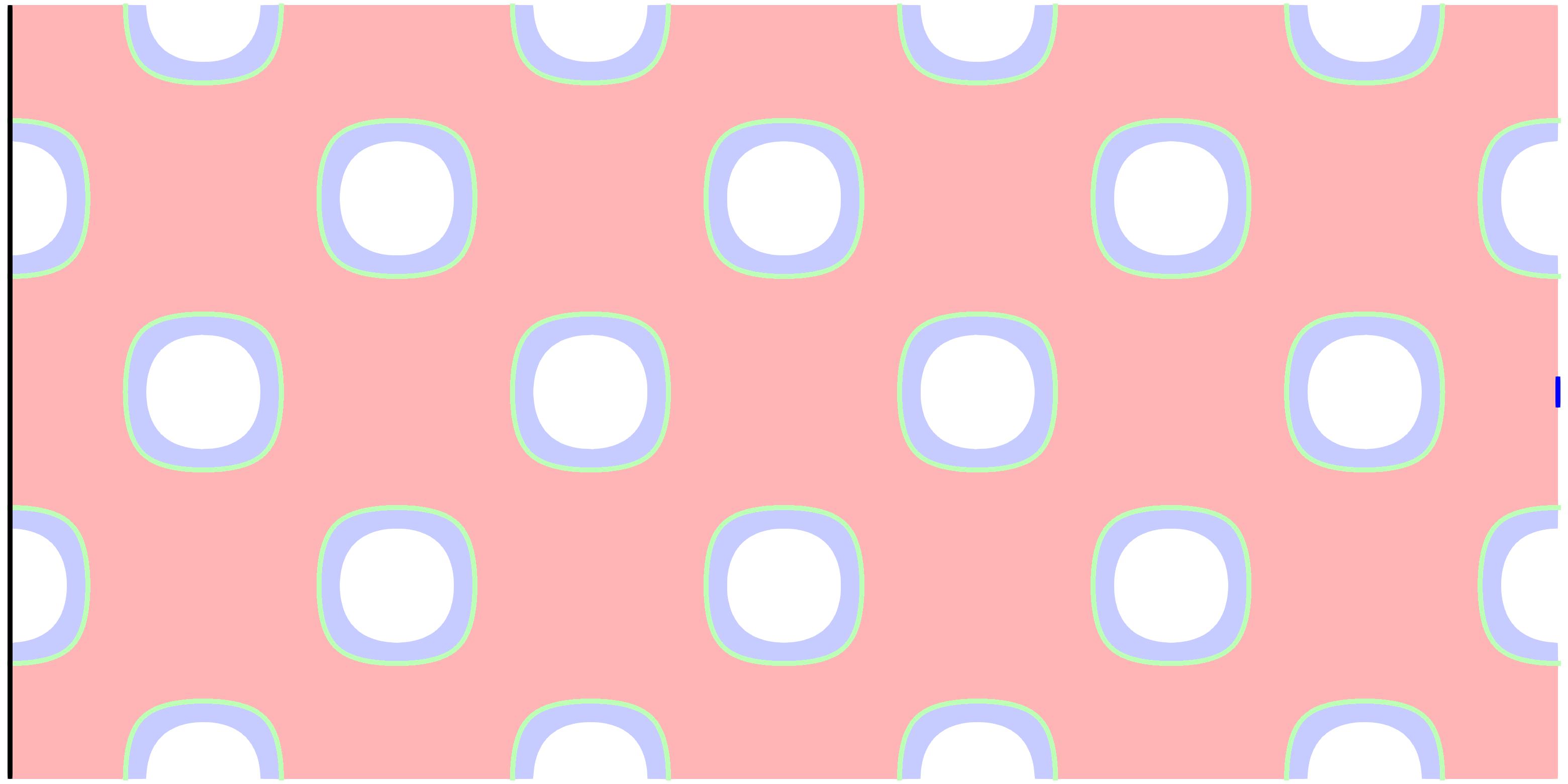}
        \caption{Initial iteration}
        \label{fig:fig9a}
    \end{subfigure}
    \begin{subfigure}{0.49\textwidth}
        \centering
        \includegraphics[width=\linewidth]{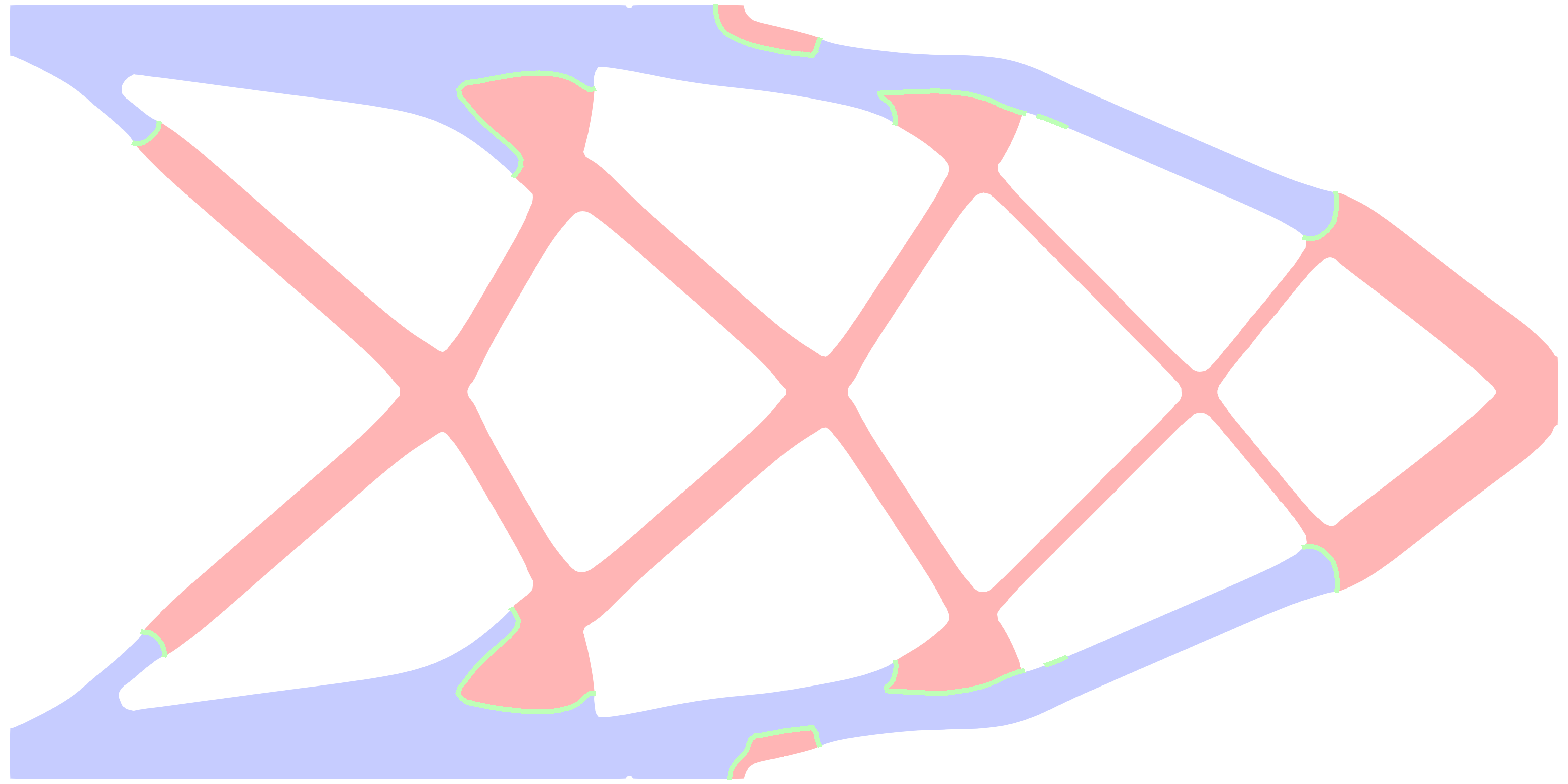}
        \caption{Final iteration}
        \label{fig:fig9b}
    \end{subfigure}
    \caption{Visualisation of the material phases for the multi-phase minimum compliance cantilever problem. The blue and red phases corresponds to linear elastic materials with a Young's modulus of 1 and $1/2$, respectively. In (a), we show the Neumann and Dirichlet regions in dark blue and black, respectively.}
    \label{fig:fig9}
\end{figure}

\subsection{Fluid--structure interaction}\noindent
The final example is a fluid--structure interaction problem, in which the solid structure is made of two material phases. To our knowledge, this is the first time topology optimisation for such a setup has been considered in the literature. This problem exemplifies the type of multi-physics and multi-phase topology optimisation problems with complex interface conditions that the methods we investigate here are able to address. 

The formulation of this problem is similar to the example considered by \citet{WEGERT2025118203}, with the addition of multiple phases in the structural component. We assume that the structure is linear elastic and the fluid is governed by Stokes flow. 
We consider displacements small enough such that the coupling in the fluid--structure problem is one way. This allows the fluid and elastic problems to be decoupled, so that they can be solved in a sequential manner. For the Stokes flow problem, we use the stabilised Nitsche fictitious domain method and face ghost-penalty stabilisation \cite{10.1002/nme.4823_2015} with continuous piecewise linear elements for the velocities and stabilised piecewise constant elements for the pressures.

The optimisation problem is given by
\begin{equation}\label{fsi optim}
    \begin{aligned}
        \min_{(\phi_1,\phi_2)\in\mathcal{V}_{h}^2}&~(\boldsymbol{\varepsilon}(\boldsymbol{d}_1),\boldsymbol{\sigma}_1(\boldsymbol{d}_1))_{\overline{D}_{\phi_1}^\complement\cap D_{\phi_2}}+(\boldsymbol{\varepsilon}(\boldsymbol{d}_2),\boldsymbol{\sigma}_2(\boldsymbol{d}_2))_{D_{\phi_1}\cap D_{\phi_2}}\\
        {\rm s.t.}&~\mathrm{Vol}(\overline{D}_{\phi_1}^\complement\cap D_{\phi_2})=0.035\mathrm{Vol}(D),\\
        &~\mathrm{Vol}(D_{\phi_1}\cap D_{\phi_2})=0.035\mathrm{Vol}(D),
    \end{aligned}
\end{equation}
where the displacement fields $\boldsymbol{d}_1$ and $\boldsymbol{d}_2$ satisfy the variational form of the two-phase linear elasticity equations:
\begin{equation}\label{eq: 2phase linear elasticity fsi}
\begin{aligned}
&\text{Find $[\boldsymbol d_1, \boldsymbol d_2]\in\mathcal W_{1,h}\times\mathcal W_{2,h}$ such that}
\\
&B([\boldsymbol{d}_1,\boldsymbol{d}_2],[\boldsymbol{s}_1,\boldsymbol{s}_2];\phi_1,\phi_2)=l_f([\boldsymbol{s}_1,\boldsymbol{s}_2];[\boldsymbol{u},p],\phi_1,\phi_2)\qquad\forall [\boldsymbol s_1,\boldsymbol s_2]\in\mathcal W_{1,h}\times\mathcal W_{2,h},
\end{aligned}    
\end{equation}
where $B$ is as in \eqref{eqn: A canti example} and $l_f$ is given by
\begin{equation}
    l_f([\boldsymbol{s}_1,\boldsymbol{s}_2];[\boldsymbol{u},p],\phi_1,\phi_2)=\left((1-\chi_s)\boldsymbol{s}_1,(\boldsymbol{\sigma}_f(\boldsymbol{u},p)\cdot\bn)\right)_{\Gamma_{f,1}}+\left((1-\chi_s)\boldsymbol{s}_2,(\boldsymbol{\sigma}_f(\boldsymbol{u},p)\cdot\bn)\right)_{\Gamma_{f,2}}.
\end{equation}
Here, the structure is subject to stress $\sigma_f$ due to Stokes flow, where the velocity $\boldsymbol{u}$ and pressure $p$ satisfy the variational form of Stokes flow:
\begin{equation}\label{eqn: stokes}
\begin{aligned}
&\text{Find $[\boldsymbol u, p]\in\mathcal U_{h,\boldsymbol{u}_{in}}\times\mathcal Q_{h}$ such that}
\\
&R([\boldsymbol{u},p],[\boldsymbol{v},q];\phi_1,\phi_2)=0\qquad\forall [\boldsymbol v,\boldsymbol q]\in\mathcal U_{h,\boldsymbol{0}}\times\mathcal Q_{h},
\end{aligned}    
\end{equation}
where
\begin{equation}
\begin{aligned}
    R([\boldsymbol{u},p]&,[\boldsymbol{v},q];\phi_1,\phi_2)=(\bnabla\boldsymbol{u},\bnabla\boldsymbol{v})_{\Omega_f}-(\partial_{\bn}\boldsymbol{u},\boldsymbol{v})_{\Gamma}-(\partial_{\bn}\boldsymbol{v},\boldsymbol{u})_{\Gamma}+(\gamma_\Gamma\boldsymbol{u},\boldsymbol{v})_{\Gamma}-(p,\bnabla\cdot \boldsymbol{v})_{\Omega_f}-(q,\bnabla\cdot \boldsymbol{u})_{\Omega_f}\\
    &-(p\bn,\boldsymbol{v})_{\Gamma}-(q\bn,\boldsymbol{u})_{\Gamma}+\sum_{F\in\mathcal{F}_{G,f}}(\gamma_{\boldsymbol{u}}\jump{\partial_{\boldsymbol{n}_F}\boldsymbol{u}},\jump{\partial_{\boldsymbol{n}_F}\boldsymbol{v}})_F-\sum_{F\in\mathcal{F}_{f}}(\gamma_{p}\jump{\boldsymbol{u}},\jump{\boldsymbol{v}})_F+(\chi_f p,q)_{\Omega_f}.
\end{aligned}
\end{equation}
Here, $\mathcal{W}_{i,h}=\bigl\{\,\boldsymbol{v}\in C^0(\mathcal{T}_{i,h})^d:\boldsymbol{v}\rvert_{K}\in\mathbb{P}_1(K)^d~\forall K\in\mathcal{T}_{i,h},~\boldsymbol{v}\rvert_{\Gamma_{\rm bottom}}=\boldsymbol{0}\,\bigr\}$ are the spaces of the displacements, where $\mathcal{T}_{1,h}$ and $\mathcal{T}_{2,h}$ are the active elements for $\overline{D}_{\phi_1}^\complement\cap D_{\phi_2}$ and $D_{\phi_1}\cap D_{\phi_2}$, respectively; $\mathcal{U}_{h,\boldsymbol{w}}=\bigl\{\,\boldsymbol{v}\in C^0(\mathcal{T}_{f,h})^d:\boldsymbol{v}\rvert_{K}\in\mathbb{P}_1(K)^d~\forall K\in\mathcal{T}_{i,h},~\boldsymbol{v}\rvert_{\Gamma_{\rm inlet}}=\boldsymbol{w},~\boldsymbol{v}\rvert_{\Gamma_{\rm bottom}\cup\Gamma_{\rm top}}=\boldsymbol{0}\,\bigr\}$ and $\mathcal{Q}_{h}=\bigl\{\,p\in L^2(\mathcal{T}_{f,h}):p\rvert_{K}\in\mathbb{P}_0(K)~\forall K\in\mathcal{T}_{f,h}\,\bigr\}$ are the velocity and pressure spaces, respectively, where $\mathcal{T}_{f,h}$ are the active elements for the fluid phase $\Omega_f=({D}_{\phi_1}\cap \overline{D}_{\phi_2}^\complement)\cup(\overline{D}_{\phi_1}^\complement\cap\overline{D}_{\phi_2}^\complement)$; $\Gamma_{f,i}$ is the interface between the fluid phase and the $i^{\rm th}$ structural phase with  $\Gamma=\Gamma_{f,1}\cup\Gamma_{f,2}$; $\mathcal{F}_{G,f}$ is the ghost skeleton for the fluid domain; $\mathcal{F}_{f}$ is the set of facets for the active domain defined by $\Omega_f$; $\chi_f$ is the indicator function for isolated volumes for the fluid phase; $\boldsymbol{\sigma}_f(\boldsymbol{u},p)=2\boldsymbol{\varepsilon}(\boldsymbol{u})-p\boldsymbol{I}$ is the Stokes stress tensor; $\chi_s$ is the indicator function for isolated volumes in the structural problem; $\gamma_\Gamma=100/h$ is the Nitsche parameter; $\gamma_{\boldsymbol{u}}=0.1h$ is the ghost penalty parameter; and $\gamma_p=0.25h$ is the symmetric pressure stabilisation parameter. Note that following \citet{WEGERT2025118203}, we include $1-\psi_s$ in $l_s$ to ensure that isolated volumes of the solid phase have zero displacement.

The background domain $D$ has a length and height of 1 and 1/2, respectively. For the boundary conditions on the fluid, we prescribe no-slip boundary conditions on the top and bottom of the bounding domain $D$, a parabolic inlet velocity of $\boldsymbol{u}_{\rm in}=(16y(0.5-y)\boldsymbol{e}_1$, a vanishing natural boundary condition for the Stokes variational form on the outlet, and a no-slip boundary condition on the fluid--structure interface. For the structural component, in addition to the forcing caused by the fluid, we have zero-displacement conditions on the bottom of the bounding domain. We include a non-designable support region for the structure shown in Figure~\ref{fig: fig10a}. Finally, the structural phases have a Young's modulus of $1$ and $0.1$, as well as a Poisson's ratio of $0.3$.

We discretise the above problem over an unstructured background mesh with refinement around the non-designable region to improve accuracy of the solutions near the interface of the elastic structure. The non-refined region has a maximum element size of 0.025, while the refined region has a maximum element size of 0.0025. We solve the constrained optimisation problem in \eqref{fsi optim} using a projection method \citep{Wegert_2023b}. To ensure that the optimiser does not favour isolated volumes that disrupt the fluid flow by, a term is added to the objective to penalise isolated volumes in the solid phases \citep{WEGERT2025118203}. Here, we take this penalisation term to be $I(\phi_1,\phi_2)=\int_{(\overline{D}_{\phi_1}^\complement\cap D_{\phi_2})\cup(D_{\phi_1}\cap D_{\phi_2})}h^{-2}\chi_{s}~\mathrm{d}x$, which vanishes when no isolated regions are present. Finally,  automatic shape differentiation is used to compute all derivatives.

Figure~\ref{fig:fig10} shows the material phases at the initial and final optimisation iterations. As in the case of the cantilever problem, the material with a larger Young's modulus is concentrated at high-stress regions. These results demonstrate that multi-phase unfitted discretisations in conjunction with automatic shape differentiation can be used to effectively solve multi-phase and multi-physics topology optimisation problems.

\begin{figure}[!t]
    \centering
    \begin{subfigure}{0.7\textwidth}
        \centering
        \includegraphics[width=\linewidth]{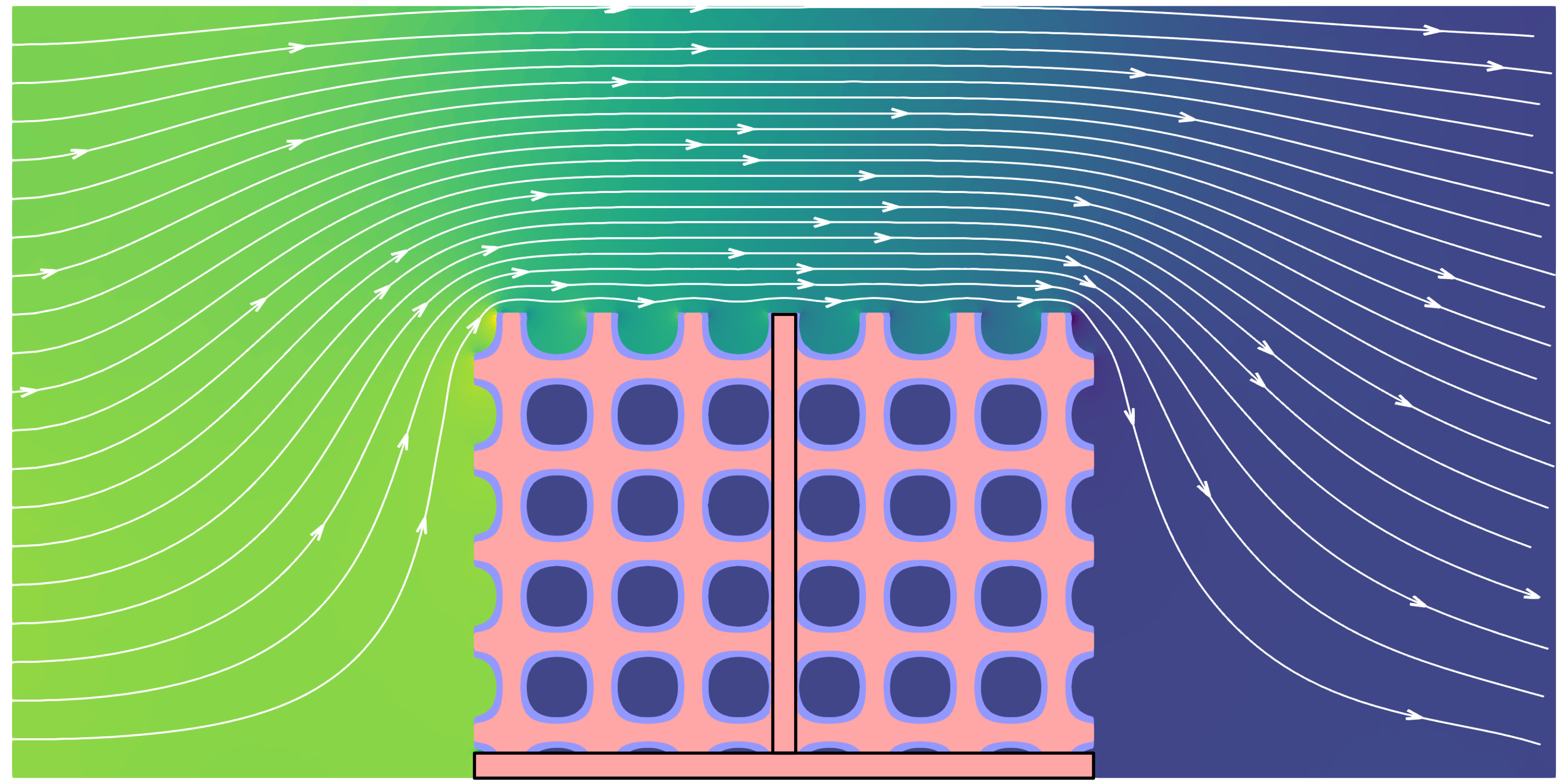}
        \caption{Initial iteration}
        \label{fig: fig10a}
    \end{subfigure}
    \begin{subfigure}{0.7\textwidth}
        \centering
        \includegraphics[width=\linewidth]{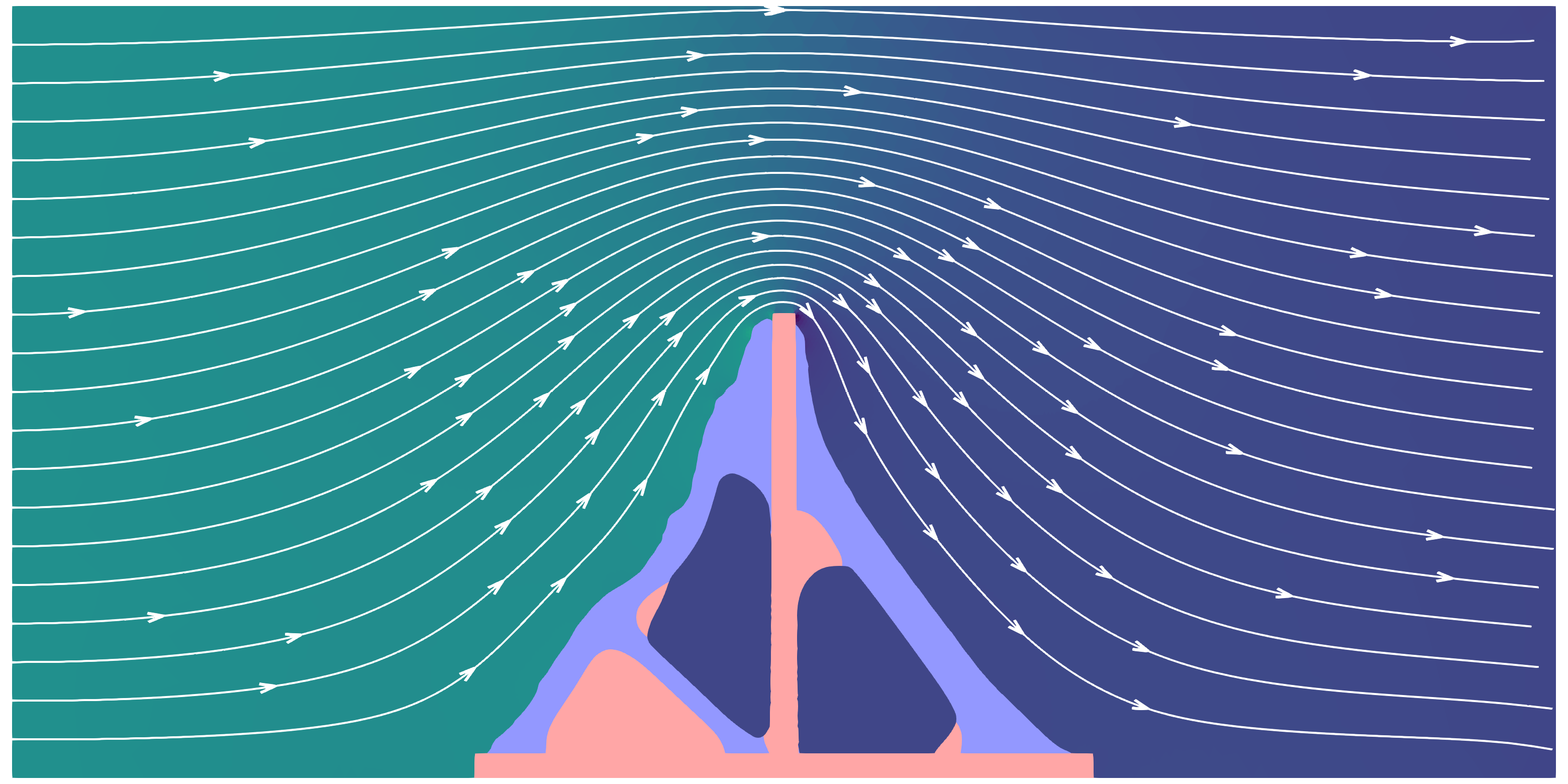}
        \includegraphics[width=1.0\linewidth]{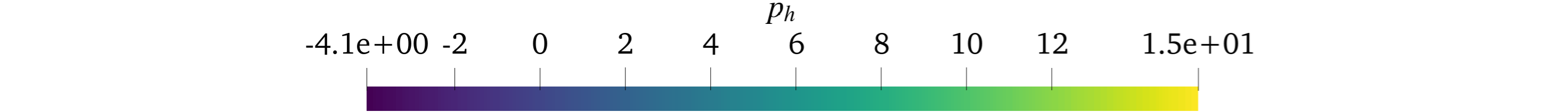}
        \caption{Final iteration}
    \end{subfigure}
    \caption{Visualisation of material phases for the minimum compliance fluid--structure interaction problem.  The blue and red phases corresponds to linear elastic materials with a Young's modulus of 1 and $0.1$, respectively. The pressure field is visualised by the colour map while the direction of flow is indicated by the arrows on the streamlines of the velocity field. The region outlined by black lines in (a) is non-designable.}
    \label{fig:fig10}
\end{figure}

\section{Summary, conclusions, and outlook}\label{sec: Conclusions}\noindent
The focus of this contribution has been on discrete shape calculus techniques and automatic shape differentiation for the case of functionals over domains that are defined by the intersections of multiple level-set functions. For the shape calculus, aimed at discretised cases, we used concepts from convex geometry to generalise the mathematical results of \citet{Berggren_2023} and \citet{WEGERT2025118203} to arbitrary convex polytopes in $\mathbb{R}^d$. We constructed equivalence between level-set and polytopal representations in order to then construct and prove Theorems~\ref{theorem 1}, \ref{theorem 2}, and \ref{theorem 3}. These results establish directional semiderivatives for the three types of functionals: (i) domain integrals (Sec.~\ref{subsec:dom int}); (ii) boundary integrals truncated by a perturbed domain (Sec.~\ref{subsec:boundary int trunc}); and (iii) boundary integrals (Sec.~\ref{subsec:boundary int}). For the case of a single level-set function, our results for (i) and (iii) reduce to those of \citet{Berggren_2023} and \citet{WEGERT2025118203}. Case (ii) is new and appears only in the case of two or more intersecting level-set functions. The new results are general and establish a discrete shape calculus
enabling multi-phase level-set topology optimisation with unfitted discretisations.

The shape calculus results are complemented by our proposed automatic shape differentiation approach for multi-phase problems. In particular, we developed a polytope cutter that provides an extension of the automatic shape differentiation approach of \citet{WEGERT2025118203} to multiple level-set functions. We validated the implementation against finite differences as well as the formulas devised in Theorems~\ref{theorem 1}, \ref{theorem 2}, and \ref{theorem 3}. As expected, the automatic shape differentiation matched the exact derivatives to near machine precision (see Table~\ref{tab:verify}). Our implementation can be leveraged in both serial and distributed CPU computing frameworks, and we demonstrate the scalability of the multi-phase automatic shape differentiation up to 1,658,880,000 cells over 13824 CPUs. The implementation aims for a clear and intuitive application programming interface (API), and is fully compatible with GridapTopOpt's automatic differentiation of arbitrary PDE-constrained maps. Releasing this open-source software will facilitate the solution of complex multi-physics and multi-phase topology optimisation problems.

We demonstrated our framework for multi-phase unfitted level-set topology optimisation by solving topology optimisation problems for multi-phase anisotropic diffusion, multi-phase linear elasticity, and multi-phase fluid--structure interaction. The fluid--structure interaction example especially well demonstrates the power of our approach for solving topology optimisation problems involving multi-phase and multi-physics systems with complex boundary conditions. In particular, interface conditions between structural phases as well as the no-slip boundary condition at the fluid--structure boundaries are applied using unfitted finite elements and Nitsche's method.

In the future, we plan to investigate the use of other unfitted finite element methods such as AgFEM, investigate matrix-free methods to improve the scalability of solvers, and to apply the approach to other complex design problems involving multiple physics and multiple phases.  

\section*{CRediT authorship contribution statement}\noindent
\textbf{Zachary J Wegert:} Writing -- original draft, Writing -- review and editing, Conceptualisation, Data curation, Mathematical analysis, Investigation, Methodology, Software, Validation, Visualisation. \textbf{Martin Berggren:} Writing -- review and editing, Mathematical analysis, Investigation, Methodology, Validation. \textbf{Vivien J Challis:} Supervision, Conceptualisation, Writing -- review \& editing, Validation, Project administration, Funding acquisition, Resources.

\section*{Declaration of Competing Interest}\noindent
The authors have no competing interests to declare that are relevant to the content of this article.

\section*{Data availability}\noindent
The source code is available at \url{https://github.com/zjwegert/GridapTopOpt.jl}. To recreate the results, we provide all scripts at \url{https://github.com/zjwegert/Wegert_et_al_2026_MP}. 

\section*{Acknowledgement}\noindent
This research was funded (partially or fully) by the Australian Government through the Australian Research Council (ARC), Discovery Grant DP220102759. This research used computational resources provided by the Queensland Cyber Infrastructure Foundation (QCIF) and the National Computational Infrastructure (NCI) Australia. The first author would also like to thank Jordi Manyer for many insightful discussions regarding cutting algorithms.

\appendix

\bibliographystyle{elsarticle-num-names} 
\bibliography{main}

\end{document}


\maketitle

\section{Derivation of directional derivatives in Section 3.6}

\noindent In the following, we derive the directional derivatives for the example in Section~3.6. 
As noted in Remark~3.22, the derivation is merely formal because it assumes shape differentiability of $u_1$ and $u_2$. 
%
%
%
We define the Lagrangian $\mathcal{L}$ to be
    \begin{equation}
    \mathcal{L}(P,\Lambda;\phi_1,\phi_2)=F(P,P;\phi_1,\phi_2)-A(P,\Lambda;\phi_1,\phi_2)+l(\Lambda;\phi_1,\phi_2),
\end{equation}
where $P=[p_1,p_2]$ and $\Lambda=[\lambda_1,\lambda_2]$.

Since the Lagrangian is linear in $\Lambda$, differentiation of the Lagrangian with respect to $\Lambda$ in the direction $W$ generates the state equation residual, that is,
\begin{equation}
d_\Lambda\mathcal L(P,\Lambda;\phi_1,\phi_2)(W) \equiv 
\frac d{dt}\mathcal L(P, \Lambda + tW;\phi_1,\phi_2)\big|_{t=0} =
-A(P, W; \phi_1,\phi_2) + l(W;\phi_1,\phi_2),
\end{equation}
which vanishes for each $W\in \mathcal V_{1,h}\times\mathcal V_{2,h}$ when $P=U$ due to state equation (84). Next, differentiation of the Lagrangian with respect to $P$ in the direction $R$ generates a residual of the adjoint equation, that is,
\begin{equation}\label{eqn: adj residual example}
\begin{aligned}
d_P\mathcal L(P,\Lambda;\phi_1,\phi_2)(R)  &\equiv 
\frac d{dt}\mathcal L(P+tR,\Lambda;\phi_1,\phi_2)\big|_{t=0}
\\
&= d_P F(P,P;\phi_1,\phi_2)(R) - d_P A(P, \Lambda;\phi_1,\phi_2)(R)  
= 2F(P,R;\phi_1,\phi_2) - A(R, \Lambda;\phi_1,\phi_2).
\end{aligned}
\end{equation}


Finally, under the assumption that $U$ is shape differentiable, the directional derivative of $J$ in the direction $w$ is obtained by differentiating the Lagrangian with respect to $\phi_i$ in the direction $w$ and evaluating the resulting expression for $P=U$ and $\Lambda = \hat\Lambda$, that is,
\begin{equation}\label{eqn: 94}
\begin{aligned}
d_{\phi_i}J(\phi_1,\phi_2)(w) &= d_{\phi_i}\mathcal L(P,\Lambda;\phi_1,\phi_2)\big|_{P=U, \Lambda=\hat\Lambda}(w) 
\\
&= d_{\phi_i} F(U,U;\phi_1,\phi_2)(w) - d_{\phi_i} A(U,\hat\Lambda;\phi_1,\phi_2)(w) + d_{\phi_i}l(\hat\Lambda;\phi_1,\phi_2)(w).    
\end{aligned}
\end{equation}
where $\hat{\Lambda}$ satisfies the adjoint problem: find $\hat{\Lambda}\in\mathcal{V}_{1, h}\times\mathcal{V}_{2, h}$ such that
\begin{equation}
   A(R, \hat\Lambda;\phi_1,\phi_2) = 2F(U,R;\phi_1,\phi_2)\qquad\forall R\in\mathcal{V}_{1, h}\times\mathcal{V}_{2, h}.
\end{equation}




We now consider derivatives of the expressions $d_{\phi_i} F(U,U;\phi_1,\phi_2)(w)$, $- d_{\phi_i} A(U,\hat\Lambda;\phi_1,\phi_2)(w)$, and $d_{\phi_i}l(\hat\Lambda;\phi_1,\phi_2)(w)$ occurring in \eqref{eqn: 94} term by term. 
(Note that these, by the scheme laid out in Section~3.6, are partial derivatives, where $U$ and $\hat\Lambda$ are considered constant.)
First, for the terms $-(\Ab_1\bnabla u_1,\bnabla \lambda_1)_{D_{\phi_1}\cap \overline{D}_{\phi_2}^\complement}-(\Ab_2\bnabla u_2,\bnabla \lambda_2)_{D_{\phi_1}\cap D_{\phi_2}}$ that appear in $A(U,\hat\Lambda;\phi_1,\phi_2)$, we obtain, using Theorem~3.10, that
    \begin{equation}\label{eqn: dphi1Aint}
        \begin{aligned}
            &d_{\phi_1}\left[-(\Ab_1\bnabla u_1,\bnabla \hat\lambda_1)_{D_{\phi_1}\cap \overline{D}_{\phi_2}^\complement}-(\Ab_2\bnabla u_2,\bnabla \hat\lambda_2)_{D_{\phi_1}\cap D_{\phi_2}}\right](w)
            \\&\qquad=\int_{\partial D_{\phi_1}\cap \overline{D}_{\phi_2}^\complement}(\Ab_1\bnabla u_1)\cdot\bnabla \hat\lambda_1\frac{w}{\lvert\partial_{\bn_{\phi_1}}\phi_1\rvert}~\mathrm{d}S+\int_{\partial D_{\phi_1}\cap D_{\phi_2}}(\Ab_2\bnabla u_2)\cdot\bnabla \hat\lambda_2\frac{w}{\lvert\partial_{\bn_{\phi_1}}\phi_1\rvert}~\mathrm{d}S,
        \end{aligned}
    \end{equation}
    \begin{equation}
        \begin{aligned}\label{eqn: dphi2Aint}
             & d_{\phi_2}\left[-(\Ab_1\bnabla u_1,\bnabla \hat\lambda_1)_{D_{\phi_1}\cap \overline{D}_{\phi_2}^\complement}-(\Ab_2\bnabla u_2,\bnabla \hat\lambda_2)_{D_{\phi_1}\cap D_{\phi_2}}\right](w)
            \\&\qquad=-\int_{D_{\phi_1}\cap\partial D_{\phi_2}}(\Ab_1\bnabla u_1)\cdot\bnabla \hat\lambda_1\frac{w}{\lvert\partial_{\bn_{\phi_2}}\phi_2\rvert}~\mathrm{d}S+\int_{D_{\phi_1}\cap\partial D_{\phi_2}}(\Ab_2\bnabla u_2)\cdot\bnabla \hat\lambda_2\frac{w}{\lvert\partial_{\bn_{\phi_2}}\phi_2\rvert}~\mathrm{d}S,
        \end{aligned}
    \end{equation}
    and, similarly,
    \begin{equation}\label{eqn: dphi1F}
        \begin{aligned}
            & d_{\phi_1}F(U,U;\phi_1,\phi_2)(w)
            \\&\qquad=d_{\phi_1}\left[(\Ab_1\bnabla u_1,\bnabla u_1)_{D_{\phi_1}\cap \overline{D}_{\phi_2}^\complement}+(\Ab_2\bnabla u_2,\bnabla u_2)_{D_{\phi_1}\cap D_{\phi_2}}\right](w)
            \\&\qquad=-\int_{\partial D_{\phi_1}\cap \overline{D}_{\phi_2}^\complement}(\Ab_1\bnabla u_1)\cdot\bnabla u_1\frac{w}{\lvert\partial_{\bn_{\phi_1}}\phi_1\rvert}~\mathrm{d}S-\int_{\partial D_{\phi_1}\cap D_{\phi_2}}(\Ab_2\bnabla u_2)\cdot\bnabla u_2\frac{w}{\lvert\partial_{\bn_{\phi_1}}\phi_1\rvert}\,\mathrm{d}S,
        \end{aligned}
    \end{equation}
    \begin{equation}\label{eqn: dphi2F}
        \begin{aligned}
             & d_{\phi_2}F(U,U;\phi_1,\phi_2)(w)
            \\&\qquad= d_{\phi_2}\left[(\Ab_1\bnabla u_1,\bnabla u_1)_{D_{\phi_1}\cap \overline{D}_{\phi_2}^\complement}+(\Ab_2\bnabla u_2,\bnabla u_2)_{D_{\phi_1}\cap D_{\phi_2}}\right](w)
            \\&\qquad=\int_{D_{\phi_1}\cap\partial D_{\phi_2}}(\Ab_1\bnabla u_1)\cdot\bnabla u_1\frac{w}{\lvert\partial_{\bn_{\phi_2}}\phi_2\rvert}~\mathrm{d}S-\int_{D_{\phi_1}\cap\partial D_{\phi_2}}(\Ab_2\bnabla u_2)\cdot\bnabla u_2\frac{w}{\lvert\partial_{\bn_{\phi_2}}\phi_2\rvert}\,\mathrm{d}S,
        \end{aligned}
    \end{equation}
In this example, we will consider the heat source boundary $\Gamma_N$ fixed, independent of the design.
Therefore, trivially,
%
\begin{equation}\label{eqn: dphiil}
d_{\phi_i} l(\hat\Lambda;\phi_1,\phi_2)(w) = 0.
\end{equation}
%
Also, since perturbations of $\phi_1$ and $\phi_2$ satisfy Assumption~3.5,
%
\begin{equation}\label{eqn: dphiij}
d_{\phi_i} j(V,U) =0.    
\end{equation}


Next, we consider the directional derivatives of the interface terms in  $-A(U,\hat\Lambda;\phi_1,\phi_2)$, that is,
\begin{equation}\label{eqn: interface terms}
     (\mean{\Ab\bnabla u},\jump{\hat\lambda})_{D_{\phi_1}\cap\partial D_{\phi_2}}+(\mean{\Ab\bnabla \hat\lambda},\jump{u})_{D_{\phi_1}\cap\partial D_{\phi_2}}-(\mu\jump{\hat\lambda},\jump{u})_{D_{\phi_1}\cap\partial D_{\phi_2}}.
\end{equation}    
Recall that there are two different results for differentiation of boundary integrals, Theorems~3.15 and~3.18.
The former concerns integrals that are truncated by a domain that is perturbed by the level-set function.
(For an illustration, consider differentiating integrals over $\Gamma_{12}$ under a perturbation of $\phi_1$ in Figure~1.)
This is precisely the case when differentiating the interface terms~\eqref{eqn: interface terms} under a perturbation of $\phi_1$. 
Using Theorem~3.15, we thus have
\begin{equation}\label{eqn: dphi1interface}
\begin{aligned}
        &d_{\phi_1}\left[(\mean{\Ab\bnabla u},\jump{\hat\lambda})_{D_{\phi_1}\cap\partial D_{\phi_2}}+(\mean{\Ab\bnabla \hat\lambda},\jump{u})_{D_{\phi_1}\cap\partial D_{\phi_2}}-(\mu\jump{\hat\lambda},\jump{u})_{D_{\phi_1}\cap\partial D_{\phi_2}}\right](w)\\&\quad=\int_{\partial D_{\phi_1}\cap\partial D_{\phi_2}}\left(\mean{\Ab\bnabla u}\cdot\jump{\hat\lambda}+\mean{\Ab\bnabla \hat\lambda}\cdot\jump{u}-\mu\jump{u}\jump{\hat\lambda}\right)\frac{w}{\lvert\partial_{\bN^2}\phi_1\rvert}~\mathrm{d}\gamma,
\end{aligned}
\end{equation}
where $\bN^2$ is the unit normal to $\partial D_{\phi_1}$ inside $\partial D_{\phi_2}$, outward with respect to $D_{\phi_2}$. 
    
In contrast, the differentiation of interface terms~\eqref{eqn: interface terms} with respect to $\phi_2$ concerns a boundary given by a level-set function that itself is perturbed, a case that is covered by Theorem~3.18.
(For an illustration, consider differentiating integrals over $\Gamma_{12}$ under a perturbation of $\phi_2$ in Figure~1.)  
Moreover, both the average and jump operators in expression~\eqref{eqn: interface terms} involve the normal field, which in turn depends on $\phi_2$. 
We will thus require the directional derivative of the outward-directed normal field $\bn_{\phi_2}$ on $\partial D_{\phi_2}$. 
    As shown by~\citet[expression~(B18)]{10.1002/nme.70284}, the derivative of $\bn_{\phi_2}$ with respect to $\phi_2$ in the direction $w$ is given by
    \begin{equation}\label{eqn: dn}
        \mathrm{d}_{\phi_2}\bn_{\phi_2}(w)=\frac{P_T\bnabla w}{\lvert\partial_{\bn_{\phi_2}}\phi_2\rvert},
    \end{equation}
where $P_{T}=\boldsymbol{I}-\bn_{\phi_2}\otimes\bn_{\phi_2}$ is the projector on to the tangent plane of $\partial D_{\phi_2}$. 
Using this result, Theorem~3.18, and the product rule as discussed in Remark~3.21, we have, using notation further elaborated on below,
\begin{equation}\label{eqn: dphi2interface}
\begin{aligned}
    &d_{\phi_2}\left[(\mean{\Ab\bnabla u},\jump{\hat\lambda})_{D_{\phi_1}\cap\partial D_{\phi_2}}+(\mean{\Ab\bnabla \hat\lambda},\jump{u})_{D_{\phi_1}\cap\partial D_{\phi_2}}-(\mu\jump{\hat\lambda},\jump{u})_{D_{\phi_1}\cap\partial D_{\phi_2}}\right](w)\\
    &\quad=\int_{D_{\phi_1}\cap\partial D_{\phi_2}}d_{\phi_2}
    \mean{\Ab\bnabla u}(w)\cdot\jump{\hat\lambda}+d_{\phi_2}\mean{\Ab\bnabla\hat\lambda}(w)\cdot\jump{u}\,\mathrm{d}S
    \\&\quad+\int_{D_{\phi_1}\cap\partial D_{\phi_2}}
    \mean{\Ab\bnabla u}\cdot d_{\phi_2}\jump{\hat\lambda}(w)
     +\mean{\Ab\bnabla\hat\lambda}\cdot d_{\phi_2}\jump{u}(w)\,\mathrm{d}S
     \\&\quad
     -\mu\int_{D_{\phi_1}\cap\partial D_{\phi_2}}
     d_{\phi_2}\jump{\hat\lambda}(w)\cdot\jump{u}
     +\jump{\hat\lambda}\cdot d_{\phi_2}\jump{u}(w)\,\mathrm{d}S
     \\&\quad
     -\int_{D_{\phi_1}\cap\partial D_{\phi_2}}
     \Bigl(\mean{\Ab\partial_{\bn_{\phi_2}}\bnabla u}\jump{\hat\lambda}
     + \mean{\Ab\bnabla u}\jump{\partial_{\bn_{\phi_2}}\hat\lambda}\Bigr)\frac{w}{\lvert\partial_{\bn_{\phi_2}}\phi_2\rvert}\,\mathrm{d}S 
     \\&\quad
     -\int_{D_{\phi_1}\cap\partial D_{\phi_2}}
     \Bigl(\mean{\Ab\partial_{\bn_{\phi_2}}\bnabla\hat\lambda}\jump{u}
     + \mean{\Ab\bnabla\hat\lambda}\jump{\partial_{\bn_{\phi_2}}u}\Bigr)\frac{w}{\lvert\partial_{\bn_{\phi_2}}\phi_2\rvert}\,\mathrm{d}S 
     \\&\quad
     +\mu\int_{D_{\phi_1}\cap\partial D_{\phi_2}}\Bigl(\jump{\partial_{\bn_{\phi_2}}\hat\lambda}\jump{u}
     +\jump{\hat\lambda}\jump{\partial_{\bn_{\phi_2}}u}\Bigr)
     \frac{w}{\lvert\partial_{\bn_{\phi_2}}\phi_2\rvert}\,\mathrm{d}S
     \\&\quad+\sum_{S\in\mathcal{F}_h\setminus\partial D}\int_{S \cap D_{\phi_1} \cap \partial D_{\phi_{2}}}{\Big\llbracket}\bn_{\phi_{2}}{\Big(}\mean{\Ab\bnabla \hat\lambda}\cdot\jump{u}+\mean{\Ab\bnabla u}\cdot\jump{\hat\lambda}-\mu\jump{u}\cdot\jump{\hat\lambda}{\Big)}{\Big\rrbracket}_S\frac{w}{\lvert\partial_{\boldsymbol{N}^S}\phi_{2}\rvert}~\mathrm{d}\gamma
     \\&\quad+\int_{\partial D \cap D_{\phi_1} \cap \partial D_{\phi_{2}}}(\bn_{\partial D}\cdot\bn_{\phi_{2}}) {\Big(}\mean{\Ab\bnabla \hat\lambda}\cdot\jump{u}+\mean{\Ab\bnabla u}\cdot\jump{\hat\lambda}-\mu\jump{u}\cdot\jump{\hat\lambda}{\Big)}\frac{w}{\lvert\partial_{\boldsymbol{N}^D}\phi_{2}\rvert}~\mathrm{d}\gamma
     \\&\quad+\int_{\partial D_{\phi_1} \cap \partial D_{\phi_{2}}}(\bn_{\phi_1}\cdot\bn_{\phi_{2}}){\Big(}\mean{\Ab\bnabla \hat\lambda}\cdot\jump{u}+\mean{\Ab\bnabla u}\cdot\jump{\hat\lambda}-\mu\jump{u}\cdot\jump{\hat\lambda}{\Big)}\frac{w}{\lvert\partial_{\boldsymbol{N}^1}\phi_{1}\rvert}~\mathrm{d}\gamma,      
\end{aligned}
\end{equation}
where the three first integrals on the right side correspond to the the $g'$ term in expression~(81), whereas the rest of the terms correspond to the $dJ(\phi_k)(w)$ term.
Regarding the first integral on the right, for $f$ either $u$ or $\hat\lambda$, it holds that
%
\begin{equation}\label{eqn: dphi2Anablaf}
d_{\phi_2}\mean{\Ab\bnabla f}(w) = d_{\phi_2}(\kappa_1 \Ab_1\nabla f_1 + \kappa_2\Ab_2\nabla f_2)(w)
= \bigl(d_{\phi_2}\kappa_1(w)\bigr)\,\Ab_1\nabla f_1 + \bigl(d_{\phi_2}\kappa_2(w)\bigr)\,\Ab_2\nabla f_1,
\end{equation}
%
where, by differentiation, utilising definition~(86) and formula~\eqref{eqn: dn} ,
%
\begin{equation}\label{eqn: dphi2kappa}
\begin{aligned}
\mathrm{d}_{\phi_2}\kappa_1(w) &= 
2\bn_{\phi_2}\cdot\frac{c_1 \Ab_2-c_2\Ab_1}{(c_1+c_2)^2}\,\mathrm{d}_{\phi_2}\bn_{\phi_2}(w)
=2\bn_{\phi_2}\cdot\frac{(c_1 \Ab_2-c_2\Ab_1)}{(c_1+c_2)^2}\frac{P_T\bnabla w}{\lvert\partial_{\bn_{\phi_2}}\phi_2\rvert},
\\
\mathrm{d}_{\phi_2}\kappa_2(w) &= 2\bn_{\phi_2}\cdot\frac{c_2\Ab_1-c_1\Ab_2}{(c_1+c_2)^2}\,\mathrm{d}_{\phi_2}\bn_{\phi_2}(w)
= 2\bn_{\phi_2}\cdot\frac{(c_2\Ab_1-c_1\Ab_2)}{(c_1+c_2)^2}\frac{P_T\bnabla w}{\lvert\partial_{\bn_{\phi_2}}\phi_2\rvert}
\end{aligned}
\end{equation}
%
Moreover, the derivatives of the jumps in the the second and third integrals can be written, where, again, $f$ is either $u$ or $\hat\lambda$,
%
\begin{equation}\label{eqn: dphi2jumpf}
\begin{aligned}
d_{\phi_2}\jump{f}(w) &= d_{\phi_2}(f_1\bn_1 + f_2\bn_2)(w)
=f_1\,d_{\phi_2}\bn_1(w) + f_2\,d_{\phi_2}\bn_2(w)
\\
&=f_1\bn_1\cdot\bn_1\,d_{\phi_2}\bn_1(w) + f_2\bn_2\cdot\bn_2\,d_{\phi_2}\bn_2(w)    
\\
&=f_1\bn_1\cdot\bn_{\phi_2}\,d_{\phi_2}\bn_{\phi_2}(w) + f_2\bn_2\cdot\bn_{\phi_2}\,d_{\phi_2}\bn_{\phi_2}(w)
\\
&= \jump{f\bn_{\phi_2}}\,d_{\phi_2}\bn_{\phi_2}(w) 
= \jump{f\bn_{\phi_2}}\frac{P_T\bnabla w}{\lvert\partial_{\bn_{\phi_2}}\phi_2\rvert}
\end{aligned}
\end{equation}
%
Finally, since we employ piecewise-linear approximations, the first term in the fourth and fifth integrals vanish here, since they involve second derivatives inside the elements.

Bringing everything together, we find, by substituting derivatives~\eqref{eqn: dphi1F}, \eqref{eqn: dphi1Aint}, \eqref{eqn: dphi1interface}, \eqref{eqn: dphiil}, and~\eqref{eqn: dphiij} into expression~\eqref{eqn: 94}, that
\begin{equation}\label{eqn: dphi1J}
\begin{aligned}
    \mathrm{d}_{\phi_1}J(\phi_1,\phi_2)(w)&=\int_{\partial D_{\phi_1}\cap \overline{D}_{\phi_2}^\complement}(\Ab_1\bnabla u_1)\cdot(\bnabla \hat{\lambda}_1-\bnabla u_1)\frac{w}{\lvert\partial_{\bn_{\phi_1}}\phi_1\rvert}~\mathrm{d}S
    \\&\quad+\int_{\partial D_{\phi_1}\cap D_{\phi_2}}(\Ab_2\bnabla u_2)\cdot(\bnabla \hat{\lambda}_2-\bnabla u_2)\frac{w}{\lvert\partial_{\bn_{\phi_1}}\phi_1\rvert}~\mathrm{d}S
    \\&\quad+\int_{\partial D_{\phi_1}\cap\partial D_{\phi_2}}\left(\mean{\Ab\bnabla u}\cdot\jump{\hat{\lambda}}+\mean{\Ab\bnabla \hat{\lambda}}\cdot\jump{u}-\mu\jump{u}\cdot\jump{\hat{\lambda}}\right)\frac{w}{\lvert\partial_{\bN^2}\phi_1\rvert}~\mathrm{d}\gamma,
\end{aligned}
\end{equation}
and, by combining derivatives~\eqref{eqn: dphi2F}, \eqref{eqn: dphi2Aint}, \eqref{eqn: dphi2interface}, \eqref{eqn: dphiil}, and~\eqref{eqn: dphiij} into expression~\eqref{eqn: 94}, that
\begin{equation}\label{eqn: dphi2J}
    \begin{aligned}
    &\mathrm{d}_{\phi_2}J(\phi_1,\phi_2)(w)=-\int_{D_{\phi_1}\cap\partial D_{\phi_2}}(\Ab_1\bnabla u_1)\cdot(\bnabla \hat{\lambda}_1-\bnabla u_1)\frac{w}{\lvert\partial_{\bn_{\phi_2}}\phi_2\rvert}~\mathrm{d}S
    \\&\quad+\int_{D_{\phi_1}\cap\partial D_{\phi_2}}(\Ab_2\bnabla u_2)\cdot(\bnabla \hat{\lambda}_2-\bnabla u_2)\frac{w}{\lvert\partial_{\bn_{\phi_2}}\phi_2\rvert}~\mathrm{d}S
    \\&\quad+\int_{D_{\phi_1}\cap\partial D_{\phi_2}}d_{\phi_2}
    \mean{\Ab\bnabla u}(w)\cdot\jump{\hat\lambda}+d_{\phi_2}\mean{\Ab\bnabla\hat\lambda}(w)\cdot\jump{u}\,\mathrm{d}S
    \\&\quad+\int_{D_{\phi_1}\cap\partial D_{\phi_2}}
    \mean{\Ab\bnabla u}\cdot d_{\phi_2}\jump{\hat\lambda}(w)
     +\mean{\Ab\bnabla\hat\lambda}\cdot d_{\phi_2}\jump{u}(w)\,\mathrm{d}S
     \\&\quad
     -\mu\int_{D_{\phi_1}\cap\partial D_{\phi_2}}
     d_{\phi_2}\jump{\hat\lambda}(w)\cdot\jump{u}
     +\jump{\hat\lambda}\cdot d_{\phi_2}\jump{u}(w)\,\mathrm{d}S
     \\&\quad
     -\int_{D_{\phi_1}\cap\partial D_{\phi_2}}
     \Bigl(\mean{\Ab\partial_{\bn_{\phi_2}}\bnabla u}\jump{\hat\lambda}
     + \mean{\Ab\bnabla u}\jump{\partial_{\bn_{\phi_2}}\hat\lambda}\Bigr)\frac{w}{\lvert\partial_{\bn_{\phi_2}}\phi_2\rvert}\,\mathrm{d}S 
     \\&\quad
     -\int_{D_{\phi_1}\cap\partial D_{\phi_2}}
     \Bigl(\mean{\Ab\partial_{\bn_{\phi_2}}\bnabla\hat\lambda}\jump{u}
     + \mean{\Ab\bnabla\hat\lambda}\jump{\partial_{\bn_{\phi_2}}u}\Bigr)\frac{w}{\lvert\partial_{\bn_{\phi_2}}\phi_2\rvert}\,\mathrm{d}S 
     \\&\quad
     +\mu\int_{D_{\phi_1}\cap\partial D_{\phi_2}}\Bigl(\jump{\partial_{\bn_{\phi_2}}\hat\lambda}\jump{u}
     +\jump{\hat\lambda}\jump{\partial_{\bn_{\phi_2}}u}\Bigr)
     \frac{w}{\lvert\partial_{\bn_{\phi_2}}\phi_2\rvert}\,\mathrm{d}S
     \\&\quad+\sum_{S\in\mathcal{F}_h\setminus\partial D}\int_{S \cap D_{\phi_1} \cap \partial D_{\phi_{2}}}{\Big\llbracket}\bn_{\phi_{2}}{\Big(}\mean{\Ab\bnabla \hat\lambda}\cdot\jump{u}+\mean{\Ab\bnabla u}\cdot\jump{\hat\lambda}-\mu\jump{u}\cdot\jump{\hat\lambda}{\Big)}{\Big\rrbracket}_S\frac{w}{\lvert\partial_{\boldsymbol{N}^S}\phi_{2}\rvert}~\mathrm{d}\gamma
     \\&\quad+\int_{\partial D \cap D_{\phi_1} \cap \partial D_{\phi_{2}}}(\bn_{\partial D}\cdot\bn_{\phi_{2}}) {\Big(}\mean{\Ab\bnabla \hat\lambda}\cdot\jump{u}+\mean{\Ab\bnabla u}\cdot\jump{\hat\lambda}-\mu\jump{u}\cdot\jump{\hat\lambda}{\Big)}\frac{w}{\lvert\partial_{\boldsymbol{N}^D}\phi_{2}\rvert}~\mathrm{d}\gamma
     \\&\quad+\int_{\partial D_{\phi_1} \cap \partial D_{\phi_{2}}}(\bn_{\phi_1}\cdot\bn_{\phi_{2}}){\Big(}\mean{\Ab\bnabla \hat\lambda}\cdot\jump{u}+\mean{\Ab\bnabla u}\cdot\jump{\hat\lambda}-\mu\jump{u}\cdot\jump{\hat\lambda}{\Big)}\frac{w}{\lvert\partial_{\boldsymbol{N}^1}\phi_{1}\rvert}~\mathrm{d}\gamma,      
\end{aligned}
\end{equation}
 where $\hat{\Lambda}=[\hat{\lambda}_1,\hat{\lambda}_2]$ satisfies the adjoint problem: Find $\hat{\Lambda}\in\mathcal{V}_{1,h}\times\mathcal{V}_{2,h}$ such that
 \begin{equation}
     \begin{aligned}
         &(\Ab_1\bnabla w_1,\bnabla \hat{\lambda}_1)_{D_{\phi_1}\cap \overline{D}_{\phi_2}^\complement}+(\Ab_2\bnabla w_2,\bnabla \hat{\lambda}_2)_{D_{\phi_1}\cap D_{\phi_2}}+j(W,\hat{\lambda})+(\mu\jump{\hat{\lambda}},\jump{W})_{D_{\phi_1}\cap\partial D_{\phi_2}}\\
         &\quad
         -(\mean{\Ab\bnabla \hat{\lambda}},\jump{ W})_{D_{\phi_1}\cap\partial D_{\phi_2}}-(\mean{\Ab\bnabla W},\jump{ \hat{\lambda}})_{D_{\phi_1}\cap\partial D_{\phi_2}}=2F(W,\phi_1,\phi_2),
     \end{aligned}
 \end{equation}
 for all $W\in\mathcal{V}_{1,h}\times\mathcal{V}_{2,h}$.

Details on the evaluations of the various directional derivatives inside reduced gradient~\eqref{eqn: dphi2J} are given in expressions~\eqref{eqn: dphi2Anablaf}, \eqref{eqn: dphi2kappa}, and \eqref{eqn: dphi2jumpf}.
Note also that the first term in integrals six and seven in expression~\eqref{eqn: dphi2J} vanishes in this case, since we use piecewise-linear approximation spaces.

    

\bibliographystyle{elsarticle-num-names} 
\bibliography{main}

%% file: Figure2_new_tex.tex
\begingroup%
  \makeatletter%
  \providecommand\color[2][]{%
    \errmessage{(Inkscape) Color is used for the text in Inkscape, but the package 'color.sty' is not loaded}%
    \renewcommand\color[2][]{}%
  }%
  \providecommand\transparent[1]{%
    \errmessage{(Inkscape) Transparency is used (non-zero) for the text in Inkscape, but the package 'transparent.sty' is not loaded}%
    \renewcommand\transparent[1]{}%
  }%
  \providecommand\rotatebox[2]{#2}%
  \newcommand*\fsize{\dimexpr\f@size pt\relax}%
  \newcommand*\lineheight[1]{\fontsize{\fsize}{#1\fsize}\selectfont}%
  \ifx\svgwidth\undefined%
    \setlength{\unitlength}{5483.00266056bp}%
    \ifx\svgscale\undefined%
      \relax%
    \else%
      \setlength{\unitlength}{\unitlength * \real{\svgscale}}%
    \fi%
  \else%
    \setlength{\unitlength}{\svgwidth}%
  \fi%
  \global\let\svgwidth\undefined%
  \global\let\svgscale\undefined%
  \makeatother%
  \begin{picture}(1,0.54061314)%
    \lineheight{1}%
    \setlength\tabcolsep{0pt}%
    \put(0,0){\includegraphics[width=\unitlength,page=1]{Figure2_new.pdf}}%
    \put(0.7920302,0.24521605){\makebox(0,0)[lt]{\lineheight{1.25}\smash{\begin{tabular}[t]{l}
    $\Omega_4=\overline{D}_{\phi_1}^\complement\cap \overline{D}_{\phi_2}^\complement$
    \end{tabular}}}}%
    \put(0.7920302,0.52097744){\makebox(0,0)[lt]{\lineheight{1.25}\smash{\begin{tabular}[t]{l}
    $\Omega_3=\overline{D}_{\phi_1}^\complement\cap D_{\phi_2}$
    \end{tabular}}}}%
    \put(0.5325895,0.24521605){\makebox(0,0)[lt]{\lineheight{1.25}\smash{\begin{tabular}[t]{l}
    $\Omega_2=D_{\phi_1}\cap D_{\phi_2}$
    \end{tabular}}}}%
    \put(0.5325895,0.52097744){\makebox(0,0)[lt]{\lineheight{1.25}\smash{\begin{tabular}[t]{l}
    $\Omega_1=D_{\phi_1}\cap \overline{D}_{\phi_2}^\complement$
    \end{tabular}}}}%
    \put(0.67303449,0.39981741){\color[rgb]{1,0.78039216,0.18823529}\makebox(0,0)[lt]{\lineheight{1.25}\smash{\begin{tabular}[t]{l}$\Gamma_{12}$\end{tabular}}}}%
    \put(0.67372228,0.109385){\color[rgb]{0.41176471,0.00784314,0.49019608}\makebox(0,0)[lt]{\lineheight{1.25}\smash{\begin{tabular}[t]{l}$\Gamma_{23}$\end{tabular}}}}%
    \put(0.78389913,0.39981741){\color[rgb]{0.41176471,0.00784314,0.49019608}\makebox(0,0)[lt]{\lineheight{1.25}\smash{\begin{tabular}[t]{l}$\Gamma_{23}$\end{tabular}}}}%
    \put(0.97045986,0.39981741){\color[rgb]{0.07843137,0.42352941,0}\makebox(0,0)[lt]{\lineheight{1.25}\smash{\begin{tabular}[t]{l}$\Gamma_{34}$\end{tabular}}}}%
    \put(0,0){\includegraphics[width=\unitlength,page=2]{Figure2_new.pdf}}%
    \put(0.15814745,0.12186497){\makebox(0,0)[lt]{\lineheight{1.25}\smash{\begin{tabular}[t]{l}$D_{\phi_2}=\{\bx\in D:\phi_2(\bx)<0\}$\end{tabular}}}}%
    \put(0,0){\includegraphics[width=\unitlength,page=3]{Figure2_new.pdf}}%
    \put(0.00553813,0.39883207){\makebox(0,0)[lt]{\lineheight{1.25}\smash{\begin{tabular}[t]{l}$D_{\phi_1}=\{\bx\in D:\phi_1(\bx)<0\}$\end{tabular}}}}%
    \put(0.48762067,0.39944766){\color[rgb]{0,0,0.6}\makebox(0,0)[lt]{\lineheight{1.25}\smash{\begin{tabular}[t]{l}$\Gamma_{14}$\end{tabular}}}}%
    \put(0.79174962,0.10600129){\color[rgb]{0,0,0.6}\makebox(0,0)[lt]{\lineheight{1.25}\smash{\begin{tabular}[t]{l}$\Gamma_{14}$\end{tabular}}}}%
    \put(0.52730176,0.10975432){\color[rgb]{1,0.78039216,0.18823529}\makebox(0,0)[lt]{\lineheight{1.25}\smash{\begin{tabular}[t]{l}$\Gamma_{12}$\end{tabular}}}}%
    \put(0.91500111,0.11130348){\color[rgb]{0.07843137,0.42352941,0}\makebox(0,0)[lt]{\lineheight{1.25}\smash{\begin{tabular}[t]{l}$\Gamma_{34}$\end{tabular}}}}%
  \end{picture}%
\endgroup%

%% file: Et_two_ls_tex.tex
\begingroup%
  \makeatletter%
  \providecommand\color[2][]{%
    \errmessage{(Inkscape) Color is used for the text in Inkscape, but the package 'color.sty' is not loaded}%
    \renewcommand\color[2][]{}%
  }%
  \providecommand\transparent[1]{%
    \errmessage{(Inkscape) Transparency is used (non-zero) for the text in Inkscape, but the package 'transparent.sty' is not loaded}%
    \renewcommand\transparent[1]{}%
  }%
  \providecommand\rotatebox[2]{#2}%
  \newcommand*\fsize{\dimexpr\f@size pt\relax}%
  \newcommand*\lineheight[1]{\fontsize{\fsize}{#1\fsize}\selectfont}%
  \ifx\svgwidth\undefined%
    \setlength{\unitlength}{227.69478607bp}%
    \ifx\svgscale\undefined%
      \relax%
    \else%
      \setlength{\unitlength}{\unitlength * \real{\svgscale}}%
    \fi%
  \else%
    \setlength{\unitlength}{\svgwidth}%
  \fi%
  \global\let\svgwidth\undefined%
  \global\let\svgscale\undefined%
  \makeatother%
  \begin{picture}(1,0.86602728)%
    \lineheight{1}%
    \setlength\tabcolsep{0pt}%
    \put(0,0){\includegraphics[width=\unitlength,page=1]{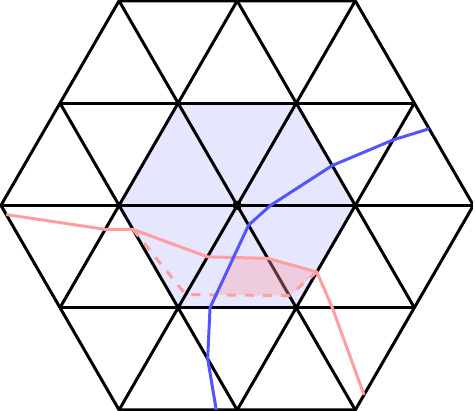}}%
    \put(0.40975576,0.45582536){\makebox(0,0)[lt]{\lineheight{1.25}\smash{\begin{tabular}[t]{l}$\boldsymbol{x}_w$\end{tabular}}}}%
    \put(0.40,0.605){\makebox(0,0)[lt]{\lineheight{1.25}\smash{\begin{tabular}[t]{l}${\rm supp}~w$\end{tabular}}}}%
    \put(0.49648009,0.26539625){\makebox(0,0)[lt]{\lineheight{1.25}\smash{\begin{tabular}[t]{l}$E_t$\end{tabular}}}}%
    \put(0.70053276,0.24373315){\color[rgb]{1,0.61960784,0.61960784}\makebox(0,0)[lt]{\lineheight{1.25}\smash{\begin{tabular}[t]{l}$\partial D_{\phi_1}$\end{tabular}}}}%
    \put(0.19465067,0.13823071){\color[rgb]{1,0.61960784,0.61960784}\makebox(0,0)[lt]{\lineheight{1.25}\smash{\begin{tabular}[t]{l}$D_{\phi_1}$\end{tabular}}}}%
    \put(0.68021143,0.5877673){\color[rgb]{0.32156863,0.33333333,1}\makebox(0,0)[lt]{\lineheight{1.25}\smash{\begin{tabular}[t]{l}$\partial D_{\phi_2}$\end{tabular}}}}%
    \put(0.8340944,0.35237241){\color[rgb]{0.32156863,0.33333333,1}\makebox(0,0)[lt]{\lineheight{1.25}\smash{\begin{tabular}[t]{l}$D_{\phi_2}$\end{tabular}}}}%
  \end{picture}%
\endgroup%

%% file: Unfitted_Discretisation_mapping_only_tex.tex
\begingroup%
  \makeatletter%
  \providecommand\color[2][]{%
    \errmessage{(Inkscape) Color is used for the text in Inkscape, but the package 'color.sty' is not loaded}%
    \renewcommand\color[2][]{}%
  }%
  \providecommand\transparent[1]{%
    \errmessage{(Inkscape) Transparency is used (non-zero) for the text in Inkscape, but the package 'transparent.sty' is not loaded}%
    \renewcommand\transparent[1]{}%
  }%
  \providecommand\rotatebox[2]{#2}%
  \newcommand*\fsize{\dimexpr\f@size pt\relax}%
  \newcommand*\lineheight[1]{\fontsize{\fsize}{#1\fsize}\selectfont}%
  \ifx\svgwidth\undefined%
    \setlength{\unitlength}{478.68660033bp}%
    \ifx\svgscale\undefined%
      \relax%
    \else%
      \setlength{\unitlength}{\unitlength * \real{\svgscale}}%
    \fi%
  \else%
    \setlength{\unitlength}{\svgwidth}%
  \fi%
  \global\let\svgwidth\undefined%
  \global\let\svgscale\undefined%
  \makeatother%
  \begin{picture}(1,0.36130061)%
    \lineheight{1}%
    \setlength\tabcolsep{0pt}%
    \put(0,0){\includegraphics[width=\unitlength,page=1]{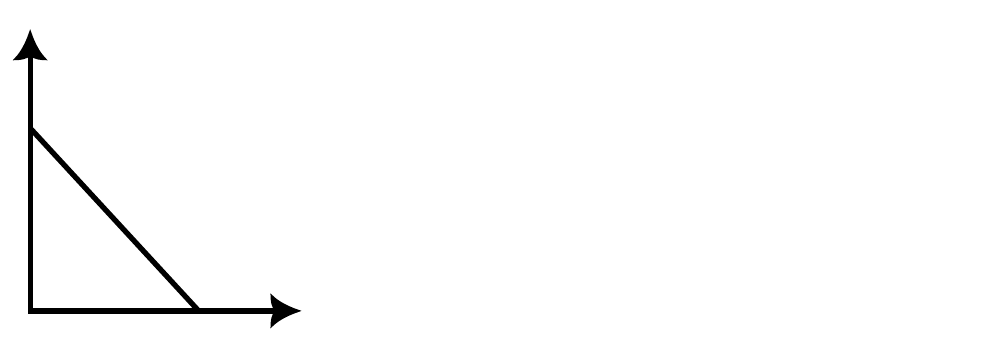}}%
    \put(0.31562297,0.04166542){\color[rgb]{0,0,0}\makebox(0,0)[lt]{\lineheight{1.25}\smash{\begin{tabular}[t]{l}$\xi_1$\end{tabular}}}}%
    \put(0,0){\includegraphics[width=\unitlength,page=2]{Unfitted_Discretisation_mapping_only.pdf}}%
    \put(0.38618491,0.22248104){\color[rgb]{0,0,0}\makebox(0,0)[lt]{\lineheight{1.25}\smash{\begin{tabular}[t]{l}$\boldsymbol{x}=T_\phi(\boldsymbol{\xi})$\end{tabular}}}}%
    \put(0.58496161,0.04169045){\color[rgb]{0,0,0}\makebox(0,0)[lt]{\lineheight{1.25}\smash{\begin{tabular}[t]{l}$\boldsymbol{q}_1$\end{tabular}}}}%
    \put(1.00807157,0.1756941){\color[rgb]{0,0,0}\makebox(0,0)[lt]{\lineheight{1.25}\smash{\begin{tabular}[t]{l}$\boldsymbol{q}_2$\end{tabular}}}}%
    \put(0.77164645,0.31991225){\color[rgb]{0,0,0}\makebox(0,0)[lt]{\lineheight{1.25}\smash{\begin{tabular}[t]{l}$\boldsymbol{q}_3$\end{tabular}}}}%
    \put(0.7686208,0.23756756){\color[rgb]{0,0,0}\makebox(0,0)[lt]{\lineheight{1.25}\smash{\begin{tabular}[t]{l}$\boldsymbol{v}_2(\phi)$\end{tabular}}}}%
    \put(0.84544814,0.1103513){\color[rgb]{0,0,0}\makebox(0,0)[lt]{\lineheight{1.25}\smash{\begin{tabular}[t]{l}$\boldsymbol{v}_1(\phi)$\end{tabular}}}}%
    \put(0.01571492,0.34249918){\color[rgb]{0,0,0}\makebox(0,0)[lt]{\lineheight{1.25}\smash{\begin{tabular}[t]{l}$\xi_2$\end{tabular}}}}%
    \put(0,0){\includegraphics[width=\unitlength,page=3]{Unfitted_Discretisation_mapping_only.pdf}}%
    \put(0.04391166,0.24010396){\color[rgb]{0,0,0}\makebox(0,0)[lt]{\lineheight{1.25}\smash{\begin{tabular}[t]{l}$(0,1)$\end{tabular}}}}%
    \put(0.19078914,0.01197272){\color[rgb]{0,0,0}\makebox(0,0)[lt]{\lineheight{1.25}\smash{\begin{tabular}[t]{l}$(1,0)$\end{tabular}}}}%
    \put(0.02673994,0.01197272){\color[rgb]{0,0,0}\makebox(0,0)[lt]{\lineheight{1.25}\smash{\begin{tabular}[t]{l}$(0,0)$\end{tabular}}}}%
  \end{picture}%
\endgroup%

%% file: split2d_tex.tex
\begingroup%
  \makeatletter%
  \providecommand\color[2][]{%
    \errmessage{(Inkscape) Color is used for the text in Inkscape, but the package 'color.sty' is not loaded}%
    \renewcommand\color[2][]{}%
  }%
  \providecommand\transparent[1]{%
    \errmessage{(Inkscape) Transparency is used (non-zero) for the text in Inkscape, but the package 'transparent.sty' is not loaded}%
    \renewcommand\transparent[1]{}%
  }%
  \providecommand\rotatebox[2]{#2}%
  \newcommand*\fsize{\dimexpr\f@size pt\relax}%
  \newcommand*\lineheight[1]{\fontsize{\fsize}{#1\fsize}\selectfont}%
  \ifx\svgwidth\undefined%
    \setlength{\unitlength}{366.25039889bp}%
    \ifx\svgscale\undefined%
      \relax%
    \else%
      \setlength{\unitlength}{\unitlength * \real{\svgscale}}%
    \fi%
  \else%
    \setlength{\unitlength}{\svgwidth}%
  \fi%
  \global\let\svgwidth\undefined%
  \global\let\svgscale\undefined%
  \makeatother%
  \begin{picture}(1,0.42592779)%
    \lineheight{1}%
    \setlength\tabcolsep{0pt}%
    \put(0,0){\includegraphics[width=\unitlength,page=1]{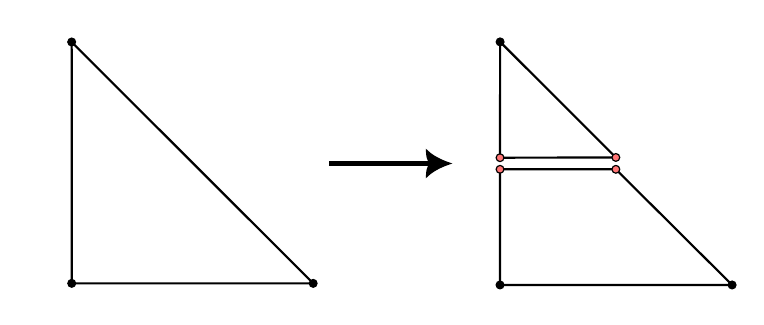}}%
    \put(0,0){\includegraphics[width=\unitlength,page=2]{split2d.pdf}}%
  \end{picture}%
\endgroup%

%% file: split3d_tex.tex
\begingroup%
  \makeatletter%
  \providecommand\color[2][]{%
    \errmessage{(Inkscape) Color is used for the text in Inkscape, but the package 'color.sty' is not loaded}%
    \renewcommand\color[2][]{}%
  }%
  \providecommand\transparent[1]{%
    \errmessage{(Inkscape) Transparency is used (non-zero) for the text in Inkscape, but the package 'transparent.sty' is not loaded}%
    \renewcommand\transparent[1]{}%
  }%
  \providecommand\rotatebox[2]{#2}%
  \newcommand*\fsize{\dimexpr\f@size pt\relax}%
  \newcommand*\lineheight[1]{\fontsize{\fsize}{#1\fsize}\selectfont}%
  \ifx\svgwidth\undefined%
    \setlength{\unitlength}{366.25169649bp}%
    \ifx\svgscale\undefined%
      \relax%
    \else%
      \setlength{\unitlength}{\unitlength * \real{\svgscale}}%
    \fi%
  \else%
    \setlength{\unitlength}{\svgwidth}%
  \fi%
  \global\let\svgwidth\undefined%
  \global\let\svgscale\undefined%
  \makeatother%
  \begin{picture}(1,0.42592936)%
    \lineheight{1}%
    \setlength\tabcolsep{0pt}%
    \put(0,0){\includegraphics[width=\unitlength,page=1]{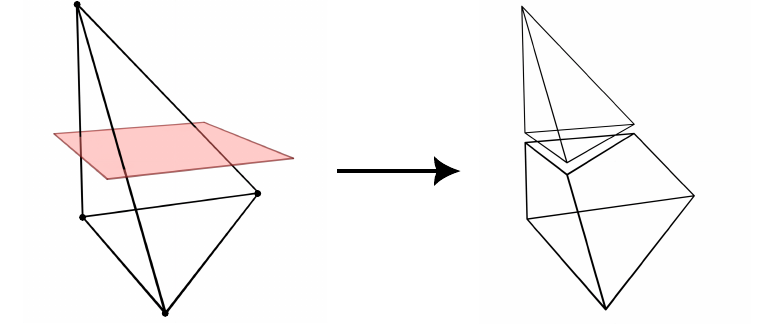}}%
    \put(0,0){\includegraphics[width=\unitlength,page=2]{split3d.pdf}}%
  \end{picture}%
\endgroup%

%% file: simplexify2d_tex.tex
\begingroup%
  \makeatletter%
  \providecommand\color[2][]{%
    \errmessage{(Inkscape) Color is used for the text in Inkscape, but the package 'color.sty' is not loaded}%
    \renewcommand\color[2][]{}%
  }%
  \providecommand\transparent[1]{%
    \errmessage{(Inkscape) Transparency is used (non-zero) for the text in Inkscape, but the package 'transparent.sty' is not loaded}%
    \renewcommand\transparent[1]{}%
  }%
  \providecommand\rotatebox[2]{#2}%
  \newcommand*\fsize{\dimexpr\f@size pt\relax}%
  \newcommand*\lineheight[1]{\fontsize{\fsize}{#1\fsize}\selectfont}%
  \ifx\svgwidth\undefined%
    \setlength{\unitlength}{366.25039889bp}%
    \ifx\svgscale\undefined%
      \relax%
    \else%
      \setlength{\unitlength}{\unitlength * \real{\svgscale}}%
    \fi%
  \else%
    \setlength{\unitlength}{\svgwidth}%
  \fi%
  \global\let\svgwidth\undefined%
  \global\let\svgscale\undefined%
  \makeatother%
  \begin{picture}(1,0.42592779)%
    \lineheight{1}%
    \setlength\tabcolsep{0pt}%
    \put(0,0){\includegraphics[width=\unitlength,page=1]{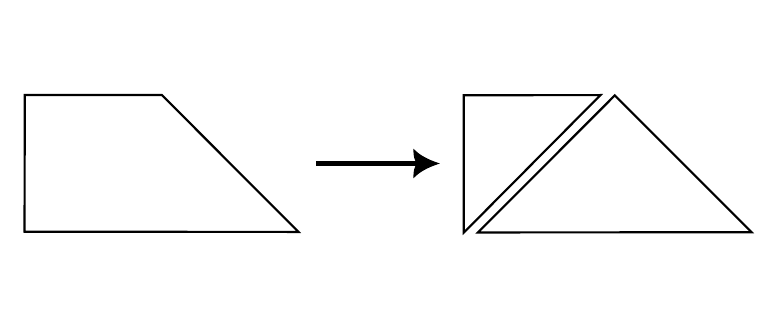}}%
  \end{picture}%
\endgroup%

%% file: simplexify3d_tex.tex
\begingroup%
  \makeatletter%
  \providecommand\color[2][]{%
    \errmessage{(Inkscape) Color is used for the text in Inkscape, but the package 'color.sty' is not loaded}%
    \renewcommand\color[2][]{}%
  }%
  \providecommand\transparent[1]{%
    \errmessage{(Inkscape) Transparency is used (non-zero) for the text in Inkscape, but the package 'transparent.sty' is not loaded}%
    \renewcommand\transparent[1]{}%
  }%
  \providecommand\rotatebox[2]{#2}%
  \newcommand*\fsize{\dimexpr\f@size pt\relax}%
  \newcommand*\lineheight[1]{\fontsize{\fsize}{#1\fsize}\selectfont}%
  \ifx\svgwidth\undefined%
    \setlength{\unitlength}{366.25169649bp}%
    \ifx\svgscale\undefined%
      \relax%
    \else%
      \setlength{\unitlength}{\unitlength * \real{\svgscale}}%
    \fi%
  \else%
    \setlength{\unitlength}{\svgwidth}%
  \fi%
  \global\let\svgwidth\undefined%
  \global\let\svgscale\undefined%
  \makeatother%
  \begin{picture}(1,0.42592936)%
    \lineheight{1}%
    \setlength\tabcolsep{0pt}%
    \put(0,0){\includegraphics[width=\unitlength,page=1]{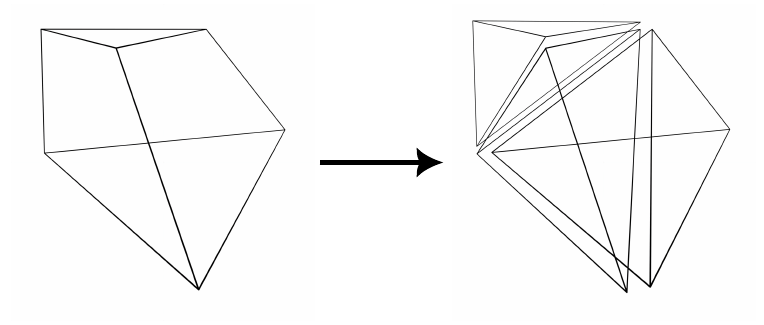}}%
  \end{picture}%
\endgroup%

%% file: rec_cutter_tex.tex
\begingroup%
  \makeatletter%
  \providecommand\color[2][]{%
    \errmessage{(Inkscape) Color is used for the text in Inkscape, but the package 'color.sty' is not loaded}%
    \renewcommand\color[2][]{}%
  }%
  \providecommand\transparent[1]{%
    \errmessage{(Inkscape) Transparency is used (non-zero) for the text in Inkscape, but the package 'transparent.sty' is not loaded}%
    \renewcommand\transparent[1]{}%
  }%
  \providecommand\rotatebox[2]{#2}%
  \newcommand*\fsize{\dimexpr\f@size pt\relax}%
  \newcommand*\lineheight[1]{\fontsize{\fsize}{#1\fsize}\selectfont}%
  \ifx\svgwidth\undefined%
    \setlength{\unitlength}{616.92588025bp}%
    \ifx\svgscale\undefined%
      \relax%
    \else%
      \setlength{\unitlength}{\unitlength * \real{\svgscale}}%
    \fi%
  \else%
    \setlength{\unitlength}{\svgwidth}%
  \fi%
  \global\let\svgwidth\undefined%
  \global\let\svgscale\undefined%
  \makeatother%
  \begin{picture}(1,1.25042461)%
    \lineheight{1}%
    \setlength\tabcolsep{0pt}%
    \put(0,0){\includegraphics[width=\unitlength,page=1]{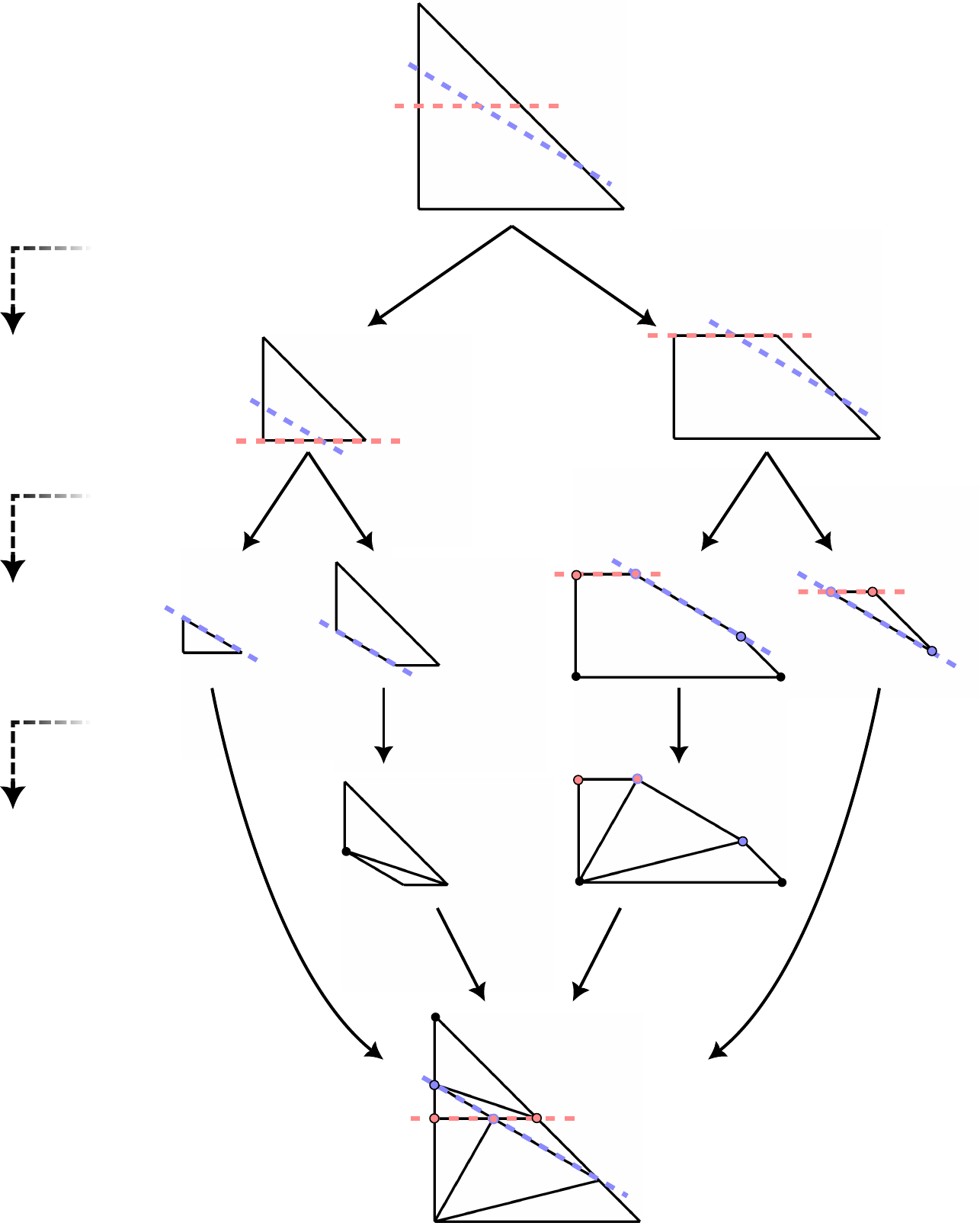}}%
    \put(0.02764335,0.95606986){\color[rgb]{1,0.54117647,0.54509804}\makebox(0,0)[lt]{\lineheight{1.25}\smash{\begin{tabular}[t]{l}\texttt{Split} with $\phi_1$\end{tabular}}}}%
    \put(0.03033534,0.70381167){\color[rgb]{0.54509804,0.54117647,0.99215686}\makebox(0,0)[lt]{\lineheight{1.25}\smash{\begin{tabular}[t]{l}\texttt{Split} with $\phi_2$\end{tabular}}}}%
    \put(0.02764335,0.47583326){\color[rgb]{0,0,0}\makebox(0,0)[lt]{\lineheight{1.25}\smash{\begin{tabular}[t]{l}\texttt{Simplexify}\end{tabular}}}}%
    \put(0,0){\includegraphics[width=\unitlength,page=2]{rec_cutter.pdf}}%
    \put(0.02764335,0.20594929){\color[rgb]{0,0,0}\makebox(0,0)[lt]{\lineheight{1.25}\smash{\begin{tabular}[t]{l}Resulting cut cell\end{tabular}}}}%
    \put(0,0){\includegraphics[width=\unitlength,page=3]{rec_cutter.pdf}}%
  \end{picture}%
\endgroup%

%% file: rec_cutter_facet_tex.tex
\begingroup%
  \makeatletter%
  \providecommand\color[2][]{%
    \errmessage{(Inkscape) Color is used for the text in Inkscape, but the package 'color.sty' is not loaded}%
    \renewcommand\color[2][]{}%
  }%
  \providecommand\transparent[1]{%
    \errmessage{(Inkscape) Transparency is used (non-zero) for the text in Inkscape, but the package 'transparent.sty' is not loaded}%
    \renewcommand\transparent[1]{}%
  }%
  \providecommand\rotatebox[2]{#2}%
  \newcommand*\fsize{\dimexpr\f@size pt\relax}%
  \newcommand*\lineheight[1]{\fontsize{\fsize}{#1\fsize}\selectfont}%
  \ifx\svgwidth\undefined%
    \setlength{\unitlength}{616.92588025bp}%
    \ifx\svgscale\undefined%
      \relax%
    \else%
      \setlength{\unitlength}{\unitlength * \real{\svgscale}}%
    \fi%
  \else%
    \setlength{\unitlength}{\svgwidth}%
  \fi%
  \global\let\svgwidth\undefined%
  \global\let\svgscale\undefined%
  \makeatother%
  \begin{picture}(1,1.26880574)%
    \lineheight{1}%
    \setlength\tabcolsep{0pt}%
    \put(0,0){\includegraphics[width=\unitlength,page=1]{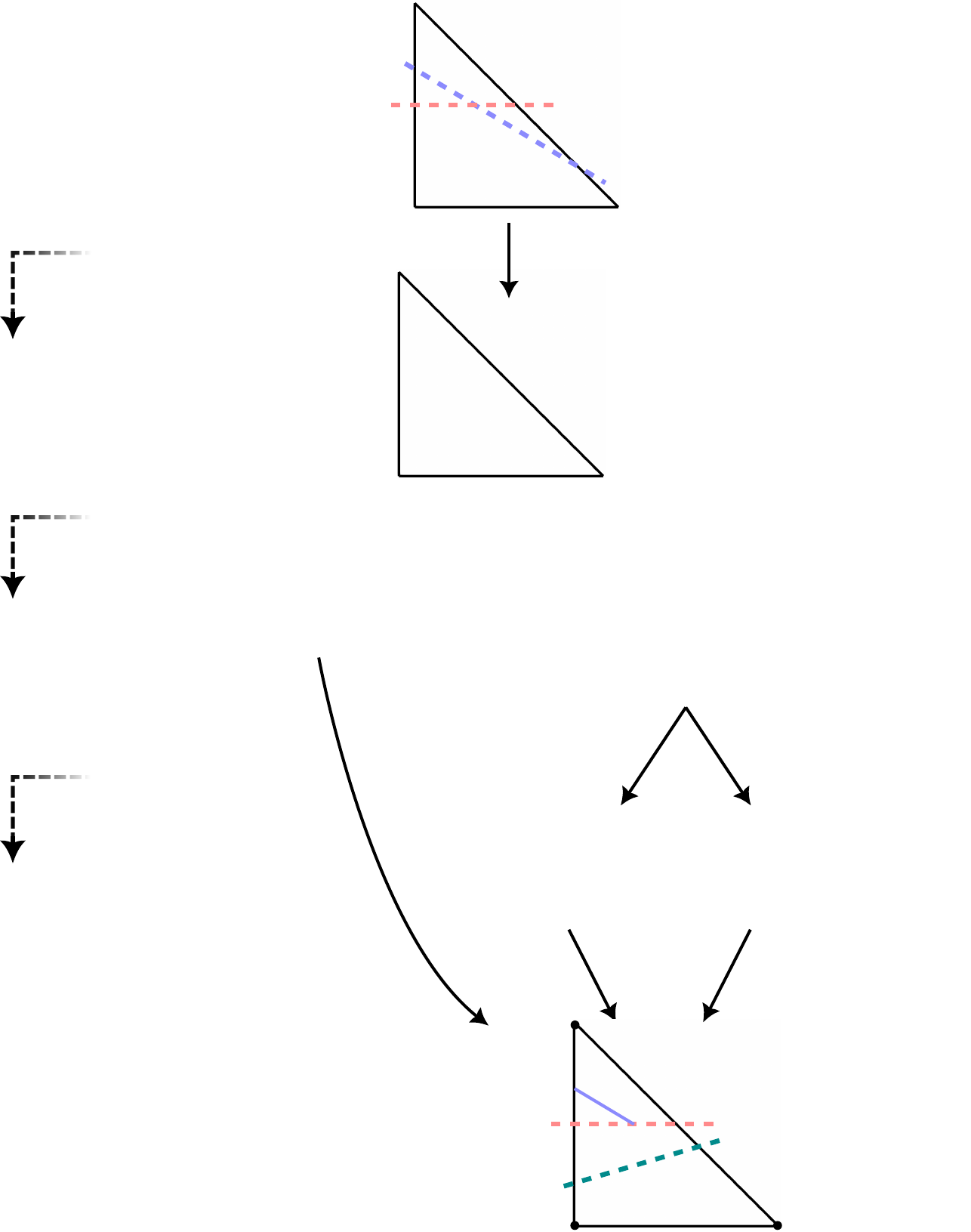}}%
    \put(0.02764335,0.96715675){\color[rgb]{0.54509804,0.54117647,0.99215686}\makebox(0,0)[lt]{\lineheight{1.25}\smash{\begin{tabular}[t]{l}Compute $\mathcal{F}_0$ with $\phi_2$\end{tabular}}}}%
    \put(0.03033534,0.69544747){\color[rgb]{1,0.54117647,0.54509804}\makebox(0,0)[lt]{\lineheight{1.25}\smash{\begin{tabular}[t]{l}\texttt{Split} with $\phi_1$\end{tabular}}}}%
    \put(0.02764335,0.4309985){\color[rgb]{0,0.54117647,0.54509804}\makebox(0,0)[lt]{\lineheight{1.25}\smash{\begin{tabular}[t]{l}\texttt{Split} with $\phi_3$\end{tabular}}}}%
    \put(0,0){\includegraphics[width=\unitlength,page=2]{rec_cutter_facet.pdf}}%
    \put(0.02764335,0.20974215){\color[rgb]{0,0,0}\makebox(0,0)[lt]{\lineheight{1.25}\smash{\begin{tabular}[t]{l}Resulting facet\end{tabular}}}}%
    \put(0,0){\includegraphics[width=\unitlength,page=3]{rec_cutter_facet.pdf}}%
  \end{picture}%
\endgroup%